\documentclass[11pt]{amsart}

\usepackage[
paper=a4paper,
text={147mm,230mm},centering
]{geometry}

\iftrue
\makeatletter
\def\@settitle{%
  \vspace*{-20pt}
  \begin{flushleft}%
    \baselineskip14\p@\relax
    \normalfont\bfseries\LARGE
    \@title
  \end{flushleft}%
}
\def\@setauthors{%t
  \begingroup
  \def\thanks{\protect\thanks@warning}%
  \trivlist
  \large \@topsep30\p@\relax
  \advance\@topsep by -\baselineskip
  \item\relax
  \author@andify\authors
  \def\\{\protect\linebreak}%

  \authors
  \ifx\@empty\contribs
  \else
    ,\penalty-3 \space \@setcontribs
    \@closetoccontribs
  \fi
  \normalfont
  \endtrivlist
  \endgroup
}
\def\@setabstracta{%
    \ifvoid\abstractbox
  \else
    \skip@25\p@ \advance\skip@-\lastskip
    \advance\skip@-\baselineskip \vskip\skip@
    \box\abstractbox
    \prevdepth\z@ % because \abstractbox is a vtop
    \vskip-10pt
  \fi
}
\renewenvironment{abstract}{%
  \ifx\maketitle\relax
    \ClassWarning{\@classname}{Abstract should precede
      \protect\maketitle\space in AMS document classes; reported}%
  \fi
  \global\setbox\abstractbox=\vtop \bgroup
    \normalfont\small
    \list{}{\labelwidth\z@
      \leftmargin0pc \rightmargin\leftmargin
      \listparindent\normalparindent \itemindent\z@
      \parsep\z@ \@plus\p@
      
    }%
    \item[\hskip\labelsep\bfseries\abstractname.]%
}{%
  \endlist\egroup
  \ifx\@setabstract\relax \@setabstracta \fi
}
\def\section{\@startsection{section}{1}%
  \z@{-1.2\linespacing\@plus-.5\linespacing}{.8\linespacing}%
  {\normalfont\bfseries\large}}
\def\subsection{\@startsection{subsection}{2}%
  \z@{-.8\linespacing\@plus-.3\linespacing}{.3\linespacing\@plus.2\linespacing}%
  {\normalfont\bfseries}}
\def\subsubsection{\@startsection{subsubsection}{3}%
  \z@{.7\linespacing\@plus.1\linespacing}{-1.5ex}%
  {\normalfont\itshape}}
\def\@secnumfont{\bfseries}
\makeatother
\fi %\iftrue/false for amsart.cls hack

\usepackage{amssymb}
\usepackage{mathrsfs}
\usepackage[all]{xy}
\usepackage{graphicx,color,float}
\usepackage[bookmarks]{hyperref}
\usepackage{pinlabel}
\usepackage[textwidth=1.3in,color=yellow]{todonotes}
\usepackage{amsmath}
\usepackage{pb-diagram,pb-xy}
\usepackage{amsmath}
\usepackage{amsfonts}
\usepackage{graphicx}
\usepackage{amscd}
\usepackage{mathtools}
\usepackage{url}
\usepackage{caption}
\usepackage{subcaption}
\usepackage{fancybox}
\usepackage{wrapfig}
\usepackage{color}
\usepackage{multirow, url}
\usepackage{tikz}
\usetikzlibrary{shapes.geometric, arrows}
\usetikzlibrary{shapes}
\usepackage{xcolor}
\usepackage[normalem]{ulem}  %sout

\usetikzlibrary{calc}
\textwidth=\paperwidth \advance\textwidth by-2\oddsidemargin
\advance\oddsidemargin by-1in
\evensidemargin=\oddsidemargin
\calclayout

\renewcommand{\bar}{\overline}
\renewcommand{\tilde}{\widetilde}

\newcommand{\C}{\mathbb{C}}

\newcommand{\Q}{\mathbb{Q}}
\newcommand{\R}{\mathbb{R}}

\newcommand{\Z}{\mathbb{Z}}

\newcommand{\tf}{\tilde{f}}

\newcommand{\te}{\tilde{e}}

\tikzstyle{startstop} = [rectangle, rounded corners, 
minimum width=3cm, 
minimum height=1cm,
text centered, 
draw=black]

\tikzstyle{io} = [trapezium, 
trapezium stretches=true, % A later addition
trapezium left angle=70, 
trapezium right angle=110, 
minimum width=3cm, 
minimum height=1cm, text centered, 
draw=black, fill=blue!30]

\tikzstyle{process} = [rectangle, rounded corners, 
minimum width=4cm, 
minimum height=1cm,
text centered, 
draw=black]

\tikzstyle{decision} = [diamond, 
minimum width=3cm, 
minimum height=1cm, 
text centered, 
draw=black, 
fill=green!30]
\tikzstyle{arrow} = [thick,->,>=stealth]

\def\mcal{\mathcal}
\def\frak{\mathfrak}
\def\scr{\mathscr}

\numberwithin{equation}{section}
\theoremstyle{plain}
\newtheorem{theorem}{Theorem}[section]
\newtheorem{thmx}{Theorem}
\newtheorem{propx}[thmx]{Proposition}

\newtheorem{cor}[thmx]{Corollary}
\newtheorem{proposition}[theorem]{Proposition}
\newtheorem{corollary}[theorem]{Corollary}

\newtheorem{lemma}[theorem]{Lemma}

\theoremstyle{definition}
\newtheorem{definition}[theorem]{Definition}

\newtheorem{example}[theorem]{Example}
\newtheorem{remark}[theorem]{Remark}

\def\to{\mathchoice{\longrightarrow}{\rightarrow}{\rightarrow}{\rightarrow}}
\makeatletter
\newcommand{\shortxra}[2][]{\ext@arrow 0359\rightarrowfill@{#1}{#2}}
\def\longrightarrowfill@{\arrowfill@\relbar\relbar\longrightarrow}
\newcommand{\longxra}[2][]{\ext@arrow 0359\longrightarrowfill@{#1}{#2}}
\renewcommand{\xrightarrow}[2][]{\mathchoice{\longxra[#1]{#2}}%
  {\shortxra[#1]{#2}}{\shortxra[#1]{#2}}{\shortxra[#1]{#2}}}
\makeatother
\numberwithin{equation}{section}

\newcommand{\g}{\mathfrak{g}}

\newcommand{\rxw}{\underline{w}}
\newcommand{\fr}{\mathrm{fr}}
\newcommand{\uf}{\mathrm{uf}}
\newcommand{\cmA}{\mathsf{A}}
\newcommand{\vep}{\varepsilon}
\newcommand{\seed}{\mathsf{s}}
\newcommand{\bfa}{\mathbf{a}}
\newcommand{\bfb}{\mathbf{b}}

\newcommand{\Lb}{\mathcal{L}}

\usepackage{marginnote}

\newenvironment{Red}
{\relax\color{red}}
{\hspace*{.5ex}\relax}

\newenvironment{Yel}
{\relax\color{yellow}}
{\hspace*{.5ex}\relax}

\newcommand{\ber}{\begin{Red}}
\newcommand{\er}{\end{Red}}

\newcommand{\beb}{\begin{Yel}}
\newcommand{\eb}{\end{Yel}}
	
\newcommand{\berE}{\begin{Red}{}\marginnote{\fbox{\scshape\lowercase{E}}}{}}

\newcommand{\berMH}{\begin{Red}{}\marginnote{\fbox{\scshape\lowercase{MH}}}{}}

\newcommand{\berYS}{\begin{Red}{}\marginnote{\fbox{\scshape\lowercase{YS}}}{}}

\newcommand{\berYH}{\begin{Red}{}\marginnote{\fbox{\scshape\lowercase{YH}}}{}}

\begin{document}

\title[Newton--Okounkov bodies of partial flag varieties via cluster algebras]{Newton--Okounkov bodies of partial flag varieties via cluster algebras}

\author{Yunhyung Cho}
\address{Department of Mathematics Education, Sungkyunkwan University, Seoul, Republic of Korea}
\email{yunhyung@skku.edu}
\thanks{The research of Y.\ Cho was supported by the National Research Foundation of Korea(NRF) grant funded by the Korea government (MSIT) (RS-2025-00572980 and RS-2020-NR049535) and Samsung Science and Technology Foundation (SSTF-BA2402-01).}

\author{Myungho Kim}
\address{Department of Mathematics, Kyung Hee University, Seoul, Republic of Korea}
\email{mkim@khu.ac.kr}
\thanks{The research of M.  \ Kim was supported by the National Research Foundation of Korea(NRF) grant funded by the Korea government (MSIT) (NRF-2020R1A5A1016126).}

\author{Yoosik Kim}
\address{Department of Mathematics and Institute of Mathematical Science, Pusan National University, Busan, Republic of Korea}
\email{yoosik@pusan.ac.kr}
\thanks{The research of Y.  \ Kim was supported by the National Research Foundation of Korea(NRF) grant funded by the Korea government (MSIT) (RS-2025-16069532 and NRF-2020R1A5A1016126).}

\author{Euiyong Park}
\address{Department of Mathematics, University of Seoul, Seoul, Republic of Korea}
\email{epark@uos.ac.kr}
\thanks{The research of E.\ Park was supported by the National Research Foundation of Korea(NRF) grant funded by the Korea government (MSIT) (RS-2023-00273425 and NRF-2020R1A5A1016126).}

%\thanks{This work was supported by}
%\date{\today}
%\subjclass[2010]{11F33, 11F80, 11G18}
%\keywords{Cuspidal group, Eisenstein ideals}

\begin{abstract}
We construct Newton--Okounkov polytopes of Schubert varieties in partial flag varieties of arbitrary type using the cluster structure on a unipotent cell. When the governing cluster algebra is of infinite type, we prove that for any very ample homogeneous line bundle over a simply laced partial flag variety, the resulting family of Newton--Okounkov polytopes contains infinitely many pairwise nonequivalent polytopes up to integral affine transformation. As an application to symplectic geometry, we construct infinitely many distinct monotone Lagrangian tori in a broad class of simply laced partial flag varieties.
\end{abstract}
\maketitle
\setcounter{tocdepth}{1} %if you want to show more sections on the contents, change this number 1 to 2 or 3.
\tableofcontents

%---------------------------------------------------
\section{Introduction}\label{secIntroduction}

Newton--Okounkov bodies, introduced by Lazarsfeld--Musta\c{t}\u{a} \cite{LM09} and Kaveh--Khovanskii \cite{KK12}, provide a powerful framework for studying linear system on projective varieties via convex geometry. These convex bodies associated to a polarized projective variety $(X,\mathcal{L})$ effectively encode its geometric information. For a suitable choice of valuation, this body becomes a rational polytope $\Delta$, which in turn yields a flat degeneration of $X$ to the toric variety associated with $\Delta$ in \cite{And13}. It has been useful to study the geometry of $X$ via its toric degeneration in a wide range of contexts.

Many foundational examples of Newton--Okounkov bodies of homogeneous spaces were initially studied via combinatorial models such as triangulations, ladder diagrams, plabic graphs, tropicalizations, and wiring diagrams, see \cite{GonciuleaLakshmibai, GP00b, Cal02, AB04, KoganMiller, SS04, Kav15, RW19, CMZ24} for instance. While these constructions are explicit and combinatorially accessible, they typically produce only finitely many Newton--Okounkov polytopes of a single variety.

The theory of Newton--Okounkov bodies has been further developed using the machinery of cluster algebras, see \cite{GHKK18, RW19, BFMN20, FH21, EH22, BCMN24, FO25} for instance. Cluster algebras provide a systematic method for constructing families of Newton--Okounkov bodies and recurrence relations describing how they are related. In many important cases, it is of particular interest to understand how previously known constructions can be unified with those arising from cluster algebras.

More interestingly, when the governing cluster algebra is of infinite type, this method yields infinitely many Newton--Okounkov bodies together with their associated geometric objects. Consequently, it provides a systematic and controlled way for constructing infinitely many geometric objects with nice properties on a single projective variety.

In this vein, a natural question would be whether these objects are genuinely distinct, up to a meaningful notion of equivalence. To answer this question, the authors in \cite{CKKP25} developed a practical criterion for detecting infinitely many pairwise nonequivalent Newton--Okounkov bodies arising from a cluster algebra, up to integral affine transformation. Using this criterion, they showed that there are infinitely many distinct Newton--Okounkov bodies, toric degenerations, and monotone Lagrangian tori for a broad class of complete flag varieties.

The main purpose of this paper is to construct and distinguish families of Newton--Okounkov polytopes of Schubert varieties in partial flag varieties governed by a cluster algebra.  More specifically, we investigate whether the resulting family contains infinitely many pairwise distinct combinatorial and geometric objects for Schubert varieties when the underlying cluster algebra is of infinite type.

Let $G$ be a simply connected semisimple algebraic group over $\C$ with the index set $I$ of simple roots of its Lie algebra $\mathfrak{g}$. Let $\textsf{A} = (a_{i,j})_{i,j\in I}$ be the Cartan matrix. For a subset $K \subseteq I$, we denote by $P$ the parabolic subgroup associated to $K$. Let $W$ be the Weyl group and $W_K$ the subgroup generated by the simple reflections indexed by $K$. We write $W^K$ for the set of minimal length representatives of the cosets in $W / W_K$. For each $w \in W^K$, let $X^K_w$ be the associated Schubert variety in $G/P$. 

Following \cite{BFZ05, Williams13, GLS11, GY17, FO25}, we consider the cluster algebra governing the combinatorial and geometric objects associated with $X_w^K$. Fix a reduced expression of $w$,
$$
\rxw = s_{i_1} s_{i_2} \cdots s_{i_m}.
$$
For each $k \in \{1, \dots, m\}$, set 
$$
k^+ \coloneqq \min \bigl(  \{ m+1 \} \cup \{  k+1 \leq j \leq m \mid i_j = i_k \} \bigr)
$$ 
and define index sets
$$
J \coloneqq \{ 1,2, \cdots, m \}, \, J_\mathrm{fz} \coloneqq \{ j \in J \mid j^+ = m+1 \}, \mbox{ and } J_\uf \coloneqq J \setminus J_\fr.
$$
We introduce a collection of algebraically independent variables $\{ A_{j} \mid j \in J \}$. Using the Cartan matrix $\textsf{A}$, we define an extended exchange matrix $\varepsilon_{0} \coloneqq (\varepsilon_{r,s})_{r \in J_{\uf},\, s \in J}$ by
\begin{equation}\label{equ_GLSseed}
\varepsilon_{r,s} =
\begin{cases}
-1 & \text{if } r = s^+, \\
1 & \text{if } r^+ = s, \\
-a_{i_s,i_r} & \text{if } s < r < s^+ < r^+, \\
a_{i_s,i_r} & \text{if } r < s < r^+ < s^+, \\
0 & \text{otherwise.}
\end{cases}
\end{equation}
The pair $\seed_0 \coloneqq (\{ A_{j} \mid j \in J \}, \varepsilon_{0})$ forms the initial seed. We denote by $\mathcal{A}(\seed_0)$ the cluster algebra generated by this initial seed, that is, the subalgebra of $\C( A_{j} \mid j \in J )$ generated by all the cluster variables of all the seeds mutation equivalent to $\seed_0$. Although the construction depends on the choice of reduced expression, different choices yield isomorphic cluster algebras. Accordingly, we write $\mathcal{A}(w) = \mathcal{A}(\seed_0)$ when we need to emphasize the choice $w$.

For each seed $\seed$ of the cluster algebra $\mathcal{A}(w)$, we associate to $\seed$ a Newton--Okounkov body of the Schubert variety $X_w^K$ with a choice of line bundle. Let $\lambda$ be a dominant integral weight with respect to $K$ and $\mathcal{L}^K_\lambda$ the corresponding homogeneous line bundle on $X_w^K$. This line bundle is globally generated and it is very ample if $\lambda$ is regular. To each seed $\seed$ of $\mathcal{A}(w)$, one can construct a valuation $v_\seed$ arising from a dominance order and a Newton–Okounkov body $\Delta_\seed$ of $(X_w^K, \mathcal{L}^K_\lambda)$ by following \cite{Qin17, FO25}. The resulting family of convex bodies reflects the underlying cluster structure. Accordingly, it has a tropicalized cluster structure, which we now recall. 

For an unfrozen index $k \in J_\mathrm{uf}$ and a pair $(\seed, \seed^\prime)$ of seeds with $\mu_k (\seed) = \seed^\prime$, the \emph{tropicalized cluster mutation in the $k$-th direction}
$$
\mu^{{T}}_k \colon \R^{J} \to \R^{J}
$$
is defined to be the piecewise-linear transformation given by 
\begin{equation}\label{equ_tropical11}
\mathbf{u} \coloneqq (u_j) \mapsto \mathbf{u}^\prime \coloneqq (u^\prime_j), \quad 
u_j^\prime = 
\begin{cases}
- u_j &\mbox{ if $j = k$,} \\
u_j + \left[ \varepsilon_{k,j} \right]_+ u_k  &\mbox{ if $j \neq k$ and $u_k \geq 0$,} \\
u_j + \left[ -\varepsilon_{k,j} \right]_+ u_k &\mbox{ if $j \neq k$ and $u_k \leq 0$}
\end{cases}
\end{equation}
where $[a]_+ \coloneqq \max \{a,0\}$. The family $\{\Delta_\seed \mid \mbox{$\seed$ is a seed of $\mathcal{A}(w)$}\}$ is said to have a \emph{tropicalized cluster structure} if $\Delta_{\seed^\prime} = \mu_k^T (\Delta_\seed)$ whenever $\seed^\prime$ is obtained from $\seed$ by mutation in the $k$-th direction.

In the case where $K = \emptyset$, we consider the Schubert variety $X_w \coloneqq X_w^\emptyset$ in the complete flag variety $G/B$. In \cite{FO25}, Fujita--Oya constructed a family of Newton--Okounkov polytopes of $(X_w, \mathcal{L}_\lambda)$ using valuations determined by seeds of $\mathcal{A}(w)$ and showed that this family has the tropicalized cluster structure. Building on their work, we extends their result to Schubert varieties of partial flag varieties. 

\begin{propx}[Theorem~\ref{theorem_NOpolyopteforXWK}]\label{propxA}
Let $G$ be a simply connected semisimple algebraic group over $\C$ with the index set $I$ for simple roots of its Lie algebra $\frak{g}$. For a subset $K \subseteq I$, choose $w \in W^K$ and a dominant integral weight $\lambda$ with respect to $K$. Consider the Schubert variety $X_w^K \subseteq G/P$ together with the homogeneous line bundle $\mathcal{L}^K_\lambda$. Then there exists a family 
$$
\bigl\{\Delta_\seed \mid \mbox{ $\seed$ is a seed of the cluster algebra $\mathcal{A}(w)$} \bigr\}
$$
of Newton--Okounkov bodies of $\bigl(X_w^K, \mathcal{L}^K_\lambda \bigr)$ indexed by the set of seeds such that
\begin{enumerate}
\item each underlying value semigroup is saturated and finitely generated,
\item each $\Delta_\seed$ is a rational polytope, and
\item the family $\{ \Delta_\seed \}$ has a tropicalized cluster structure.
\end{enumerate}
\end{propx}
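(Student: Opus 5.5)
The plan is to reduce the statement to the Fujita--Oya result for $K=\emptyset$. The key observation is that for $w\in W^K$ the natural projection $\pi\colon G/B\to G/P$ restricts to a birational morphism $X_w\to X^K_w$, and that $\pi^*\mathcal{L}^K_\lambda\cong\mathcal{L}_\lambda$, where $\lambda$ is now viewed as a dominant weight for $G/B$.

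First we identify the section spaces. Since $\pi\colon X_w\to X_w^K$ is birational with connected fibers and both Schubert varieties are normal, we have $\pi_*\mathcal{O}_{X_w}=\mathcal{O}_{X^K_w}$. The projection formula then gives
\[
H^0\bigl(X^K_w,(\mathcal{L}^K_\lambda)^{\otimes k}\bigr)\cong H^0\bigl(X_w,\mathcal{L}_{k\lambda}\bigr)
\]
for all $k\ge 0$, and these isomorphisms are compatible with multiplication. Both sides are the dual Demazure modules $V_w(k\lambda)^*$, so the section rings coincide. Moreover the open cell $U^-_w\cong BwB/B$ maps isomorphically onto the open cell $BwP/P$ of $X^K_w$. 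Hence the function fields agree, $\C(X^K_w)=\C(X_w)=\C(U_w^-)$, and this field is the ambient field of the cluster algebra $\mathcal{A}(w)$ (the seed $\seed_0$ attached to $\rxw$).

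Next we transport the valuations. For each seed $\seed$, let $v_\seed$ be the valuation on $\C(X_w)$ constructed by Fujita--Oya from the dominance order, following \cite{Qin17, FO25}, and regard it as a valuation on $\C(X^K_w)$ through the identification above. We trivialize both line bundles using the same lowest-weight section $\tau_\lambda$. Under the isomorphism of section rings, the graded semigroups
\[
S(X^K_w,\mathcal{L}^K_\lambda,v_\seed,\tau_\lambda)\quad\text{and}\quad S(X_w,\mathcal{L}_\lambda,v_\seed,\tau_\lambda)
\]
coincide. Consequently the Newton--Okounkov bodies $\Delta_\seed$ coincide as well. Fujita--Oya proved their results for arbitrary dominant $\lambda$, not only regular ones, so they give directly that this semigroup is finitely generated and saturated and that $\Delta_\seed$ is a rational polytope. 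They also give that $\Delta_{\mu_k(\seed)}=\mu^T_k(\Delta_\seed)$. This yields items (1)–(3).

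The main obstacle is the first step. We must check carefully that the identification is compatible with all the data Fujita--Oya's valuations depend on: the embedding of $\C[U^-_w]$, the choice of $\tau_\lambda$, and the requirement $w\in W^K$, which is what makes the map birational rather than a fibration with positive-dimensional generic fiber. We must also confirm that Fujita--Oya's statements really hold for non-regular $\lambda$. If their saturation argument relies on regularity, the fallback is to use their description of the semigroup through the $g$-vectors of the theta or dual canonical basis elements of $V_w(k\lambda)^*$. Saturation then follows from the bijection between this basis and the lattice points of $k\Delta_\seed$, which holds for every dominant $\lambda$.
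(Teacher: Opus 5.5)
Your proposal follows the paper's proof. The paper shows that a birational morphism $\varphi\colon X\to Y$ onto a normal projective $Y$ gives an isomorphism of section rings, via Zariski's main theorem $\varphi_*\mathcal{O}_X=\mathcal{O}_Y$ and the projection formula. This yields equal value semigroups for $v_Y=v_X\circ\varphi^*$ and $\tau_X=\varphi^*\tau_Y$; the paper applies it to $\pi|_{X_w}\colon X_w\to X^K_w$ for $w\in W^K$ and inherits (1)--(3) from Fujita--Oya, which it cites for all $\lambda\in\mathsf{P}^+$.

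One small slip, harmless since you only need the function fields to agree: $U^-_w=U^-\cap BwB$ is an open dense subset of the cell $BwB/B$, not isomorphic to it (for $\mathrm{SL}_2$ and $w=s$ it is $\C^\times\subset\mathbb{A}^1$).
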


Combining this construction with \cite{And13, HK15}, we obtain families of toric degenerations and completely integrable systems on Schubert varieties.

\begin{cor}\label{cor_B}
\begin{enumerate}
\item
If $\lambda$ is a regular, then there exists a family of toric degenerations $\{\mathcal{X}^\seed\}$ of Schubert variety $X_w^K$ indexed by the set of seeds of $\mathcal{A}(w)$ such that the central fiber $\mathcal{X}^\seed_0$ is the normal toric variety associated  with $\Delta_\seed$. 
\item
If $X_w^K$ is smooth in addition, then there exists a family of completely integrable systems $\Phi_\seed$ on $X_w^K$ indexed by the set of seeds of $\mathcal{A}(w)$ such that $\Phi_\seed \colon X_w^K \to \Delta_\seed$.
\end{enumerate}
\end{cor}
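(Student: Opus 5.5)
Both parts of Corollary~\ref{cor_B} follow by feeding the output of Proposition~\ref{propxA} into two general theorems: Anderson's toric degeneration theorem \cite{And13} and Harada--Kaveh's construction of integrable systems \cite{HK15}. The only work is to check their hypotheses for each seed $\seed$ of $\mathcal{A}(w)$. Fix such a seed and let $S_\seed$ be the value semigroup of $v_\seed$ on the section ring $R = \bigoplus_{k\ge 0} H^0(X_w^K, (\mathcal{L}^K_\lambda)^{\otimes k})$, with $\Delta_\seed$ its Newton--Okounkov body. By Proposition~\ref{propxA}(1), $S_\seed$ is finitely generated and saturated; by Proposition~\ref{propxA}(2), $\Delta_\seed$ is a rational polytope.

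For part (1), assume $\lambda$ is regular, so $\mathcal{L}^K_\lambda$ is very ample. Then $X_w^K$ is projectively embedded, and $R$ is its homogeneous coordinate ring; this uses projective normality of Schubert varieties, so that sections of all powers coincide with the graded pieces of the coordinate ring. Since $S_\seed$ is finitely generated, Anderson's theorem \cite{And13} gives a flat family $\mathcal{X}^\seed \to \mathbb{A}^1$. Its general fiber is $X_w^K$, and its special fiber is $\mathrm{Proj}\,\C[S_\seed]$. Saturation of $S_\seed$ means that $S_\seed$ equals the set of lattice points in the cone over $\{1\}\times\Delta_\seed$. Hence $\C[S_\seed]$ is normal and $\mathrm{Proj}\,\C[S_\seed]$ is the normal projective toric variety of $\Delta_\seed$. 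One technical point must be checked: the valuation $v_\seed$ of \cite{FO25} is defined on the function field using a dominance order, not a lexicographic order on monomials. Anderson's argument, however, only needs a valuation with one-dimensional leaves taking values in a totally ordered group of the correct rank. I would verify this for $v_\seed$ on $\C(X_w^K)$, which is birational to the unipotent cell of $w$ via the big cell. This rank-and-leaves property is part of the construction behind Proposition~\ref{propxA}, and I would cite it from there.

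For part (2), assume in addition that $X_w^K$ is smooth. Equip it with the Kähler form pulled back from the projective embedding by $\mathcal{L}^K_\lambda$. The degeneration from part (1) has a finitely generated value semigroup, which is exactly the setting of Harada--Kaveh \cite{HK15}. They produce a completely integrable system $\Phi_\seed$ on $X_w^K$ whose image is $\Delta_\seed$. To do so they realise the family as a subvariety of $\mathbb{P}^N\times\mathbb{A}^1$ and run the gradient-Hamiltonian flow from the smooth general fiber to the central toric fiber. Then they compose with the moment map of the central fiber, whose image is $\Delta_\seed$ because the central fiber is the toric variety of $\Delta_\seed$. Smoothness is needed so that the general fiber is a symplectic manifold.

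I expect the main obstacle to be the applicability check in part (1): matching the dominance-order valuation $v_\seed$ and its value semigroup to the framework of \cite{And13}. Because Proposition~\ref{propxA} already provides finite generation, everything else is formal. Finally, the tropicalized cluster structure in Proposition~\ref{propxA}(3) is not needed for this corollary. It only records how the members of the family are related across mutations.
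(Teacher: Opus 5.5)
Your proposal is correct and follows the paper's own argument. The paper also takes the finitely generated, saturated value semigroup from Theorem~\ref{theorem_NOpolyopteforXWK} (inherited from $X_w$ via \cite{FO25}), applies \cite{And13} to obtain the toric degeneration whose central fiber is normal by saturation, and invokes \cite{HK15} for the integrable system. The extra checks you flag are automatic here: $v_\seed$ is by construction a lowest-term valuation for a total additive order on $\Z^J$, so it has one-dimensional leaves of full rank, and Anderson's theorem can be applied directly to the section ring, so projective normality is not needed.
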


\begin{remark}
In \cite{BCMN24}, Bossinger et al. constructed a family of Newton--Okounkov bodies of $G/P$ using the cluster algebra parametrizing the $\theta$-basis in \cite{GHKK18}. In contrast, the construction of Proposition~\ref{propxA} relies on the cluster algebra parametrizing the dual canonical basis \cite{Lus90, Lus91, Kas90, Kas91, Kas93}.

In our setting, we derive and exploit certain non-negativity properties of the dual canonical basis to distinguish Newton--Okounkov polytopes. For this purpose, it is essential to work with Newton--Okounkov bodies whose integral points correspond to elements of the dual canonical basis.
\end{remark}

If a cluster algebra is of infinite type, then it admits infinitely many distinct seeds and hence gives rise to infinitely many associated Newton--Okounkov bodies, toric degenerations, and completely integrable systems of $(X^K_w, \mathcal{L}^K_\lambda)$. We consider the equivalence relation on the set of Newton--Okounkov polytopes given by integral affine transformations (a.k.a unimodular equivalences). Recall that if two polytope are related in this way, then the associated toric varieties are isomorphic. A natural question is whether these objects are pairwise distinct up to integral affine transformation.

In \cite{CKKP25}, the authors introduced a criterion to distinguish a family of Newton--Okounkov polytopes equipped with a tropicalized cluster structure under the additional assumption that each polytope is $\Q$-Gorenstein. One advantage of this criterion is that it can be applied with minimal reliance on an explicit polyhedral description. Recall that extended $g$-vectors or valuations, provide points of these polytopes. Determining an explicit polyhedral description of Newton--Okounkov bodies has been an important and challenging problem, see \cite{BZ01, RW19} for instance. In this paper, we refine this criterion so that it applies to full dimensional Newton--Okounkov polytopes having a tropicalized cluster structure, see Theorem~\ref{theorem_maincriterion}.

To apply the criterion, we assume that $G$ is of simply laced type. Let $w_K \in W^K$ be a (unique) minimal length representative of the left coset $w_0 W_{K}$ where $w_0$ is the longest element of $W$. We then have $X^K_{w_K} = G/P$.

\begin{thmx}[Theorem~\ref{theorem_distinguishGP}]\label{thmxB}
Assume that $G$ is a simply connected semisimple algebraic group over $\C$ of simply laced type. For a subset $K \subseteq I$, let $w_K$ be a minimal length representative of $w_0 W_{K}$ in $W$ and $\lambda$ a dominant integral weight with respect to $K$. Let $\Delta_\seed$ be the Newton--Okounkov polytope of $(G/P, \mathcal{L}^K_\lambda)$ associated to a seed $\seed$ of the cluster algebra $\mathcal{A}(w_K)$ in Proposition~\ref{propxA}. If $\mcal{A}(w_K)$ is of infinite type, then the family 
$$
\bigl\{\Delta_\seed \mid \mbox{ $\seed$ is a seed of the cluster algebra $\mathcal{A}(w_K)$} \bigr\}
$$ 
contains infinitely many Newton--Okounkov polytopes, no two of which are related by any integral affine transformation.
\end{thmx}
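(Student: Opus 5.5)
Our plan is to deduce Theorem~\ref{thmxB} from the refined criterion, Theorem~\ref{theorem_maincriterion}, applied to the family built in Proposition~\ref{propxA} for $w = w_K$, so that $X^K_{w_K} = G/P$. In the spirit of \cite{CKKP25}, the criterion should say the following. Consider a family of full-dimensional rational polytopes with a tropicalized cluster structure. Suppose there is a sequence of mutations along which a suitable integral affine invariant (for instance the number of lattice points on facets of a given type, or the set of facet normals measured against a fixed interior point) is unbounded. Then infinitely many of the polytopes are pairwise non-equivalent. Accordingly, we need three inputs: full-dimensionality, control of facets under mutation, and an unbounded invariant, which will come from infinite type.

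First, we check that each $\Delta_\seed$ is full dimensional in $\R^J$. Here $|J| = \ell(w_K) = \dim G/P$, and the value semigroup contains the valuations (extended $g$-vectors) of all cluster monomials. Since $v_\seed$ is a full-rank valuation, its image generates a full-rank lattice. The cone over $\Delta_\seed$ is therefore full dimensional, and so is $\Delta_\seed$ itself once $\lambda$ is regular. For nonregular $\lambda$ we would pass to the regular case, or restrict to the relevant face.

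Second, we locate a distinguished point and facet structure preserved by mutation. The frozen variables correspond to the generalized minors $\Delta_{\varpi_i, w_K\varpi_i}$, whose valuations should give vertices or points fixed under every $\mu_k^T$. Using the positivity of structure constants of the dual canonical basis (the non-negativity property mentioned in the Remark), we then show that each cluster variable $x_k$ of $\seed$ yields a facet-defining inequality $u_k \ge 0$ (or its tropical analogue) on $\Delta_\seed$. Under mutation, these inequalities transform compatibly by the piecewise-linear map $\mu_k^T$. For simply laced type, the exchange matrix $\varepsilon_0$ is skew-symmetric. This is what makes the tropical mutation formulas symmetric enough for the criterion's hypotheses to hold.

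Third, we use infinite type. An infinite-type cluster algebra has an unfrozen subquiver that is mutation-equivalent to one containing a non-Dynkin full subquiver, for example a double arrow or an affine cycle. Along a Kronecker-type mutation sequence $\mu_k\mu_l\mu_k\mu_l\cdots$, the $g$-vectors grow unboundedly; in the Kronecker case they grow linearly, with Chebyshev-type recursions. The resulting lattice points, which lie on facets of $\Delta_{\seed_n}$, then show that the chosen invariant tends to infinity. We expect this step to be the main obstacle: the unbounded growth must be realized inside the ambient $\R^J$ with frozen directions present, and it must not be cancelled by the integral affine equivalence. We would first try to isolate a rank-two infinite-type subseed of the initial seed of $\mathcal{A}(w_K)$, obtained by reordering the reduced word of $w_K$, and then compute the invariant explicitly along the corresponding mutation sequence.
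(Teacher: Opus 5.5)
The criterion you posit (some integral affine invariant is unbounded) is essentially a restatement of the conclusion. The content of Theorem~\ref{theorem_maincriterion} lies in three concrete hypotheses: an interior rational point fixed by all $\mu_k^T$; non-negativity of one coordinate $u_s$ on every $\Delta_\seed$; and exchange entries $\varepsilon^\ell_{r,s}\to-\infty$ against that same $s$.

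\textbf{Non-negativity and the fixed point.} Your second step gets the non-negativity wrong. An inequality $u_k\ge 0$ cannot hold on every $\Delta_\seed$ for an unfrozen $k$. Indeed, $\mu_k^T$ maps $\Delta_\seed$ onto $\Delta_{\mu_k(\seed)}$ and negates $u_k$, so this would force $u_k\equiv 0$, contradicting full-dimensionality. For frozen indices it also fails in general (Remark~\ref{remark_nonnegativeexamples}). What is true is non-negativity of the frozen coordinates $u_{j_k}$ with $k\in I\setminus K$ only. This does not follow from positivity of structure constants. The paper proves it (Theorem~\ref{theorem_main6}) using the monoidal categorification and a result of Kashiwara--Nakashima. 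It shows that each dual canonical basis element of $\C[U^-\cap X_w]$ equals $D_{w\lambda,\lambda}^{-1}\,\mathsf{G}^{\mathrm{up}}_{q=1}(\widetilde b)$ in $\C[U^-(w)]_{D_w}$. Here $\mathsf{G}^{\mathrm{up}}_{q=1}(\widetilde b)\in\C[U^-(w)]$, where frozen variables are not inverted, and $\lambda$ is supported on $I(w)$. Similarly, the frozen minors give points with vanishing unfrozen coordinates, hence fixed points, but you need such a point in the \emph{interior}. The paper obtains one from the seed whose polytope is a string polytope.

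\textbf{The unbounded invariant.} Your third step lacks a mechanism that survives integral affine equivalence, as you suspect. Growing $g$-vectors along a Kronecker-type sequence are integral points of dilates $k\Delta_{\seed_n}$ with $k\to\infty$, not of $\Delta_{\seed_n}$ itself. All $\Delta_\seed$ have the same volume, and distinct seeds can give equivalent polytopes (Remark~\ref{remark_sameobjdifseed}). So infinite type alone proves nothing.

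The missing idea is to make the arrows between an unfrozen vertex $r$ and a marked frozen vertex $s$ (one with $u_s\ge 0$ on every polytope) unbounded. Pulling back $u_s\ge 0$ from $\Delta_{\mu_r(\seed_\ell)}$ via $\mu_r^T$ gives $\Delta_{\seed_\ell}\subseteq\{u_s-\varepsilon^\ell_{r,s}u_r\ge 0\}$. The fixed interior point $\mathbf{u}_0$ has $u_{0,r}=0$ and $u_{0,s}=p/q$, independent of $\ell$. Hence the polar dual of $\Delta_{\seed_\ell}-\mathbf{u}_0$ contains the segment from $\frac{q}{p}\mathbf{e}_s$ to $\frac{q}{p}(\mathbf{e}_s-\varepsilon^\ell_{r,s}\mathbf{e}_r)$. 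That segment contains at least $1-q\varepsilon^\ell_{r,s}$ points of $\frac1p\Z^m$. The maximum of such counts over interior points of $\frac1p\Z^m$ is an integral affine invariant, and it is therefore unbounded.

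Producing $\varepsilon^\ell_{r,s}\to-\infty$ at a \emph{marked} vertex is the real combinatorial work. The paper reduces, via the left weak Bruhat order and marked quivers, to finitely many minimal cases. In each case it exhibits an affine or extended affine subquiver, of finite mutation type, that becomes of infinite mutation type after adjoining a marked frozen vertex. This uses the Felikson--Shapiro--Tumarkin classification and the Felikson--Tumarkin criterion. Adjoining unmarked frozen vertices need not suffice (Remark~\ref{rmk_arbitraryaddfrozen}).
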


\begin{remark}
We expect that the result analogous to Theorem~\ref{thmxB} holds in the symmetrizable case as well. The main obstacle to extending our argument lies that the monoidal categorification of the quantum cluster algebra structure of a unipotent quantum coordinate ring in \cite{KKKO18, KKOP21, KKOP23} is currently available only in the symmetric case. 
\end{remark}

We now turn to applications to symplectic geometry. A growing body of work indicates that cluster algebras govern interesting symplecto-geometric objects, see \cite{Aurouxinfinitely, Via16, Via17, CG22, CG24, ABL25, ABR25, CKKP25} and many others. This paper provides a new class of monotone Lagrangian tori in partial flag varieties arising from cluster algebras.

When the line bundle $\mathcal{L}^K_\lambda$ over $G/P$ is anticanonical, one can construct a family of monotone Lagrangian tori in $G/P$ indexed by the set of seeds of the cluster algebra $\mathcal{A}(w_K)$ by using \cite[Propositions A and B]{CKKP25}. In this paper, we extend their result, which treats complete flag varieties, to the setting of partial flag varieties.

\begin{thmx}[Theorem~\ref{theorem_distinguishGPmonotone}]\label{thmxD}
Assume that a simply connected semisimple algebraic group $G$ over $\C$ is simply laced type and $\mathcal{L}^K_\lambda$ is the anticanonical line bundle on $G/P$. For each seed $\seed$ of the cluster algebra $\mathcal{A}(w_K)$, there is a monotone Lagrangian torus $L_\seed$ of $G/P$ that is the fiber over the center of $\Delta_\seed$ under the completely integrable system $\Phi_\seed$ in Corollary~\ref{cor_B}. If $\mathcal{A}(w_K)$ is of infinite type, then the family 
$$
\bigl\{L_\seed \mid \mbox{ $\seed$ is a seed of the cluster algebra $\mathcal{A}(w_K)$} \bigr\}
$$ 
contains infinitely many monotone Lagrangian tori, no two of which are related by any symplectomorphism on $G/P$.
\end{thmx}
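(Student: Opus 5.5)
The proof has two parts: first construct the monotone Lagrangian tori $L_\seed$, then show that infinitely many of them are pairwise non-symplectomorphic.

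\emph{Construction.} By Proposition~\ref{propxA} and Corollary~\ref{cor_B}, each seed $\seed$ of $\mathcal{A}(w_K)$ gives a full dimensional rational polytope $\Delta_\seed$ and a completely integrable system $\Phi_\seed \colon G/P \to \Delta_\seed$. Here $G/P = X^K_{w_K}$ is smooth and $\lambda$ is regular, since the anticanonical weight is $2\rho$ minus the sum over $K$. Following \cite[Propositions A and B]{CKKP25}, I will show that $\Delta_\seed$ is reflexive up to translation, i.e.\ it has a unique interior lattice point $u_\seed$ and its dual is integral. For $\seed_0$ this reduces to the known Gelfand--Tsetlin / string polytope type description. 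The tropicalized cluster structure then propagates the property: $\mu^T_k$ is linear on each of its two half-spaces, and I will show it maps $u_\seed$ to $u_{\mu_k\seed}$. One way to see this is that the center corresponds to the valuation of a frozen-variable product section, on which mutations act trivially.

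The toric degeneration of \cite{HK15} makes $\Phi_\seed$ restrict to a toric moment map on an open dense subset, namely the preimage of the interior minus a lower-dimensional set where the degeneration is non-smooth. The fiber $L_\seed = \Phi_\seed^{-1}(u_\seed)$ is then a Lagrangian torus. Monotonicity follows exactly as in \cite{CKKP25}: the Maslov index and the symplectic area of each disk class in the toric chart are proportional, because the facets of $\Delta_\seed$ are at lattice distance one from the center.

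\emph{Distinguishing the tori.} For a monotone Lagrangian torus fiber, I will extract a symplectic invariant that recovers $\Delta_\seed$ up to integral affine transformation. The disk potential, via Nishinou--Nohara--Ueda type degeneration arguments, has Newton polytope equal to the polar dual of $\Delta_\seed - u_\seed$. Since the potential is an invariant up to $GL(n,\Z)$ change of basis on $H_1(L)$, any symplectomorphism $L_\seed \mapsto L_{\seed'}$ forces $\Delta_\seed$ and $\Delta_{\seed'}$ to be unimodularly equivalent. If the potential's Newton polytope is too hard to obtain directly, I would instead use the weaker invariant from \cite{CKKP25}: the set of Maslov-two classes with nonzero counts, equivalently the number of facets and their normal vectors. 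Theorem~\ref{thmxB}, applied with the anticanonical $\lambda$, then supplies infinitely many pairwise inequivalent $\Delta_\seed$, which gives infinitely many pairwise non-symplectomorphic $L_\seed$.

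\emph{Main obstacle.} The hardest step is to verify that the disk potential, or the Maslov-index-two counts, really detect all facets of $\Delta_\seed$ in the partial flag setting. The degeneration is only toric away from singular strata, and $G/P$ is only partially homogeneous with respect to the torus action. To handle this I would first try to adapt \cite[Proposition B]{CKKP25}, checking that the codimension of the singular locus of the central fiber is at least two, so that disk counts near the toric limit still coincide with the facet count.
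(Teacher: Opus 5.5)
Your overall architecture matches the paper's. The tori are the monotone central fibres of \cite[Proposition A]{CKKP25} (Corollary~\ref{cor_monotoneLagtoriconst}). Your propagation of ``all facets at lattice distance one from the center'' is sound once the center has vanishing $J_{\mathrm{uf}}$-components, since it then lies on every bending wall of every $\mu^T_k$ (Lemma~\ref{lemma_characfixed}). Distinctness is then reduced to Theorem~\ref{thmxB}.

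The gap is the bridge from polytopes to tori, which you flag yourself, and your plan for it would fail. Nishinou--Nohara--Ueda identify the Maslov-two disk counts with the facets of $\Delta_\seed$ only when the toric limit $X_{\Delta_\seed}$ admits a small resolution. Your substitute condition, that the singular locus has codimension at least two, holds automatically for the normal toric variety of a saturated semigroup, so it buys nothing. Without small resolutions the facet description is false. The Chekanov torus in $\mathbb{CP}^2$ has toric limit $\mathbb{P}(1,1,4)$, with an isolated singular point. Its potential is $u+(1+w)^2/(u^2w)$, and the term $2u^{-2}$ sits at a non-vertex lattice point of the Newton polygon. Vianna's tori \cite{Via16} show the same phenomenon along whole edges. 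Nothing prevents the $\Delta_{\seed_\ell}$ reached by long mutation sequences from behaving this way. By the proof of Theorem~\ref{theorem_maincriterion}, their polar duals contain ever longer segments, and interior points of these segments are never facet normals. So neither ``nonzero counts $=$ facets'' nor your fallback invariant, as you describe it, is available.

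What you actually need is only that $L_\seed$ determines $(\Delta_\seed-u_\seed)^\circ$ up to $\mathrm{GL}(m,\Z)$, for instance as the Newton polytope of its disk potential; extra terms at non-vertex lattice points do not change the convex hull. Granting that, your deduction is fine. A symplectomorphism $L_\seed\to L_{\seed'}$ makes the polar duals, hence $\Delta_\seed$ and $\Delta_{\seed'}$, integral-affinely equivalent, and Theorem~\ref{thmxB} with the anticanonical $\lambda$ finishes. But this is a genuine symplectic input, and NNU does not provide it here.

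The paper does no disk analysis on $G/P$ at all. It derives Theorem~\ref{theorem_distinguishGPmonotone} from the same criterion that gives Theorem~\ref{theorem_distinguishGP}. It takes the symplectic input from \cite{CKKP25}: the construction of $L_\seed$ via Propositions A and B there, and the passage from polytopes to tori, applied to $G/P$ unchanged. You should invoke that input rather than try to re-derive a facet count.

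Similarly, monotonicity cannot be read off from ``disk classes in the toric chart''. The chart $\Phi_\seed^{-1}(\mathrm{Int}\,\Delta_\seed)\cong\mathrm{Int}(\Delta_\seed)\times T^m$ retracts onto $L_\seed$, so every relevant class leaves the chart. That is exactly what \cite[Proposition A]{CKKP25} handles.
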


\begin{remark}
In the case where $\mathrm{SL}_n(\C)/P$ is a Grassmannian in \cite{RW19}, plabic graphs provide combinatorial models for constructing Newton--Okounkov bodies and describing them via the mirror superpotentials. This framework can be used to produce monotone Lagrangian tori, see \cite{NU14, NU20, Cas23}. However, for a fixed $\mathrm{SL}_n(\C)/P$, there are only finitely many plabic graphs and hence this approach produces only finitely many such tori. In contrast, Proposition~\ref{propxA} produces monotone Lagrangian torus for every seed and therefore yields infinitely many tori when the associated cluster algebra is of infinite type.

In the setting of Proposition~\ref{propxA}, the valuations arise from $A$-cluster mutations, while the mirror superpotentials are related via $X$-cluster mutations. This contrasts with the situation in \cite{RW19} where the valuations are given by $X$-cluster mutations and the mirror superpotentials transform via $A$-cluster mutations. We refer to \cite[Section~7]{BCMN24} for a discussion of the relationship between them.
\end{remark}

To prove Theorems~\ref{thmxB} and~\ref{thmxD}, we apply the criterion established in Theorem~\ref{theorem_maincriterion}. Roughly speaking, this requires verifying two conditions: non-negativity and multiplicity.

The non-negativity condition asserts that there exists a frozen index $j \in J_\mathrm{fz}$ such that every Newton--Okounkov polytope $\Delta_\seed$ is contained in the half space given by $u_j \geq 0$. Since the lattice points of each $\Delta_\seed$ parametrize elements of the dual canonical basis, this condition admits the following representation-theoretic interpretation$;$ the $j$-th component of the extended $g$-vector of every dual canonical basis elements is non-negative.

To prove this statement, there are two inputs. 
The cluster algebra structure on the unipotent group in \cite{GLS11, KO21} and  the monoidal categorification of the quantum cluster algebra structure of a unipotent quantum coordinate ring and its localizations in \cite{KKKO18, KKOP21, KKOP23}. Indeed, the cluster algebra $\mathcal{A}(\seed_0)$ in~\eqref{equ_GLSseed} is isomorphic to the coordinate ring of the unipotent cell. According to \cite{GLS11}, the algebra $\mathcal{A}(\seed_0)$ is obtained by localizing the coordinate ring of the unipotent group and the corresponding cluster structures are compatible via the inversion of frozen variables.

A subtle point arises from this localization process. Additional factors appear in the denominator, and one must ensure that these do not affect the relevant components of dual canonical basis elements. To address this, we pass it to a categorical framework. Using structural properties of simple modules and determinantial modules in Kashiwara--Nakashima \cite{KN25}, we deduce the non-negativity of the extended $g$-vectors. Conceptually, this approach replaces intricate ring-theoretic computations with more tractable module-theoretic arguments.

In more concrete terms, the result can be stated at the level of Newton--Okounkov bodies, which may be of independent interest.

\begin{propx}[Corollary~\ref{cor_propertyofclustercone}]\label{thmx_E}
For $w \in W$, set 
\begin{equation}\label{equ_Iwcoloneqq}
I(w) \coloneqq \{i\in {\rm supp}(w) \, \vert \, ws_i>w\}.
\end{equation}
For each $k \in {\rm supp}(w) \setminus I(w)$, let $j_{k} \in J_\mathrm{fz}$ be the index corresponding to the frozen variable corresponding to $k$. Then for each seed $\seed$, the Newton--Okounkov polytope $\Delta_\seed$ in Theorem~\ref{thmxB} satisfies
\begin{equation}\label{equ_Iwcoloneqq22}
\Delta_\seed \subseteq \{ \mathbf{u} \in \R^J \mid u_{j_k} \geq 0 \}.
\end{equation}
\end{propx}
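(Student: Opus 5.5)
Because $\Delta_\seed$ is a Newton--Okounkov body, it is the closure of the convex hull of the rescaled values $v_\seed(\sigma)/n$ of sections $\sigma \in H^0(X^K_{w_K},(\mathcal{L}^K_\lambda)^{\otimes n})$. The half space in \eqref{equ_Iwcoloneqq22} is closed and convex, so it suffices to show that every value attained by $v_\seed$ has nonnegative $j_k$-th coordinate. In the construction of Proposition~\ref{propxA} (following \cite{Qin17, FO25}), the sections are identified with elements of $\C[U_w]$, or of $\C[N^-]$ restricted to the unipotent cell, lying in the span of the dual canonical basis. The valuation $v_\seed$ comes from a dominance order, and sends each dual canonical basis element $b$ to its extended $g$-vector $g_\seed(b)$ with respect to $\seed$. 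Because $v_\seed$ takes distinct values on distinct basis elements and the leading term of a sum is one of the summands' leading terms, every value is some $g_\seed(b)$. The statement therefore reduces to a claim about the basis alone: for every dual canonical basis element $b$ of $\C[N^-]$ (or of $\C[N(w)]$) that contributes to sections, the $j_k$-coordinate of $g_\seed(b)$ is nonnegative.

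The next step is to show that this coordinate does not depend on $\seed$. The tropical mutation \eqref{equ_tropical11} in an unfrozen direction $k'$ changes $u_j$ only by a multiple of $u_{k'}$, and $u_{k'}$ is unconstrained in sign. So mutation invariance of the frozen coordinate is not automatic. I would instead use the intrinsic description: in the monoidal categorification \cite{KKKO18, KKOP21}, frozen variables correspond to determinantial modules $M(w\varpi_k,\varpi_k)$ that are central up to grading, and the frozen component of $g_\seed(b)$ equals the $\Lambda$-invariant/degree pairing of the simple module $L(b)$ against that central object. This pairing is seed independent, because the simple modules corresponding to cluster monomials commute with frozens. Equivalently, this coordinate is the exponent of the frozen variable $A_{j_k}$ in the localization $\C[N^-]\to \C[N^-_w]$ of \cite{GLS11}, and that exponent is a seed-independent degree. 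It is then enough to verify nonnegativity in the initial seed $\seed_0$.

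The hard step is nonnegativity in $\seed_0$, and this is where the hypothesis $k \in {\rm supp}(w)\setminus I(w)$, i.e.\ $ws_k<w$, enters. By \cite{GLS11}, a dual canonical basis element $b$ of $\C[N(w)]$ becomes, after inverting frozen variables, a Laurent monomial in $\seed_0$ times a polynomial. A priori, the frozen variables may appear with negative exponent. Categorically, this exponent is $-\tfrac12$ times the $\Lambda$-invariant $\Lambda(L(b), \mathsf{M}_k)$, or equivalently the $\mathfrak{d}$-type quantity, where $\mathsf{M}_k=M(w\varpi_k,\varpi_k)$. So I would aim to show that $\mathsf{M}_k \nabla L(b)$ and $L(b)\nabla \mathsf{M}_k$ agree in degree with the $\Lambda$-value of the correct sign, i.e.\ that $L(b)$ ``commutes from the right side'' with $\mathsf{M}_k$.

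When $ws_k<w$, the module $\mathsf{M}_k$ is a right-end determinantial module. Kashiwara--Nakashima \cite{KN25} give structural results for such modules: any simple $L$ in the category $\mathscr{C}_w$ satisfies $\Lambda(L,\mathsf{M}_k)=-(\mathrm{wt}(L),w\varpi_k+\varpi_k)$ or a similar closed formula. Using that formula, together with $\mathrm{wt}(L)\in -Q_+$ and $(w\varpi_k+\varpi_k, \alpha_i)$ having a definite sign when $ws_k<w$, I expect to obtain $u_{j_k}\ge 0$. If $k \in I(w)$, the corresponding frozen instead comes from a non-right-end factor, and no sign control is available, which explains the restriction in the statement. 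I expect this final sign computation to be the main obstacle, specifically matching the paper's sign conventions for $g$-vectors and for $\Lambda$.
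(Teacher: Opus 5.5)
Your first reduction is fine and matches how the paper begins: values of $v_\seed$ on sections are extended $g$-vectors of dual canonical basis elements, so it suffices to show $(g_\seed(\mathsf{G}^{\mathrm{up}}_{q=1}(b)))_{j_k}\ge 0$ for $b\in\mathbf{B}_w(\infty)$.

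\textbf{The seed-independence step fails.} The $j_k$-coordinate of $g_\seed$ is \emph{not} seed independent, so reducing to $\seed_0$ is invalid. Take type $A_2$ and $w=w_0=s_1s_2s_1$. Then $J_\uf=\{1\}$, $J_\mathrm{fz}=\{2,3\}$, $(\varepsilon_{1,1},\varepsilon_{1,2},\varepsilon_{1,3})=(0,-1,1)$, and $j_1=3$ with $1\notin I(w_0)=\emptyset$.
\begin{itemize}
\item In $\seed_0$ we have $g_{\seed_0}(A_1)=\mathbf{e}_1$.
\item In $\mu_1(\seed_0)$ we have $X_1=A_2A_3^{-1}$ and $A_1=(A_1')^{-1}A_3(1+A_2A_3^{-1})$, so $g_{\mu_1(\seed_0)}(A_1)=-\mathbf{e}_1+\mathbf{e}_3$.
\end{itemize}
This is just \eqref{equ_tropical11}. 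It shifts frozen coordinates by $[\pm\varepsilon_{k',j}]_+u_{k'}$, and can \emph{decrease} them when $u_{k'}<0$. So nonnegativity in $\seed_0$ cannot be propagated to other seeds either.

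Any invariant built from $L(b)$ and $\mathsf{M}_{w\varpi_k,\varpi_k}$ alone (a $\Lambda$-pairing, a localization exponent) is seed independent, as you yourself note. It therefore cannot equal this coordinate.

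\textbf{The last step is also unsupported.} The closed formula for $\Lambda(L,\mathsf{M}_{w\varpi_k,\varpi_k})$ that you quote is for $L\in\mathscr{C}_w$. Those $L$ correspond to elements of $\C[U^-(w)]$, which need no frozen denominators at all. The real difficulty is the simples with $b\in\mathbf{B}_w(\infty)$ that lie outside $\mathscr{C}_w$, and your sketch gives no control over them.

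\textbf{The missing idea} is a factorization that handles every seed at once.
\begin{enumerate}
\item By Kashiwara--Nakashima (Proposition~\ref{prop_KN25}), for the simple $\mathsf{M}$ attached to $b\in\mathbf{B}_w(\infty)$ there is $\lambda=\sum_{k\in I(w)}\lambda_k\varpi_k$ with $\widetilde{\mathsf{M}}=\mathsf{M}_{w\lambda,\lambda}\nabla\mathsf{M}\in\mathscr{C}_w$.
\item Since $\mathsf{Q}_w(\widetilde{\mathsf{M}})\simeq\mathsf{Q}_w(\mathsf{M}_{w\lambda,\lambda})\circ\mathsf{Q}_w(\mathsf{M})$, this gives $\mathsf{G}^{\mathrm{up}}_{q=1}(b)=D_{w\lambda,\lambda}^{-1}\,\mathsf{G}^{\mathrm{up}}_{q=1}(\widetilde b)$ in $\C[U^-(w)]_{D_w}\simeq\C[U^-_w]$, with $\mathsf{G}^{\mathrm{up}}_{q=1}(\widetilde b)\in\C[U^-(w)]$ (Proposition~\ref{thm_dualcanonical}).
\item $\C[U^-(w)]\simeq\overline{\mathcal{A}}(\seed_0)$ does not invert frozen variables (Theorem~\ref{theorem_clusterstructreonunisub}). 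Hence the Laurent expansion of $\mathsf{G}^{\mathrm{up}}_{q=1}(\widetilde b)$ in \emph{every} seed has no frozen variable in a denominator, and all its frozen $g$-coordinates are $\ge 0$.
\item $D_{w\lambda,\lambda}^{-1}$ is a frozen monomial common to all seeds, with $g$-vector $-\sum_{k\in I(w)}\lambda_k\mathbf{e}_{j_k}$.
\end{enumerate}
Adding the two $g$-vectors gives $(g_\seed)_{j_k}\ge 0$ for $k\notin I(w)$ in every seed. The hypothesis $k\notin I(w)$ enters only through the support of $\lambda$; no sign computation with $\Lambda$ is needed.
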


\begin{remark}
The mutation rule in~\eqref{equ_tropical11} is obtained by tropicalizing the $X$-cluster mutations. Under this transformation, frozen variables are affected indirectly through mutations at unfrozen variables, and hence their values may change. Nonetheless, Theorem~\ref{thmx_E} claims that the non-negativity is preserved under any iterated mutations. At present, the authors do not know how to prove Theorem~\ref{thmx_E} by purely combinatorial arguments.
\end{remark}

\begin{remark}
For the longest element $w_0 \in W$,~\eqref{equ_Iwcoloneqq22} holds for every frozen index. In contrast, for a general element $w$, not every frozen component has the non-negativity property, see Remark~\ref{remark_nonnegativeexamples}.
\end{remark}

Finally, we address the multiplicity condition. It is worth mentioning that distinct seeds may still yield equivalent Newton--Okounkov polytopes, see Remark~\ref{remark_sameobjdifseed}. Thus, even when the governing cluster algebra is of infinite type, additional input is necessary to distinguish the constructed polytopes. The key input is the existence of a sequence of seeds whose exchange quivers have arrows with arbitrarily large multiplicity emanating from a frozen variable corresponding to an index not in~\eqref{equ_Iwcoloneqq}.

Motivated by \cite{GLS08}, we introduce extended exchange matrices equipped with makings reflecting~\eqref{equ_Iwcoloneqq} together with a partial order on them. We then analyze their relationship with the weak Bruhat order and compare exchange matrices across different Weyl groups. We reduce the number of cases that we examine. We use the classification of cluster algebras of finite mutation type in \cite{FST12} to observe that $\mathcal{A}(w_K)$ is of infinite type if and only if it is of infinite mutation type. By identifying suitable subquivers of finite mutation type and determining the threshold at which the addition of an appropriate marked point yields infinite mutation type, we construct a sequence of seeds in which the multiplicities of arrows from the designated frozen variables become arbitrarily large.

\subsection*{Acknowledgement} 
The second named author would like to thank Bernard Leclerc for kindly answering our question.

\section{Partial flag varieties and Schubert varieties}\label{sec_partialSchvar}

In this section, we fix notation from Lie theory and review basic definitions and properties of partial flag varieties and their Schubert varieties.

Let $G$ be a simply connected semisimple algebraic group over $\C$ with Lie algebra $\g$. Denote by $\cmA = (a_{i,j})_{i,j\in I}$ the Cartan matrix of $\mathfrak{g}$ where $I = \{1, 2, \cdots, n = \mathrm{rank}(\mathfrak{g}) \}$ is the index set of simple roots. Let $B$ be a Borel subgroup of $G$, $H$ a maximal torus in $B$, $B^-$ the opposite Borel subgroup, and $U$ (resp. $U^-$) the unipotent radical of $B$ (resp. $B^-$).

Let $\mathfrak{h}$ be the Lie algebra of $H$, and set $\mathfrak{h}^* \coloneqq \mathrm{Hom}_\mathbb{C}(\mathfrak{h}, \mathbb{C})$ for its dual. We denote by $\langle- , -  \rangle$ the natural pairing between $\mathfrak{h}$ and $\mathfrak{h}^*$, and often write $\lambda(x) \coloneqq \langle x, \lambda \rangle$ for $x \in \mathfrak{h}$ and $\lambda \in \mathfrak{h}^*$. Let $\Phi = \Phi(G,H)$ be the root system and let $\Phi^+ = \Phi^+(B,H)$ denote the set of positive roots. The Weyl group $W$ of $\mathfrak{g}$ is given by $N(H)/H$ where $N(H)$ is the normalizer of $H$ in $G$. For each root $\alpha \in \Phi$, we denote by $U_\alpha$ the corresponding root subgroup of $(G,H)$.

For each $i \in I$, let $\alpha_i \in \mathfrak{h}^*$ be the $i$-th simple root, $s_i \in W$ the corresponding simple reflection, and $h_i \in \mathfrak{h}$ the $i$-th simple coroot, defined by $\langle h_i, \alpha_j \rangle = a_{i,j}$ for all $i,j \in I$. The $i$-th fundamental weight $\varpi_i \in \mathfrak{h}^*$ is defined by $\langle h_j, \varpi_i \rangle = \delta_{j,i}$ for all $j \in I$. The \emph{weight lattice} $\mathsf{P}$ is
$$
\mathsf{P} \coloneqq \bigoplus_{i\in I} \Z \varpi_i
$$
and the set of \emph{dominant integral weights} is
$$
\mathsf{P}^+ \coloneqq \{ \lambda \in \mathsf{P} \mid \lambda (h_i) \ge 0 \text{ for all } i\in I \}.
$$

Every Weyl group element $w \in W$ can be expressed as a product of simple reflections. The length function $\ell \colon W \to \mathbb{Z}_{\ge 0}$ is defined by the number of simple reflections of a reduced expression of $w$. For a subset $K \subseteq I$, the subgroup
$$
W_K \coloneqq \langle s_k \mid k \in K \rangle
$$ 
is called a \emph{parabolic subgroup} of $W$ associated with $K$. Define
\begin{equation}\label{equ_W^K}
W^K \coloneqq \{ w \in W \mid \ell(wv) \geq \ell(w) \mbox{ for all $v \in W_K$} \} \simeq W/W_K,
\end{equation}
so that $W^K$ is the set of minimal length representatives of cosets in the factor group $W/W_K$. We also set 
$$
\Phi_K \coloneqq \Phi \cap \bigoplus_{k \in K} \mathbb{Z} \alpha_k.
$$ 
The  \emph{parabolic subgroup} $P^K$ associated with $K$ is defined by
$$
P^K \coloneqq \langle H, U_\alpha \mid \alpha \in \Phi^+ \cup \Phi_K \rangle.
$$
The quotient $G/P^K$ is a smooth projective variety, called the \emph{partial flag variety}. If $K = \emptyset$, then $P^K$ is the Borel subgroup $B$ and hence $G/P^K = G/B$ is the complete flag variety. There is the canonical projection given by
\begin{equation}
\pi_K \colon G/B \to G/P^K, \quad gB \mapsto gP^K.
\end{equation}

Now fix a subset $K \subseteq I$. For $w \in W^K$, we choose a representative of $w$ in $N(H)$, and by abuse of notation, we denote this representative again by $w$ when the choice does not affect the discussion. The \emph{Schubert cell} is defined by
$$
C^K_w \coloneqq B{w}P^K/P^K
$$ 
and the \emph{Schubert variety} $X^K_{w}$ is its Zariski closure in $G/P^K$, which is a subvariety of the partial flag variety $G/P^K$. The Schubert variety admits the following Schubert cell decomposition
\begin{equation}\label{equ_decompositionofSchu}
X^K_w = \bigsqcup_{\substack{v \in W^K; \, v \leq w}} B v P^K / P^K
\end{equation}
where $v \leq w$ denotes the (strong) Bruhat order. For an arbitrary $w \in W$, there exists a unique decomposition 
$$
w = w^\prime w^{\prime \prime} \quad \mbox{with $w^\prime \in W^K$ and $w^{\prime \prime} \in W_K$}
$$ and we set $X^K_{w} \coloneqq X^K_{w^\prime}$. In particular, if $K = \emptyset$, then $W^K = W$, and we simply write $X_w$ instead of $X_w^\emptyset$.

\section{Newton--Okounkov polytopes of Schubert varieties of $G/B$}\label{Sec_NOp}

The goal of this section is to recall the construction of Newton--Okounkov bodies and the cluster algebra structure on unipotent cells. We then review the construction of Newton--Okounkov polytopes of Schubert varieties of the complete flag variety $G/B$ using this cluster structure and summarize their key properties in Fujita--Oya \cite{FO25}.

\subsection{Newton--Okounkov bodies}\label{sec_NObodiesreview}

Let $X$ be an irreducible normal projective variety over $\C$ of complex dimension $m$ together with a globally generated line bundle $\mathcal{L}$ over $X$. Fix a total order $\geq$ on $\Z^m$ compatible with the addition. To construct a Newton--Okounkov body, we choose a valuation $v$ and a reference section $h$ of $\mathcal{L}$. 

A \emph{valuation} $v$ \emph{with one-dimensional leaves} is a function $v \colon \C(X) \backslash \{0\} \to \Z^m$ on the function field such that for all rational functions $f, g \in \C(X) \setminus \{0\}$ and $c \in \C \setminus \{0\}$,
\begin{enumerate}
\item $v(fg) = v(f) + v(g)$,
\item $v(f+g) \geq \min (v(f), v(g))$,
\item $v(cf) = v(f)$, and
\item for all $f, g \in \C(X) \setminus \{0\} $ with $v(f) = v(g)$, there exists $c \in \C \setminus \{0\}$ such that $$
v(g - cf) > v(g) \mbox{ or } g - cf = 0.
$$ 
\end{enumerate}

Consider the section ring of $X$ given by
$$
R = \bigoplus_{k \geq 0} R_k \,\, \mbox{ where $R_0 \coloneqq \C$ and $R_k \coloneqq H^0(X, \mcal{L}^{\otimes k})$}. 
$$
Choose a reference section $h \in H^0(X, \mathcal{L})$ with $h \neq 0$ and define the value semigroup 
\begin{equation}\label{equ_SR}
S(X, \mathcal{L},v,h) = \bigcup_{k \geq 1} \bigl\{ (k, v(f/h^k)) \mid f \in R_k \backslash \{0\} \bigr\}  \subseteq \Z_{\geq 0} \times \Z^m \subseteq \R_{\geq 0} \times \R^m. 
\end{equation}
The \emph{Newton--Okounkov body} associated with the quadruple $(X, \mathcal{L}, v, h)$ is defined by 
\begin{equation}\label{equ_Deltav}
\Delta(X, \mathcal{L}, v, h) \coloneqq \overline{\operatorname{conv} \left( \bigcup_{k \geq 1} \{ x / k \mid (k,x) \in S(X, \mathcal{L},v,h) \} \right) } \subseteq \R^m.
\end{equation}
We say that a convex body $\Delta \subseteq \mathbb{R}^m$ is a \emph{Newton--Okounkov body of} $(X, \mathcal{L})$ if there exist a valuation $v$ and a nonzero section $h \in H^0(X,\mathcal{L})$ such that $\Delta = \Delta(X, \mathcal{L}, v, h)$.

\subsection{Cluster algebra structures on unipotent cells}\label{subsec_clusunicell}

In this subsection, we recall the cluster algebra structures on unipotent cells, which will be used to construct Newton--Okounkov bodies of Schubert varieties in partial flag varieties. For $w \in W$, the \emph{unipotent cell} is defined by 
$$
U_w^- = U^- \cap BwB.
$$ 
It can be regarded as an open subset of $X_w$ via an open embedding $U^-_w \hookrightarrow X_w$ induced by the natural inclusion  $U^- \hookrightarrow G/B$. 

The cluster algebra structure on the coordinate ring $\C[U^-_w]$ was constructed in \cite{BFZ05, Williams13, GLS11, GY17, FO25}.  Fix a reduced expression of $w \in W$
$$
\rxw = s_{i_1} s_{i_2} \cdots s_{i_m}.
$$
The \emph{support} of $w$ is defined as $\{i_1, i_2, \ldots, i_m\} \subseteq I$. This set is independent of the choice of reduced expression. By replacing $G$ with the subgroup corresponding to $\operatorname{supp}(w)$ if necessary, we may assume that $\operatorname{supp}(w)=I$.

For $\lambda \in \mathsf{P}^+$ and $u,v \in W$, let $\Delta_{u \lambda, v \lambda} \in \C[G]$ denote the \emph{generalized minor} associated with $u,v$, and $\lambda$. Its restriction to $U_w^-$ defines a regular function
$$
D_{u \lambda, v \lambda} \coloneqq \Delta_{u \lambda, v \lambda} |_{U^-_w} 
$$
In type $A$, the functions $\Delta_{u \varpi_j, v \varpi_j}(g)$ coincide with the usual matrix minors. 

For $k = 1, \dots, m$, set
$$
w_{\leq k} \coloneqq s_{i_1} s_{i_2} \cdots s_{i_k} \, \mbox{ and } \, k^+ \coloneqq \min \bigl( \{ m+1 \} \cup \{  k+1 \le j \le m \mid i_j = i_k \} \bigr). 
$$
Define index sets
$$
J \coloneqq \{ 1,2, \dots, m \}, \, J_\mathrm{fz} \coloneqq \{ j \in J \mid j^+ = m+1 \}, \mbox{ and } J_\uf \coloneqq J \setminus J_\mathrm{fz}.
$$
The \emph{unipotent minors} are defined by
$$
D_j \coloneqq D_{w_{\leq j} \varpi_{i_j}, \varpi_{i_j}} \qquad \text{ for $1 \leq j \leq m$}.
$$
Next, define the extended exchange matrix $\varepsilon_{t_0} \coloneqq (\varepsilon_{r,s})_{r \in J_{\uf},\, s \in J}$ by
\begin{equation}\label{equ_GLSseed}
\varepsilon_{r,s} =
\begin{cases}
-1 & \text{if } r = s^+, \\
1 & \text{if } r^+ = s, \\
-a_{i_s,i_r} & \text{if } s < r < s^+ < r^+, \\
a_{i_s,i_r} & \text{if } r < s < r^+ < s^+, \\
0 & \text{otherwise,}
\end{cases}
\end{equation}
where $(a_{i,j})_{i,j \in I}$ is the Cartan matrix $\textsf{A}$ of $G$. The principal submatrix $(\varepsilon_{r,s})_{r,s \in J_\uf}$ is skew-symmetrizable, and it is skew-symmetric if and only if $\textsf{A}$ is symmetric.  

Consider the field of rational functions
\begin{equation}\label{equ_functionfields}
\mathcal{F} = \C \left( A_{j,t_0} \mid j = 1, \dots, m \right). 
\end{equation}
A \emph{seed} $\seed = (\mathcal{A}, \varepsilon)$ consists of an algebraically independent set $\mathcal{A}$ generating $\mathcal{F}$ and an exchange matrix $\varepsilon$. We take the initial seed as
\begin{equation}\label{equ_initialseed}
\seed_0 = \left( \mathcal{A}_{\seed_0} \coloneqq \{ A_{j,t_0} \mid j \in J \}, \, \varepsilon_{t_0}  = (\varepsilon_{r,s})_{r \in J_{\uf},\, s \in J} \right)
\end{equation}
All other seeds are obtained from $\seed_0$ by iterated mutations. For $k \in J_{\mathrm{uf}}$, the mutation $\seed' = \mu_k(\seed)$ is defined as follows. Set $[a]_+ \coloneqq \max \{a,0\}$. The cluster variables mutate by the rule
\begin{equation}\label{equ_defatjvar}
\mu_k \colon \mcal{A}_\seed \dashrightarrow \mcal{A}_{\seed'}, \quad
\mu_k^*(A^\prime_j) =
\begin{cases}
A_k^{-1} \left( \prod_{i \in J} A_i^{[\varepsilon_{k,i}]_+} + \prod_{i \in J} A_i^{[-\varepsilon_{k,i}]_+} \right) & \text{if } j = k, \\
A_j & \text{if } j \neq k,
\end{cases}
\end{equation}
and the exchange matrix mutates by the rule 
\begin{equation}\label{equ_mutationexchangematri}
\varepsilon^\prime = (\varepsilon_{r,s}^\prime)_{r,s \in J}, \qquad
\varepsilon_{r,s}^\prime =
\begin{cases}
- \varepsilon_{r,s} & \text{if } r = k \text{ or } s = k, \\
\varepsilon_{r,s} + \mathrm{sgn}(\varepsilon_{k,s}) [\varepsilon_{r,k}\varepsilon_{k,s}]_+ & \text{otherwise.}
\end{cases}
\end{equation}

Let $\mathbb{T}$ be the exchange graph with initial vertex $t_0$ corresponding to $\seed_0$. For each $t \in \mathbb{T}$, let $(A_{j,t})_{j \in J}$ denote the corresponding cluster variables defined by the rule~\eqref{equ_defatjvar}. We define the cluster algebras associated with $\seed_0$ as follows$\colon$

\begin{definition}\label{definition_clusteralg}
Let $\mathcal{F}$ be given as in~\eqref{equ_functionfields}.
\begin{itemize}
\item The (\emph{ordinary}) \emph{cluster algebra} is the subalgebra $\mathcal{A}(\seed_0)$ of $\mathcal{F}$ generated by
$$
\left\{A_{j,t} \mid t \in \mathbb{T}, j \in J_{\mathrm{uf}} \right\} \cup \left\{ A_{j,t}^\pm \mid t \in \mathbb{T}, j \in J_\mathrm{fz} \right\}.
$$
\item The \emph{upper cluster algebra} is the subalgebra of $\mathcal{F}$ given by
$$
\mathcal{U}(\seed_0) = \bigcap_{t \in \mathbb{T}} \C \bigl[ A_{j,t}^{\pm} \mid j \in J \bigr].
$$
\end{itemize}
\end{definition}

\begin{remark}\label{remark_notationAw}
Different choices of reduced expression of $w$ yield isomorphic cluster algebras. Accordingly, we write $\mathcal{A}(w)$ in place of $\mathcal{A}(\seed_0)$ when we want to emphasize the dependence on $w$.
\end{remark}

\begin{theorem}[Appendix B in \cite{FO25}]\label{theorem_clustunitcell}
The ordinary cluster algebra agrees with the upper cluster algebra, that is, 
$$
\mathcal{A}(\seed_0) = \mathcal{U}(\seed_0)
$$
Moreover, there is a $\C$-algebra isomorphism 
$$
\mathcal{A}(\seed_0) = \mathcal{U}(\seed_0)  \to \C[U_w^-], \quad A_{j,t_0} \mapsto D_j.
$$
In this sense, $\C[U_w^-]$ is endowed with the cluster algebra structure.
\end{theorem}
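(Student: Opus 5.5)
This statement is cited from Appendix~B of \cite{FO25}, so I would reconstruct their argument rather than build something new. The plan has three steps.

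\textbf{Step 1: a map into the upper cluster algebra.} Define $\Psi\colon \C[A_{j,t_0}^{\pm 1}] \supseteq \mathcal{U}(\seed_0) \to \C(U_w^-)$ by $A_{j,t_0}\mapsto D_j$. Two inputs are needed here.
\begin{itemize}
\item The unipotent minors $D_1,\dots,D_m$ are algebraically independent and generate the function field $\C(U_w^-)$. I would get this from the Bott--Samelson/Lusztig parametrization $(t_1,\dots,t_m)\mapsto y_{i_1}(t_1)\cdots y_{i_m}(t_m)$ (with the appropriate torus twist). In these coordinates the $D_j$ are related to the $t_j$ by a triangular monomial change of variables.
\item The exchange relations hold: for each $k\in J_\uf$, the mutated variable $\mu_k(A_{k,t_0})$ maps to a regular function on $U_w^-$. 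These relations are the generalized determinantal identities of Fomin--Zelevinsky among minors $\Delta_{u\varpi_i,\varpi_i}$, as in \cite{BFZ05} and \cite{GLS11}.
\end{itemize}

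\textbf{Step 2: the codimension-two argument of \cite{BFZ05}.} To show $\Psi(\mathcal{U}(\seed_0))\subseteq\C[U_w^-]$ and equality, I would use the following facts.
\begin{itemize}
\item Frozen minors: the $D_j$ with $j\in J_\fr$ are, up to the torus twist, the minors $\Delta_{w\varpi_i,\varpi_i}$. On $U_w^-$ these are invertible.
\item Star-shaped region: the initial cluster and its one-step mutations cover $U_w^-$ up to codimension two. Normality of $U_w^-$ (it is smooth, being isomorphic to an open subset of affine space) then gives $\C[U_w^-] = \bigcap_{k} \C[\text{cluster }\mu_k(\seed_0)^{\pm}]$ with frozen variables inverted.
\end{itemize}
The right-hand side equals the upper bound $\mathcal{U}$ at $\seed_0$. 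Since $\varepsilon_{t_0}$ has full rank (it is triangular with respect to $k\mapsto k^+$), the upper bound equals the upper cluster algebra, giving $\mathcal{U}(\seed_0)\cong\C[U_w^-]$.

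\textbf{Step 3: $\mathcal{A}=\mathcal{U}$.} This is the step I expect to be the main obstacle, since upper bound arguments alone do not give it. I would invoke the Goodearl--Yakimov result \cite{GY17} that quantum (and classical) Schubert cell algebras are cluster algebras equal to their upper cluster algebras. They prove that the generators $y_{i}(t)$ coordinates, equivalently the PBW/dual PBW generators, lie in the cluster algebra generated with inverted frozens, which is the localization used in Definition~\ref{definition_clusteralg}.

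An alternative route is to show the seed is locally acyclic or admits a maximal green sequence, combined with full rank, and then apply Muller's theorem. For double Bruhat-type reduced words, the sequence mutating along $k\mapsto k^+$ repeatedly (the ``source'' sequence) should work.

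Either way, one has to match conventions: Fujita--Oya's twist and ordering versus GLS/GY. Doing this carefully is the remaining technical point.
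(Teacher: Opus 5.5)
The step that fails as written is the mechanism in Step~1. The unipotent minors $D_j$ are not monomials in the Lusztig coordinates, and no torus rescaling of the $t_j$ makes them so. Take $\mathrm{SL}_3$ and $\underline{w}=s_1s_2s_1$. Then $g=y_1(t_1)y_2(t_2)y_1(t_3)=I+(t_1+t_3)E_{21}+t_2E_{32}+t_2t_3E_{31}$, so $D_2=t_1t_2$ and $D_3=t_2t_3$, but $D_1=g_{21}=t_1+t_3$. In fact the image of $(\C^\times)^3$ is exactly the chart where the \emph{adjacent} cluster $\{D_1'=g_{32},D_2,D_3\}$ is nonzero, not the initial chart.

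The chamber ansatz expresses the Lusztig parameters as Laurent monomials in minors of the Berenstein--Zelevinsky \emph{twist} of $g$, not of $g$ itself. That twist is a biregular automorphism of $U^-_w$, not a torus twist. Alternatively, pass to $U^-(w)$ via $\overline{\pi}(w)$ and use the triangularity of the restricted minors with respect to the dual PBW generators. Either fix gives three things: algebraic independence, generation of $\C(U^-_w)$, and that $\{D_j\neq 0\ \forall j\}$ is a torus with coordinate ring $\C[D_j^{\pm1}]$. Your Step~2 also needs this last fact. It further needs pairwise coprimality of the unfrozen $D_j$, and of $D_k$ with $D_k'$, in $\C[U^-_w]$. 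That is where the real work of the codimension-two argument of \cite{BFZ05} lies, and your proposal asserts it rather than supplies it.

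Apart from this, your outline rests on the same external inputs as the cited source, assembled differently. The paper gives no proof here; it imports the statement from \cite{FO25}. The route the paper itself indicates (Theorem~\ref{theorem_clusterstructreonunisub} together with $\overline{\pi}(w)\colon U^-_w\cong U^-(w)\cap O_w$ from \cite{KO21}) goes through the unipotent subgroup. First, $\C[U^-(w)]$ is the cluster algebra with frozen variables \emph{not} inverted (\cite{GLS11} in the symmetric case, \cite{GY17} in general). Inverting the frozen variables gives $\C[U^-(w)]_{D_w}$, and $\overline{\pi}(w)^*$ carries the restricted minors to the $D_j$.

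So $\overline{\pi}(w)$ is not a matter of conventions. It is what lets the Goodearl--Yakimov theorem, which concerns $U^-(w)$, say anything about $U^-_w$. With it in place the statement follows, and Step~2 is needed at most for the starfish inclusion $\mathcal{U}(\seed_0)\subseteq\C[U^-_w]$.

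Your decomposition is also legitimate: BFZ directly on $U^-_w$, and \cite{GY17} only for the abstract equality $\mathcal{A}=\mathcal{U}$, which depends only on $\varepsilon_{t_0}$. Your full-rank claim is correct, since $\varepsilon_{r,r^+}=1$ and $\varepsilon_{r,s}=0$ for $s>r^+$. But on that route the whole codimension-two analysis falls on you. You also do not obtain the compatible non-inverted structure on $\C[U^-(w)]$ that the paper later uses in Theorem~\ref{theorem_main6}.

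Two smaller points. First, the Lusztig coordinates are not regular on $U^-_w$: above, $t_3=g_{31}/g_{32}$, and $g_{32}$ vanishes at points of $U^-_w$. So in Step~3 only the dual PBW generators can be said to lie in the cluster algebra. Second, a maximal green sequence is not a known sufficient condition for $\mathcal{A}=\mathcal{U}$; Muller's theorem requires local acyclicity.
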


\subsection{Newton--Okounkov polytopes of Schubert varieties of $G/B$}

By utilizing the above cluster algebra structure, we associate to each seed $\seed$ a valuation $v_\seed$ on the function field $\C(U^-_w)$, which is isomorphic to $\C(X_w)$. Let $ \seed = \left((A_{j,t})_{j \in J}, \vep_t \right)$ be the seed of $\C[ U^-_w]$ associated with a vertex $t \in \mathbb{T}$. For $\bfa, \bfb \in \Z^{J}$, we define 
$$
\bfa \preceq_{\vep_t} \bfb \quad \Longleftrightarrow \quad \bfa = \bfb + \textbf{v} \vep_t \text{ for some $\textbf{v} \in \Z_{\ge0}^{J_\uf}$.}
$$
This order $ \preceq_{\vep_t}$ on $\Z^J$ is called the \emph{dominance order} associated with $\vep_t$ in \cite{Qin17}. 

Consider the Laurent polynomial ring $\mathcal{F} \coloneqq \C[ A_{j,t}^{\pm 1} \mid j \in J ]$. Let $<_t$ be a total order refining the opposite dominance order $\preceq^{\mathrm{op}}_{\varepsilon_t}$ associated with the exchange matrix $\varepsilon_t$ corresponding to $t$. By identifying a Laurent monomial $ \prod_{j \in J} A_{j,t}^{a_j}$ with its exponent vector $\bfa = (a_j)_{j \in J} \in \Z^{J}$, the order $<_t$ induces an order on the set of Laurent monomials in $\mathcal{F}$. We denote by $ v_{\seed}$ the lowest term valuation on $\mathcal{F}$ corresponding to $<_t$. Namely, for $f/g \in \mathcal{F}$ with $f,g \in \C[A_{j,t} \mid j \in J]$, we define
$$
v_{\seed}(f/g) \coloneqq (a_j)_{j \in J} - (b_j)_{j \in J} \in \Z^{J}
$$
where 
$$
f = \prod_{j=1}^{m} c A_{j,t}^{a_j} + (\mbox{terms of higher order}), \,\, g = \prod_{j=1}^{m} c^\prime A_{j,t}^{b_j} + (\mbox{terms of higher order})
$$
for some $c, c^\prime \in \mathbb{C} \setminus \{0\}$.
We write $v_t$ for $v_{\seed}$ if no confusion arises.

For a certain class of elements in $\mathcal{F}$, the valuation can be calculated by the extended $g$-vector, which we now recall. Following \cite{FominZelevinsky4, FockGoncharov}, we set
$$
X_{i,t} \coloneqq \prod_{j \in J} A_{j,t}^{\varepsilon_{i,j}}.
$$
An element $f \in \mathcal{F}$ is said to be \emph{weakly pointed} at $(g_j)_{j \in J} \in \Z^J$ if $f$ can be expressed as
\begin{equation}\label{equ_extgvector}
f = \left( \prod_{j \in J} A^{g_j}_{j,t} \right) \left( \sum_{\mathbf{a} = (a_{j}) \in \mathbb{Z}^{J_\mathrm{uf}}_{\geq 0} } c_\mathbf{a} \prod_{j \in J_\mathrm{uf}} X_{j,t}^{a_j} \right)
\end{equation}
for some coefficients $c_{\mathbf{a}} \in \mathbb{C}$ with $c_{\mathbf{0}} \neq 0$. In this case, 
$$
g_{\seed}(f) \coloneqq (g_j)_{j \in J}
$$ 
is called the \emph{extended g-vector} of $f$. We sometimes write $g_t$ in place of $g_{\seed}$. By \cite[Proposition 3.9]{FO25}, for every weakly pointed element $f \in \mathcal{F}$, we have
\begin{equation}\label{equ_val=g}
v_\seed(f) = g_\seed(f).    
\end{equation}

For a dominant integral weight $\lambda \in \mathsf{P}^+$, we define a line bundle $\Lb_\lambda \coloneqq (G \times \C)/B$ over the flag manifold $G/B$, where $B$ acts on $G\times \C$ from the right as $ (g,c) \cdot b \coloneqq (gb, \chi_\lambda(b) c)$ for $g\in G, c\in \C$, and $b\in B$. Restricting to $X_w$, we obtain the line bundle on $X_w$ which is also denoted by $\Lb_\lambda$. If $\lambda$ is regular in addition, that is, $\langle h_i, \lambda \rangle >0$ for every $i\in I$, then the line bundle $\mcal{L}_\lambda$ is very ample. We fix a lowest weight vector $\tau_\lambda$ in $H^0(G/B, \Lb_\lambda)$ and restrict it to $X_w$. Following Section~\ref{sec_NObodiesreview} and using the valuation $v_\seed$ on the function field $\mathcal{F} \simeq \C(X_w)$, we produce the semigroup $S(X_w, \mcal{L}_\lambda,v_\seed,\tau_\lambda)$ defined in \eqref{equ_SR} and the Newton--Okounkov body $\Delta(X_w, \mcal{L}_\lambda,v_\seed,\tau_\lambda)$ defined by \eqref{equ_Deltav}. In sum, we have the quadruple
\begin{equation}\label{equ_quadrupleonxw}
(X_w,\mcal{L}_\lambda,v_\seed,\tau_\lambda)
\end{equation}
By following the procedure in Section~\ref{Sec_NOp}, we obtain  the family of semigroups and Newton--Okounkov bodies 
\begin{equation}\label{equ_familyofseminewto}
\left\{ \left( S(X_w, \mcal{L}_\lambda, v_{\seed}, \tau_\lambda), \Delta(X_w, \mcal{L}_\lambda, v_{\seed}, \tau_\lambda) \right) \mid \mbox{$\seed$ is a seed} \right\}
\end{equation}

Let $C_{\seed}(w) = C_t(w)$ be the smallest closed convex cone containing $v_\seed ( \C[ U^- \cap X_w] \setminus \{ 0 \} )$ in $\R^{J}$, which is called the \emph{cluster cone} of $X_w$ associated with the seed $\seed$. 

\begin{definition}\label{definition_tropicalclusterstr}
Consider the family $\{ S_t \subseteq \R_{\geq 0} \times \R^J \mid t \in \mathbb{T} \}$ of semigroups and the family $\{ \Delta_t \subseteq \R^J \mid t \in \mathbb{T} \}$ of associated Newton--Okounkov bodies, which are parametrized by $\mathbb{T}$. 

\begin{enumerate}
\item For an unfrozen index $k \in J_\mathrm{uf}$ and a pair $(t, t^\prime)$ of vertices with $\mu_k (t) = t^\prime$ in  $\mathbb{T}$, a \emph{tropicalized cluster mutation in the $k$-th direction}
$$
\mu^{{T}}_k \colon \R^{J} \to \R^{J}
$$
is defined to be the piecewise-linear transformation defined by 
\begin{equation}\label{equ_tropical1}
\mathbf{u} \coloneqq (u_j) \mapsto \mathbf{u}^\prime \coloneqq (u^\prime_j), \quad 
u_j^\prime = 
\begin{cases}
- u_j &\mbox{ if $j = k$} \\
u_j + \left[ \varepsilon_{k,j} \right]_+ u_k  &\mbox{ if $j \neq k$ and $u_k \geq 0$} \\
u_j + \left[ -\varepsilon_{k,j} \right]_+ u_k &\mbox{ if $j \neq k$ and $u_k \leq 0$}
\end{cases}
\end{equation}
where ${\varepsilon}_t = (\varepsilon_{i,j})_{i \in J_{uf}, j \in J}$ is the exchange matrix associated with $t$. 
\item We also define 
\begin{equation}\label{equ_hattropical1}
\widehat{\mu}^{{T}}_k \colon \R_{\geq 0} \times \R^{J} \to \R_{\geq 0} \times \R^{J}
\end{equation}
by $\widehat{\mu}^{{T}}_k (r, \mathbf{u}) \coloneqq (r, \mu^{{T}}_k (\mathbf{u}))$. This map $\widehat{\mu}^{{T}}_k$ is also called a \emph{tropicalized cluster mutation in the $k$-th direction}.

\item The family of semigroups parametrized by $\mathbb{T}$ is said to have a \emph{tropical cluster structure} if for every pair $(t, t^\prime)$ of vertices with $t^\prime = \mu_k (t)$, the associated semigroups $S_t$ and $S_{t^\prime}$ are related by the tropicalized cluster mutation $\widehat{\mu}^T_k$ in the $k$-th direction, that is, $S_{t^\prime} = \widehat{\mu}^T_k (S_t)$.

\item The family of Newton--Okounkov bodies parametrized by $\mathbb{T}$ is said to have a \emph{tropical cluster structure} if for every pair $(t, t^\prime)$ of vertices with $t^\prime = \mu_k (t)$, the associated Newton--Okounkov bodies $\Delta_t$ and $\Delta_{t^\prime}$ are related by the tropicalized cluster mutation $\mu^T_k$ in the $k$-thdirection, that is, $\Delta_{t^\prime} = \mu^T_k (\Delta_t)$.
\end{enumerate}
\end{definition}

\begin{theorem}[Corollary 5.8, Theorem 6.8, Corollary 6.9, Corollary 6.10 in \cite{FO25}]\label{Thm: FO1}
The family in~\eqref{equ_familyofseminewto} satisfies the following$\colon$
\begin{enumerate}
\item each semigroup in~\eqref{equ_familyofseminewto} is finitely generated and saturated. 
\item each Newton--Okounkov body in~\eqref{equ_familyofseminewto} is a rational polytope whose integral points coincide with the extended g-vectors of elements of the dual canonical basis,
\item the family of semigroups in~\eqref{equ_familyofseminewto} has a tropicalized cluster structure,
\item the family of Newton--Okounkov bodies in~\eqref{equ_familyofseminewto} has a tropicalized cluster structure,
\item the cluster cone $C_{t}(w) = \bigcup_{\lambda \in \mathsf{P}^+} \Delta(X_w,\mcal{L}_\lambda,v_t,\tau_\lambda)$.
\end{enumerate}	
\end{theorem}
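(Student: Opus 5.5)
The plan is to reduce every statement to properties of the dual canonical (upper global) basis $\mathbf{B}^{\mathrm{up}}$ of $\C[U^-_w]$, and to use the fact that this basis is compatible with every seed of $\mathcal{A}(w)$ in the sense of \cite{Qin17}.

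\textbf{Step 1 (sections as a span of basis elements).} Divide by the lowest weight vector $\tau_\lambda$ and restrict to $U^-_w$. This identifies $H^0(X_w,\mathcal{L}_\lambda^{\otimes k})$ with a subspace $V_{k\lambda}\subseteq \C[U^-_w]$, namely the dual of the Demazure module $V_w(k\lambda)$. By Kashiwara's compatibility of Demazure crystals with the global basis, $V_{k\lambda}$ is spanned by a subset $\mathbf{B}_w(k\lambda)$ of $\mathbf{B}^{\mathrm{up}}$, after localizing at the frozen minors $D_j$, $j\in J_{\mathrm{fz}}$, and multiplying by them where necessary. Under the isomorphism of Theorem~\ref{theorem_clustunitcell}, every element of $\mathbf{B}^{\mathrm{up}}$ becomes an element of $\mathcal{A}(\seed_0)$.

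\textbf{Step 2 (valuations are $g$-vectors).} Next I would show that every $b\in\mathbf{B}^{\mathrm{up}}$ is weakly pointed in every seed $t\in\mathbb{T}$ and that $b\mapsto g_t(b)$ is injective. I would derive this from the monoidal categorification of $\C[U^-_w]$ by quiver Hecke algebra modules (KKKO/KKOP) in the symmetric case, and from Qin's common triangular basis theorem in general. By \eqref{equ_val=g} we then get $v_t(b)=g_t(b)$. A valuation with one-dimensional leaves takes pairwise distinct values on a basis of a finite-dimensional space and respects $\min$, so
$$
v_t\bigl(V_{k\lambda}\setminus\{0\}\bigr)=\{g_t(b)\mid b\in\mathbf{B}_w(k\lambda)\}.
$$
Hence $S_t$ is exactly the set of pairs $(k,g_t(b))$. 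This gives the integral-point description in (2), once saturation in (1) is established.

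\textbf{Step 3 (tropical mutation).} For a pointed element $f$, the $g$-vectors in adjacent seeds are related by $g_{t'}(f)=\mu^T_k(g_t(f))$. This is the tropical transformation rule for $g$-vectors of pointed elements (Fomin--Zelevinsky / GHKK), with sign coherence coming from the categorification. Applied to all of $\mathbf{B}^{\mathrm{up}}$, it yields $S_{t'}=\widehat{\mu}^T_k(S_t)$, which is (3).

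For (4), note that $\mu^T_k$ is piecewise linear and positively homogeneous, so it commutes with dilation. It also maps the closure of the convex hull of the rescaled semigroup onto the corresponding closure, which is $\Delta_{t'}$. Convexity of the image needs care, since a piecewise linear map need not preserve convex hulls. I would obtain it by showing that $\Delta_{t'}$ is itself the closure of the rescaled image of $S_{t'}$, which is only available once saturation in (1) is known.

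\textbf{Step 4 (finite generation and saturation).} These are the hard part. At the initial seed $t_0$, I would relate $g_{t_0}(b)$ to the string (or Lusztig) parametrization of the crystal along $\rxw$ by a unimodular linear map, following Kashiwara--Oya. Then $S_{t_0}$ is carried to the set of lattice points of the string cone intersected with the Demazure inequalities of Berenstein--Zelevinsky and Littelmann. That set is the lattice-point set of a rational polyhedral cone, so $S_{t_0}$ is finitely generated and saturated and $\Delta_{t_0}$ is a rational polytope. To pass to an arbitrary seed, I would argue that $S_t$ equals the set of lattice points of the rational polyhedral cone $\widehat{\mu}^T(C)$. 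The key input is that $\mu^T_k$ is a bijection of $\Z^J$ that is linear on each side of the hyperplane $u_k=0$. It therefore suffices to check that this image is convex, which I expect to derive from the fact that $S_{t'}$ is a semigroup whose real cone is again polyhedral.

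\textbf{Step 5 (the cluster cone).} Item (5) then follows by taking the union over $\lambda\in\mathsf{P}^+$. Since $\mathbf{B}^{\mathrm{up}}=\bigcup_\lambda \mathbf{B}_w(\lambda)$, the cluster cone is the union of the $\Delta_t(\lambda)$.

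I expect the main obstacle to be the saturation and convexity transfer in Step 4 away from $t_0$, because piecewise linear maps do not preserve convexity in general. If the argument above does not close, I would instead invoke Qin's description of $g_t(\mathbf{B}^{\mathrm{up}})$ as the lattice points of the tropical points of the cone. This description is intrinsically seed-independent and polyhedral in every seed.
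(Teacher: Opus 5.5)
The gap is in Step~3, which carries items (3) and (4). Your overall architecture is otherwise the one behind the Fujita--Oya results that the paper simply cites; the paper gives no proof of its own. That architecture is: identify sections with the span of the Demazure part of the dual canonical basis, use $v_t=g_t$ on that basis with $g_t$ injective to get $S_t=\{(k,g_t(b))\}$, transport along mutations, and obtain rationality and saturation from a seed whose body is unimodularly equivalent to a string polytope.

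There is no general rule that an element pointed in two adjacent seeds satisfies $g_{t'}(f)=\mu^T_k(g_t(f))$. Here is a counterexample.
\begin{itemize}
\item Take $J=\{1,2\}$, $J_{\uf}=\{1\}$ and $\varepsilon_{1,2}=1$, so that $X_{1,t}=A_2$.
\item In $t$, the element $f=1+A_2=1+X_{1,t}$ is pointed at $(0,0)$.
\item In $t'=\mu_1(t)$ one has $\varepsilon'_{1,2}=-1$, and $f=A_2\bigl(1+X_{1,t'}\bigr)$ is pointed at $(0,1)$.
\item But $\mu^T_1(0,0)=(0,0)\neq(0,1)$.
\end{itemize}
Sign coherence gives the tropical rule for cluster monomials and theta functions, not for arbitrary pointed elements. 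What you actually need is a property of this particular basis: every $b\in\mathbf{B}^{\mathrm{up}}$ that is pointed at $g$ in $t$ is pointed at $\mu^T_k(g)$ in $\mu_k(t)$. This is compatible pointedness, i.e.\ $\mathbf{B}^{\mathrm{up}}$ is parametrized by tropical points. It is part of Qin's theorem identifying the localized dual canonical basis with the common triangular basis; in the symmetric case it can alternatively be derived from the monoidal categorification. It must be invoked as a separate input, and it is exactly what turns (2) into (3).

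The obstacle you anticipate in Steps~3--4 is not real, and (4) does not need saturation.
\begin{itemize}
\item For any semigroup $S$, the set $P=\bigcup_{k\ge1}\frac1k S_k$ is closed under rational convex combinations. If $x\in S_k$, $y\in S_l$ and $0\le a\le b$, then $al\cdot(k,x)+(b-a)k\cdot(l,y)=(bkl,\,alx+(b-a)ky)\in S$.
\item Hence $\overline{P}$ is already convex and equals the Newton--Okounkov body.
\item Since $\mu^T_k$ is a positively homogeneous homeomorphism, (3) gives $\Delta_{t'}=\mu^T_k(\Delta_t)$ at once.
\end{itemize}
Propagating (1) and (2) from the string seed is then formal.
\begin{itemize}
\item $\mu^T_k(\Delta_t)$ is convex because it equals $\Delta_{t'}$.
\item It is the union of the images of $\Delta_t\cap\{u_k\ge0\}$ and $\Delta_t\cap\{u_k\le0\}$ under unimodular linear maps, hence a rational polytope.
\item $\mu^T_k$ is a bijection of $\Z^J$ commuting with dilations, so $S_t=\{(k,x)\mid x\in k\Delta_t\cap\Z^J\}$ transports to the same description of $S_{t'}$.
\item Gordan's lemma then gives finite generation.
\end{itemize}

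Two points in Step~5. First, $\bigcup_\lambda\mathbf{B}_w(\lambda)$ is the basis of $\C[U^-\cap X_w]$, not of the localized ring $\C[U^-_w]$. Second, closedness of the union of the bodies $\Delta_t(\lambda)=\Delta(X_w,\mathcal{L}_\lambda,v_t,\tau_\lambda)$ needs an argument: for regular $\lambda$ and $k\gg0$ one has $\Delta_t(\mu)\subseteq\Delta_t(k\lambda)=k\Delta_t(\lambda)$, so the union equals $\R_{\ge0}\Delta_t(\lambda)$.
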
	

Combining this with \cite{And13}, we obtain the following corollary.

\begin{corollary}
If $\lambda$ is a regular dominant integral weight, then for each seed $\seed$, there exists a toric degeneration of $X_w$ into the normal toric variety associated with $\Delta(X_w,\mcal{L}_\lambda,v_\seed,\tau_\lambda)$.
\end{corollary}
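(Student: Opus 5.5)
The corollary follows by combining Theorem~\ref{Thm: FO1} with Anderson's theorem \cite{And13}. Anderson proves the following: let $X$ be a projective variety with a very ample line bundle $\mathcal{L}$, a valuation $v$ with one-dimensional leaves, and a reference section $h$. Suppose the value semigroup $S(X,\mathcal{L},v,h)$ is finitely generated. Then $X$ admits a flat degeneration to the toric variety $\operatorname{Proj}\C[S]$. When $S$ is moreover saturated, this toric variety is the normal projective toric variety associated with the rational polytope $\Delta(X,\mathcal{L},v,h)$. So the plan is to check each of these hypotheses for the quadruple $(X_w,\mathcal{L}_\lambda,v_\seed,\tau_\lambda)$ in~\eqref{equ_quadrupleonxw}.

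First, $X_w$ is an irreducible normal projective variety, since Schubert varieties are normal. For regular $\lambda$, the line bundle $\mathcal{L}_\lambda$ is very ample on $G/B$, and hence also on its restriction to $X_w$. Second, $v_\seed$ is a valuation on $\C(X_w)\simeq\mathcal{F}$ with values in $\Z^J$, where $|J| = m = \dim X_w$. It is the lowest-term valuation for a total order $<_t$ refining $\preceq^{\mathrm{op}}_{\varepsilon_t}$, and it has one-dimensional leaves: two elements with the same lowest monomial differ, after rescaling one of them, by an element of strictly higher order. This uses that $\{A_{j,t}\}$ is an algebraically independent transcendence basis generating $\mathcal{F}$, so the valuation has full rank $m$.

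Third, by Theorem~\ref{Thm: FO1}(1) the semigroup $S(X_w,\mathcal{L}_\lambda,v_\seed,\tau_\lambda)$ is finitely generated and saturated. By Theorem~\ref{Thm: FO1}(2), $\Delta(X_w,\mathcal{L}_\lambda,v_\seed,\tau_\lambda)$ is a rational polytope. Applying \cite{And13} then gives a flat family $\mathcal{X}\to\mathbb{A}^1$ with generic fiber $X_w$ and special fiber $\operatorname{Proj}\C[S]$.

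The only delicate step is identifying this special fiber with the normal toric variety of $\Delta$. Saturation of $S$ means $S$ is exactly the set of lattice points of the cone over $\{1\}\times\Delta$ in $\Z_{\ge0}\times\Z^J$. Hence $\C[S]$ is the normal semigroup algebra of that cone, and $\operatorname{Proj}\C[S]$ is the projective toric variety $X_\Delta$. I would also check that $S$ generates $\Z\times\Z^J$ as a group, so that the lattice is the standard one; this holds because the valuation has full rank. If $\Delta$ is not very ample in degree one, one replaces $\mathcal{L}_\lambda$ by a Veronese power. This does not change the toric variety, so no further argument is needed.
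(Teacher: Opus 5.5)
Your proposal is correct and follows the paper's argument, which consists only of combining Theorem~\ref{Thm: FO1}(1)--(2) (finite generation, saturation, rationality of $\Delta$) with Anderson's degeneration theorem \cite{And13}. One small precision: full rank alone does not give $\Z S=\Z\times\Z^J$. What gives it is that $v_\seed$ is surjective onto $\Z^J$ (via Laurent monomials in the $A_{j,t}$), together with very ampleness, which writes every rational function on $X_w$ as a ratio of two sections of the same tensor power of $\mathcal{L}_\lambda$.
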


\section{Newton--Okounkov polytopes of Schubert varieties in $G/P$}\label{sec_NOpolyoptesforpartial}

The aim of this section is to construct Newton–Okounkov polytopes of Schubert varieties in partial flag varieties $G/P$ using the cluster algebra structure on unipotent cells, and to investigate their applications. We also state the main result, the existence of infinitely many pairwise non-equivalent Newton--Okounkov bodies of $G/P$ constructed from cluster algebras of infinite type.

\subsection{Construction of Newton--Okounkov polytopes of Schubert varieties in $G/P$}\label{subsection_constNO}

We begin with a general setup. Let $X$ and $Y$ be irreducible, normal, projective varieties of dimension $m$ over $\C$, and suppose that 
$$
\varphi \colon X \to Y
$$
is a birational morphism. Assume that $X$ is equipped with a valuation 
$$
v_X \colon \C(X) \setminus \{0\} \to \Z^m
$$ 
with one-dimensional leaves, with respect to a fixed total order on the lattice $\Z^m$. Since the induced pullback on function fields $\varphi^* \colon \C(Y) \xrightarrow{\sim} \C(X)$ is an isomorphism, we obtain a valuation with one-dimensional leaves on $Y$ by
\begin{equation}\label{equ_vY}
v_Y \coloneqq v_X \circ \varphi^* \colon \C(Y) \setminus \{0\}  \to \C(X) \setminus \{0\}  \to \Z^m.
\end{equation}

Let $\mcal{L}_Y$ be a basepoint-free line bundle on $Y$ and set 
$$
\mcal{L}_X \coloneqq \varphi^* \mcal{L}_Y,
$$  
which is also basepoint-free. Fix a reference section $\tau_Y \in H^0(Y, \mcal{L}_Y)$, and define 
$$
\tau_X \coloneqq \varphi^* \tau_Y = \tau_Y \circ \varphi \in H^0(X, \mcal{L}_X)
$$ 
as the reference section for $\mcal{L}_X$. 

In this way, we obtain two quadruples 
$$
(X, \mcal{L}_X,  v_X, \tau_X) \mbox{ and } (Y, \mcal{L}_Y,  v_Y, \tau_Y),
$$
which are input data for defining Newton--Okounkov bodies. The following proposition compares the Newton--Okounkov bodies associated with these two quadruples.

\begin{lemma}\label{proposition_NewtonOkounkovbodycoincide}
Let $X$ and $Y$ be irreducible, normal, projective varieties of dimension $m$ over $\C$, and let $\varphi \colon X \to Y$ be a birational morphism. Fix a total order on $\Z^m$ and a valuation $v_X \colon \C(X) \to \Z^m$ with one-dimensional leaves. Let $\mcal{L}_Y$ be a basepoint-free line bundle on $Y$ with a reference section $\tau_Y \in H^0(Y, \mcal{L}_Y)$ and define
$$
\mathcal{L}_X \coloneqq \varphi^* \mathcal{L}_Y, \tau_X \coloneqq \varphi^* \tau_Y, \mbox{ and } v_Y \coloneqq v_X \circ \varphi
$$
Then the corresponding Newton--Okounkov bodies coincide, i.e., 
\begin{equation}\label{equ_twoNewtonOkounkovbodies}
\Delta(X, \mcal{L}_X,  v_X, \tau_X) = \Delta(Y, \mcal{L}_Y,  v_Y, \tau_Y).
\end{equation}
\end{lemma}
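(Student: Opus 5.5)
The plan is to show that the two value semigroups coincide, $S(X,\mcal{L}_X,v_X,\tau_X)=S(Y,\mcal{L}_Y,v_Y,\tau_Y)$. Equality of the Newton--Okounkov bodies \eqref{equ_twoNewtonOkounkovbodies} then follows at once from the definition \eqref{equ_Deltav}, since both bodies are the closed convex hull of the same set. The key input is that $\varphi^*$ induces, for every $k\ge 0$, an isomorphism
$$
\varphi^* \colon H^0(Y,\mcal{L}_Y^{\otimes k}) \xrightarrow{\sim} H^0(X,\mcal{L}_X^{\otimes k}) = H^0(X,\varphi^*\mcal{L}_Y^{\otimes k}).
$$
Given this, for $f\in H^0(Y,\mcal{L}_Y^{\otimes k})\setminus\{0\}$ the rational functions satisfy
$$
\varphi^*(f/\tau_Y^k)=\varphi^*f/\tau_X^k,
$$
because pullback of sections is compatible with ratios and $\tau_X=\varphi^*\tau_Y$. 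Hence
$$
v_Y(f/\tau_Y^k)=v_X(\varphi^*(f/\tau_Y^k))=v_X(\varphi^*f/\tau_X^k),
$$
so every point of $S_Y$ lies in $S_X$. Surjectivity of $\varphi^*$ on sections gives the reverse inclusion.

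To establish the isomorphism on sections, I would first note injectivity. Since $\varphi$ is birational and $X$ is irreducible, $\varphi$ is dominant, so a section vanishing after pullback vanishes on a dense open subset of $Y$, hence is zero. For surjectivity I would use the projection formula
$$
\varphi_*\varphi^*\mcal{L}_Y^{\otimes k}\cong \mcal{L}_Y^{\otimes k}\otimes\varphi_*\mathcal{O}_X,
$$
together with $\varphi_*\mathcal{O}_X=\mathcal{O}_Y$. The latter is Zariski's main theorem: $\varphi$ is proper (since $X$ is projective) and birational, and $Y$ is normal. Then
$$
H^0(X,\varphi^*\mcal{L}_Y^{\otimes k})=H^0(Y,\varphi_*\varphi^*\mcal{L}_Y^{\otimes k})=H^0(Y,\mcal{L}_Y^{\otimes k}),
$$
and this identification is exactly $\varphi^*$.

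The only nontrivial step is this surjectivity, i.e. $\varphi_*\mathcal{O}_X=\mathcal{O}_Y$, which is where normality of $Y$ is used. As a fallback, one can argue directly: a section $s$ of $\varphi^*\mcal{L}_Y^{\otimes k}$ defines a rational section of $\mcal{L}_Y^{\otimes k}$. This rational section is regular on the open set $V\subseteq Y$ over which $\varphi$ is an isomorphism, and the complement of $V$ has codimension at least $2$ by Zariski's main theorem. Normality of $Y$ then lets the section extend (Hartogs), and the extension pulls back to $s$ since the two agree on a dense open subset of $X$.

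Finally, I would remark that $v_Y$ is indeed a valuation with one-dimensional leaves, because $\varphi^*\colon\C(Y)\to\C(X)$ is a field isomorphism. The statement writes $v_Y\coloneqq v_X\circ\varphi$, which I read as $v_X\circ\varphi^*$ as in \eqref{equ_vY}.
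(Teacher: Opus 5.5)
Your proposal is correct and follows essentially the same route as the paper. Zariski's main theorem ($\varphi_*\mathcal{O}_X\cong\mathcal{O}_Y$) together with the projection formula gives $\varphi^*\colon H^0(Y,\mathcal{L}_Y^{\otimes k})\xrightarrow{\sim}H^0(X,\mathcal{L}_X^{\otimes k})$, and the identity $v_Y(f/\tau_Y^k)=v_X(\varphi^*f/\tau_X^k)$ then shows that the value semigroups, and hence the Newton--Okounkov bodies, coincide; your separate injectivity check and Hartogs-type fallback are not needed but are harmless.
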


\begin{proof}
We claim that if $\psi \colon X \to Y$ is a projective birational morphism and $Y$ is normal, then for any bundle $\mathcal{L}$ of $Y$ and $m \in \mathbb{N}$, we have an isomorphism
$$
\psi^* \colon H^0(Y, \mathcal{L}^{\otimes m}) \to H^0(Y, \psi^* \mathcal{L}^{\otimes m}), \quad f \mapsto f \circ \psi.
$$
Recall that if $\psi : X \to Y$ is a projective birational morphism and $Y$ is normal, one has that
\[
    \psi_*\mathcal O_X \cong \mathcal O_Y
\]
by Zariski's main theorem \cite[Corollary III.11.4]{Hartshorne}. Hence, by the projection formula, 
\[
    \psi_*\psi^*{\mcal{L}} \cong {\mcal{L}} \otimes_{\mcal{O}_Y} \psi_*\mathcal O_X \cong {\mcal{L}}.
\]
Moreover, taking global sections of $\mcal{L}^{\otimes m}$, we obtain
a desired isomorphism 
\[
    H^0(X,\psi^*{\mcal{L}}^{\otimes m}) \cong H^0(Y,{\mcal{L}}^{\otimes m}).
\]
Therefore, we have a graded ring isomorphism between section rings
\[
    R(Y,{\mcal{L}}) \coloneqq \bigoplus_{m\ge0} H^0(Y,{\mcal{L}}^{\otimes m}) \cong \bigoplus_{m\ge0} H^0(X,\psi^*{\mcal{L}}^{\otimes m})
    \eqqcolon R(X,\psi^*{\mcal{L}}).
\]

Now, we return to our situation. Since $\varphi \colon X \rightarrow Y$ is a projective birational morphism and $Y$ is normal, we have a graded ring isomorphism
\[
    \varphi^* \colon R(Y,\mcal{L}_Y) \rightarrow R(X,\mcal{L}_X), \quad \varphi^*(f) := f \circ \varphi.
\]
For each $f \in H^0(Y, \mcal{L}_Y^{\otimes k})$, by definition of $v_Y$, we have
\begin{equation}\label{equ_valuat}
v_Y\!\left(\frac{f}{\tau_Y^{k}}\right)
= (v_X \circ \varphi^*)\!\left(\frac{f}{\tau_Y^{k}}\right)
= v_X\!\left(\frac{f}{\tau_Y^{k}} \circ \varphi\right)
= v_X\!\left(\frac{\varphi^*f}{\tau_X^{k}}\right).
\end{equation}
Hence, it yields that
$$
S(Y, \mcal{L}_Y, v_Y, \tau_Y) \subseteq S(X, \mcal{L}_X, v_X, \tau_X).
$$
Conversely, since $\varphi^*$ is an isomorphism, each $g \in H^0(X, \mcal{L}_X^{\otimes k})$ can be written as $g = \varphi^* f$ for some $f \in H^0(Y, \mcal{L}_Y^{\otimes k})$. Applying~\eqref{equ_valuat} yields the opposite inclusion, and hence the two semigroups coincide, and~\eqref{equ_twoNewtonOkounkovbodies} follows.
\end{proof}

We now return to the setting of Section~\ref{sec_partialSchvar}. Using Lemma~\ref{proposition_NewtonOkounkovbodycoincide}, we construct Newton--Okounkov bodies of Schubert varieties in $G/P$. Fix a subset $K \subseteq I$ and consider the associated parabolic subgroup $P^K$. For brevity, we write $P \coloneqq P^K$. Let 
$$
\pi \colon G/B \to G/P
$$ 
be the natural projection. For each $v \in W^K$, the morphism $\pi$ maps the Schubert cell $BvB/B$ onto $BvP/P$. For each $w \in W^K$, we obtain two Schubert varieties$\colon$ 
\begin{itemize}
\item $X_w \subseteq G/B$, consisting of Schubert cells indexed by $v \in W$ with $v \leq w$, and
\item $X_w^K \subseteq G/P$, consisting of Schubert cells indexed by $v \in W^K$ with $v \leq w$
\end{itemize}
by the decomposition~\eqref{equ_decompositionofSchu}. Thus, the restriction of $\pi$ to $X_w$ yields a surjective morphism 
$$
\pi |_{X_w} \colon X_w (\subseteq G/B) \to X_w^K (\subseteq G/P).
$$ 
Moreover, since $w \in W^K$ is the minimal length representative in its coset, the restriction of $\pi$ to the Schubert cell $BwB/B$ is injective. Since $BwB$ is dense in $X_w$, the morphism $\pi |_{X_w}$ is birational. Let $\iota \colon X_w \to G/B$ denote the closed embedding, and define
\begin{equation}\label{equ_varphidef}
\varphi \coloneqq \pi \circ \iota \colon X_w \to G/B \to G/P. 
\end{equation}
We summarize the above discussion as follows.

\begin{proposition}[Lemma 11.3.3 in \cite{Kum02}]\label{prop_morphismsujbiration}
For $w \in W^K$, the morphism $\varphi \colon X_w  \to X_w^K$ 
is surjective and birational.
\end{proposition}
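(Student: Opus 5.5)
The plan is to prove surjectivity and birationality separately, both from the cell decompositions~\eqref{equ_decompositionofSchu} and the $B$-equivariance of $\pi$.

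\emph{Surjectivity.} Since $\pi$ is $B$-equivariant, $\pi(BvB/B) = BvP/P$ for every $v \in W$. Applying this to $v = w$ shows that $\varphi(X_w)$ contains the cell $C^K_w$. The map $\varphi$ is proper, because $X_w$ is projective, so $\varphi(X_w)$ is closed. Hence $\varphi(X_w) \supseteq \overline{C^K_w} = X^K_w$. For the reverse inclusion, note that $\varphi(X_w) = \pi(\overline{BwB/B}) \subseteq \overline{\pi(BwB/B)} = X^K_w$ by continuity. Therefore $\varphi$ maps $X_w$ onto $X^K_w$.

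\emph{Birationality.} I will show that $\varphi$ restricts to an isomorphism from the open dense cell $BwB/B$ onto the open dense cell $BwP/P$. Let $U_w \coloneqq U \cap wU^-w^{-1}$, the product of the root subgroups $U_\alpha$ with $\alpha \in \Phi^+ \cap w\Phi^-$. The standard parametrization gives an isomorphism $U_w \to BwB/B$, $u \mapsto uwB$. Because $w \in W^K$, we have $w(\alpha_k) > 0$ for all $k \in K$, and hence $w(\Phi_K^+) \subseteq \Phi^+$. It follows that $U \cap w\,U_P^- w^{-1} = U_w$, where $U_P^-$ is the unipotent radical of the opposite parabolic subgroup. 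This yields the analogous isomorphism $U_w \to BwP/P$, $u \mapsto uwP$. Under these two parametrizations, $\pi$ becomes the identity of $U_w$, so $\varphi|_{BwB/B}$ is an isomorphism onto $BwP/P$. Since both cells are open and dense in irreducible varieties of the same dimension $\ell(w)$, $\varphi$ is birational. For both parametrizations I would cite the Bruhat-cell results in \cite[Lemma 11.3.3]{Kum02} rather than reprove them.

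The main obstacle is the birationality step, namely checking injectivity of $\varphi$ on the cell in the scheme-theoretic sense. It is not enough to know that the dimensions agree, since $\varphi$ could a priori be finite of degree greater than one, or purely inseparable. Both concerns are removed by identifying both cells with the same affine space $U_w$. Purely inseparable maps are in any case irrelevant because we work over $\C$. Injectivity on the cell can also be seen directly: if $uwP = u'wP$ with $u, u' \in U_w$, then $w^{-1}u'^{-1}uw \in P \cap w^{-1}U_w w$. This intersection is contained in $P \cap U^-$, which is generated by the root subgroups of $\Phi_K^-$. The group $w^{-1}U_w w$ is generated by root subgroups for roots in $w^{-1}\Phi^+ \cap \Phi^-$. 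That set meets $\Phi_K^-$ trivially precisely because $w(\Phi_K^+) \subseteq \Phi^+$, so the intersection is trivial. Since $\C$ has characteristic zero, a bijective morphism between these normal varieties is an isomorphism by Zariski's main theorem, which again gives birationality.
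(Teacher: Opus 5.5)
Your proposal is correct and follows essentially the same route as the paper. Surjectivity comes from the Bruhat cell decompositions; you use properness of $\varphi$ rather than matching cells, which is equally valid. Birationality comes from the fact that, since $w \in W^K$, $\pi$ restricts to an injective map from the dense cell $BwB/B$ onto $BwP/P$ — in your version an isomorphism, via the common parametrization by $U \cap wU^-w^{-1}$. Your explicit root-subgroup check and your attention to degree and inseparability just fill in details that the paper leaves implicit when it says ``injective on the dense cell, hence birational.''
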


A weight $\lambda \in \mathsf{P}^+$ is called a \emph{dominant integral weight} (resp. \emph{regular dominant integral weight}) \emph{with respect to} $K$ if it admits an expression of the form
$$
\lambda = \sum_{i \in I \setminus K} \lambda_i \varpi_i, \quad \lambda_i \in \Z_{\geq 0}  \quad (\mbox{resp. }  \lambda_i \in \Z_{> 0}). 
$$

Fix a dominant integral weight $\lambda \in \mathsf{P}^+$ with respect to $K$. It determines a character $\chi_\lambda$ of $P$ and the associated homogeneous line bundle
$$
\mcal{L}^K_\lambda = G \times_{P} \C
$$
where $P$ acts on $G \times \C$ by $(g,z) \cdot p = (gp, \chi_\lambda({p})z)$ for $g \in G, z \in \C$, and $p \in P$. In particular, when $K = \emptyset$, we write $\mcal{L}_\lambda \coloneqq \mcal{L}_\lambda^\emptyset$ for the line bundle on $G/B$. This line bundle is basepoint-free. Moreover, $\mcal{L}_\lambda$ is isomorphic to the pullback $\pi^* \mcal{L}_\lambda^K$ on $G/B$ and the induced bundle $\iota^* \mcal{L}_\lambda$ is isomorphic to $\varphi^* \mcal{L}_\lambda^K$ on $X_w$. 

For $K \subseteq I$ and $w \in W^K$, consider the cluster algebra structure $\mcal{A}(w)$ on $\C[U^-_w]$ given in Theorem~\ref{theorem_clustunitcell}. For each seed $\seed$ of $\mcal{A}(w)$, we obtain the quadruple $(X_w, \mcal{L}_\lambda, v_\seed, \tau_\lambda)$ as in~\eqref{equ_quadrupleonxw}, which defines a Newton--Okounkov body for $X_w \subseteq G/B$. By Proposition~\ref{prop_morphismsujbiration}, there exists a birational morphism
$$
\varphi \colon X_w (\subseteq G/B) \to X_w^K (\subseteq G/P).
$$
Choose a reference section $\tau^K_\lambda \in H^0(X_w^K, \mcal{L}^K_\lambda)$ such that $\varphi^* \tau^K_\lambda = \tau_\lambda$, and define $v_\seed^K \coloneqq v_\seed \circ \varphi^*$. This construction yields the quadruple $(X_w^K, \mcal{L}^K_\lambda, v^K_\seed, \tau^K_\lambda)$, which defines a Newton--Okounkov body for $X_w^K \subseteq G/P$. 

The next theorem compares the Newton--Okounkov bodies of $X_w^K$ and $X_w$. 

\begin{theorem}[Proposition~\ref{propxA}]\label{theorem_NOpolyopteforXWK}
Let $G$ be a simply connected semisimple algebraic group over $\C$ with the index set $I$ for simple roots of its Lie algebra $\frak{g}$. For a subset $K \subseteq I$, let $\lambda$ be a dominant integral weight with respect to $K$. For each $w \in W^K$ and each seed $\seed$ of the cluster algebra $\mcal{A}(w)$ in Definition~\ref{definition_clusteralg}, we have 
$$
S(X_w^K, \mcal{L}^K_\lambda, v^K_\seed, \tau^K_\lambda) =  S(X_w, \mcal{L}_\lambda, v_\seed, \tau_\lambda),
$$
where the right-hand side is the semigroup from the quadruple~\eqref{equ_quadrupleonxw}.
In particular, the Schubert variety $X_w^K$ admits a Newton--Okounkov body $\Delta(X_w^K, \mcal{L}^K_\lambda, v^K_\seed, \tau^K_\lambda)$ satisfying 
$$
\Delta(X_w^K, \mcal{L}^K_\lambda, v^K_\seed, \tau^K_\lambda) =  \Delta(X_w, \mcal{L}_\lambda, v_\seed, \tau_\lambda).
$$
\end{theorem}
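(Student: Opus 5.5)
The plan is to apply Lemma~\ref{proposition_NewtonOkounkovbodycoincide} to the birational morphism $\varphi \colon X_w \to X_w^K$ of Proposition~\ref{prop_morphismsujbiration}. We take $X = X_w$, $Y = X_w^K$, $\mcal{L}_Y = \mcal{L}^K_\lambda$, $\tau_Y = \tau^K_\lambda$ and $v_X = v_\seed$.

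First we check the hypotheses of the lemma. Schubert varieties in $G/B$ and $G/P$ are irreducible, normal and projective; normality is a standard fact, e.g.\ from \cite{Kum02}. The morphism $\varphi$ is surjective and birational, so $\dim X_w = \dim X_w^K = m = \ell(w)$. Since $X_w$ is projective and $X_w^K$ is separated, $\varphi$ is also projective, which is the hypothesis used in the proof of the lemma. The bundle $\mcal{L}^K_\lambda$ is basepoint-free because $\lambda$ is dominant with respect to $K$.

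Next we match the data on $X_w$. We have $\iota^*\mcal{L}_\lambda \cong \varphi^*\mcal{L}^K_\lambda$, as recorded before the theorem. The reference section $\tau^K_\lambda$ was chosen so that $\varphi^*\tau^K_\lambda = \tau_\lambda$. The valuation $v^K_\seed$ is defined as $v_\seed \circ \varphi^*$. So the quadruple $(X, \mcal{L}_X, v_X, \tau_X)$ of the lemma is exactly $(X_w, \mcal{L}_\lambda, v_\seed, \tau_\lambda)$ from \eqref{equ_quadrupleonxw}.

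The main point is that $\tau^K_\lambda$ exists. The lemma's proof shows that $\varphi^* \colon H^0(X_w^K, \mcal{L}^K_\lambda) \to H^0(X_w, \varphi^*\mcal{L}^K_\lambda)$ is an isomorphism, by Zariski's main theorem and the projection formula. The fixed isomorphism $\iota^*\mcal{L}_\lambda \cong \varphi^*\mcal{L}^K_\lambda$ carries $\tau_\lambda|_{X_w}$ to a section of $\varphi^*\mcal{L}^K_\lambda$. Hence $\tau_\lambda|_{X_w}$ has a unique preimage $\tau^K_\lambda$, and it is nonzero because $\tau_\lambda|_{X_w} \neq 0$. Concretely, $\tau^K_\lambda$ is the restriction of the lowest weight vector of $H^0(G/P,\mcal{L}^K_\lambda)$, which is nonzero on the open cell.

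The proof of the lemma actually establishes equality of the value semigroups, not only of the bodies. For each $k$, the map $f \mapsto \varphi^* f$ is a bijection $R_k(Y) \to R_k(X)$, and it satisfies $v_Y(f/\tau_Y^k) = v_X(\varphi^* f/\tau_X^k)$ by \eqref{equ_valuat}. This gives $S(X_w^K, \mcal{L}^K_\lambda, v^K_\seed, \tau^K_\lambda) = S(X_w, \mcal{L}_\lambda, v_\seed, \tau_\lambda)$. Equality of the Newton--Okounkov bodies then follows directly from the definition \eqref{equ_Deltav}.

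Once the semigroups agree, the properties listed in Proposition~\ref{propxA} transfer from Theorem~\ref{Thm: FO1}. These are finite generation, saturation, rationality of the polytope, and the tropicalized cluster structure.

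I expect the only delicate step to be the bookkeeping that identifies $\tau_\lambda$ and the line bundle isomorphism compatibly. Everything else is formal from the lemma.
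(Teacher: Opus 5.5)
Your proposal is correct and follows the paper's proof: the paper likewise applies Lemma~\ref{proposition_NewtonOkounkovbodycoincide} to the birational morphism $\varphi$ of Proposition~\ref{prop_morphismsujbiration}, using the normality of Schubert varieties. Your additional checks make explicit details the paper leaves implicit, namely the projectivity of $\varphi$, the existence of $\tau^K_\lambda$ via the isomorphism on global sections, and reading off equality of the semigroups (not just the bodies) from the lemma's proof.
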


\begin{proof}
By Proposition~\ref{prop_morphismsujbiration}, the map $\varphi$ is a birational morphism. Recall that every Schubert variety is a normal variety, see \cite[Theorem 2.1.1]{Bri04} for instance. The assertion then follows from Lemma~\ref{proposition_NewtonOkounkovbodycoincide}.
\end{proof}

For each seed $\seed$, we obtain a semigroup $S(X_w^K, \mcal{L}^K_\lambda, v^K_{\seed}, \tau^K_\lambda)$ and a Newton--Okounkov body $\Delta(X_w^K, \mcal{L}^K_\lambda, v^K_{\seed}, \tau^K_\lambda)$. By Theorem~\ref{theorem_NOpolyopteforXWK}, the family of semigroups and Newton--Okounkov bodies 
\begin{equation}\label{equ_familyofnewtonxwk}
\left\{ \left( S(X_w^K, \mcal{L}^K_\lambda, v^K_{\seed}, \tau^K_\lambda), \Delta(X_w^K, \mcal{L}^K_\lambda, v^K_{\seed}, \tau^K_\lambda) \right) \mid \mbox{$\seed$ is a seed} \right\}
\end{equation}
inherits all structural properties from the corresponding family $$
\left\{ \left( S(X_w, \mcal{L}_\lambda, v_\seed, \tau_\lambda), \Delta(X_w, \mcal{L}_\lambda, v_\seed, \tau_\lambda) \right) \mid \mbox{$\seed$ is a seed}  \right\}.
$$

\begin{corollary} \label{cor_theoremAstruct}
The family in~\eqref{equ_familyofnewtonxwk} satisfies the following$\colon$
\begin{enumerate}
\item each semigroup in~\eqref{equ_familyofnewtonxwk} is finitely generated and saturated,
\item each Newton--Okounkov body in~\eqref{equ_familyofnewtonxwk} is a rational polytope whose integral points coincide with the extended $g$-vectors of elements of the dual canonical basis,
\item the family of semigroups in~\eqref{equ_familyofnewtonxwk} has a tropicalized cluster structure,
\item the family of Newton--Okounkov polytopes in~\eqref{equ_familyofnewtonxwk} has a tropicalized cluster structure,
\end{enumerate}
\end{corollary}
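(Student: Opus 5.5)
The plan is to transport each assertion of Theorem~\ref{Thm: FO1} through the identification of semigroups and bodies in Theorem~\ref{theorem_NOpolyopteforXWK}. The first step is to fix $w \in W^K$ and a dominant integral weight $\lambda$ with respect to $K$, and to regard $\lambda$ as an element of $\mathsf{P}^+$. Then the quadruple $(X_w,\mcal{L}_\lambda,v_\seed,\tau_\lambda)$ of~\eqref{equ_quadrupleonxw} is exactly the one used by Fujita--Oya. The only compatibility to check is $\varphi^*\mcal{L}^K_\lambda \cong \iota^*\mcal{L}_\lambda$, which was noted before Theorem~\ref{theorem_NOpolyopteforXWK}. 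With the reference section chosen so that $\varphi^*\tau^K_\lambda=\tau_\lambda$, that theorem gives, for every seed $\seed$,
$$
S(X_w^K, \mcal{L}^K_\lambda, v^K_\seed, \tau^K_\lambda) = S(X_w, \mcal{L}_\lambda, v_\seed, \tau_\lambda),
$$
together with the equality of the corresponding Newton--Okounkov bodies.

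Items (1) and (2) then follow directly from Theorem~\ref{Thm: FO1}(1) and (2). Finite generation and saturation are intrinsic properties of a subsemigroup of $\Z_{\geq 0}\times\Z^J$, so they pass across the equality of semigroups. Likewise, being a rational polytope whose lattice points are the extended $g$-vectors of the dual canonical basis elements is a property of the subset $\Delta\subseteq\R^J$ itself, and these bodies are literally equal.

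For (3) and (4), I would observe that the index set $\mathbb{T}$ of seeds is the same on both sides, since both families are built from the cluster algebra $\mcal{A}(w)$. Moreover, the tropicalized mutation $\mu^T_k$ in~\eqref{equ_tropical1} depends only on the exchange matrix $\varepsilon_t$ and not on the variety. So if $t'=\mu_k(t)$, then
$$
S^K_{t'} = S_{t'} = \widehat{\mu}^T_k(S_t) = \widehat{\mu}^T_k(S^K_t)
$$
by Theorem~\ref{Thm: FO1}(3), and the analogous chain gives the statement for the bodies using Theorem~\ref{Thm: FO1}(4).

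I do not expect a genuine obstacle here. The one point needing care is that the $\lambda$ in Theorem~\ref{Thm: FO1} ranges over all of $\mathsf{P}^+$, whereas here $\lambda$ is only dominant with respect to $K$, and hence possibly non-regular. The Fujita--Oya statements are formulated for arbitrary dominant $\lambda$ with globally generated $\mcal{L}_\lambda$, so non-regularity causes no problem. In addition, Lemma~\ref{proposition_NewtonOkounkovbodycoincide} needs only basepoint-freeness and the normality of Schubert varieties, both of which have already been secured.
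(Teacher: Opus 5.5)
Your proposal is correct and follows the paper's argument. By Theorem~\ref{theorem_NOpolyopteforXWK}, the semigroups and Newton--Okounkov bodies of $(X_w^K,\mathcal{L}^K_\lambda)$ coincide with those of $(X_w,\mathcal{L}_\lambda)$ for every seed, so all four properties are inherited from Theorem~\ref{Thm: FO1}; your remarks on the common seed index set, the $\mu^T_k$ depending only on $\varepsilon_t$, and non-regular $\lambda\in\mathsf{P}^+$ just make explicit what the paper leaves implicit.
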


\begin{remark}\label{def_clusteralgforSchpartialflag}
For $w \in W^K$, the coordinate ring $\C[U^-_{w}]$ carries a cluster algebra structure that is isomorphic to $\mcal{A}(\seed_0)$ defined in~\eqref{equ_initialseed}. By Theorem~\ref{theorem_NOpolyopteforXWK}, we may denote by $\mcal{A}(w)$ this cluster algebra which we use to construct the Newton--Okounkov bodies of both $X_w^K \subseteq G/P$ and $X_w \subseteq G/B$, cf. Remark~\ref{remark_notationAw}. 
\end{remark}

We now discuss several applications of Theorem~\ref{theorem_NOpolyopteforXWK} and Corollary~\ref{cor_theoremAstruct}. Assume in addition that the line bundle $\mcal{L}^K_\lambda$ is very ample (equivalently, $\lambda$ is a regular dominant integral weight with respect to $K$). For each seed $\seed$, there exists a flat degeneration of $X_w^K$ to a toric variety whose normalization is the toric variety associated with $\Delta(X_w^K, \mcal{L}^K_\lambda, v^K_{\seed}, \tau^K_\lambda)$ by \cite{And13}. Since the semigroup $S(X_w, \mcal{L}_\lambda, v_\seed, \tau_\lambda)$ is saturated, it follows that $S(X_w^K, \mcal{L}^K_\lambda, v^K_{\seed}, \tau^K_\lambda)$ is also saturated. Hence, the central fiber of this flat degeneration is a normal toric variety. Moreover, by \cite{HK15}, there exists a completely integrable system 
$$
\Phi_{\seed} \colon X_w^K \to \R^m
$$ 
whose image is  $\Delta(X_w^K, \mcal{L}^K_\lambda, v^K_{\seed}, \tau^K_\lambda)$, where $m = \dim_\C X_w^K$.

\begin{corollary}\label{cor_constructiontoric}
Assume in addition that $\mcal{L}^K_\lambda$ is a very ample line bundle over $X_w^K$. For each seed $\seed$ of the cluster algebra $\mcal{A}(w)$ in Theorem~\ref{theorem_clustunitcell}, there exist
\begin{enumerate}
\item a flat degeneration of $X_w^K$ into the normal toric variety associated with the Newton--Okounkov polytope $\Delta(X_w^K, \mcal{L}^K_\lambda, v^K_{\seed}, \tau^K_\lambda)$ and
\item a completely integrable system $\Phi_\seed \colon X_w^K \to \R^{m}$ whose image is $\Delta(X_w^K, \mcal{L}^K_\lambda, v^K_{\seed}, \tau^K_\lambda)$.
\end{enumerate}
\end{corollary}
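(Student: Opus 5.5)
The statement is Corollary~\ref{cor_constructiontoric}. The plan is to reduce both parts to the general results of \cite{And13} and \cite{HK15}. The needed inputs are Theorem~\ref{theorem_NOpolyopteforXWK} and Corollary~\ref{cor_theoremAstruct}, applied to the quadruple $(X_w^K, \mcal{L}^K_\lambda, v^K_\seed, \tau^K_\lambda)$.

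Fix a seed $\seed$. By Theorem~\ref{theorem_NOpolyopteforXWK}, the value semigroup of this quadruple equals $S(X_w, \mcal{L}_\lambda, v_\seed, \tau_\lambda)$. By Theorem~\ref{Thm: FO1}(1), the latter is finitely generated and saturated. Hence the semigroup for $X_w^K$ is finitely generated and saturated, and $\Delta_\seed := \Delta(X_w^K, \mcal{L}^K_\lambda, v^K_\seed, \tau^K_\lambda)$ is a rational polytope. We must also check the hypotheses of \cite{And13}. First, $X_w^K$ is a normal irreducible projective variety of dimension $m$, since Schubert varieties are normal \cite{Bri04}. Second, the valuation $v^K_\seed = v_\seed\circ\varphi^*$ is $\Z^m$-valued with one-dimensional leaves, because $\varphi^*$ is an isomorphism of function fields. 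Third, the section ring of the very ample bundle $\mcal{L}^K_\lambda$ is finitely generated.

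For part (1), we apply Anderson's theorem \cite{And13}. It says that whenever the value semigroup $S$ is finitely generated, the Rees-type filtration of the section ring induced by $v^K_\seed$ yields a flat family over $\mathbb{A}^1$. The general fibre of this family is $X_w^K$, and its special fibre is $\mathrm{Proj}\,\C[S]$. Since $S$ is saturated and generated in degree one, $\C[S]$ is normal and $\mathrm{Proj}\,\C[S]$ is the normal projective toric variety associated with $\Delta_\seed$. One technical point needs care: Anderson's construction uses a valuation with respect to a total order, and $S$ must be generated in degree one for the Proj to be the toric variety of $\Delta_\seed$ rather than of a rescaling. We obtain degree-one generation from saturation together with the fact that lattice points of the rational polytope $k\Delta_\seed$ are exactly the degree $k$ elements. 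Integrality of $\Delta_\seed$ (its integral points are $g$-vectors of dual canonical basis elements) lets us conclude, since the associated Hilbert functions agree. If degree-one generation fails, we would instead pass to a Veronese subring, which does not change the toric variety.

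For part (2), we equip $X_w^K$ with the Kähler form obtained from the projective embedding given by $\mcal{L}^K_\lambda$. We then invoke Harada--Kaveh \cite{HK15}: whenever the value semigroup is finitely generated, there is a completely integrable system on the smooth locus whose image is the Newton--Okounkov body. This system is defined on all of $X_w^K$ via gradient-Hamiltonian flow along the toric degeneration from part (1). That yields $\Phi_\seed\colon X_w^K\to\R^m$ with image $\Delta_\seed$.

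The main obstacle is the part (2) hypothesis check, namely that \cite{HK15} applies to a possibly singular $X_w^K$. Their result gives a continuous map on all of $X_w^K$ which is an integrable system on an open dense subset. We would therefore state (2) in that sense, and in the smooth case as in Corollary~\ref{cor_B}.
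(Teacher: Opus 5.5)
Your proposal is correct and follows the paper's own argument. Theorem~\ref{theorem_NOpolyopteforXWK} identifies the value semigroup with the Fujita--Oya one, which is finitely generated and saturated; \cite{And13} then gives a toric degeneration whose central fibre $\mathrm{Proj}\,\C[S]$ is normal by saturation, and \cite{HK15} gives $\Phi_\seed$. Your smoothness caveat for (2) is exactly the hypothesis the paper adds in Corollary~\ref{cor_B}.

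The one blemish is the degree-one-generation detour. Its justification is wrong: a lattice polytope need not be normal, and you have not shown that $\Delta_\seed$ is a lattice polytope. It is also unneeded, since saturation alone makes $\mathrm{Proj}\,\C[S]$ the toric variety of $\Delta_\seed$, and passing to a Veronese subring does not change it, as your fallback already says.
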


We now specialize to the case where $X_w^K = G/P$ by choosing the minimal length representative $w_K$ of the coset $w_0 W_K$ in $W$ where $w_0$ is the longest element of $W$. Let $\lambda$ be chosen so that $\mathcal{L}^K_\lambda$ is the anticanonical line bundle, that is, a positive integer multiple of the anticanonical weight 
$$
\lambda = \sum_{i \in I \setminus K} \varpi_i.
$$ 
Then the K\"{a}hler form induced by the projective embedding associated with $\mcal{L}^K_\lambda$ is \emph{monotone}, which means that the first Chern class is positively proportional to the K\"{a}hler form. By \cite[Proposition A]{CKKP25}, the associated completely integrable system carries a monotone Lagrangian torus located at the ``center" of $\Delta(G/P, \mcal{L}^K_\lambda, v^K_{\seed}, \tau^K_\lambda)$. Here, a Lagrangian submanifold $L$ is called \emph{monotone} if for every relative homotopy class $\beta \in \pi_2(X,L)$, the Maslov index of $\beta$ is positively proportional to its symplectic area, see \cite{Oh93, BC09}.

Recall that these Newton--Okounkov polytopes are in one-to-one correspondence with the seeds of the cluster algebra structure on $\C[U^-_{w_K}]$. For later use, we introduce the following notation for the cluster algebra $\mcal{A}(\seed_0)$.

\begin{definition}\label{def_clusteralgforpartialflag}
For a parabolic subgroup $P$, the coordinate ring $\C[U^-_{w_K}]$ carries a cluster algebra structure isomorphic to $\mcal{A}(w_K)$ defined in~\eqref{equ_initialseed}. We denote this cluster algebra by $\mcal{A}_P$. 
\end{definition}

\begin{corollary}\label{cor_monotoneLagtoriconst}
Assume that $\mcal{L}^K_\lambda$ is an anticanonical line bundle on $G/P$. Then, for each seed $\seed$ of the cluster algebra $\mcal{A}_P$, the completely integrable system $\Phi_\seed \colon G/P \to \R^{m}$ admits a monotone Lagrangian torus $L_\seed$ in $G/P$ located at the center of $\Delta(G/P, \mcal{L}^K_\lambda, v^K_{\seed}, \tau^K_\lambda)$.
\end{corollary}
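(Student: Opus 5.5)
The corollary follows from Corollary~\ref{cor_constructiontoric} together with \cite[Proposition A]{CKKP25}. The argument has three steps: check the hypotheses of that proposition for the quadruple $(G/P, \mcal{L}^K_\lambda, v^K_\seed, \tau^K_\lambda)$, produce the torus, and identify the center.

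\emph{Step 1: setup.} Take $w = w_K$, so that $X^K_{w_K} = G/P$ is smooth and the Newton--Okounkov body has full dimension $m = \ell(w_K) = \dim_\C G/P$. The anticanonical weight $\lambda = c\sum_{i\in I\setminus K}\varpi_i$ is regular with respect to $K$, so $\mcal{L}^K_\lambda$ is very ample. Corollary~\ref{cor_constructiontoric} then gives, for each seed $\seed$ of $\mcal{A}_P$:
\begin{itemize}
\item a flat degeneration of $G/P$ to the normal toric variety $X_{\Delta_\seed}$, where normality comes from saturation in Corollary~\ref{cor_theoremAstruct}(1);
\item a completely integrable system $\Phi_\seed \colon G/P \to \Delta_\seed$, constructed as in \cite{HK15} from the gradient-Hamiltonian flow.
\end{itemize}
The Kähler form pulled back from the projective embedding given by $\mcal{L}^K_\lambda$ is monotone, because $c_1(G/P)$ is a positive multiple of $c_1(\mcal{L}^K_\lambda)$.

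\emph{Step 2: reflexivity of the polytope.} We need $\Delta_\seed$, after translation by a lattice vector, to be a reflexive (or at least Fano/$\Q$-Gorenstein) polytope with a unique interior lattice point, the ``center''. This is the main obstacle. The plan is to use the flat degeneration:
\begin{itemize}
\item The central fiber $X_{\Delta_\seed}$ is a normal Gorenstein toric Fano variety. Indeed, the anticanonical class is preserved in the degeneration and $\mcal{L}^K_\lambda$ is anticanonical, so $\Delta_\seed$ is the polytope of the anticanonical polarization on the toric fiber.
\item If this is not automatic, we argue mutation by mutation instead. The tropical map $\mu^T_k$ is linear on each of its two chambers, so it preserves the property that the facet inequalities have the form $\langle n_F, \mathbf{u}-\mathbf{u}_0\rangle \ge -1$, provided the center $\mathbf{u}_0$ lies on the wall $u_k=0$ or transforms compatibly.
\item We can invoke \cite{CKKP25}, where exactly this was established for $G/B$, and transport it via Theorem~\ref{theorem_NOpolyopteforXWK}. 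That theorem identifies the polytopes for $G/P$ with those for $X_{w_K}\subseteq G/B$.
\end{itemize}

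\emph{Step 3: the torus.} Once reflexivity is in place, \cite[Proposition A]{CKKP25} applies. It says: for a monotone Kähler manifold with a toric degeneration to a Gorenstein Fano toric variety whose moment polytope is $\Delta_\seed$, the fiber $L_\seed \coloneqq \Phi_\seed^{-1}(\mathbf{u}_0)$ over the unique interior lattice point is a monotone Lagrangian torus. The proof of that proposition runs as follows:
\begin{itemize}
\item The fiber lies in the open dense set where the gradient-Hamiltonian flow is a symplectomorphism onto the toric part.
\item It is therefore Lagrangian, and it is a torus because $\mathbf{u}_0$ is interior.
\item Monotonicity follows by comparing Maslov indices and symplectic areas of disks with those in the toric central fiber, where the fiber over the center of a reflexive polytope is monotone by Cho--Oh's formula.
\end{itemize}
Applying this to every seed yields the family $\{L_\seed\}$.
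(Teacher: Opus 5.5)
Your proposal is correct and is essentially the paper's argument: take $w=w_K$ and the (regular, hence very ample) anticanonical weight in Corollary~\ref{cor_constructiontoric}, note that the induced K\"ahler form is monotone, and apply \cite[Proposition A]{CKKP25}. The paper does not carry out your Step~2 separately; the citation covers it. If you keep Step~2, rely only on its first route, where the anticanonical polarization degenerates to the anticanonical polarization of the normal central fiber. The third route does not work: Theorem~\ref{theorem_NOpolyopteforXWK} identifies $\Delta_\seed$ with a body of the Schubert variety $X_{w_K}\subseteq G/B$ for the pulled-back bundle $\varphi^*\mathcal{L}^K_\lambda$, not with an anticanonical body of $G/B$ as treated in \cite{CKKP25}.
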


\subsection{Comparison of Newton--Okounkov polytopes of Schubert varieties in $G/P$}

The goal of this subsection is to compare the Newton--Okounkov polytopes, toric degenerations, and monotone Lagrangian tori constructed in Section~\ref{subsection_constNO} in the simply laced case. If the associated cluster algebra contains infinitely many distinct seeds, then each of these families also contains infinitely many pairwise distinct members.

We first introduce an equivalence relation on Newton--Okounkov polytopes.

\begin{definition}
An \emph{integral affine transformation} is a map $\phi \colon \R^m \to \R^m$ of the form
$$
\phi (\mathbf{u} ) = A \mathbf{u} + \mathbf{u}_0 \,\, \mbox{ where $A \in \mathrm{GL}(m, \Z)$ and $\mathbf{u}_0 \in \Z^m$}.
$$

Let $\{ \Delta_\seed \}$ be a family of polytopes in $\R^m$. We say that
$\Delta_\seed$ and $\Delta_{\seed^\prime}$ are \emph{equivalent} and write $\Delta_\seed \sim \Delta_{\seed^\prime}$ if there exists an integral affine transformation $\phi$ such that $\Delta_{\seed^\prime} = \phi (\Delta_\seed)$.
\end{definition}

We also introduce an equivalence relation on Lagrangian submanifolds.

\begin{definition}
Let $\{ L_\seed \}$ be a family of Lagrangian submanifolds in a symplectic manifold $(X, \omega)$. We say that $L_\seed$ and $L_{\seed^\prime}$ are \emph{equivalent} and write $L_\seed \sim L_{\seed^\prime}$ if there exists a symplectomorphism $\phi$ on $X$ such that $L_{\seed^\prime} = \phi (L_\seed)$. \end{definition}

Recall that a cluster algebra $\mcal{A}$ is said to be of \emph{infinite type} if it has infinitely many distinct seeds, see Definition~\ref{def_clufininfin}. We now state the main results. 

\begin{theorem}[Theorem~\ref{thmxB}]\label{theorem_distinguishGP}
Let $G$ be a simply connected semisimple algebraic group of simply laced type over $\C$. For a subset $K \subseteq I$, let $P$ be the parabolic subgroup corresponding to $K$ and let $\lambda$ be a regular dominant integral weight with respect to $K$ so that the line bundle $\mcal{L}^K_\lambda$ over $G/P$ is very ample. For each seed $\seed$ of the cluster algebra $\mcal{A}_{P}$, we set 
$$
\Delta_\seed \coloneqq \Delta(X_w^K, \mcal{L}^K_\lambda, v^K_\seed, \tau^K_\lambda),  
$$
the Newton--Okounkov polytope constructed in Theorem~\ref{theorem_NOpolyopteforXWK}. If $\mcal{A}_P$ is of infinite type, then the set of equivalence classes 
\begin{equation}\label{equ_NOpolyseed}
\bigl\{ [\Delta_\seed] \mid\mbox{$\seed$ is a seed\,} \bigr\}
\end{equation}
is infinite.
\end{theorem}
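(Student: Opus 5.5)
The plan is to apply the distinguishing criterion of Theorem~\ref{theorem_maincriterion} (the refined version of the criterion of \cite{CKKP25} for full-dimensional Newton--Okounkov polytopes with a tropicalized cluster structure). By Corollary~\ref{cor_theoremAstruct}, the family $\{\Delta_\seed\}$ is a family of full-dimensional rational polytopes in $\R^J$ with a tropicalized cluster structure; full-dimensionality holds because $m = \dim_\C G/P = |J|$. The criterion needs two inputs: a \emph{non-negativity} condition and a \emph{multiplicity} condition. Once both are verified, the criterion gives infinitely many equivalence classes in~\eqref{equ_NOpolyseed}.

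The non-negativity condition asks for a frozen index $j\in J_\fz$ with $\Delta_\seed\subseteq\{u_j\ge 0\}$ for every seed $\seed$. I would take $j=j_k$ for some $k\in\mathrm{supp}(w_K)\setminus I(w_K)$ and invoke Corollary~\ref{cor_propertyofclustercone}. For $w=w_K$ this set is nonempty: $w_Ks_i>w_K$ exactly when $i\in K$, since $w_K$ is minimal in $w_0W_K$, and $I\setminus K$ is nonempty unless $G/P$ is a point. So every $i\in I\setminus K$ is admissible. The underlying argument reads lattice points of $\Delta_\seed$ as extended $g$-vectors of dual canonical basis elements (Corollary~\ref{cor_theoremAstruct}(2)). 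Their $j_k$-components are then shown to be non-negative by passing to the monoidal categorification \cite{KKKO18,KKOP21,KKOP23}. There, the frozen variable is a determinantial module that commutes with every simple module, and \cite{KN25} is used to control the localization factors.

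The multiplicity condition is the combinatorial heart, and I expect it to be the main obstacle. One must produce a sequence of seeds $\seed_1,\seed_2,\dots$ reachable by mutation such that the number of arrows between the frozen vertex $j_k$ and the unfrozen part of the quiver grows without bound. The plan proceeds in four steps.
\begin{enumerate}
\item Use the classification \cite{FST12} to show that, for simply laced $w_K$, infinite type of $\mcal{A}_P$ is equivalent to infinite mutation type.
\item Encode the initial quiver~\eqref{equ_GLSseed} together with a marking of the frozen vertices indexed outside $I(w)$. Define a partial order on such marked matrices that is compatible with the weak Bruhat order, in the spirit of \cite{GLS08}, so that an unbounded-multiplicity sequence for a smaller marked quiver lifts to a larger one.
\item Reduce to minimal cases, namely $w$ (for $G$ and for Levi subgroups) at the threshold where the unfrozen quiver is of finite mutation type, but adding the marked frozen vertex yields infinite mutation type.
\item In each minimal case, exhibit explicit mutation sequences, for instance periodic ones along an affine or Kronecker-type subquiver, along which the arrow multiplicities at $j_k$ grow linearly.
\end{enumerate}

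Finally, combining the two conditions with Theorem~\ref{theorem_maincriterion} yields that the $\Delta_{\seed_n}$ fall into infinitely many equivalence classes, proving the theorem.
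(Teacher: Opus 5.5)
\paragraph{Missing hypothesis.} Your overall architecture is the paper's: apply Theorem~\ref{theorem_maincriterion}, take non-negativity at the frozen indices $j_k$ with $k\in I\setminus K$ from Corollary~\ref{cor_propertyofclustercone}, and get multiplicity from marked quivers, the weak Bruhat order, minimal cases and the finite-mutation-type classification. Your observation that $w_Ks_i>w_K$ exactly when $i\in K$, so that the admissible indices are the marked ones, is correct.

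The gap is that Theorem~\ref{theorem_maincriterion} has \emph{three} hypotheses, not two, and you never address condition~(1). That condition asks for a rational point $\mathbf{u}_0\in\mathrm{Int}(\Delta_{\seed_0})$ fixed by all iterated tropicalized mutations. By Lemma~\ref{lemma_characfixed}, this means an interior rational point all of whose $J_\uf$-coordinates vanish.

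This is not a formality. The origin is such a fixed point, but it lies on the boundary, since non-negativity places it on $\{u_s=0\}$. Nothing you cite forces the subspace $\{u_j=0,\ j\in J_\uf\}$ to meet the interior of $\Delta_{\seed_0}$.

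Condition~(1) is exactly what makes the criterion's proof work:
\begin{itemize}
\item Because $\mathbf{u}_0$ is fixed and tropical mutations carry $\Delta_\seed$ homeomorphically onto $\Delta_{\mu_k\seed}$, the point $\mathbf{u}_0$ is interior to every $\Delta_{\seed_\ell}$.
\item Because $u_{0,r}=0$ for the unfrozen index $r$, one has $\langle\mathbf{u}_0,\mathbf{e}_s\rangle=\langle\mathbf{u}_0,\mathbf{e}_s-\varepsilon^\ell_{r,s}\mathbf{e}_r\rangle=p/q$, with $p,q$ independent of $\ell$.
\item Only then does the segment in $(\Delta_{\seed_\ell}-\mathbf{u}_0)^\circ$ carry at least $-q\varepsilon^\ell_{r,s}+1$ points of $\frac1p\Z^m$ for a \emph{fixed} $p$. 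This is what gives $n(\Delta_{\seed_\ell})\to\infty$.
\end{itemize}
The paper supplies this input separately. By \cite{FO17}, some $\Delta_\seed$ is unimodularly equivalent to a string polytope of the Schubert variety. The explicit string-polytope inequalities \cite{BZ01,Lit98} then exhibit an interior rational point with vanishing unfrozen coordinates.

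\paragraph{Multiplicity step.} There are a few smaller inaccuracies here.
\begin{itemize}
\item The paper does not build periodic mutation sequences. It certifies infinite mutation type of $\mathsf{Q}'\cup\{f\}$ via the Felikson--Tumarkin criteria. It then obtains unbounded arrows at $f$ abstractly from Proposition~\ref{proposition_multiplicityquiver}.
\item The finite-mutation-type piece $\mathsf{Q}'$ need not be the whole unfrozen quiver; in the $A_5$ example it is a proper subquiver.
\item The criterion needs a fixed unfrozen $r$ with $\varepsilon^\ell_{r,s}\to-\infty$, whereas Proposition~\ref{proposition_multiplicityquiver} lets the vertex $i_\ell$ vary with $\ell$. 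You need a pigeonhole over the finitely many unfrozen vertices to fix $r$.
\item The set of minimal cases is infinite: $A_n$ with $I\setminus K=\{2,n-1\}$, and $D_n$ with $I\setminus K=\{2\}$. So ``treat each minimal case explicitly'' must be supplemented by the stabilization Lemmas~\ref{lemma_an1} and~\ref{lemma_dn2}.
\end{itemize}
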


By Corollary~\ref{cor_constructiontoric}, we obtain a family of toric degenerations of $G/P$. In particular, Theorem~\ref{theorem_distinguishGP} implies that this family contains infinitely many pairwise distinct toric limits.

\begin{theorem}[Theorem~\ref{thmxD}]\label{theorem_distinguishGPmonotone}
Assume in addition that the line bundle $\mcal{L}^K_\lambda$ over $G/P$ is anticanonical. For each seed $\seed$ of the cluster algebra $\mcal{A}_P$, let $L_\seed$ be the monotone Lagrangian torus constructed in Corollary~\ref{cor_monotoneLagtoriconst}. If $\mcal{A}_P$ is of infinite type, then the set of equivalence classes 
$$
\bigl\{ [L_\seed] \mid \mbox{$\seed$ is a seed\,} \bigr\}
$$
is infinite.
\end{theorem}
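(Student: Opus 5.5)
The plan is to deduce Theorem~\ref{theorem_distinguishGPmonotone} from Theorem~\ref{theorem_distinguishGP}, following the scheme of \cite{CKKP25} for complete flag varieties. The point is to show that a symplectomorphism carrying $L_\seed$ to $L_{\seed'}$ forces $\Delta_\seed \sim \Delta_{\seed'}$, at least after restricting to a suitable infinite subfamily of seeds. The invariant I would use is the disk potential of a monotone Lagrangian torus. Since $L_\seed$ is monotone, its Floer-theoretic disk potential $W_{L_\seed}$ is well defined on $H^1(L_\seed;\C^*)\cong (\C^*)^m$. It counts Maslov index two holomorphic disks and is invariant under symplectomorphisms, up to the $\mathrm{GL}(m,\Z)$ change of coordinates on $H_1(L_\seed;\Z)$. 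The Newton polytope of $W_{L_\seed}$, taken up to $\mathrm{GL}(m,\Z)$, is therefore an invariant of the equivalence class $[L_\seed]$.

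First, I relate the potential to the polytope. The toric degeneration of Corollary~\ref{cor_constructiontoric} and the fact that $L_\seed$ sits over the center of $\Delta_\seed$ allow one to invoke \cite[Proposition B]{CKKP25}. By that result, and because $\mathcal{L}^K_\lambda$ is anticanonical so that $\Delta_\seed$ is reflexive after translating its center to the origin, the Newton polytope of $W_{L_\seed}$ contains, and in the relevant cases agrees with, the polar dual $\Delta_\seed^\vee$. At minimum, the leading terms of $W_{L_\seed}$ correspond to the facets of $\Delta_\seed$; these terms come from the toric disk count, and the degeneration argument shows they persist. Dualizing, a symplectomorphism $L_\seed \to L_{\seed'}$ then yields an integral linear map carrying $\Delta_\seed$ to $\Delta_{\seed'}$ after both are centered, so $\Delta_\seed \sim \Delta_{\seed'}$.

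Next, Theorem~\ref{theorem_distinguishGP}, applied with this anticanonical $\lambda$ (which is regular with respect to $K$), supplies infinitely many pairwise inequivalent $\Delta_\seed$. The corresponding $L_\seed$ are then pairwise inequivalent, so the set $\{[L_\seed]\}$ is infinite.

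The main obstacle is the first step, namely certifying that the disk potential really remembers $\Delta_\seed$ and not merely some coarser datum. I would handle it by avoiding a full identification of the potential. Instead, I would use only those features that the criterion of Theorem~\ref{theorem_maincriterion} actually exploits. Concretely, the distinguishing invariant in that criterion is built from the half-space $u_{j_k}\ge 0$ of Corollary~\ref{cor_propertyofclustercone} and from the growing arrow multiplicities; in particular, it depends on the facet normals of $\Delta_\seed$. The Laurent exponents of $W_{L_\seed}$ realize exactly those normals via \cite[Proposition B]{CKKP25}. It therefore suffices to check that the same unbounded quantity is computable from the exponent set of $W_{L_\seed}$ up to $\mathrm{GL}(m,\Z)$.
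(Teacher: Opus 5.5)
Your strategy matches what the paper indicates. Theorem~\ref{theorem_distinguishGPmonotone} is meant to follow by transporting the polytope criterion of Theorem~\ref{theorem_maincriterion} to the tori through \cite{CKKP25}, that is, by reading the polar dual of $\Delta_\seed$, centred at the mutation-fixed point $\mathbf{u}_0$, off the disk potential of $L_\seed$. As written, however, the transfer step you flag does not close.

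Your main line is $L_\seed\sim L_{\seed'}\Rightarrow\Delta_\seed\sim\Delta_{\seed'}$, followed by Theorem~\ref{theorem_distinguishGP} as a black box. This needs two things you have not secured.
\begin{itemize}
\item It needs $\mathrm{Newt}(W_{L_\seed})$ to \emph{equal} the rescaled polar dual $(\Delta_\seed-\mathbf{u}_0)^\circ$. You only assert containment ``at minimum'' and equality ``in the relevant cases''.
\item It needs the recentring to be integral. If $B\in\mathrm{GL}(m,\Z)$ carries $\Delta_\seed-\mathbf{u}_0$ to $\Delta_{\seed'}-\mathbf{u}_0$, then $\Delta_{\seed'}=B\Delta_\seed+(\mathrm{id}-B)\mathbf{u}_0$. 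Equivalence here allows only translations in $\Z^m$, so $(\mathrm{id}-B)\mathbf{u}_0$ must lie in $\Z^m$.
\end{itemize}
The reflexivity you invoke to get integrality is not established in the paper. Monotonicity of the fibre over $\mathbf{u}_0$ gives equal lattice distance from $\mathbf{u}_0$ to all facets. It does not give that $\Delta_\seed$ is a lattice polytope or that $\mathbf{u}_0\in\Z^m$, and the paper only uses a rational interior fixed point. Also, ``leading terms'' has no meaning here, since all Maslov index two disks on a monotone torus have the same area.

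Your fallback points the right way but misplaces the relevant vectors. By \cite[Lemma~4.16]{CKKP25}, $\mathbf{e}_s$ and $\mathbf{e}_s-\varepsilon^\ell_{r,s}\mathbf{e}_r$ are normals of \emph{supporting} half-spaces of $\Delta_{\seed_\ell}$. They need not be facet normals, so nothing forces them to be exponents of $W_{L_{\seed_\ell}}$.

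What suffices is the containment you already claim, namely that every facet contributes a term:
\[
\mathrm{Newt}(W_{L_\seed})\supseteq c\,(\Delta_\seed-\mathbf{u}_0)^\circ,
\]
where $c$ is the common facet distance. It is fixed by the monotonicity constant, so it does not depend on $\seed$. With this containment the argument closes as follows.
\begin{itemize}
\item Fix $M$ with $c/u_{0,s}\in\frac{1}{M}\Z$, and set $N(L)\coloneqq\#\bigl(\tfrac{1}{M}\Z^m\cap\mathrm{Newt}(W_L)\bigr)$.
\item $N(L)$ is finite. It is invariant under symplectomorphisms, because these act on $H_1(L;\Z)$ through $\mathrm{GL}(m,\Z)$, which preserves $\tfrac1M\Z^m$.
\item Since $u_{0,r}=0$ by Lemma~\ref{lemma_characfixed}, the two half-spaces put both $\frac{c}{u_{0,s}}\mathbf{e}_s$ and $\frac{c}{u_{0,s}}(\mathbf{e}_s-\varepsilon^\ell_{r,s}\mathbf{e}_r)$ into $c\,(\Delta_{\seed_\ell}-\mathbf{u}_0)^\circ$. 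By convexity the segment joining them lies there too.
\item That segment contains at least $1-\varepsilon^\ell_{r,s}$ points of $\tfrac1M\Z^m$, so $N(L_{\seed_\ell})\to\infty$.
\end{itemize}
This is exactly the counting in the proof of Theorem~\ref{theorem_maincriterion}, now applied to $\mathrm{Newt}(W_L)$. It needs no equality of Newton polytope and polar dual, no reflexivity, and no integrality of $\mathbf{u}_0$.
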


\begin{remark}\label{remark_sameobjdifseed}
Even if we have two different seeds, the associated Newton--Okounkov polytopes and monotone Lagrangian tori may coincide, see \cite{CKLP21,Kim24}. One cannot expect that there is a one-to-one correspondence between the set of equivalence classes and the set of distinct seeds. 
\end{remark}

To prove Theorem~\ref{theorem_distinguishGP}, we require a refinement of \cite[Theorem C]{CKKP25}.

\begin{theorem}[cf.\ Theorem C in \cite{CKKP25}]\label{theorem_maincriterion}
Let $X$ be an irreducible projective variety and let $\mathcal{L}$ be a globally generated and big line bundle over $X$. Suppose that $(X, \mcal{L})$ admits a family 
$$
\{ \Delta_\seed \mid \mbox{$\seed$ is a seed\,} \}
$$
of Newton--Okounkov bodies containing the origin, arising from a family of finitely generated saturated semigroups endowed with a tropical cluster structure. Assume that there exists a polytope $\Delta_{\seed_0}$ in this family satisfying 
\begin{enumerate}
\item (Fixed point) there exists a point $\mathbf{u}_0 \in \mathrm{Int}(\Delta_{\seed_0}) \cap \Q^m$ that is fixed under all iterated tropicalized cluster mutations. 
\end{enumerate}
Suppose further that there exist a sequence $\left( \seed_\ell \right)_{\ell \in \mathbb{N}}$ of seeds and indices $(s, r) \in J_\mathrm{uf} \times J$ such that
\begin{enumerate}
\setcounter{enumi}{1}
\item (Multiplicity) the sequence $( \varepsilon^\ell_{s, r} )_{\ell \in \mathbb{N}}$ of the $(s, r)$-entries in the extended exchange matrices diverges to $- \infty$ as $\ell \to \infty$ and
\item (Non-negativity) for every seed $\seed$, the Newton--Okounkov polytope $\Delta_{\seed}$ is contained in the half-space $\{ \mathbf{u} \in \R^m \mid u_s \geq 0 \}$.
\end{enumerate}
Then the set~\eqref{equ_NOpolyseed} of equivalence classes is infinite.
\end{theorem}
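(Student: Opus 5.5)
The plan is to check first whether the three hypotheses, read literally with $s \in J_\mathrm{uf}$, can hold at the same time. I expect they cannot, in which case the conclusion holds vacuously. I will use only the fixed-point hypothesis~(1), the non-negativity hypothesis~(3), and the fact that $s$ is a mutable direction at every seed. The multiplicity hypothesis~(2) should only be needed to guarantee that an unfrozen index $s$ is given.

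\textbf{Step 1: the polytopes are full dimensional.} Since $\mathcal{L}$ is big and globally generated, the Newton--Okounkov body $\Delta_{\seed_0}$ has dimension $m = \dim X$, by the Kaveh--Khovanskii and Lazarsfeld--Musta\c{t}\u{a} theory; its Euclidean volume equals $\mathrm{vol}(\mathcal{L})/m! > 0$. Hence $\mathrm{Int}(\Delta_{\seed_0})$ is a nonempty open subset of $\R^m$. Hypothesis~(3) gives $\Delta_{\seed_0} \subseteq \{ u_s \geq 0\}$. The interior of a subset of a closed half-space lies in the corresponding open half-space, so the point $\mathbf{u}_0$ from~(1) satisfies $(\mathbf{u}_0)_s > 0$.

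\textbf{Step 2: the fixed point forces $(\mathbf{u}_0)_s = 0$.} Because $s \in J_\mathrm{uf}$ and the set of unfrozen indices is the same for every seed, the mutation $\seed_0' \coloneqq \mu_s(\seed_0)$ is defined. By the tropical cluster structure, the corresponding map is $\mu^T_s$ from~\eqref{equ_tropical1}, computed with $\varepsilon_{\seed_0}$. Hypothesis~(1) says $\mu^T_s(\mathbf{u}_0) = \mathbf{u}_0$. By the first case of~\eqref{equ_tropical1}, the $s$-th coordinate of $\mu^T_s(\mathbf{u}_0)$ is $-(\mathbf{u}_0)_s$. Thus $(\mathbf{u}_0)_s = -(\mathbf{u}_0)_s$, so $(\mathbf{u}_0)_s = 0$. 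This contradicts Step~1.

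\textbf{Step 3: conclusion, and a cross-check.} The contradiction shows that no family satisfies~(1) and~(3) simultaneously with $s \in J_\mathrm{uf}$, so the implication in the statement is vacuously true. As an independent check that avoids~(1), apply~(3) to both $\seed_0$ and $\seed_0'$. On $\{u_s \ge 0\}$ the map $\mu^T_s$ sends $u_s$ to $-u_s$, so
$$
\Delta_{\seed_0'} = \mu^T_s(\Delta_{\seed_0}) \subseteq \{u_s \le 0\} \cap \{u_s \ge 0\}.
$$
Hence $\Delta_{\seed_0'}$ lies in the hyperplane $\{u_s = 0\}$, contradicting full-dimensionality of $\Delta_{\seed_0'}$ again.

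\textbf{Main obstacle.} The only delicate point is Step~1, namely that bigness of $\mathcal{L}$ gives a top-dimensional body even though the valuation comes from a total order refining a dominance order. I would justify this by noting that the valuation has one-dimensional leaves, so the value semigroup generates a full-rank lattice and the standard volume formula applies.

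I also note that this argument relies on $s$ being unfrozen. In the intended applications, where $s$ is a frozen index with divergent entries $\varepsilon_{r,s}$, the substantive route of \cite{CKKP25} is needed instead. That route passes to a half-space-adapted linear piece of $\mu^T_r$ and bounds lattice-point invariants near $\mathbf{u}_0$. The statement as worded, however, is settled by Steps~1--3.
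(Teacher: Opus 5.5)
Your Steps 1--3 are logically valid for the statement exactly as printed. With $s\in J_\mathrm{uf}$, hypothesis (1) forces $(\mathbf{u}_0)_s=0$ (this is Lemma~\ref{lemma_characfixed}), while (3) forces $(\mathbf{u}_0)_s>0$. Your cross-check via $\mu^T_s$ is also correct. Two side points: Step~1 does not need bigness, because (1) already supplies a point of the topological interior. Also, your claim that one-dimensional leaves force a full-rank value group is false in general, although it holds for the lowest-term valuations $v_\seed$, which are onto $\Z^m$.

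However, what you have found is an index transposition in the statement, not a proof of the theorem. The paper's proof uses $(\mathbf{u}_0)_r=0$, $(\mathbf{u}_0)_s=p/q>0$ and the vector $\mathbf{e}_s-\varepsilon^\ell_{r,s}\mathbf{e}_r$. The criterion is fed by Theorem~\ref{theorem_divergeentries}, where $r\in J_\mathrm{uf}$, $s\in J_\mathrm{fz}$ and $\varepsilon^\ell_{r,s}\to-\infty$. So the intended hypotheses are $(r,s)\in J_\mathrm{uf}\times J$ with $\varepsilon^\ell_{r,s}\to-\infty$ and $\Delta_\seed\subseteq\{u_s\ge 0\}$. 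This is the version Theorem~\ref{theorem_distinguishGP} relies on, and for it your proposal offers no argument. The telltale sign is that the multiplicity hypothesis (2), which carries all the content, is never used.

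The missing argument runs as follows.
\begin{itemize}
\item Each $\mu^T_k$ is a piecewise-linear homeomorphism fixing $\mathbf{u}_0$, and $\Delta_{\seed_\ell}$ is the image of $\Delta_{\seed_0}$ under a composite of these. Hence $\mathbf{u}_0\in\mathrm{Int}(\Delta_{\seed_\ell})$ for all $\ell$, with $(\mathbf{u}_0)_s=p/q$ independent of $\ell$ and $(\mathbf{u}_0)_r=0$.
\item Your Step~3 idea of applying (3) at a seed and at its neighbour is exactly right, but in direction $r$. Once $\varepsilon^\ell_{r,s}<0$, the $s$-component of $\mu^T_r(\mathbf{u})$ is $u_s$ on $\{u_r\ge 0\}$ and $u_s-\varepsilon^\ell_{r,s}u_r$ on $\{u_r\le 0\}$.
\item Therefore (3) at $\seed_\ell$ and at $\mu_r(\seed_\ell)$ gives $\Delta_{\seed_\ell}\subseteq\{u_s\ge0\}\cap\{u_s-\varepsilon^\ell_{r,s}u_r\ge0\}$ (\cite[Lemma 4.16]{CKKP25}). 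On $\{u_r\ge0\}$ the second inequality follows from the first.
\item Both functionals equal $p/q$ at $\mathbf{u}_0$. So $(\Delta_{\seed_\ell}-\mathbf{u}_0)^\circ$ contains the segment from $\frac{q}{p}\mathbf{e}_s$ to $\frac{q}{p}(\mathbf{e}_s-\varepsilon^\ell_{r,s}\mathbf{e}_r)$, which carries at least $-q\varepsilon^\ell_{r,s}+1$ points of $\frac{1}{p}\Z^m$.
\item Finally one needs an integral-affine invariant that sees this growth. This is the paper's $n(\Delta)$: the maximum, over lattice points $\mathbf{u}\in\mathrm{Int}(\Delta)$, of the number of lattice points of $(\Delta-\mathbf{u})^\circ$. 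Take the lattice to be a fixed refinement $\frac{1}{N}\Z^m$ with $p\mid N$ and $\mathbf{u}_0\in\frac{1}{N}\Z^m$.
\item This $n(\Delta)$ is finite, and it is preserved because $A\in\mathrm{GL}(m,\Z)$ acts on polar duals by $(A^{T})^{-1}$. Hence $n(\Delta_{\seed_\ell})\to\infty$, and there are infinitely many classes.
\end{itemize}
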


The proof of Theorem~\ref{theorem_maincriterion} follows that of \cite[Theorem C]{CKKP25} with minor modifications. We briefly indicate the necessary changes and then explain how to apply it to our setting. 

The main difference is that the polytopes are assumed to be $\Q$-Gorenstein in \cite[Theorem C]{CKKP25} and hence admit a distinguished interior lattice point. This point serves as a center whose affine distance to each facet is equal. In our setting, such a canonical lattice point need not exist since we consider globally generated and big line bundles over $X$. These conditions in Theorem~\ref{theorem_maincriterion} imply that each Newton--Okounkov body is a full dimensional rational polytope.

By condition (1), there exists a rational point $\mathbf{u}_{0}$ in the interior of $\Delta_{\seed_0}$ that is fixed under iterated tropicalized cluster mutations. 
Such fixed points can be characterized as follows.

\begin{lemma}[Corollary 4.9. in \cite{CKKP25}]\label{lemma_characfixed}
The following are equivalent. 
\begin{enumerate}
\item A point $\mathbf{u} \in \R^{J}$ is fixed under all iterated tropicalized cluster mutations,
\item All $J_\mathrm{uf}$-components of $\mathbf{u}$ are zero.
\end{enumerate}
\end{lemma}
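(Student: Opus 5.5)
The plan is to prove both implications directly from the explicit formula \eqref{equ_tropical1} for $\mu^T_k$. No property of the Newton--Okounkov bodies themselves is needed, only the shape of the piecewise-linear map. Throughout, ``fixed under all iterated tropicalized cluster mutations'' means the following: for every path $t_0 \to t_1 \to \cdots \to t_p$ in $\mathbb{T}$ with $t_{q} = \mu_{k_q}(t_{q-1})$, the composite $\mu^T_{k_p} \circ \cdots \circ \mu^T_{k_1}$ sends $\mathbf{u}$ to itself. Here each $\mu^T_{k_q}$ is built from the exchange matrix $\varepsilon_{t_{q-1}}$.

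For (1) $\Rightarrow$ (2), it suffices to look at paths of length one starting at the seed where $\mathbf{u}$ lives. Fix $k \in J_\uf$ and apply the single mutation $\mu^T_k$. By \eqref{equ_tropical1} the $k$-th coordinate of the image is $-u_k$. The fixed-point hypothesis then forces $-u_k = u_k$, so $u_k = 0$. Since $k \in J_\uf$ was arbitrary, all $J_\uf$-components of $\mathbf{u}$ vanish.

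For (2) $\Rightarrow$ (1), suppose $u_k = 0$ for every $k \in J_\uf$. First I show that a single mutation $\mu^T_k$ (for any seed and any $k \in J_\uf$) fixes $\mathbf{u}$. The $k$-th coordinate becomes $-u_k = 0 = u_k$. For $j \neq k$, both the $u_k \ge 0$ and the $u_k \le 0$ branches of \eqref{equ_tropical1} apply; they agree and give $u'_j = u_j + [\pm\varepsilon_{k,j}]_+ \cdot 0 = u_j$. This holds regardless of the exchange matrix, so it holds at every seed. Hence $\mu^T_k(\mathbf{u}) = \mathbf{u}$. In particular the image again has vanishing $J_\uf$-components. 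An induction on the length $p$ of the mutation path then shows that every composite $\mu^T_{k_p} \circ \cdots \circ \mu^T_{k_1}$ fixes $\mathbf{u}$.

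There is no genuine obstacle here. The only point needing care is bookkeeping: the exchange matrices change along the path, so the maps $\mu^T_{k_q}$ differ from step to step. The argument above is insensitive to this, because the vanishing of $u_k$ kills the only term in which $\varepsilon_{t}$ enters. That is why the induction goes through uniformly over all seeds.
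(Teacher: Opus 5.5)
Your proposal is correct and complete. Reading off the $k$-th coordinate $-u_k$ of $\mu^T_k(\mathbf{u})$ gives (1)$\Rightarrow$(2), and the fact that every $\varepsilon$-dependent term in \eqref{equ_tropical1} carries a factor of $u_k$ gives (2)$\Rightarrow$(1) uniformly over all seeds along any mutation path. The paper gives no argument of its own and simply cites Corollary~4.9 of \cite{CKKP25}, so your direct check against the explicit piecewise-linear formula is a sufficient justification.
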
 

Fix such a point $\mathbf{u}_{0}$ in $\mathrm{Int}(\Delta_{\seed_0}) \cap \Q^m$. Since $\mathbf{u}_0$ lies in the interior of $\Delta_{\seed_0}$ and $\Delta_{\seed_0}$ is contained in the half-space $\{ \mathbf{u} \in \R^m \mid u_s \geq 0 \}$ by condition (3), the $s$-th component of $\mathbf{u}_0$ is a strictly positive rational number. Thus, it can be written as $p / q$ where $(p,q) \in \mathbb{N} \times  \mathbb{N} $ and $\mathrm{gcd}(p,q) = 1$. For $\mathbf{v} = \mathbf{e}_s$ or $\mathbf{e}_s - \varepsilon^\ell_{r,s} \mathbf{e}_r$, Lemma~\ref{lemma_characfixed} implies that for all $\ell$, we have 
$$
\langle \mathbf{u}_0, \mathbf{v} \rangle = \frac{p}{q} \in \Q_{>0}.
$$
We emphasize that $p$ and $q$ are independent of $\ell$. 

To show that the family contains infinitely many distinct polytopes, we introduce an invariant under integral affine transformations. Let $\Delta \subseteq \R^m$ be a full dimensional polytope and let $\Delta^\circ$ be the polar dual of $\Delta$. For fixed $p \in \mathbb{N}$, define 
$$
n(\Delta) \coloneqq \max_{\mathbf{u}_i \in \mathscr{I}} \left[ \# \left( \frac{1}{p} \Z^m \cap (\Delta - \mathbf{u}_i)^\circ \right) \right]
$$
where $\mathscr{I} \coloneqq \frac{1}{p}\Z^m \cap \mathrm{Int}(\Delta)$. Note that $n(\Delta)$ is finite because $\Delta - \mathbf{u}_i$ is full dimensional and contains the origin.

We claim that $n(\Delta) $ is invariant under any integral affine transformation. 

\begin{proposition}
The number $n(\Delta)$ is invariant under integral affine transformations. Namely, if $\Delta \sim \Delta^\prime$, then $n(\Delta) = n(\Delta^\prime)$.
\end{proposition}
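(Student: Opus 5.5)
Write $\Delta' = \phi(\Delta)$ with $\phi(\mathbf{u}) = A\mathbf{u} + \mathbf{u}_0$, where $A \in \mathrm{GL}(m,\Z)$ and $\mathbf{u}_0 \in \Z^m$. The plan is to show that $\phi$ induces a bijection between the index sets $\mathscr{I}$ and $\mathscr{I}'$. We then show that, for each pair of corresponding points, the two counts inside the maximum agree. Taking maxima gives $n(\Delta) = n(\Delta')$. By symmetry, using $\phi^{-1}$, which is again integral affine, it suffices to prove that each count for $\Delta$ is realized by a count for $\Delta'$.

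First, $\phi$ is a homeomorphism of $\R^m$ and maps $\mathrm{Int}(\Delta)$ onto $\mathrm{Int}(\Delta')$. Since $A\Z^m = \Z^m$ and $\mathbf{u}_0 \in \Z^m \subseteq \frac{1}{p}\Z^m$, it also preserves the lattice $\frac{1}{p}\Z^m$, so $\phi$ restricts to a bijection $\mathscr{I} \to \mathscr{I}'$, $\mathbf{u}_i \mapsto \mathbf{u}_i' \coloneqq \phi(\mathbf{u}_i)$.

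Next, translate to the origin: $\Delta' - \mathbf{u}_i' = A(\Delta - \mathbf{u}_i)$. Now use the standard behaviour of polar duals under linear maps. With $Q^\circ = \{ \mathbf{y} \mid \langle \mathbf{x}, \mathbf{y}\rangle \geq -1 \ \forall \mathbf{x} \in Q\}$ (or the version with $\leq 1$; the convention does not matter), one has $(AQ)^\circ = (A^{T})^{-1} Q^\circ$, because $\langle A\mathbf{x}, \mathbf{y}\rangle = \langle \mathbf{x}, A^T \mathbf{y}\rangle$. Hence
$$
(\Delta' - \mathbf{u}_i')^\circ = (A^T)^{-1} (\Delta - \mathbf{u}_i)^\circ .
$$
Since $(A^T)^{-1} \in \mathrm{GL}(m,\Z)$, it maps $\frac{1}{p}\Z^m$ bijectively onto itself. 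It therefore gives a bijection between $\frac{1}{p}\Z^m \cap (\Delta - \mathbf{u}_i)^\circ$ and $\frac{1}{p}\Z^m \cap (\Delta' - \mathbf{u}_i')^\circ$, and the two cardinalities agree.

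Combining the two steps, the multisets of counts indexed by $\mathscr{I}$ and by $\mathscr{I}'$ coincide, so their maxima coincide. Finiteness of these sets (noted before the statement) ensures the maxima are well defined, since the origin lies in the interior of $\Delta - \mathbf{u}_i$ and the polar is bounded.

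The only point needing care is the translation part of $\phi$. It must send $\frac{1}{p}\Z^m$ into itself, which holds because $\mathbf{u}_0$ is integral. It must also be compatible with recentering at $\mathbf{u}_i$, which holds because the polar dual is always taken after recentering. There is no real obstacle.
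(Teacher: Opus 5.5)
Your proof is correct and takes the same route as the paper: an integral affine map preserves $\frac{1}{p}\Z^m$ and interiors, so it gives a bijection $\mathscr{I} \to \mathscr{I}'$, and after recentering the polar duals are related by a unimodular linear map. Your identity $(\Delta' - \mathbf{u}_i')^\circ = (A^{T})^{-1}(\Delta - \mathbf{u}_i)^\circ$ is just the paper's ``related by the transpose'' written out explicitly, together with its treatment of the translation part.
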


\begin{proof}
The claim follows from the facts that any integral affine transformation preserves the lattice $\frac{1}{p} \Z^m$ and that if $\Delta$ and $\Delta^\prime$ are related by the matrix multiplication by $A \in \mathrm{GL}(m, \Z)$, then their polar duals are related by its transpose. 
\end{proof}

For the family $\{ \Delta_\seed \mid \mbox{$\seed$ is a seed\,} \}$ in Theorem~\ref{theorem_maincriterion}, set
$$
n_\seed \coloneqq n(\Delta_\seed)
$$
This invariant can be used to detect the existence of infinitely many distinct members.

\begin{corollary}\label{cor_nslinf}
Let $(\Delta_{\seed_\ell})_{\ell \in \mathbb{N}}$ be a sequence of full dimensional polytopes in $\R^m$.
If $n(\Delta_{\seed_\ell}) \to \infty$ as $\ell \to \infty$, then the set~\eqref{equ_NOpolyseed} is infinite.
\end{corollary}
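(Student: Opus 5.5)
The statement is immediate from the invariance of $n(\Delta)$ under integral affine transformations, established in the preceding proposition. We argue by contradiction. Suppose the set~\eqref{equ_NOpolyseed} of equivalence classes were finite, say represented by polytopes $\Delta^{(1)}, \dots, \Delta^{(N)}$. Then every member $\Delta_{\seed_\ell}$ of the sequence is equivalent to some $\Delta^{(i)}$. By the invariance proposition, this gives
\[
n(\Delta_{\seed_\ell}) = n(\Delta^{(i)}) \le \max_{1 \le i \le N} n(\Delta^{(i)})
\]
for every $\ell$.

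It remains to check that the right-hand side is a finite number. Each $\Delta^{(i)}$ is a full dimensional polytope, since it is equivalent to a full dimensional $\Delta_{\seed_\ell}$ and integral affine transformations preserve dimension. For each $\mathbf{u}_j \in \mathscr{I}$, the point $\mathbf{u}_j$ lies in the interior of $\Delta^{(i)}$, so the polar dual $(\Delta^{(i)} - \mathbf{u}_j)^\circ$ is a bounded polytope and contains only finitely many points of $\frac{1}{p}\Z^m$. The set $\mathscr{I} = \frac{1}{p}\Z^m \cap \mathrm{Int}(\Delta^{(i)})$ is also finite, because $\Delta^{(i)}$ is bounded. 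Hence each $n(\Delta^{(i)})$ is a finite maximum of finite numbers, and the sequence $\bigl(n(\Delta_{\seed_\ell})\bigr)_\ell$ is bounded. This contradicts the hypothesis that $n(\Delta_{\seed_\ell}) \to \infty$, so the set of equivalence classes is infinite.

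There is no real obstacle in this argument. The only points needing care are that the same fixed $p$ is used throughout, which the definition of $n$ guarantees, and the finiteness of $n$, which relies on full dimensionality and boundedness as checked above. The substantive work lies elsewhere: in verifying, via the fixed point, multiplicity and non-negativity conditions of Theorem~\ref{theorem_maincriterion}, that $n(\Delta_{\seed_\ell})$ actually diverges for the sequence of seeds along which $\varepsilon^\ell_{s,r} \to -\infty$.
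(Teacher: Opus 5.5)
Your proof is correct and matches the paper, which gives no separate argument and treats the corollary as an immediate consequence of the invariance of $n$: if there were only finitely many classes, the sequence $n(\Delta_{\seed_\ell})$ would take only finitely many finite values, which contradicts divergence. One minor precision: restrict your maximum to the representatives whose classes actually contain some $\Delta_{\seed_\ell}$, since only those are guaranteed to be full dimensional.
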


We are now ready to prove Theorem~\ref{theorem_maincriterion}.

\begin{proof}[Proof of Theorem~\ref{theorem_maincriterion}]
Recall from \cite[Lemma 4.16]{CKKP25} that the polytope $\Delta_{\seed_\ell}$ is supported by the half-spaces given by 
$$
\{ \mathbf{u} \in \R^m \mid \langle \mathbf{u}, \mathbf{e}_s \rangle \geq 0 \}, \,\, \{ \mathbf{u} \in \R^m \mid \langle \mathbf{u}, \mathbf{e}_s - \varepsilon^\ell_{r,s} \mathbf{e}_r \rangle \geq 0 \}.
$$ 
It follows that the translated polytope $\Delta_{\seed_\ell} - \mathbf{u}_0$ is supported by the half-spaces 
$$
\left\{ \mathbf{u} \in \R^m \mid \left\langle \mathbf{u}, \frac{q}{p} \mathbf{e}_s \right\rangle + 1 \geq 0 \right\}, \,\, \left\{ \mathbf{u} \in \R^m \mid \left\langle \mathbf{u}, \frac{q}{p} \bigl( \mathbf{e}_s - \varepsilon^\ell_{r,s} \mathbf{e}_r \bigr) \right\rangle + 1 \geq 0 \right\}.
$$ 
By \cite[Theorem 6.4]{Brn} (see also \cite[Lemma 4.15]{CKKP25}), the polar dual $(\Delta_{\seed_\ell} - \mathbf{u}_0)^\circ$ contains the line segment joining $ \frac{q}{p} \mathbf{e}_s$ and $\frac{q}{p} \bigl( \mathbf{e}_s - \varepsilon^\ell_{r,s} \mathbf{e}_r \bigr)$. Consequently, the polar dual contains at least $- q \varepsilon^\ell_{r,s} + 1$ lattice points in $\frac{1}{p} \Z^m$. Since $ - \varepsilon^\ell_{r,s} \to \infty$ as $\ell \to \infty$,we conclude that $n(\Delta_{\seed_\ell}) \to \infty$ as $\ell \to \infty$. Theorem~\ref{theorem_maincriterion} follows from Corollary~\ref{cor_nslinf}. 
\end{proof}

We have established the criterion, which will be used to prove Theorem~\ref{theorem_distinguishGP}. We now explain why condition~(1) in Theorem~\ref{theorem_maincriterion} holds in our setting. By \cite{FO17}, there exists a seed $\seed$ such that $\Delta_\seed$ is equivalent to a string polytope of a Schubert variety. Using the explicit description of string polytopes (see, for example, \cite{BZ01, Lit98}), one verifies the existence of a fixed point as in condition~(1) by Lemma~\ref{lemma_characfixed}.

In Section~\ref{sec_nonnegativity}, we verify the non-negativity condition in Theorem~\ref{theorem_maincriterion} by exploiting properties of elements in the dual canonical basis. Sections~\ref{Sec_labledextendedexch} and \ref{sec_distinguishing} are devoted to establishing the multiplicity condition.

\section{Non-negativity of the dual canonical basis}\label{sec_nonnegativity}

The goal of this section is to review the dual canonical basis and the cluster algebra structures on unipotent cells and unipotent groups. By exploiting the relationship between the cluster algebras on these spaces and their monoidal categorifications together with their localizations, we establish the non-negativity of certain components of the associated extended $g$-vectors.

\subsection{Dual canonical basis}

Let $\frak{g}$ be a complex semisimple Lie algebra and let $I$ be the index set for simple roots. Let $U(\mathfrak{g})$ be the \emph{universal enveloping algebra} of $\mathfrak{g}$ and  let $U_q(\mathfrak{g})$ be the $q$-deformation of $U(\mathfrak{g})$ over $\C(q)$, called the \emph{quantum group} associated with the Cartan matrix $\mathsf{A}$ where $q$ is an indeterminate, see \cite{LusztigBook}. The algebra $U(\mathfrak{g})$ is recovered by specializing $U_q(\mathfrak{g})$ at $q=1$. For each $i \in I$, we denote by $e_i$ and $f_i$ the \emph{Chevalley generators} of $U_q(\mathfrak{g})$ of weight $\alpha_i$ and $- \alpha_i$, respectively. We denote by $U_q^-(\mathfrak{g})$ the \emph{negative half} of $U_q(\mathfrak{g})$, which is the subalgebra of $U_q(\mathfrak{g})$ generated by $\{ f_i \mid i \in I\}$. 

Let $\mathbf{B}(\infty)$ be the \emph{infinite crystal} of $U_q^-(\mathfrak{g})$ and let $\mathsf{G}^{\mathrm{up}}(\infty)$ be the \emph{dual canonical basis} (or \emph{upper global basis}) of the restricted dual of $U_q^-(\mathfrak{g})$, see \cite{KashiwaraBook, LusztigBook} for instance. Since $\textbf{B}(\infty)$ is regarded as the specialization of $\mathsf{G}^{\mathrm{up}}(\infty)$ at $q=0$, we write 
$$
\mathsf{G}^{\mathrm{up}}(\infty) = \{ \mathsf{G}^{\mathrm{up}}(b) \mid b\in \mathbf{B}(\infty) \}.
$$
Let $\te_i$ and $\tf_i$ be the \emph{Kashiwara operators} of weight $\alpha_i$ and $- \alpha_i$, respectively. For a reduced expression $w = s_{i_1} \dots s_{i_\ell}$ of $w$ in the Weyl group $W$, define the subcrystal $\mathbf B_w(\infty)$ of $\mathbf{B}(\infty)$ by
$$\mathbf B_w(\infty)
= \bigl\{\tf_{i_1}^{a_1} \tf_{i_2}^{a_2} \dots \tf_{i_{\ell-1}}^{a_{\ell-1}} \tf_{i_\ell}^{a_\ell} {\mathbf 1}\, \vert \, (a_1, \dots, a_\ell) \in \Z_{\ge 0}^\ell \bigr\}.
$$
This set is independent of the choice of reduced expression.

Recall that for $w$ in the Weyl group $W$, the unipotent cell $U_w^{-} = U^- \cap BwB$ has the cluster algebra structure in Section~\ref{subsec_clusunicell}. The coordinate ring $\C[U_w^{-}]$ is isomorphic to the cluster algebra $\mcal{A}(\seed_0)$, whose initial seed $\seed_0$ is given in~\eqref{equ_initialseed}. The specialization of the restricted dual of $U_q^-(\mathfrak{g})$ at $q=1$ is isomorphic to $\C[U^{-}]$ and the specialization of the dual canonical basis yields a basis $\mathsf{G}^{\mathrm{up}}_{q=1}(\infty)$ of $\C[U^{-}]$. Let
\begin{itemize}
\item 
$\iota_w \colon U^- \cap X_w \hookrightarrow U^-$ be the natural inclusion and 
\item 
$\iota_w^* \colon \C[U^-] \twoheadrightarrow \C[U^- \cap X_w]$
be the corresponding ring homomorphism. 
\end{itemize}
Then we have
\begin{itemize}
\item $\{ \mathsf{G}^{\mathrm{up}}_{q=1}(b) \in \C[U^-] \, \vert \,b \in \mathbf B(\infty) \setminus \mathbf B_w(\infty)\}$ forms a $\C$-basis of the kernel of $\iota_w^*$ and
\item $\{\iota_w^* (\mathsf{G}^{\mathrm{up}}_{q=1}(b)) \in \C[U^-\cap X_w] \, \vert \, b \in  \mathbf B_w(\infty)\} $ forms a $\C$-basis of $\C[U^-\cap X_w]$.
\end{itemize}

Since $U^-_w$ is the open subvariety of $U^- \cap X_w$ defined by $0 \neq D_w \coloneqq \prod_{i\in I}D_{w\varpi_i,\varpi_i}$ by \cite[Lemma 2.17]{KO21}, the coordinate ring $\C[U^-_w] $ is isomorphic to the localized ring $\C[U^- \cap X_w]_{D_w}$. Through the isomorphism, we define
$$
\mathsf{G}^{\mathrm{up}}_w(\infty) \coloneqq  \left\{ \iota_w^* \mathsf{G}^{\mathrm{up}}_{q=1}(b) \cdot {\prod_{i\in I} \iota_w^*(D_{w \varpi_i, \varpi_i})^{-a_i}} \mid b \in \textbf{B}_w(\infty),  a_i \in \Z_{\geq 0} \mbox{ for $i \in I$} \right\},
$$
which forms a basis of $\C[U^- \cap X_w]_{D_w}\simeq \C[U^-_w]$, see \cite[Theorem C.1]{FO25}. We also refer to $\mathsf{G}^{\mathrm{up}}_w(\infty)$ as the \emph{dual canonical basis} of $\C[U^-_w]$. 

The following is the main theorem of this section, concerning the non-negativity of certain  components of the extended $g$-vectors of the dual canonical basis elements in $\mathsf{G}^{\mathrm{up}}_w(\infty)$ when $\frak{g}$ is of simply laced type.

\begin{theorem}\label{theorem_main6}
Let $\mathfrak{g}$ be a finite dimensional semisimple Lie algebra of simply laced type over $\C$ and let $I$ denote the index set of the simple roots of $\mathfrak{g}$. 
For $w \in W$, set 
\begin{equation}\label{equ_defIw}
I(w) \coloneqq \{i\in {\rm supp}(w) \, \vert \, ws_i>w\}.
\end{equation}
For each $k \in {\rm supp}(w) \setminus I(w)$, let $j_{k} \in J_\mathrm{fz}$ be the index corresponding to the frozen variable $D_{w \varpi_k, \varpi_k}$ with respect to the initial seed $\seed_0$ of the cluster algebra $\mcal{A}(\seed_0) \simeq \C[U^-_w]$. Then, for every dual canonical basis element $x \in \mathsf{G}^{\mathrm{up}}_w(\infty)$ which lies in $\C[U^-\cap X_w]$ and each seed $\seed$, the $j_k$-th component of its extended $g$-vector with respect to $\seed$ is nonnegative, i.e.,
$$
\bigl( g_\seed (x) \bigr)_{j_k} \geq 0.
$$
\end{theorem}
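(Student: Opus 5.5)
We would prove Theorem~\ref{theorem_main6} by passing to the monoidal categorification of the quantum unipotent coordinate ring and its localization \cite{KKKO18, KKOP21, KKOP23}. In the symmetric case the dual canonical basis of $\C[U^-\cap X_w]$ (quantum version $A_q(\mathfrak{n}(w))$) corresponds to classes of self-dual simple modules in a category $\mathscr{C}_w$ of graded modules over a quiver Hecke algebra. The cluster monomials correspond to real simple modules, and the initial frozen variables $D_{w\varpi_i,\varpi_i}$ correspond to the determinantial modules $\mathsf{M}(w\varpi_i,\varpi_i)$, which are central in $\mathscr{C}_w$. The localization $\C[U^-_w]$ then corresponds to the localized category $\widetilde{\mathscr{C}}_w$ obtained by inverting these central objects. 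Every $x\in \mathsf{G}^{\mathrm{up}}_w(\infty)$ lies in $\C[U^-\cap X_w]$, so it is the class of a simple module $L\in\mathscr{C}_w$, not merely of $\widetilde{\mathscr{C}}_w$. The task is therefore to show that, for $k\notin I(w)$, the $j_k$-th component of the extended $g$-vector of $[L]$ with respect to any seed is nonnegative.

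First we would interpret the extended $g$-vector categorically. For a monoidal seed $\seed=(\{M_j\}_{j\in J},\varepsilon)$ in $\widetilde{\mathscr{C}}_w$, the simple $L$ is a head of a convolution $\nabla_{\mathbf{g}} \coloneqq \bigl(\mathop{\nabla}_{j} M_j^{\circ g_j}\bigr)$, in the sense of \cite{KKOP23}, and $g_\seed([L])=\mathbf{g}$. For negative $g_j$ one uses dual objects, which exist for frozen $j$ by centrality and in general in the localized, rigid setting. The frozen modules are unchanged under mutation, so $M_{j_k}=\mathsf{M}(w\varpi_k,\varpi_k)$ for every seed. The $j_k$-component should then be detectable by an invariant that is additive on heads of convolutions, such as the degree
$$
\Lambda\bigl(\mathsf{C}_k,\,-\bigr),
$$
or a suitable $\mathfrak{d}$/$\Lambda$-pairing with a module $\mathsf{C}_k$ that pairs trivially with all unfrozen cluster modules and nontrivially with exactly one frozen one. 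This pairing is computable from the dual-basis description of $\mathbf{g}$ via the invariants of \cite{KKOP23, KN25}.

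Second, we would use the hypothesis $ws_k<w$. It lets us choose a reduced expression of $w$ ending with $s_k$, so that $w\varpi_k\ne w s_k\varpi_k$ and $\mathsf{M}(w\varpi_k,\varpi_k)$ is a genuine \emph{right} factor: every simple module in $\mathscr{C}_w$ has a head-decomposition compatible with the last letter. Using Kashiwara--Nakashima \cite{KN25} on determinantial modules, we would show that the invariant above, for $L\in\mathscr{C}_w$, equals a quantity like $\varepsilon^*_k$-type multiplicity, namely the largest $n$ with $\mathsf{M}(w\varpi_k,\varpi_k)^{\circ n}$ dividing $L$ after the canonical factorization. This quantity is manifestly $\ge 0$. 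Equivalently, $L\nabla \mathsf{M}(w\varpi_k,\varpi_k)^{\circ(-a)}$ is not in $\mathscr{C}_w$ for $a>0$ unless that factor is present.

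The main obstacle is this second step. We need to identify the $j_k$-component of the $g$-vector, which a priori depends on the seed through the unfrozen directions, with a seed-independent, manifestly nonnegative module invariant. Equivalently, we must show that the denominators introduced by localizing at $D_w$ never contribute negatively in direction $j_k$. We would first try to prove that the pairing with $\mathsf{C}_k$ vanishes on all unfrozen cluster modules of every seed, by induction along mutations using the exchange short exact sequences. That would make the invariant equal to $g_{j_k}$ in every seed, after which nonnegativity reduces to the initial-seed computation.
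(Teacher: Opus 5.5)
There is a genuine gap, and it lies in your first identification. The ring $\C[U^-\cap X_w]\simeq\C[U^-]/\mathrm{Ker}(\iota_w^*)$ is a \emph{quotient} of $\C[U^-]$. It is categorified by the Serre quotient $\mathscr{C}/\mathrm{Ker}(\mathsf{Q}_w)$ of the full category $\mathscr{C}=R\text{-}\mathsf{gmod}$. By contrast, $\mathscr{C}_w$ categorifies the \emph{subalgebra} $\C[U^-(w)]$, whose quantum version is $A_q(\mathfrak{n}(w))$. Inside $\C[U^-_w]\simeq\C[U^-(w)]_{D_w}$ these are in general different subrings; they coincide for $w=w_0$. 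The elements $x$ in the statement are $\iota_w^*\mathsf{G}^{\mathrm{up}}_{q=1}(b)$ with $b\in\mathbf{B}_w(\infty)$. These are classes of simple modules $\mathsf{M}\in\mathscr{C}$ with $\mathsf{Q}_w(\mathsf{M})\neq0$, not of simples in $\mathscr{C}_w$. (Also, not every element of $\mathsf{G}^{\mathrm{up}}_w(\infty)$ lies in $\C[U^-\cap X_w]$; that is a hypothesis.)

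Your identification cannot be right. If $x=[L]$ with $L\in\mathscr{C}_w$, then $x\in\C[U^-(w)]\simeq\overline{\mathcal{A}}(\seed_0)$, where frozen variables are not inverted. Then \emph{every} frozen component of $g_\seed(x)$ would be nonnegative in every seed, including $j_k$ with $k\in I(w)$. This contradicts Remark~\ref{remark_nonnegativeexamples}. So in your setup the hypothesis $k\notin I(w)$ does no work. The real content of the theorem is never addressed: passing from $\C[U^-\cap X_w]$ to $\C[U^-(w)]_{D_w}$ creates frozen denominators only in the directions indexed by $I(w)$.

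Your second step also fails as planned. A pairing with some $\mathsf{C}_k$ that vanishes on all unfrozen cluster modules of every seed would make $g_{j_k}$ seed-independent. That is false. Mutating at an unfrozen $r$ sends $g(A_r)=\mathbf{e}_r$ to a vector with frozen components $[\varepsilon_{r,j}]_+$, which are nonzero for suitable $r$. The ``largest $n$ with $\mathsf{M}(w\varpi_k,\varpi_k)^{\circ n}$ dividing $L$'' description has the same defect.

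The missing ingredient is Kashiwara--Nakashima's Proposition~5.28. For a simple $\mathsf{M}\in\mathscr{C}$ with $\mathsf{Q}_w(\mathsf{M})$ simple, there is $\lambda=\sum_{k\in I(w)}\lambda_k\varpi_k$ such that $\widetilde{\mathsf{M}}\coloneqq\mathsf{M}_{w\lambda,\lambda}\nabla\mathsf{M}$ lies in $\mathscr{C}_w$. Combine this with $\mathsf{Q}_w(\mathsf{M}_{w\lambda,\lambda}\nabla\mathsf{M})\simeq\mathsf{Q}_w(\mathsf{M}_{w\lambda,\lambda})\circ\mathsf{Q}_w(\mathsf{M})$ from \cite{KKOP23}. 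This gives $x=D_{w\lambda,\lambda}^{-1}\,\tilde x$ in $\C[U^-(w)]_{D_w}$ with $\tilde x=[\widetilde{\mathsf{M}}]\in\C[U^-(w)]$. Hence, for every seed $\seed$,
\[
g_\seed(x)=g_\seed(\tilde x)-\sum_{k\in I(w)}\lambda_k\mathbf{e}_{j_k}.
\]
The first term has nonnegative frozen components, because $\C[U^-(w)]\simeq\overline{\mathcal{A}}(\seed_0)$ does not invert frozen variables. The second term vanishes at $j_k$ for $k\notin I(w)$, since frozen variables are common to all seeds. The seed dependence you single out as the main obstacle therefore needs no induction along mutations and no module-theoretic pairing. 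It is absorbed entirely by this factorization.
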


\begin{remark} We record the following remarks.
\begin{itemize}
\item For $K \subseteq I$, let $w_K$ be the minimal length representative of the coset $w_0W_K$ in $W$. Then $I(w_K)= I \setminus K$. 
\item Theorem \ref{theorem_main6} remains valid when $\mathfrak{g}$ is a symmetric Kac--Moody algebra.
\end{itemize}
\end{remark}

As a consequence of Theorem~\ref{theorem_main6}, we verify the non-negativity condition in Theorem~\ref{theorem_maincriterion}. For any dominant integral weight $\lambda \in \mathsf{P}^+$ and $w \in W$, there exists a subset called the \emph{Demazure crystal} $\mathbf{B}_w(\lambda) \subseteq \textbf{B}(\infty)$ such that the set 
$$
\mathsf{G}^{\mathrm{up}}_w(\lambda) \coloneqq \bigl\{\iota_w^* \mathsf{G}^{\mathrm{up}}_{q=1}(b) \mid b \in \mathbf{B}_w(\lambda) \bigr\} \subseteq \mathsf{G}^{\mathrm{up}}_w(\infty),
$$
generates the vector space
\begin{equation}\label{equ_sigmatau}
\mathsf{L}_\lambda \coloneqq \left\{ \sigma/ \tau_\lambda \mid \sigma \in H^0(X_w, \mcal{L}_\lambda) \right\} \subseteq \C[U^-\cap X_w],
\end{equation}
by the compatibility of the dual canonical basis with highest weight modules, see \cite[Corollary 3.20]{FO17}. We then have the following corollary.

\begin{corollary}[Proposition~\ref{thmx_E}]\label{cor_propertyofclustercone}
Let $\mathfrak{g}$ be a finite dimensional semisimple Lie algebra of simply laced type over $\C$. For $w \in W$ and a seed $\seed$ of the cluster algebra $\mcal{A}(w)$, let $C_\seed(w)$ be the cluster cone of $w$ with respect to $\seed$. Let $j_{k} \in J_\mathrm{fz}$ be the index corresponding to the frozen variable $D_{w \varpi_k, \varpi_k}$ with respect to the initial seed $\seed_0$. Then, for every seed $\seed$,
$$
C_\seed(w) \subseteq \{ \mathbf{u} \in \R^{J} \mid u_{j_k} \geq 0 \mbox { for all $j \in \mathrm{supp}(w) \setminus I(w).$} \}
$$
In particular, every Newton--Okounkov polytope $\Delta(X^K_w,\mcal{L}^K_\lambda,v^K_\seed,\tau^K_\lambda)$ satisfies 
$$
\Delta(X^K_w,\mcal{L}^K_\lambda,v^K_\seed,\tau^K_\lambda) \subseteq \{ \mathbf{u} \in \R^{J} \mid u_{j_k} \geq 0 \mbox { for all $j \in \mathrm{supp}(w) \setminus I(w).$} \}
$$
\end{corollary}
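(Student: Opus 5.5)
The plan is to deduce the corollary from Theorem~\ref{theorem_main6}, using the description of the cluster cone and of the Newton--Okounkov bodies through valuations of dual canonical basis elements. Fix a seed $\seed$ and an index $k \in \mathrm{supp}(w)\setminus I(w)$.

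\textbf{Cluster cone.} The first step is to show that every lattice point of the form $v_\seed(f)$, with $f \in \C[U^-\cap X_w]\setminus\{0\}$, satisfies $u_{j_k}\ge 0$.

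By the basis statement recalled above, $f$ is a finite linear combination $\sum_b c_b\, \iota_w^*\mathsf{G}^{\mathrm{up}}_{q=1}(b)$ with $b \in \mathbf{B}_w(\infty)$. Each such element lies in $\mathsf{G}^{\mathrm{up}}_w(\infty)$ (take all $a_i=0$) and also in $\C[U^-\cap X_w]$. Since $\C[U^-_w]=\mathcal{U}(\seed_0)$ by Theorem~\ref{theorem_clustunitcell}, the dual canonical basis elements are pointed with respect to every seed (\cite{FO25}, via \cite{Qin17}). Hence \eqref{equ_val=g} gives $v_\seed(x)=g_\seed(x)$ for each basis element $x$, and Theorem~\ref{theorem_main6} yields $(g_\seed(x))_{j_k}\ge 0$.

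It remains to pass from basis elements to arbitrary $f$. This is the step needing care. I will use the fact, recorded in \cite{FO25} (the dual canonical basis is adapted to every $v_\seed$), that distinct basis elements have distinct extended $g$-vectors. Because $v_\seed$ is a lowest term valuation with one-dimensional leaves, there is then no cancellation among minimal terms. Consequently $v_\seed(f)=\min_{<_t}\{g_\seed(x_b) : c_b\neq 0\}$, which is one of the $g$-vectors already controlled. So the valuation image of $\C[U^-\cap X_w]\setminus\{0\}$ lies in the closed half-space $\{u_{j_k}\ge0\}$. Since that half-space is a closed convex cone, it contains the smallest closed convex cone generated by this image, namely $C_\seed(w)$.

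\textbf{Newton--Okounkov polytopes.} By Theorem~\ref{theorem_NOpolyopteforXWK},
$$
\Delta(X^K_w,\mcal{L}^K_\lambda,v^K_\seed,\tau^K_\lambda)=\Delta(X_w,\mcal{L}_\lambda,v_\seed,\tau_\lambda).
$$
By Theorem~\ref{Thm: FO1}(5), the latter is contained in $C_\seed(w)$. Alternatively, one can argue directly: each point of the semigroup has the form $v_\seed(\sigma/\tau_\lambda^{n})$ with $\sigma/\tau_\lambda^n \in \mathsf{L}_{n\lambda}\subseteq \C[U^-\cap X_w]$ by \eqref{equ_sigmatau}, so the first step applies. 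In either case the half-space is closed and convex, so it survives the rescaling by $1/n$, the convex hull and the closure in \eqref{equ_Deltav}. This gives the stated inclusion for all $k$ simultaneously.

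\textbf{Main obstacle.} Everything substantive sits in Theorem~\ref{theorem_main6}. Within the corollary itself, the only delicate point is justifying that $v_\seed$ of a linear combination equals the minimal $g$-vector among its basis constituents. This relies on the compatibility of $\mathsf{G}^{\mathrm{up}}_w(\infty)$ with $v_\seed$, i.e.\ pointedness and distinct $g$-vectors, established in \cite{FO25}. If that compatibility is not available directly, I would instead argue through the saturated semigroup of Theorem~\ref{Thm: FO1}(2): its lattice points are exactly the $g$-vectors of dual canonical basis elements, and these lie in $\C[U^-\cap X_w]$ at level $\lambda$.
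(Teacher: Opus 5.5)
Your proposal is correct and follows essentially the paper's route: the paper also derives the corollary from Theorem~\ref{theorem_main6}, using that $\mathsf{L}_\lambda$ is spanned by the Demazure dual canonical basis elements $\mathsf{G}^{\mathrm{up}}_w(\lambda)$ and that $v_\seed$ of a dual canonical basis element equals its extended $g$-vector. Your explicit handling of linear combinations fills in a step the paper leaves implicit. That step is that distinct basis elements have distinct $g$-vectors, so $v_\seed(f)$ is attained by a single basis constituent.
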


\begin{remark}\label{remark_nonnegativeexamples}
If $k \in I(w)$, then the $j_k$-th component of the extended $g$-vector of a dual canonical basis element may take negative values. As an example, let $G = \mathrm{SL}_4(\C)$ and $w = s_1 s_2 s_1 s_3 \in W$. Then $I(w) = \{2\}$. Let $\seed$ be the seed corresponding to the reduced expression $\underline{w} = s_1 s_2 s_1 s_3$. The indices $j_1 = 3, j_2 = 2$, and $j_3 = 4$ correspond to the frozen variables associated with $s_1, s_2$, and $s_3$, respectively. The cluster cone $C_\seed(w)$ is given by 
$$
C_\seed(w) = \{ \mathbf{u} \in \R^4 \mid u_1 + u_2 + u_4 \geq 0, u_2 + u_4 \geq 0, u_4 \geq 0, u_3 \geq 0\}.
$$
Observe that the components $u_3$ and $u_4$ of $\mathbf{u} \in C_\seed(w)$ are always nonnegative, whereas $u_2$ can take negative values.
\end{remark}

For a seed $\seed$, the valuation $v_\seed$ of an element in $\mathsf{L}_{\lambda}$ corresponds to a point in the semigroup $S(X_w, \mcal{L}_\lambda, v_\seed, \tau_\lambda)$. Since the valuation of a dual canonical basis element agrees with its extended $g$-vector, Theorem~\ref{theorem_main6} implies that the corresponding component of the valuation is nonnegative for every element of $S(X_w, \mcal{L}_\lambda, v_\seed, \tau_\lambda)$.

We close this section by outlining the strategy of the proof of Theorem~\ref{theorem_main6}. The argument is summarized by the commutative diagram~\eqref{equ_maindiagram11}. The construction of Newton–Okounkov polytopes relies on the cluster algebra structure on $\C[U^-_w]$ and the associated valuation is computed via extended $g$-vector on $\C[U^- \cap X_w] \setminus \{0\}$. 
\begin{equation}\label{equ_maindiagram11}
\xymatrix@C=5em{
\C[U^-(w)]\ar@{>->}[d] \ar@{>->}[rr] & & \C[U^-(w)]_{D_w} \ar[d]^{\cong} & \\
\C[U^-]  \ar@{->>}[r]_{\iota_w^*} \ar[urr]^{} 
& \C[U^-\cap X_w] \ar@{>->}[r] \ar[ur]
&\C[U^-_w]
}.
\end{equation}

To establish non-negativity, we employ two key ingredients. The first is the cluster algebra structure on the coordinate ring $\C[U^-(w)]$ of the unipotent group $U^-(w)$. Since the frozen variables are not inverted, their corresponding components remain nonnegative, and any element coming from $\C[U^-(w)]$ inherits this property. Each element in $\C[U^-\cap X_w]$ lies in a localization $\C[U^-(w)]_{D_w}$, so its expression may involve arbitrarily high powers of the element $D_w$ a priori. We need to show that the frozen variables indexed by ${\rm supp}(w) \setminus I(w)$ do not appear in the denominators of elements of the dual canonical basis.

Rather than performing ring-theoretic calculation, we pass the problem to the setting of the monoidal categorification. We realize the ring homomorphism $\C[U^- \cap X_w] \to \C[U^-(w)]_{D_w}$ via the localized functor between their categorifications~\eqref{equ_diagramincategory} and~\eqref{equ_maindiagram1}. Moreover, using structural properties of the simple modules corresponding to dual canonical basis elements in this monoidal categorification (Proposition~\ref{thm_dualcanonical}), we show that the frozen variables indexed by ${\rm supp}(w) \setminus I(w)$ do not appear in the denominator.

\begin{remark}
We expect that the statement remains valid in the symmetrizable case. The main obstacle to such a generalization is that the required monoidal categorification is available only in the symmetric case.
\end{remark}

\subsection{Cluster algebra structures on unipotent subgroups}

To prove Theorem~\ref{theorem_main6}, we make use of cluster algebra structure on the coordinate ring of the unipotent group $U^-(w)$, which we briefly recall below. Define 
$$
\Phi^+(w) \coloneqq \bigl\{ \alpha \in \Phi^+ \mid w^{-1} \alpha \in (- \Phi^+) \bigr\}
$$ 
where $\Phi^+$ is the set of positive roots. For a reduced expression $\underline{w} =  s_{i_1} s_{i_2} \cdots s_{i_m}$ of $w$, we have
$$
\Phi^+(w) = \{  s_{i_1} s_{i_2}\cdots s_{i_{j-1}}(\alpha_{i_j}) \mid j = 1, \dots, m\}.
$$
Let $U^-(w)$ be the \emph{unipotent group} corresponding to the nilpotent Lie algebra
$$
\frak{n}^-(w) \coloneqq \bigoplus_{\alpha \in \Phi^+(w)} \frak{g}_{-\alpha}.
$$
In \cite{GLS11}, Gei{\ss}--Leclerc--Schr{\"{o}}er constructed cluster algebra structures on the coordinate rings $\C[U^-(w)]$ and $\C[U^-_w]$ and explored their relationships in the case where $\frak{g}$ is a symmetric Kac--Moody algebra. Note that the cluster algebra structure on $\C[U^-_w]$ coincides with that in Theorem~\ref{theorem_clustunitcell} as they have the same generators with the same exchange matrix at a seed.

Let $\mathbb{T}$ be the exchange graph associated with the cluster algebra $\C[U_w^-]$ with the initial seed at the vertex $t_0$. We define the cluster algebra $\overline{\mcal{A}}(\seed_0)$ to be the subalgebra of $\mathcal{F} = \C(A_{j, t_0} \mid j \in J)$ generated by
$$
\left\{A_{j,t} \mid t \in \mathbb{T}, j \in J \right\}.
$$
By the Laurent phenomenon \cite{FZ02}, $\overline{\mcal{A}}(\seed_0)$ can be regarded as an algebra over the polynomial ring $\C[ A_{j,t} \mid j \in J_\mathrm{fz}]$. Unlike $\mathcal{A}(\seed_0)$, the frozen (coefficient) variables are \emph{not} inverted in $\overline{\mathcal{A}}(\seed_0)$. The algebraically independent variables of the initial seed are given by generalized minors. By restricting these minors to $U^-_w$ and $U^-(w)$, we obtain seeds for $\C[U^-(w)]$ and $\C[U^-_w]$, respectively.

\begin{theorem}[Theorem 3.3 in \cite{GLS11}] \label{theorem_clusterstructreonunisub}
The coordinate rings $\C[U^-_w]$ and $\C[U^-(w)]$ carry cluster algebra structures such that
\begin{enumerate}
\item ${\mathcal{A}}(\seed_0) \simeq \C[U^-_w]$ given by $A_{j,t_0} \mapsto \Delta_{w_{\leq j} \varpi_{i_j}, \varpi_{i_j}} |_{U^-_w}$ is a $\C$-algebra isomorphism,  
\item $\overline{\mcal{A}}(\seed_0)\simeq \C[U^-(w)]$ given by $A_{j,t_0} \mapsto \Delta_{w_{\leq j} \varpi_{i_j}, \varpi_{i_j}} |_{U^-(w)}$ is a $\C$-algebra isomorphism,  
\item ${\mcal{A}}(\seed_0)$ is obtained from $\overline{\mcal{A}}(\seed_0)$ by inverting all frozen variables.
\end{enumerate}
\end{theorem}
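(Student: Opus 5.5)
Parts~(1) and~(2) are proved separately, and part~(3) is then deduced by comparing the two isomorphisms after inverting the frozen variables.

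For part~(1) I would simply invoke Theorem~\ref{theorem_clustunitcell}. It already gives $\mathcal{A}(\seed_0)=\mathcal{U}(\seed_0)\cong \C[U^-_w]$ with $A_{j,t_0}\mapsto D_j=\Delta_{w_{\le j}\varpi_{i_j},\varpi_{i_j}}|_{U^-_w}$. What remains is to check that the exchange matrix used by Gei{\ss}--Leclerc--Schr\"oer coincides with~\eqref{equ_GLSseed}, which is a direct comparison of the quivers read off from the reduced word.

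For part~(2) I would follow the categorical route of \cite{GLS11}. Let $\Lambda$ be the preprojective algebra, and let $\mathcal{C}_w$ be the Frobenius subcategory of $\Lambda$-modules attached to $w$. The reduced word $\underline{w}$ determines a cluster-tilting object
\[
T=\bigoplus_{j\in J}V_j .
\]
The projective--injective summands of $T$ are exactly the $V_j$ with $j\in J_\mathrm{fz}$. Through the Lusztig/Gei{\ss}--Leclerc--Schr\"oer map $M\mapsto\varphi_M\in\C[U^-]$, one computes that $\varphi_{V_j}$ restricted to $U^-(w)$ equals $\Delta_{w_{\le j}\varpi_{i_j},\varpi_{i_j}}|_{U^-(w)}$, and that the Gabriel quiver of $\mathrm{End}(T)$ (with arrows between frozen vertices removed) has exchange matrix $\varepsilon_{t_0}$.

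The argument then proceeds in three steps.
\begin{itemize}
\item Mutation of cluster-tilting objects in the $2$-Calabi--Yau category $\mathcal{C}_w$ is compatible with matrix mutation, and the exchange sequences give the exchange relations, using $\varphi_{X\oplus Y}=\varphi_X\varphi_Y$ and $\varphi_{M}\varphi_{N}=\varphi_{E}+\varphi_{E'}$ for $\dim\mathrm{Ext}^1(M,N)=1$. Hence every cluster variable $A_{j,t}$ is $\varphi_R$ for a rigid indecomposable $R$ in $\mathcal{C}_w$, and so lies in $\C[U^-(w)]$. This yields an injective map $\overline{\mathcal{A}}(\seed_0)\to\C[U^-(w)]$; injectivity holds because the initial minors are algebraically independent and $\dim U^-(w)=m$.
\item Surjectivity is the main obstacle. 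I would prove it using the dual semicanonical basis $\{\varphi_M\}$ of $\C[U^-(w)]$ (or the dual PBW generators). Each dual PBW generator $\varphi_{M_j}$, where $M_j$ is the module attached to the root $s_{i_1}\cdots s_{i_{j-1}}(\alpha_{i_j})$, is a rigid module reachable by mutation from $T$, for instance by the cluster-theoretic realization of the braid-move/$T$-system sequence. Hence it is a cluster variable. Since the dual PBW generators generate $\C[U^-(w)]$, surjectivity follows. I expect verifying that these modules are reachable to be the hardest part, and I would attempt it first by induction on $\ell(w)$.
\end{itemize}

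For part~(3), the frozen minors $D_{w\varpi_i,\varpi_i}$ restricted to $U^-(w)$ correspond to the projective--injective summands of $T$. Gei{\ss}--Leclerc--Schr\"oer construct an isomorphism
\[
\C[U^-(w)]\bigl[D_{w\varpi_i,\varpi_i}^{-1}\mid i\in I\bigr]\cong\C[U^-_w],
\]
coming from the open embedding (up to a twist) of $U^-_w$ into $U^-(w)$ as the locus where these minors do not vanish. This isomorphism sends initial minors to initial minors. Therefore the isomorphisms of parts~(1) and~(2) are intertwined by inverting the frozen variables, and since $\mathcal{A}(\seed_0)$ is by definition generated by $\overline{\mathcal{A}}(\seed_0)$ together with the inverses of the frozen variables, part~(3) follows.
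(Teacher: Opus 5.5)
Your outline is correct and is essentially the argument behind the statement, which the paper does not prove itself but imports from \cite{GLS11}. Part~(2) there goes exactly through $\mathcal{C}_w$, the cluster-tilting object attached to $\underline{w}$, the map $M\mapsto\varphi_M$, and reachability of the modules giving the dual PBW generators, which is the step you rightly flag as the hard one and leave to that source. Your part~(3) is the mechanism the paper records right after the statement: the isomorphism $\bar\pi(w)\colon U^-_w\cong U^-(w)\cap O_w$, obtained by restricting the projection $\pi(w)$ (no twist is needed), together with $D_{w_{\le j}\varpi_{i_j},\varpi_{i_j}}=\pi(w)^*\bigl(\Delta_{w_{\le j}\varpi_{i_j},\varpi_{i_j}}|_{U^-(w)}\bigr)$. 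That identity holds because $w_{\le j}$ is a prefix of $w$, and it is what justifies your claim that initial minors go to initial minors; once you have it, your part~(3) argument already yields part~(1), so the separate appeal to Theorem~\ref{theorem_clustunitcell} is redundant.
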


To apply Theorem~\ref{theorem_clusterstructreonunisub} to the proof of Theorem~\ref{theorem_main6}, we examine the compatibility of the valuation of dual canonical basis elements on $\C[U^- \cap X_w] \setminus \{0\}$ following \cite[Section 2.6]{KO21} and \cite[Section 8]{GLS11}. 

Consider the following commutative diagram. 
\begin{equation}
\xymatrix{
U^-(w)  & & U^-(w) \cap O_w  \ar@{_{(}->}[ll] & \\
U^-  \ar[u]_{\pi(w)} &  U^-\cap X_w \ar@{_{(}->}[l]^{\iota_w}  & U^-_w \ar@{_{(}->}[l]\ar^{\cong}[u]_{\bar\pi(w)}
}
\end{equation}
Recall that there is an isomorphism $U^-(w)^\prime \times U^-(w) \simeq U^-$ given by multiplication where $U^-(w)^\prime \coloneqq U^-\cap w U^- w^{-1}$. Let $\pi(w) \colon U^- \to U^-(w)$ denote the projection, which is a retraction of the inclusion $j(w) \colon U^-(w) \to U^-$, that is, $\pi(w) \circ j(w) = {\rm id}_{U^-(w)}$. Let
$O_w \coloneqq \{g \in G \, \vert \,  \Delta_{w\lambda,\lambda}(g) \neq 0 \,\, \text{for all } \lambda \in P^+ \}$ 
and the horizontal arrows are inclusions. In addition, the restriction of $\pi(w)$ to $U^-_w$ induces an isomorphism $\bar \pi(w) \colon U^-_w \cong U^-(w) \cap O_w$, see \cite[Proposition 2.20 (5)]{KO21}. 

Passing to coordinate rings, we obtain the following commutative diagram.
\begin{equation}\label{equ_maindiagram}
\xymatrix@C=5em{
\C[U^-(w)]\ar@{>->}[d]_{\pi(w)^*} \ar@{>->}[rr]^{\Omega_w} & & \C[U^-(w) \cap O_w ] \ar[d]_{\bar \pi(w)^*}^{\cong} & \\
\C[U^-]  \ar@{->>}[r]_{\iota_w^*} 
& \C[U^-\cap X_w] \ar@{>->}[r] 
& \C[U^-_w] 
}.
\end{equation}
Through the injective ring homomorphism $\pi(w)^* \colon \C[U^-(w)] \to \C[U^-]$, we identify $\C[U^-(w)]$ with the $\C$-subalgebra of $\C[U^-]$ generated by the \emph{dual PBW generators} of $\C[U^-(w)]$, see \cite[Proposition 8.2]{GLS11}. Moreover, 
the intersection of $\mathsf{G}^{\mathrm{up}}_{q=1}(\infty)$ with this subalgebra forms a basis.  

For notational simplicity, we denote the restriction $\Delta_{u\lambda,\lambda}\vert_{U^-}$ by $D_{u\lambda,\lambda}$ for $u\in W$.
For any reduced expression $\underline{w}=s_{i_1} \cdots s_{i_m}$,  the elements 
$$
\{D_{w_{\le j}\varpi_{i_j}, \varpi_{i_j}} \, |\, j=1, \dots, m\}
$$ 
are polynomials in the PBW generators of $\C[U^-]$ and hence they lie in the image of $\pi(w)^*$. Since $j(w)^* \circ \pi(w)^* = {\rm id}_{\C[U^-(w)]}$, it follows that $$D_{w_{\le j}\varpi_{i_j}, \varpi_{i_j}}  = \pi(w)^*(\Delta_{w_{\le j}\varpi_{i_j}, \varpi_{i_j}}\vert_{U^-(w)}).$$
Let 
$$
D_w \coloneqq \prod_{i \in I} D_{w\varpi_i, \varpi_i} \in  \C[U^-]. 
$$
Then the map $\Omega_w$ is given by localization
$$
  \C[U^-(w)] \to \C[U^-(w) \cap O_w ] \simeq \C[U^-(w)]_{D_w}.
$$

Note that the homomorphism $\C[U^-\cap X_w] \to \C[U^-_w] $ is also given by the localization at $\iota_w^*(D_w)$ (\cite[Lemma 2.17, Lemma 2.18]{KO21}), that is, 
$$
\C[U^-]/{\rm Ker}  (\iota^*_w) \simeq \C[U^- \cap X_w ]   \to \C[U^- \cap X_w ]_{\iota^*(D_w)} \simeq \left( \C[U^-]/{\rm Ker } (\iota^*_w) \right)_{\overline{D_w}} \simeq  \C[U_w^-].
$$
where $\overline{D_w}$ is the coset represented by $D_w$ in the quotient $\C[U^-]/{\rm Ker}  (\iota^*_w)$. Then the map 
$$
\bar \pi(w)^* \colon \C[U^-(w)]_{D_w} \simeq \C[U^-(w) \cap O_w ]  \to  \C[U^-_w]  \simeq \left( \C[U^-]/{\rm Ker } (\iota^*_w) \right)_{\overline{D_w}}
$$ 
is given by
\begin{equation}
\bar\pi(w)^*(x) = x \quad \text{for} \ x \in\C[U^-(w)] \, \text{ and } \, \bar\pi(w)^*( D_{w\lambda, \lambda})= \overline{D_{w\lambda, \lambda}} \ \text{for} \ \lambda \in P^+.  
\end{equation}

\subsection{Non-negativity of $g$-vectors of dual canonical basis}

To prove Theorem~\ref{theorem_main6}, recall that
$$
\{\iota_w^* (\mathsf{G}^{\mathrm{up}}_{q=1}(b)) \in \C[U^-\cap X_w] \, \vert \, b \in  \mathbf B_w(\infty)\}  
$$
is a $\C$-basis of $\C[U^-\cap X_w]$. We begin with the following proposition.

\begin{proposition}\label{thm_dualcanonical}
Let $b \in \mathbf{B}_w(\infty)$ and consider the corresponding dual canonical basis element $\mathsf{G}^{\mathrm{up}}_{q=1}(b) \in \C[U^-]$. Then there exist
\begin{equation}\label{equ_lambdag}
\lambda = \sum_{k \in I(w)} \lambda_k \varpi_k \in \mathsf{P}^+ \mbox{ and  \,} \mathsf{G}^{\mathrm{up}}_{q=1}(\widetilde{b}) \in \C[U^{-}(w)]\cap \mathsf{G}^{\mathrm{up}}_{q=1}(\infty)
\end{equation}
such that 
\begin{equation}\label{equ_choicemu}
\mathsf{G}^{\mathrm{up}}_{q=1}(b) = D_{w \lambda, \lambda}^{-1} \cdot \mathsf{G}^{\mathrm{up}}_{q=1}(\widetilde{b})  \,\, \mbox{ in $\C[U^-(w)]_{D_w}\simeq \C[U^-_w]$.}
\end{equation}
\end{proposition}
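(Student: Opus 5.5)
The plan is to pass to the monoidal categorification of the quantum unipotent coordinate ring by quiver Hecke algebra modules (Kang--Kashiwara--Kim--Oh), where dual canonical basis elements correspond to self-dual simple modules. Let $R\text{-gmod}$ be the category of finite-dimensional graded modules over the symmetric quiver Hecke algebra; its Grothendieck ring specializes at $q=1$ to $\C[U^-]$, and simple modules correspond to $\mathsf{G}^{\mathrm{up}}(\infty)$. Inside it sits the subcategory $\mathscr{C}_w$, which categorifies $\C[U^-(w)]$; its simples correspond to $\mathsf{G}^{\mathrm{up}}_{q=1}(\infty)\cap \C[U^-(w)]$, and the determinantial modules $\mathsf{M}(w\lambda,\lambda)$ categorify the minors $D_{w\lambda,\lambda}$. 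By \cite{KKOP21, KKOP23}, the localization $\widetilde{\mathscr{C}}_w$ of $\mathscr{C}_w$ at the (central) determinantial modules $\mathsf{M}(w\varpi_i,\varpi_i)$ categorifies $\C[U^-(w)]_{D_w}$. There is also a localized functor $R\text{-gmod}\to\widetilde{\mathscr{C}}_w$ which sends simples to simples. On Grothendieck rings at $q=1$ this functor realizes the composite $\C[U^-]\to\C[U^-\cap X_w]\to\C[U^-_w]\simeq\C[U^-(w)]_{D_w}$ in~\eqref{equ_maindiagram}, and it kills exactly the simples with $b\notin\mathbf{B}_w(\infty)$.

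First step: for $b\in\mathbf{B}_w(\infty)$, let $L(b)$ be the corresponding simple module and let $\Phi_w(L(b))$ be its image in $\widetilde{\mathscr{C}}_w$. Every simple object of $\widetilde{\mathscr{C}}_w$ has the form $\mathsf{M}(w\mu,\mu)^{\circ -1}\circ S$ with $S$ simple in $\mathscr{C}_w$ and $\mu\in\mathsf{P}^+$, up to grading shift. Specializing at $q=1$ and using that the classes of self-dual simples are dual canonical basis elements, this gives $\mathsf{G}^{\mathrm{up}}_{q=1}(b)=D_{w\mu,\mu}^{-1}\mathsf{G}^{\mathrm{up}}_{q=1}(\widetilde b)$, with $\mathsf{G}^{\mathrm{up}}_{q=1}(\widetilde b)$ the class of $S$. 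What remains is to show that $\mu$ can be taken supported on $I(w)$.

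Second step, which I expect to be the main obstacle: removing the components $\mu_k$ with $k\in\operatorname{supp}(w)\setminus I(w)$. Fix such a $k$, so $ws_k<w$, and write $w=w's_k$ with $\ell(w')=\ell(w)-1$. Then $\mathsf{M}(w\varpi_k,\varpi_k)$ is the determinantial module $\mathsf{M}(w'\varpi_k,\varpi_k)$ twisted by a single $f_k$-step. The key input is Kashiwara--Nakashima \cite{KN25}: for a simple $L$ in $R\text{-gmod}$ and such $k$, the head of $\mathsf{M}(w\varpi_k,\varpi_k)\circ L$ computed in the localization is governed by the $\varepsilon_k^*$-data of $L$. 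For $b\in\mathbf{B}_w(\infty)$ this data is already realized inside $\mathscr{C}_w$, using the right-descent characterization $\mathbf{B}_w(\infty)=\{b \mid \dots\}$ via $\tilde e_k^*$ for $ws_k<w$. I will try to show that $\mathsf{M}(w\varpi_k,\varpi_k)\nabla \Phi_w(L(b))$ is itself the image of a simple in $\mathscr{C}_w$ divided only by minors in directions from $I(w)$. Equivalently, I will show that the smallest $\mu$ with $\mathsf{M}(w\mu,\mu)\circ\Phi_w(L(b))\in\mathscr{C}_w$ has $\mu_k=0$. For this I will compare $\Lambda$-invariants and degrees: $\mu_k>0$ would force $L(b)$ to have a nontrivial $\tilde e_k^*$-string that is not compatible with $b\in\mathbf{B}_w(\infty)$.

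Third step: put these together. Take $\lambda=\mu$, now supported on $I(w)$. Then read the resulting equality of Grothendieck classes at $q=1$ through the isomorphism $\bar\pi(w)^*$, which identifies $\C[U^-(w)]_{D_w}$ with $\C[U^-_w]$ and sends $D_{w\lambda,\lambda}$ to $\overline{D_{w\lambda,\lambda}}$. This yields~\eqref{equ_choicemu}.
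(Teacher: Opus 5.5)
Your framework is the paper's: pass to $R$-gmod, use the localization $\mathsf{Q}_w\colon\mathscr{C}\to\widetilde{\mathscr{C}}_w$, find a simple $\widetilde{\mathsf M}\in\mathscr{C}_w$ with $\mathsf{Q}_w(\widetilde{\mathsf M})\simeq\mathsf{Q}_w(\mathsf{M}_{w\lambda,\lambda})\circ\mathsf{Q}_w(\mathsf M)$, and take classes at $q=1$. However, the only nontrivial content of the proposition is not proved. That content is that $\lambda$ can be chosen with $\lambda_k=0$ for every $k\in\operatorname{supp}(w)\setminus I(w)$.

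Your Step~1 yields $\mathsf{G}^{\mathrm{up}}_{q=1}(b)=D_{w\mu,\mu}^{-1}\mathsf{G}^{\mathrm{up}}_{q=1}(\widetilde b)$ with $\mu$ unconstrained on $\operatorname{supp}(w)$. That gives no control over the $j_k$-components with $k\notin I(w)$, which is exactly what Theorem~\ref{theorem_main6} needs. Step~2 is only announced (``I will try to show'', ``compare $\Lambda$-invariants and degrees''). It cannot follow formally from the description of simples of $\widetilde{\mathscr{C}}_w$. For instance, $\mathsf{Q}_w(\mathsf{M}_{w\varpi_k,\varpi_k})^{\circ -1}$ is a simple object there, and any expression of it as $\mathsf{M}_{w\mu,\mu}^{\circ-1}\circ S$ forces $\mu_k\ge 1$. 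So any argument must use that the object is the image of a module in $R$-gmod.

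The mechanism you sketch is also doubtful as stated. When $ws_k<w$ you may take a reduced word ending in $s_k$. Then the span of the $f_{i_1}^{(a_1)}\cdots f_{i_m}^{(a_m)}$ is stable under right multiplication by $f_k$, so $\mathbf{B}_w(\infty)$ is stable under $\tilde f_k^*$. A nontrivial $\tilde e_k^*$-string is therefore not in tension with $b\in\mathbf{B}_w(\infty)$. (Also, the weights of $\mathsf{M}_{w\varpi_k,\varpi_k}$ and $\mathsf{M}_{w'\varpi_k,\varpi_k}$ differ by $w'\alpha_k$, which in general is not $\alpha_k$, so this is not ``a single $f_k$-step''.)

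The missing ingredient is exactly Proposition~\ref{prop_KN25}, i.e.\ \cite[Proposition 5.28]{KN25]}. It says that if $\mathsf M$ is simple and $\mathsf{Q}_w(\mathsf M)$ is simple (equivalently $b\in\mathbf{B}_w(\infty)$), then there is $\lambda=\sum_{k\in I(w)}\lambda_k\varpi_k$ with $\mathsf{M}_{w\lambda,\lambda}\nabla\mathsf M\in\mathscr{C}_w$. The paper's proof does the following:
\begin{enumerate}
\item It invokes this proposition directly.
\item It applies \cite[Proposition 1.20]{KKOP23} to get $\mathsf{Q}_w(\mathsf{M}_{w\lambda,\lambda}\nabla\mathsf M)\simeq\mathsf{Q}_w(\mathsf{M}_{w\lambda,\lambda})\circ\mathsf{Q}_w(\mathsf M)$ in $\widetilde{\mathscr{C}}_w$.
\item It reads off $\mathsf{G}^{\mathrm{up}}_{q=1}(\widetilde b)=D_{w\lambda,\lambda}\,\mathsf{G}^{\mathrm{up}}_{q=1}(b)$ in $\C[U^-(w)]_{D_w}$.
\end{enumerate}
No classification of simples of $\widetilde{\mathscr{C}}_w$ is needed. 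You cite \cite{KN25} for a different statement and leave the support claim open. Unless you quote Proposition~\ref{prop_KN25} or actually prove its support assertion, your proposal establishes only the weaker identity with $\lambda$ unrestricted outside $I(w)$.
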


Note that the sum in~\eqref{equ_lambdag} is taken over $I(w)$ in~\eqref{equ_defIw}. In particular, $\lambda_k = 0$ for all $k \notin I(w)$.

Assuming Proposition~\ref{thm_dualcanonical}, we now prove Theorem~\ref{theorem_main6}.
\begin{proof}[Proof of Theorem~\ref{theorem_main6}]
Taking extended $g$-vectors of~\eqref{equ_choicemu} with respect to a seed $\seed$, we have
\begin{equation}\label{equ_extededgvectorsum}
g_{\seed}(\mathsf{G}^{\mathrm{up}}_{q=1}(b)) = g_{\seed} (D_{w \lambda, \lambda}^{-1}) + g_{\seed} (\mathsf{G}^{\mathrm{up}}_{q=1}(\widetilde{b})).
\end{equation}
For $k \in I$, let $j_k \in J_\mathrm{fz}$ denote the index corresponding to the frozen variable $D_{w \varpi_k, \varpi_k}$ with respect to the initial seed $\seed_0$ (and hence the above seed $\seed$).

First, we claim that
\begin{equation}\label{equ_gseedg}
g_{\seed} \bigl(\mathsf{G}^{\mathrm{up}}_{q=1}(\widetilde{b})\bigr)_{j_k} \geq 0 \quad \text{for all} \ k \in I.
\end{equation}
Since $\mathsf{G}^{\mathrm{up}}_{q=1}(\widetilde{b})$ lies in the cluster algebra $\C[U^-(w)]$ and frozen variables are not inverted in this cluster algebra by Theorem~\ref{theorem_clusterstructreonunisub}, no frozen variable appears in its Laurent expansion with respect to the cluster variables in $\seed$ in the denominators.

Next, by Proposition~\ref{thm_dualcanonical}, for $\lambda$ in~\eqref{equ_lambdag} we have
$$
g_{\seed} \bigl(D_{w \lambda, \lambda}^{-1} \bigr) =  g_{\seed} \left( \prod_{k\in I} D_{w \varpi_k, \varpi_k}^{-\lambda_k} \right) = \sum_{k\in I} -\lambda_k {\bold e}_{j_k} =\sum_{k\in  I(w)} -\lambda_k {\bold e}_{j_k} 
$$
so that 
\begin{equation}\label{equ_gseeddw}
g_{\seed} \bigl(D_{w \lambda, \lambda}^{-1} \bigr)_{j_k}=0 \quad \text{$k \in I \setminus I(w)$}.
\end{equation}

Combining~\eqref{equ_extededgvectorsum},~\eqref{equ_gseedg}, and~\eqref{equ_gseeddw}, we conclude that 
$$
g_{\seed}(\mathsf{G}^{\mathrm{up}}_{q=1}(b))_{j_k}\geq 0 \quad \text{$k \in I \setminus I(w)$}
$$
as desired.
\end{proof}

It remains to prove Proposition~\ref{thm_dualcanonical}. Our argument relies on the monoidal categorification of the cluster algebra structures on $\C[U^-(w)]$ and $\C[U^-_w]$ developed in \cite{KKKO18, KKOP21, KKOP23} together with a categorical analogue of Proposition~\ref{thm_dualcanonical} established by Kashiwara--Nakashima \cite{KN25}. 

To state \cite[Proposition 5.28]{KN25}, we briefly recall the necessary background. Let ${R}$ be a symmetric quiver Hecke algebra of type $\frak g$ over an algebraically closed field of characteristic zero, and let $\scr{C} \coloneqq R\text{-}\mathsf{gmod}$ be the category of finite-dimensional graded $R$-modules. The category $R\text{-}\mathsf{gmod}$ provides a monoidal categorification of the quantum unipotent coordinate ring $A_q(\mathfrak{n}^-)$, a $q$-deformation of $\mathbb{C}[U^-]$. Its Grothendieck ring $K(\mathscr{C})$ is isomorphic to the integral form $A_{\Z[q^{\pm 1}]}(\frak{n}^-)$. Under this identification, the convolution product $\circ$ in the category $\scr{C}$ corresponds to the multiplication in the algebra $A_{\Z[q^{\pm 1}]}(\frak{n}^-)$ and the isomorphism classes of simple modules correspond to dual canonical basis elements.

For each $w \in W$, there exists a full subcategory $\scr{C}_w $ of $\scr{C}$ that categorifies the quantum unipotent coordinate ring $A_q(\frak{n}^-(w))$, a $q$-deformation of $\C[U^-(w)]$. This categorification is compatible with the cluster algebra structure: cluster monomials correspond to real simple modules. In particular, the  \emph{determinantial modules} $\mathsf{M}_{w \lambda, \lambda}$ correspond to the unipotent quantum minors $D_{w \lambda, \lambda}$.

Let $\widetilde{\scr{C}}_w=\scr{C}_w  [\mathsf{M}^{-1}_{w\varpi_i, \varpi_i} \mid i \in I]$ be the localization of $\scr{C}_w$ obtained by adding the inverses of simple modules $\mathsf{M}_{w \varpi_i, \varpi_i}$, and let $\mathsf{\Omega}_w$ be the associated localization functor constructed in \cite{KKOP21}. Similarly, let $\mathsf Q_w \coloneqq \scr{C} \to \scr{C}[\mathsf{M}^{-1}_{w\varpi_i, \varpi_i} \mid i \in I]$ be the corresponding localization functor.  Then  $\widetilde{\scr{C}}_w$ and $\scr{C}[\mathsf{M}^{-1}_{w\varpi_i, \varpi_i} \mid i \in I]$ are equivalent. More precisely, we have  the following commutative diagram of functors 
\begin{equation}\label{equ_diagramincategory}
\xymatrix{
\scr{C}_w \ar@{>->}[d] \ar[rr]^{\mathsf{\Omega}_w \quad \quad \quad \quad} & & \widetilde{\scr{C}}_w \simeq \scr{C}  [\mathsf{M}^{-1}_{w\varpi_i, \varpi_i} \mid i \in I]   \\
\scr{C} \ar@{->>}[r] \ar[rru]^{\mathsf{Q}_w}  & \scr{C}/ {\rm Ker} (\mathsf Q_w) \ar[ru]_{\mathsf P_w}& } 
\end{equation}
Here, the functor $\mathsf Q_w$ factors through the Serre quotient $\scr{C}/ {\rm Ker} (\mathsf Q_w)$ of $\scr{C}$ by the kernel of $\mathsf Q_w$. The induced functor on the quotient is denoted by $\mathsf P_w$. Passing to the Grothendieck rings and specializing at $q=1$, the inclusion functor $\scr{C}_w \to \scr{C}$ and the localization functor $\mathsf \Omega_w$ yield $\pi(w)^*$ and $\Omega_w$ in \eqref{equ_maindiagram}, respectively. 

The Serre subcategory ${\rm Ker} (\mathsf Q_w)$ consists of simple modules of the form 
$$
\bigl\{ \mathsf{M} \in \scr{C} \, \vert \, [\mathsf{M}]=\mathsf{G}_{q=1}^{\mathrm{up}}(b) \,\, \text{for} \ b \in {\mathbf B}(\infty) \setminus {\mathbf B}_w(\infty) \bigr\}.
$$
Here, we use $[ \, \, ]$ to denote the class in the Grothendieck ring. Since the kernel of $\iota_w^*$ is spanned by 
$$
\bigl\{\mathsf{G}_{q=1}^{\mathrm{up}}(b) \in \C[U^-] \, \vert \,b \in \mathbf B(\infty) \setminus \mathbf B_w(\infty) \bigr\}.
$$
the quotient functor $\scr{C} \twoheadrightarrow\scr{C}/ {\rm Ker} (\mathsf Q_w)$ induces 
$$
\iota^*_w \colon \C[U^-]  \twoheadrightarrow \C[U^-\cap X_w] \simeq  \C[U^-] / {\rm ker}(\iota^*_w).
$$
We denote by $Q_w$ and $P_w$ the induced homomorphisms corresponding to $\mathsf Q_w$ and $\mathsf P_w$, respectively.

Keeping this correspondence in mind, we recall the following result.

\begin{proposition}[Proposition 5.28 in \cite{KN25}]\label{prop_KN25}
Let $\mathsf{M}$ be a simple module in $\scr{C}$ such that $\mathsf{Q}_w(\mathsf{M})$ is simple. Then there exists a dominant integral weight $$\lambda = \sum_{k \in I(w)} \lambda_k \varpi_k \in \mathsf{P}^+$$ such that the simple object $\mathsf{M}_{w \lambda, \lambda} \nabla \mathsf{M}$ belongs to  $\scr{C}_w$, where $X \nabla Y$ denotes the head of $X\circ Y$ of the simple modules in $\scr{C}$.
\end{proposition}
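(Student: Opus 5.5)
The plan is to reduce membership in $\scr{C}_w$ to a weight criterion and then show that convolving with $\mathsf{M}_{w\lambda,\lambda}$, for $\lambda$ supported on $I(w)$ and large, repairs exactly the violations that can occur. Throughout, $\mathsf{Q}_w(\mathsf{M})$ simple is read as $\mathsf{M}\notin{\rm Ker}(\mathsf{Q}_w)$, i.e. $[\mathsf{M}]=\mathsf{G}^{\mathrm{up}}_{q=1}(b)$ with $b\in\mathbf{B}_w(\infty)$. Several inputs below, (a), (b) and the boundedness step, are recalled from memory rather than from this paper; they need to be checked against the literature before the argument can be relied on.

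\emph{(a) Weight criterion.} I would use the description of $\scr{C}_w$, which I recall from Kashiwara--Kim--Oh--Park, in terms of the set $\mathrm{W}(\mathsf{N})$. Here $\mathrm{W}(\mathsf{N})$ is the set of $\gamma\in\mathsf{Q}^+$ with $e(\gamma,\beta-\gamma)\mathsf{N}\neq 0$. The criterion is that a simple $\mathsf{N}$ lies in $\scr{C}_w$ iff $\mathrm{W}(\mathsf{N})\subseteq \mathrm{span}_{\Z_{\ge0}}\Phi^+(w)$. Equivalently, for every $\gamma\in\mathrm{W}(\mathsf{N})$ and every $i\in I$, the pairing $(\varpi_i,w^{-1}\gamma)\le 0$ should be suitably controlled by a linear functional. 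So the goal becomes bounding the quantities $(\varpi_i,w^{-1}\gamma)$ over $\gamma\in\mathrm{W}(\mathsf{M}_{w\lambda,\lambda}\nabla\mathsf{M})$.

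\emph{(b) Splitting the indices.} For $i\in\mathrm{supp}(w)\setminus I(w)$ we have $ws_i<w$, so $w=w's_i$ and $w\varpi_i=w'\varpi_i-w'\alpha_i$. I expect the hypothesis $b\in\mathbf{B}_w(\infty)$ to give the inequality in the $\varpi_i$-direction automatically for every $\gamma\in\mathrm{W}(\mathsf{M})$. I would argue this using the description of $\mathbf{B}_w(\infty)$ via $\tf_{i_1}^{a_1}\cdots\tf_{i_m}^{a_m}\mathbf 1$ with a reduced word ending in $s_i$, together with the fact that $\mathrm{W}(\mathsf{M})$ is controlled by the $\varepsilon_i$-strings. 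The directions $k\in I(w)$ are the genuinely bad ones, and only for these do we allow $\lambda_k>0$.

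\emph{(c) Effect of $\mathsf{M}_{w\lambda,\lambda}$.} Since determinantial modules are real and mutually commuting, the head $\mathsf{M}_{w\lambda,\lambda}\nabla\mathsf{M}$ is simple. Moreover, $\lambda\mapsto \mathsf{M}_{w\lambda,\lambda}\nabla \mathsf{M}$ is compatible with the additivity of $\Lambda(\mathsf{M}_{w\lambda,\lambda},-)$ in $\lambda$ and with ${\mathfrak d}$. Using $\mathrm{W}(\mathsf{M}_{w\lambda,\lambda})=\mathsf{Q}^+\cap(\lambda-w\lambda-\text{dominance cone})$ and $\mathrm{W}(X\nabla Y)\subseteq\mathrm{W}(X)+\mathrm{W}(Y)$, I would show the following. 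For each $k\in I(w)$, the violation $\max_{\gamma}(\varpi_k,w^{-1}\gamma)$ is decreased by $\lambda_k$ until it reaches the allowed range. This uses the shuffle/Mackey filtration of $e(\gamma,\beta-\gamma)(\mathsf{M}_{w\lambda,\lambda}\circ\mathsf{M})$, and the observation that the components surviving in the head are those compatible with the socle/head exchange governed by $\Lambda$.

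\emph{Main obstacle.} The hard step is (c): $\mathrm{W}$ of a head can be much smaller than the Minkowski sum, and one must show the unwanted weights are killed rather than merely shifted. I would first try to prove it for $\lambda=N\varpi_k$ one index at a time. The argument would use that the localized object $\mathsf{Q}_w(\mathsf{M})$ is simple and that $\mathsf{Q}_w$ intertwines $\nabla$. This should let one compare $\mathsf{M}_{wN\varpi_k,N\varpi_k}\nabla\mathsf{M}$ with a simple in $\widetilde{\scr{C}}_w$ whose preimage under $\mathsf{\Omega}_w$ lies in $\scr{C}_w$ up to an invertible factor, and the boundedness of the required $N$ then comes from finiteness of $\mathrm{W}(\mathsf{M})$.
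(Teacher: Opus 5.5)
The paper does not prove this statement. It quotes it from Kashiwara--Nakashima \cite{KN25} and uses it as a black box, so your sketch has to stand on its own. It does not, and the failure is already in step (b): the claim that descent directions are automatically fine is false.

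\textbf{Counterexample to (b).} Take type $A_3$ and $w=s_1s_3s_2$.
\begin{itemize}
\item We have $ws_2=s_1s_3<w$ while $ws_1,ws_3>w$. So $I(w)=\{1,3\}$ and $2$ is a right descent.
\item The element $\tf_2\mathbf{1}$ lies in $\mathbf{B}_w(\infty)$. Its simple module is $\mathsf{M}=L(2)$, the one-dimensional $R(\alpha_2)$-module, and $\alpha_2\in\mathrm{W}(\mathsf{M})$.
\item But $w^{-1}\alpha_2=\alpha_1+\alpha_2+\alpha_3$, so $(\varpi_2,w^{-1}\alpha_2)=1>0$. The violation sits exactly in the descent direction you expected to be harmless.
\end{itemize}

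\textbf{Why the proposition still holds here.} The reason is different from the one your plan assumes.
\begin{itemize}
\item One has $\mathsf{M}_{w\varpi_1,\varpi_1}=L(1)$ and $\mathsf{M}_{w\varpi_3,\varpi_3}=L(3)$.
\item The head of $L(1)\circ L(3)\circ L(2)$ is the simple module $S$ whose only nonzero pieces are $e(1,3,2)S$ and $e(3,1,2)S$.
\item This $S$ is $\mathsf{M}_{w\varpi_2,\varpi_2}\in\scr{C}_w$. At $q=1$ this is the identity $\mathsf{G}^{\mathrm{up}}_{q=1}(\tf_2\mathbf{1})=D_{w\varpi_2,\varpi_2}\,D_{w\varpi_1,\varpi_1}^{-1}D_{w\varpi_3,\varpi_3}^{-1}$ on $U^-_w$.
\end{itemize}
So descent-direction violations do occur for $b\in\mathbf{B}_w(\infty)$, and they are cured by the determinantial modules of the ascent directions. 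This cross-directional effect is what the proposition encodes. A direction-by-direction bookkeeping, in which $\lambda_k$ repairs only direction $k$, cannot see it.

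Your formula for $\mathrm{W}(\mathsf{M}_{w\lambda,\lambda})$ is also wrong as written. Here $\mathrm{W}(\mathsf{M}_{w\varpi_2,\varpi_2})=\{0,\alpha_1,\alpha_3,\alpha_1+\alpha_3,\alpha_1+\alpha_2+\alpha_3\}$, which does not contain $\alpha_2\le\varpi_2-w\varpi_2$. It has to lie in the cone spanned by $\Phi^+(w)$, because determinantial modules belong to $\scr{C}_w$.

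\textbf{Step (c) and the fallback.} Step (c) is not an argument either. The only tool you invoke is $\mathrm{W}(X\nabla Y)\subseteq\mathrm{W}(X)+\mathrm{W}(Y)$. Since $0\in\mathrm{W}(\mathsf{M}_{w\lambda,\lambda})$, this upper bound contains $\mathrm{W}(\mathsf{M})$ itself, so it can never show that a bad weight disappears.

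Your fallback in the last paragraph uses the simplicity of $\mathsf{Q}_w(\mathsf{M})$ and the equivalence $\widetilde{\scr{C}}_w\simeq\scr{C}[\mathsf{M}^{-1}_{w\varpi_i,\varpi_i}\mid i\in I]$. Simple objects of $\widetilde{\scr{C}}_w$ are inverses of determinantial modules times images of simples of $\scr{C}_w$. So at best the fallback yields \emph{some} $\lambda\in\mathsf{P}^+$ with $\mathsf{M}_{w\lambda,\lambda}\nabla\mathsf{M}\in\scr{C}_w$, with no control over the support of $\lambda$.

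That existence statement is the easy part. As the paper stresses right after quoting the result, its whole content is that $\lambda_k=0$ for every $k\notin I(w)$. In other words, the frozen variables $D_{w\varpi_k,\varpi_k}$ for right descents $k$ never need to be inverted. Nothing in your plan establishes this. It requires an input specific to right descents ($w=w's_k$), and that input is what is imported from \cite{KN25}.
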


The key point of this proposition is that such a $\lambda \in \mathsf{P}^+$ satisfies $\lambda_k = 0$ for all $k \in I \setminus I(w)$.

\begin{proof}[Proof of Proposition~\ref{thm_dualcanonical}]
Let $\mathsf{G}^{\mathrm{up}}_{q=1}(b) \in \C[U^-]$ be a dual canonical basis element such that  ${\iota}^*_w (\mathsf{G}^{\mathrm{up}}_{q=1}(b)) \neq 0$. Let $\mathsf{M}$ be the corresponding simple module in $\scr{C}$. Recall that 
$$
\mbox{
${\iota}^*_w (\mathsf{G}^{\mathrm{up}}_{q=1}(b)) \neq 0 \Leftrightarrow Q_w (\mathsf{G}^{\mathrm{up}}_{q=1}(b)) \neq 0 \Leftrightarrow {\sf Q}_w(\sf{M})$ is simple $\Leftrightarrow b \in \mathbf{B}_w(\infty)$.
}
$$
By Proposition~\ref{prop_KN25}, there exists a weight $\lambda =\sum_{k \in I(w)} \lambda_i \varpi_k$ such that  the simple module $\widetilde{\mathsf{M}} \coloneqq \mathsf{M}_{w \lambda, \lambda} \nabla \mathsf{M}$ belongs to  $\scr{C}_w$.  
By \cite[Proposition 1.20]{KKOP23},
\begin{equation} \label{equ:QwMlaM}
\mathsf{Q}_w(\widetilde{\mathsf{M}}) = 
\mathsf{Q}_w(\mathsf{M}_{w \lambda, \lambda} \nabla \mathsf{M}) \simeq \mathsf{Q}_w(\mathsf{M}_{w \lambda, \lambda}) \circ \mathsf{Q}_w(\mathsf{M}) \quad \mbox{in $\widetilde{\scr{C}}_w$}.
\end{equation}
Let $\mathsf{G}^{\mathrm{up}}_{q=1}(\widetilde{b}) \in \C[U^-(w)]$ be the dual canonical basis element corresponding to $\sf{\widetilde{M}}$. Then, in $\C[U^-(w)]_{D_w}$, we obtain 
$$
\mathsf{G}^{\mathrm{up}}_{q=1}(\widetilde{b}) = D_{w \lambda, \lambda} \cdot \mathsf{G}^{\mathrm{up}}_{q=1}(b),
$$
which proves the claim.
\end{proof}

Note that \eqref{equ:QwMlaM} implies that 
$\widetilde{\mathsf{M}}= \mathsf{M}_{w \lambda, \lambda} \nabla \mathsf{M}$ and  
$\mathsf{M}_{w \lambda, \lambda} \circ \mathsf{M}$ are isomorphic in $\mathscr{C} /{\rm Ker} (\mathsf{Q}_w)$. Hence, we obtain
$$
\overline{\mathsf{G}^{\mathrm{up}}_{q=1}(\tilde b)} =\overline{D_{w \lambda, \lambda}} \cdot \overline{\mathsf{G}^{\mathrm{up}}_{q=1}(b)} \quad \text{in } \ \C[U^-]/{\rm Ker} (\iota^*_w).
$$ 
It follows that 
\begin{equation}
\begin{aligned}
(\bar \pi(w)^*\circ [\mathsf{P}_w]) (\overline{\mathsf{G}^{\mathrm{up}}_{q=1}(b)}) 
&=\bar \pi(w)^* \left( [\mathsf{Q}_w(\mathsf{M})]\right) 
=\bar \pi(w)^* \left( \bigl[\mathsf{Q}_w(\mathsf{M}_{w\lambda,\lambda}^{\circ -1}) \circ\mathsf{Q}_w(\mathsf{\widetilde M}) \bigr] \right) \\
&=\bar \pi(w)^* \left(D_{w \lambda, \lambda}^{-1} \cdot \mathsf{G}^{\mathrm{up}}_{q=1}(\tilde b)\right) \\
&= \bigl( \overline{D_{w \lambda, \lambda}} \bigr)^{-1} \cdot \overline{\mathsf{G}^{\mathrm{up}}_{q=1}(\tilde b)}
=\overline{\mathsf{G}^{\mathrm{up}}_{q=1}(b)}.
\end{aligned}
\end{equation}
Thus
$\bar \pi(w)^*\circ [\mathsf{P}_w] $ is equal to the localization map
$$
\C[U^-]/{\rm Ker} (\iota^*_w) \to\left( \C[U^-]/{\rm Ker} (\iota^*_w) \right)_{\overline{D_w}}.
$$
In summary, we have the following commutative diagram.
\begin{equation}\label{equ_maindiagram1}
\xymatrix@C=5em{
\C[U^-(w)]\ar@{>->}[d]_{\pi(w)^*} \ar@{>->}[rr]^{[\mathsf{\Omega}_w]} & & \C[U^-(w)]_{D_w} \ar[d]_{\bar \pi(w)^*}^{\cong}  \\
\C[U^-]  \ar@{->>}[r]_{\iota_w^*} \ar[urr]^{[{\mathsf Q}_w]} 
&\left( \C[U^-]/{\rm Ker} (\iota_w^*) \right) \ar@{>->}[r] \ar[ur]_{[{\mathsf P}_w]} 
& \left( \C[U^-]/{\rm Ker} (\iota_w^*) \right)_{\overline{D_w}}
}
\end{equation}

\section{Weak Bruhat orders and the classification of finite mutation type quivers}\label{Sec_labledextendedexch}

We compare the Newton--Okounkov polytopes of $(G/P, \mathcal{L}_\lambda^K)$ and the monotone Lagrangian tori in $(G/P, \mathcal{L}_\lambda^K)$ constructed in Theorem~\ref{theorem_NOpolyopteforXWK} and Corollary~\ref{cor_monotoneLagtoriconst}. To facilitate this comparison, we introduce marked exchange matrices together with a partial order on them, designed to encode the non-negativity in Theorem~\ref{theorem_main6}. We investigate the relationship between the weak Bruhat order on Weyl groups and the order on marked exchange matrices. Finally, we classify the cluster algebras $\mathcal{A}_P$ according to their mutation types.

\subsection{Marked exchange matrices and partial order}

We begin by recalling various notions of finiteness in the context of cluster algebras.

\begin{definition}\label{def_clufininfin}
A cluster algebra is said to be of \emph{finite type} if it has only finitely many distinct seeds under iterated mutations. Otherwise, it is said to be of \emph{infinite type}. 
\end{definition}

Let $\mcal{F}$ denote the field of rational functions over $\C$ in $m$ variables ($m \coloneqq |J|$). An initial seed $\seed_0 = (A_{\seed_0}, \varepsilon_{\seed_0})$ consists of an $m$-tuple $A_{\seed_0}$ of algebraically independent variables in $\mcal{F}$ and an extended exchange matrix $\varepsilon_{\seed_0}$. By the classification theorem of \cite{FZ03}, a cluster algebra is of finite type if and only if its exchange matrix is mutation equivalent to one whose Cartan counterpart is a finite-type Cartan matrix. Equivalently, a cluster algebra is of finite type if and only if it has finitely many distinct unfrozen cluster variables.

We next recall the notion of finite mutation type.

\begin{definition} 
A cluster algebra is said to be of \emph{finite mutation type} if it admits only finitely many distinct extended exchange matrices under iterated mutations. Otherwise, it is said to be of \emph{infinite mutation type}. 
\end{definition}

If a cluster algebra is of infinite mutation type, then it is necessarily of infinite type. However, the converse does \emph{not} hold in general, i.e., a cluster algebra may be of infinite type while still being of finite mutation type. This distinction plays a crucial role in verifying the multiplicity condition in Theorem~\ref{theorem_maincriterion}. In particular, the verification reduces to detecting subquivers that are of finite mutation type but of infinite type.

We now focus on the cluster algebras $\mcal{A}_P$ in Definition~\ref{def_clusteralgforpartialflag}. To compare their mutation types, we introduce a partial order on extended exchange matrices. Let $J$ be the index set of an extended exchange matrix $\varepsilon$, with the partition $J = J_\mathrm{uf} \sqcup J_\mathrm{fz}$. Thus $\varepsilon = (\varepsilon_{r,s})_{(r,s) \in J_\mathrm{uf} \times J}$, where the principal part $(\varepsilon_{r,s})_{(r,s) \in J_\mathrm{uf} \times J_\mathrm{uf}}$ is skew-symmetrizable. 

\begin{definition}\label{def_parorderonskewsy}
Let $\varepsilon = (\varepsilon_{r,s})$ be an extended exchange matrix of size $J_{\varepsilon, \mathrm{uf}} \times J_{\varepsilon}$, and let $\varepsilon^\prime = (\varepsilon^\prime_{r,s})$ be an extended exchange matrix of size $J_{\varepsilon^\prime, \mathrm{uf}} \times J_{\varepsilon^\prime}$. We write 
\begin{equation}\label{equ_orderonex}
\varepsilon \leq \varepsilon^\prime
\end{equation}
if there exists a one-to-one map $\phi \colon J_{\varepsilon} \to J_{\varepsilon^\prime}$ such that 
\begin{enumerate}
\item $\phi ({J_{\varepsilon,\mathrm{uf}}}) \subseteq J_{\varepsilon^\prime,\mathrm{uf}}$, and 
\item $\varepsilon_{r,s} = \varepsilon^\prime_{\phi(r), \phi(s)}$ for all $(r,s) \in J_{\varepsilon, \mathrm{uf}} \times J_{\varepsilon}$.
\end{enumerate}
\end{definition}

\begin{remark}
Let $\mcal{A}$ and $\mcal{A}^\prime$ be cluster algebras. If $\mcal{A}$ is of infinite (mutation) type and there exist seeds $\seed$ of $\mcal{A}$ and $\seed^\prime$ for $\mcal{A}^\prime$ such that $\varepsilon_\seed \leq \varepsilon_{\seed^\prime}$, then $\mcal{A}^\prime$ is also of infinite (mutation) type.
\end{remark}

Gei{\ss}--Leclerc--Schr{\"{o}}er \cite{GLS08} introduced a partial order on the set of marked Dynkin diagrams to determine whether cluster algebras on unipotent cells associated with a parabolic subgroup are of finite or infinite type. Their ordering induces the order in \eqref{equ_orderonex} when appropriate seeds are chosen. 

To compare Newton–Okounkov bodies, however, we require a refinement that also records certain frozen variables, specifically those indexed by elements in~\eqref{equ_defIw}. For this reason, we introduce \emph{marked} extended exchange matrices.

\begin{definition}\label{def_labeledexexm}
A \emph{marked extended exchange matrix} is a pair $(\varepsilon, \mathsf{L})$, where $\varepsilon$ is an extended exchange matrix of size $J_\mathrm{uf} \times J$ and $\mathsf{L} \subseteq J_\mathrm{fz} \coloneqq J \setminus J_\mathrm{uf}$ is a distinguished subset of frozen indices. Elements of $\mathsf{L}$ are called \emph{marked frozen variables}. 

We define a partial order $\leq$
\begin{equation}\label{equ_partialorderlabel}
(\varepsilon, \mathsf{L}) \leq (\varepsilon^\prime, \mathsf{L}^\prime)
\end{equation}
if there is a one-to-one map $\phi \colon J_{\varepsilon} \to J_{\varepsilon^\prime}$ satisfying conditions $(1), (2)$ of Definition~\ref{def_parorderonskewsy} and 
\begin{enumerate}
\setcounter{enumi}{2}
\item $\phi(\mathsf{L}) \subseteq \mathsf{L}^\prime$.
\end{enumerate}
\end{definition}

When the principal part $\varepsilon |_{J_\mathrm{uf} \times J_\mathrm{uf}}$ is skew-symmetric, $\varepsilon$ can be represented by a quiver $\mathsf{Q}$. A vertex $v_j$ of $\mathsf{Q}$ is called \emph{unfrozen} (resp. \emph{frozen}) if $j \in J_\mathrm{uf}$ (resp. $j \in J_\mathrm{fz}$). A pair $(\mathsf{Q}, \mathsf{L})$ is called a \emph{marked quiver}. We write
$$
(\mathsf{Q}, \mathsf{L}) \leq (\mathsf{Q}^\prime, \mathsf{L}^\prime)
$$
if there exists an embedding $\phi \colon \mathsf{Q} \to \mathsf{Q}^\prime$ of quivers that sends unfrozen vertices to unfrozen vertices and marked vertices to marked vertices.

\begin{example}
Consider two marked quivers $(\mathsf{Q}, \mathsf{L})$ and $(\mathsf{Q}^\prime, \mathsf{L}^\prime)$ as depicted in Figure~\ref{fig_twolabeledquivers}. The index sets for $\mathsf{Q}$ and $\mathsf{Q}^\prime$ are $J = [9]$, and $J^\prime = [13]$, respectively. Let $J_{\mathrm{fz}} = \{5,7,8,9\}$, $J^\prime_{\mathrm{fz}} = \{7,9,11,12,13\}$, $\mathsf{L} =\{9\}$, and $\mathsf{L}^\prime =\{13\}$. Then the map
$$
\phi \colon J \to J^\prime \quad \mbox{given by $j \mapsto j+2$.}
$$
is an embedding of extended exchange matrices satisfying conditions $(1), (2)$ in Definition~\ref{def_parorderonskewsy}. However, the map $\phi$ does \emph{not} satisfy condition $(3)$ in Definition~\ref{def_labeledexexm}. Thus.
$$
\mathsf{Q} \leq \mathsf{Q}^\prime \,\, \mbox{ while $(\mathsf{Q}, \mathsf{L}) \not\leq (\mathsf{Q}^\prime, \mathsf{L}^\prime)$}.
$$
\begin{figure}[h]
$$
\resizebox{.5\hsize}{!}{
\xymatrix{
s_1 & & 2 \ar[rrd] 	&	&	& \fbox{8} \ar[lll]	&	&\\
s_2 & 1 \ar[rd] \ar[ru]	&  	& 4 \ar[ll]	\ar@/^1pc/[dd] &6 \ar[l] \ar[ru] \ar[rd]	&	&{\xy*{9}*\cir<7pt>{}\endxy}  \ar[ll] \\
s_3 & 	& 3 \ar[rru]	&	&	&\fbox{7} \ar[lll]	\\
s_4 &	&	&  \fbox{5} &  	&  (\mathsf{Q}, \mathsf{L})	& \\
 &	&	&   &  	&	& 	
}
}
\resizebox{.5\hsize}{!}{
\xymatrix{
s_1 & &2 \ar[rrrrd]	&  &  	&	&	& \fbox{12} \ar[lllll]	& \\
s_2 & 1\ar[rd] \ar[ru]&  	& 4 \ar[ll] \ar[rrd]	&	&	& 10 \ar[lll] \ar[rd] \ar[ru] &&{\xy*{13}*\cir<7pt>{}\endxy} \ar[ll] \\
s_3 &  	& 3 \ar[rd] \ar[ru] 	&  &6 \ar[ll] \ar@/^1pc/[dd] 	& 8 \ar[l] \ar[rd] \ar[ru]	& & \fbox{11} \ar[ll]&\\
s_4 & 	& & 5 \ar[rru]	&	& 	& \fbox{9}	\ar[lll]&\\
s_5 &	&	&  &  \fbox{7}	&	&	& 	(\mathsf{Q}^\prime, \mathsf{L}^\prime)	  &  
}
}
$$
\caption{The marked quivers $(\mathsf{Q}, \mathsf{L})$ and $(\mathsf{Q}^\prime, \mathsf{L}^\prime)$.} \label{fig_twolabeledquivers}
\end{figure}
\end{example}

We now explain how to associate a marked extended exchange matrix to a parabolic subgroup of a semisimple algebraic group over $\C$. Let $I$ denote the index set for the simple roots of $\frak{g}$ and let $w_0$ be the longest element of $W$. 

For a subset $K \subseteq I$, consider the corresponding parabolic subgroup $P^K$ and the quotient $W^K \simeq W / W_K$ where $W_K$ is the parabolic Weyl group. Let $w_K$ be the minimal length representative of the coset $w_0 W_K$ in $W$. 

Fix a reduced expression $\underline{w}_K = s_{i_1} s_{i_2} \cdots s_{i_m}$ of ${w}_K$. This choice determines a marked extended exchange matrix $(\varepsilon, \mathsf{L})$. The extended exchange matrix $\varepsilon = (\varepsilon_{r,s})$ is defined by the rule in~\eqref{equ_GLSseed} corresponding to $\underline{w}_K$. For each $k \in I \setminus K$, denote by $j_k \in J_\mathrm{fz}$ the index corresponding to the frozen variable with respect to the initial seed determined by $\underline{w}_K$. We then define the marking set as 
\begin{equation}\label{equ_labelL}
\mathsf{L} = \{ j_k \in J_\mathrm{fz} \mid k \in I \setminus K \}.
\end{equation}
This choice of the markings reflects the non-negativity of components of extended $g$-vectors of dual canonical basis element, cf. Theorem~\ref{theorem_main6}.

In the special case where the principal part $\varepsilon |_{J_\mathrm{uf} \times J_\mathrm{uf}}$ is skew-symmetric, $\varepsilon$ can be represented by a quiver $\mathsf{Q}$. By abuse of notation, we also write $\mathsf{L}$ for the corresponding set of marked vertices in $\mathsf{Q}$.

\begin{definition}\label{def_labeledexcmatass}
The pair $(\varepsilon, \mathsf{L})$ is called a \emph{marked exchange matrix associated with} the parabolic subgroup $P^K$. When $\varepsilon |_{J_\mathrm{uf} \times J_\mathrm{uf}}$ is skew-symmetric, the corresponding pair $(\mathsf{Q}, \mathsf{L})$ is called the \emph{marked quiver associated with} $P^K$.
\end{definition}

\begin{example}\label{example_OG28lab}
Let $G$ be a classical complex Lie group of type $D_4$ whose Dynkin digram is depicted below.  
$$
\begin{tikzpicture}[scale=0.5]
\draw (-1,0) node[anchor=east] {$D_{4}$};
\draw (0 cm,0) -- (2 cm,0);
\draw (2 cm,0) -- (4 cm,0.7 cm);
\draw (2 cm,0) -- (4 cm,-0.7 cm);
\draw[fill=white] (0 cm, 0 cm) circle (.25cm) node[below=4pt]{$1$};
\draw[fill=black] (2 cm, 0 cm) circle (.25cm) node[below=4pt]{$2$};
\draw[fill=white] (4 cm, 0.7 cm) circle (.25cm) node[right=3pt]{$4$};
\draw[fill=white] (4 cm, -0.7 cm) circle (.25cm) node[right=3pt]{$3$};
\end{tikzpicture}
$$
Take $K = \{1, 3, 4\}$, so that the corresponding partial flag variety $G/P^K$ is the orthogonal Grassmannian $\mathrm{OG}(2,8)$ of dimension $9$. A reduced expression of $w_K$ is $s_2 s_1s_3s_2s_4s_2s_3s_1s_2$. The marked quiver associated with $P^K$ is $(\mathsf{Q}, \mathsf{L})$ in Figure~\ref{fig_twolabeledquivers}. The frozen vertices are indicated by either a circle or a square, while marked frozen vertices are distinguished by circles.

Also, the marked quiver $(\mathsf{Q}^\prime, \mathsf{L}^\prime)$ in Figure~\ref{fig_twolabeledquivers} corresponds to the orthogonal Grassmannian $\mathrm{OG}(2, 10)$.
\end{example}

\subsection{Weak Bruhat orders and marked extended exchange matrices}

To compare the cluster algebras $\mcal{A}_P$ associated with parabolic subgroups, we introduced a partial order on exchange matrices in Definition~\ref{def_parorderonskewsy}. Since extended exchange matrices are determined by the Weyl group elements as in~\eqref{equ_GLSseed}, the partial order is closely related to the weak Bruhat order on the Weyl group, which we now recall. 

Let $W$ be a Weyl group, let $I$ be the index set for simple roots, and let $S = \{s_i \mid i \in I\}$ be the set of corresponding simple reflections. For $w \in W$, denote its length by $\ell(w)$. A reduced expression $\underline{w}$ then has the form 
$$
s_{i_1} s_{i_2} \cdots s_{i_m}, \, \mbox{where $m = \ell(w)$.}
$$ 

\begin{definition}
Let $v, w \in W$. The \emph{left weak Bruhat order} on $W$ is defined by
$$
v \leq_L w \quad \mbox{if $\ell(w) - \ell(v) = \ell(wv^{-1})$.}
$$
Equivalently, $w$ can be expressed as $s_{i_1} \cdots s_{i_\nu} v$ with $\ell(w) = \ell(v) + \nu$.

Similarly, the \emph{right weak Bruhat order} on $W$ is defined by
$$
v \leq_R w \quad \mbox{if $\ell(w) - \ell(v) = \ell(v^{-1}w)$.}
$$
Equivalently, $w$ can be expressed as $v s_{i_1} \cdots s_{i_\nu}$ with $\ell(w) = \ell(v) + \nu$.

Both $\leq_L$ and $\leq_R$ define partial orders on $W$.
\end{definition}

For a subset $K \subseteq I$, let 
\begin{itemize}
\item $W_{K}$ denote the parabolic subgroup generated by $\{ s_{{k}} \mid k \in K\}$, and
\item $\mathsf{D}_K$ denote the Dynkin diagram associated with $W_K$.
\end{itemize}
We also denote by
\begin{itemize}
\item $w^{K}_0$ the longest element of $W_K$, and
\item $w_{K}$ a (unique) minimal length representative of the left coset $w_0 W_{K}$ in $W$. 
\end{itemize}
Recall that $w_K = w_0 w^K_{0}$. Note that if $K = \emptyset$, then $w^{K}_0 = \mathrm{id}$ and $w_K = w_0$. 

We define an involution on $\mathsf{D}_K$ by
$$
*^K \colon \mathsf{D}_K \to \mathsf{D}_K \quad \mbox{ determined by $\alpha_{i^{*^K}} = -w_0^K \alpha_i$}
$$
For $K \subseteq M \subseteq I$, set
$$
K^{*^M} = \left\{ i^{*^M} \mid i \in K \right\}.
$$
When $M = I$, we simply write $K^{*}$ instead of $K^{*^M}$.

Although $w_0$ and $w_0^K$ do \emph{not} commute in general, we have the following lemma. 
\begin{lemma}\label{lemma_w0Kw0}
For every subset $K \subseteq I$, we have 
$$
w_0^K w_0 = w_0 w_0^{K^*}
$$
\end{lemma}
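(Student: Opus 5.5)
The plan is to show that both sides of the identity equal the longest element of a parabolic subgroup, pushed through by conjugation with $w_0$. The key input is the standard fact that $w_0$ is an involution and that conjugation by $w_0$ permutes the simple reflections. Concretely, the map $\alpha_i \mapsto -w_0\alpha_i = \alpha_{i^*}$ defines the involution $*$ on $I$. Since $w_0 s_i w_0^{-1}$ is the reflection in the root $w_0\alpha_i = -\alpha_{i^*}$, we get
\[
w_0 s_i w_0 = s_{i^*} \qquad \text{for every } i \in I.
\]

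From this I would deduce that conjugation by $w_0$ is a length-preserving group automorphism of $W$. Length is preserved because conjugation sends simple reflections to simple reflections, and hence reduced words to words of the same length, together with the same argument for the inverse. This automorphism maps the parabolic subgroup $W_K = \langle s_k \mid k\in K\rangle$ isomorphically onto $W_{K^*} = \langle s_{k^*} \mid k \in K\rangle$. Because it preserves length, it must send the unique element of maximal length in $W_K$ to the unique element of maximal length in $W_{K^*}$. This gives
\[
w_0\, w_0^K\, w_0 = w_0^{K^*}.
\]

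Multiplying on the left by $w_0$ and using $w_0^2=\mathrm{id}$ yields $w_0^K w_0 = w_0 w_0^{K^*}$, which is the assertion. Two small points need checking along the way. First, the length of $w_0 v w_0$ computed in $W$ agrees with its length as an element of $W_{K^*}$; this holds because the length of an element of a standard parabolic subgroup is the same in $W_K$ as in $W$. Second, $w_0^2 = \mathrm{id}$, which is standard.

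There is no real obstacle here. The only step that needs care is justifying $w_0 s_i w_0 = s_{i^*}$. This follows because $w_0$ sends $\Phi^+$ to $-\Phi^+$, so $-w_0$ permutes the simple roots, and because of the general identity $w s_\alpha w^{-1} = s_{w\alpha}$.
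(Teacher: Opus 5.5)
Your proposal is correct and follows the same route as the paper, whose proof simply cites the identity $w_0 s_i w_0 = s_{i^*}$. You spell out the standard consequence that conjugation by $w_0$ is a length-preserving isomorphism $W_K \to W_{K^*}$, which carries $w_0^K$ to $w_0^{K^*}$ and gives the identity after multiplying by $w_0$.
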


\begin{proof}
It follows from the identity $w_0 s_i w_0 = s_{i^*}$.
\end{proof}

Note that 
\begin{itemize}
\item $w_0^K w_0$ is the minimal length \emph{right} coset representative of $W_Kw_0$ in $W$, and 
\item $w_0 w_0^K$ is the minimal length \emph{left} coset representative of $w_0W_K$ in $W$. 
\end{itemize}
The next proposition is an immediate consequence of the construction of marked extended exchange matrices in Definition~\ref{def_labeledexcmatass}.

\begin{proposition}\label{proposition_weakBorderlabel}
If $w_{K} \leq_L w_{{K^\prime}}$ in the \emph{left} weak Bruhat order, then there exist reduced expressions $\underline{w}_{K}$ and  $\underline{w}_{{K^\prime}}$ such that the associated marked extended exchange matrices $(\varepsilon, \mathsf{L})$ and $(\varepsilon^\prime, \mathsf{L}^\prime)$, constructed from $\underline{w}_{K}$ and  $\underline{w}_{{K^\prime}}$ respectively, satisfy
$$
(\varepsilon, \mathsf{L}) \leq (\varepsilon^\prime, \mathsf{L}^\prime)
$$ 
in the sense of~\eqref{equ_partialorderlabel}.
\end{proposition}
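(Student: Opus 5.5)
Since $w_K \leq_L w_{K'}$, we can write $w_{K'} = u\, w_K$ with $\ell(w_{K'}) = \ell(u) + \ell(w_K)$, where $u = s_{i'_1}\cdots s_{i'_\nu}$ is reduced. Choose any reduced expression $\underline{w}_K = s_{i_1}\cdots s_{i_m}$. Set
$$
\underline{w}_{K'} = s_{i'_1}\cdots s_{i'_\nu}\, s_{i_1}\cdots s_{i_m},
$$
which is reduced. The rule~\eqref{equ_GLSseed} is local in the word: $k^+$ and the conditions $s<r<s^+<r^+$ only involve positions and letters at or after the relevant indices. This suggests embedding the index set $J=\{1,\dots,m\}$ of $\underline{w}_K$ as the \emph{final} segment of the index set $J'=\{1,\dots,\nu+m\}$ via $\phi(j) = j+\nu$.

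The key verification has three parts. First, for $j \in J$, the successor $j^+$ computed in $\underline{w}_K$ corresponds to $\phi(j)^+$ computed in $\underline{w}_{K'}$. This holds because the suffix of $\underline{w}_{K'}$ starting at position $\nu+1$ is exactly $\underline{w}_K$, with $m+1 \mapsto \nu+m+1$. Consequently $\phi(J_\uf) \subseteq J'_\uf$ and $\phi(J_\mathrm{fz}) \subseteq J'_\mathrm{fz}$. Second, the conditions defining $\varepsilon_{r,s}$ for $r,s \in J$ depend only on the relative order of $r,s,r^+,s^+$ and on the letters $i_r,i_s$. All of these are preserved by the shift, so $\varepsilon_{r,s} = \varepsilon'_{\phi(r),\phi(s)}$, giving conditions (1) and (2) of Definition~\ref{def_parorderonskewsy}. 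Injectivity of $\phi$ is clear. Third, for the markings, the frozen index $j_k$ for $\underline{w}_K$ is the last occurrence of the letter $k$ in $\underline{w}_K$, and it exists whenever $k \in \mathrm{supp}(w_K)$. Its image is the last occurrence of $k$ in $\underline{w}_{K'}$, namely $j'_k$. It remains to check $\phi(\mathsf{L}) \subseteq \mathsf{L}'$, i.e.\ $I\setminus K \subseteq I \setminus K'$, i.e.\ $K' \subseteq K$.

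I expect this last inclusion to be the only non-formal point. The plan is to derive it from $w_K \leq_L w_{K'}$ using descent sets. The elements of $W^K$ have right descent set contained in $I\setminus K$. Moreover, $w_K = w_0 w_0^K$ has right descent set exactly $I\setminus K$: for $i \notin K$ we have $w_K s_i < w_K$, since $w_K$ is the longest element of $W^K$, while for $k \in K$ we have $w_K s_k > w_K$. Left weak order is compatible with right descents: if $v \leq_L w$, then every right descent of $v$ is a right descent of $w$, because $w = uv$ with lengths adding and $v s_i < v$ forces $w s_i = u(vs_i)$ to have length at most $\ell(u)+\ell(v)-1$. Hence $I\setminus K \subseteq I \setminus K'$, so $K' \subseteq K$ and $\phi(\mathsf{L}) \subseteq \mathsf{L}'$, as required.

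One caveat concerns supports. The markings are indexed by $k \in I\setminus K$, and we need $j_k$ to exist. This holds since $w_K s_k < w_K$ forces $k \in \mathrm{supp}(w_K)$, which completes the argument.
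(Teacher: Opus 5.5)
Your proposal is correct and is the argument the paper has in mind. The paper gives no written proof: it calls the proposition an immediate consequence of the construction, which amounts to embedding $\underline{w}_K$ as the final segment of $\underline{w}_{K'} = \underline{u}\,\underline{w}_K$ via the shift $j \mapsto j+\nu$. Your right-descent-set argument, which gives $I\setminus K \subseteq I\setminus K'$ and hence $\phi(\mathsf{L}) \subseteq \mathsf{L}'$, supplies the marking check that the paper leaves implicit.
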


Using Proposition~\ref{proposition_weakBorderlabel}, we compare the minimal length {left} coset representatives. We begin by comparing ${w}_{K_1}$ with ${w}_{{K_2}}$ within a fixed Weyl group $W$.

\begin{proposition}\label{proposition_weakbruhat}
Let $K_1, K_2 \subseteq I$.  If $K_1 \supseteq K_2$, then 
\begin{equation}\label{equ_propweakbruhat}
w^{{K_1}}_0 w_0 \leq_L w^{{K_2}}_0 w_0.
\end{equation}
Moreover,
\begin{equation}\label{equ_propweakbruhat2}
w_{K_1} \leq_L  w_{K_2}.
\end{equation}
\end{proposition}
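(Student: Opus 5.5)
We prove \eqref{equ_propweakbruhat} first and then deduce \eqref{equ_propweakbruhat2} using Lemma~\ref{lemma_w0Kw0}.

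\emph{Step 1: \eqref{equ_propweakbruhat}.} Since $K_2 \subseteq K_1$, the group $W_{K_2}$ is a parabolic subgroup of $W_{K_1}$. Write $w_0^{K_1} = u\, w_0^{K_2}$, where $u = w_0^{K_1} w_0^{K_2}$ is the minimal length representative of $w_0^{K_1}W_{K_2}$ in $W_{K_1}$. Lengths add here:
$$
\ell(w_0^{K_1}) = \ell(u) + \ell(w_0^{K_2}).
$$
This is the standard parabolic factorization inside the Coxeter group $W_{K_1}$, in which $w_0^{K_1}$ is the longest element. Multiplying on the right by $w_0$ gives
$$
w_0^{K_2}w_0 = u^{-1} w_0^{K_1} w_0.
$$
We then use $\ell(xw_0) = \ell(w_0) - \ell(x)$ for all $x \in W$. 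This yields
$$
\ell(w_0^{K_2}w_0) - \ell(w_0^{K_1}w_0) = \ell(w_0^{K_1}) - \ell(w_0^{K_2}) = \ell(u) = \ell(u^{-1}).
$$
By the definition of $\leq_L$, this is exactly $w_0^{K_1}w_0 \leq_L w_0^{K_2}w_0$. The point is that $w_0^{K_2}w_0\,(w_0^{K_1}w_0)^{-1} = u^{-1}$ has length equal to the length difference.

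\emph{Step 2: \eqref{equ_propweakbruhat2}.} Recall $w_K = w_0 w_0^K$, and by Lemma~\ref{lemma_w0Kw0}, $w_0 w_0^{K} = w_0^{K^*} w_0$. The involution $*$ preserves inclusions, so $K_1 \supseteq K_2$ implies $K_1^* \supseteq K_2^*$. Applying Step~1 to the pair $K_1^* \supseteq K_2^*$ gives
$$
w_{K_1} = w_0^{K_1^*}w_0 \leq_L w_0^{K_2^*}w_0 = w_{K_2}.
$$
Alternatively, one could verify directly that $w_{K_2}w_{K_1}^{-1} = w_0 w_0^{K_2}w_0^{K_1}w_0 = w_0 u^{-1} w_0$. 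This element has length $\ell(u)$, because conjugation by $w_0$ preserves length, and this length matches $\ell(w_{K_2}) - \ell(w_{K_1})$.

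\emph{Main obstacle.} The only nontrivial input is the length additivity $\ell(w_0^{K_1}) = \ell(w_0^{K_1}w_0^{K_2}) + \ell(w_0^{K_2})$. I would justify it by the unique factorization $W_{K_1} = W_{K_1}^{K_2} \cdot W_{K_2}$ with additive lengths, applied to the longest element. The remaining care needed is bookkeeping of left versus right multiplication in the definition of $\leq_L$.
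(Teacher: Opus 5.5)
Your proof is correct and follows essentially the paper's argument. Both use the parabolic length identity $\ell(w_0^{K_2}w_0)-\ell(w_0^{K_1}w_0)=\ell(w_0^{K_1})-\ell(w_0^{K_2})=\ell(w_0^{K_1}w_0^{K_2})$ for the first inequality, and then transport it to $w_{K_1}\leq_L w_{K_2}$ via $w_0w_0^{K}=w_0^{K^*}w_0$ applied to $K_1^*\supseteq K_2^*$. Your explicit identification of the left factor $w v^{-1}=u^{-1}=w_0^{K_2}w_0^{K_1}$ is slightly cleaner than the paper's last step, which records the length of $v^{-1}w$ instead; that is harmless, since conjugation by $w_0$ preserves length.
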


\begin{proof}
Since $K_1 \supseteq K_2$, we have $w_{0}^{K_2} \leq_L w_{0}^{K_1}$. Observe that
$$
w^{{K_1}}_0 w_0 = w^{{K_1}}_0 w^{{K_2}}_0 w^{{K_2}}_0 w_0
$$
and compute
\begin{align*}
\ell ( w^{K_2}_{0} w_0 ) - \ell ( w^{K_1}_{0} w_0) &= \left( \ell(w_0) - \ell (w^{K_2}_{0}) \right) - \left( \ell(w_0) - \ell (w^{K_1}_{0}) \right) \\
&= \ell (w^{K_1}_{0}) - \ell (w^{K_2}_{0}) = \ell (w^{K_1}_{0} w^{K_2}_{0}) = \ell( w^{{K_1}}_0 w_0)^{-1} (w^{{K_2}}_0 w_0)).
\end{align*}
which establishes~\eqref{equ_propweakbruhat}. 

Since $K_1 \subseteq K_2$ implies $K_1^* \subseteq K_2^*$, applying~\eqref{equ_propweakbruhat} to the pair $(K_1^*, K_2^*)$ yields 
$$
w_{K_1} = w_0w_0^{K_1} = w_0^{K_1^*} w_0 \leq_L w_0^{K_2^*} w_0 = w_0 w_0^{K_2} = w_{K_2}.
$$
\end{proof}

Combining Propositions~\ref{proposition_weakBorderlabel} and~\ref{proposition_weakbruhat}, we obtain the following corollary.

\begin{corollary}\label{corollary_Dynkin1}
If $K_1 \supseteq K_2$, then there exist reduced expressions $\underline{w}_{K_1}$ and $\underline{w}_{K_2}$ such that the associated marked extended exchange matrices $(\varepsilon_1, \mathsf{L}_1)$ and $(\varepsilon_2, \mathsf{L}_2)$, constructed from $\underline{w}_{K_1}$ and $\underline{w}_{K_2}$, respectively, satisfy
$$
(\varepsilon_1, \mathsf{L}_1) \leq (\varepsilon_2, \mathsf{L}_2).
$$
\end{corollary}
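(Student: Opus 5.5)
The corollary is obtained by chaining the two preceding propositions, with no new combinatorics needed. Assume $K_1 \supseteq K_2$. First, apply the second assertion \eqref{equ_propweakbruhat2} of Proposition~\ref{proposition_weakbruhat}. It gives $w_{K_1} \leq_L w_{K_2}$ in the left weak Bruhat order. Second, feed this relation into Proposition~\ref{proposition_weakBorderlabel} with $K = K_1$ and $K' = K_2$. That proposition produces reduced expressions $\underline{w}_{K_1}$ and $\underline{w}_{K_2}$ whose associated marked extended exchange matrices satisfy $(\varepsilon_1,\mathsf{L}_1) \leq (\varepsilon_2,\mathsf{L}_2)$ in the sense of \eqref{equ_partialorderlabel}, which is exactly the claim.

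The construction behind Proposition~\ref{proposition_weakBorderlabel} is the following. Write $w_{K_2} = s_{j_1}\cdots s_{j_\nu} w_{K_1}$ with lengths adding, fix any reduced word $\underline{w}_{K_1} = s_{i_1}\cdots s_{i_m}$, and set $\underline{w}_{K_2} = s_{j_1}\cdots s_{j_\nu}s_{i_1}\cdots s_{i_m}$. The index shift $\phi(k) = k+\nu$ is then the candidate embedding $J_1 \to J_2$.

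\begin{itemize}
\item The map $k \mapsto k^+$ and the betweenness conditions in \eqref{equ_GLSseed} depend only on letters to the right, so they are preserved under $\phi$.
\item Hence the entries $\varepsilon_{r,s}$ agree, and unfrozen indices map to unfrozen indices.
\item Frozen indices also stay frozen, since a frozen index $j$ is the last occurrence of its letter $i_j$ and prefixing letters does not change that.
\end{itemize}

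The marking condition (3) needs one more observation. The marked set $\mathsf{L}_1$ consists of the frozen indices $j_k$ with $k \in I\setminus K_1$, and these map to the frozen indices of the same letters in $\underline{w}_{K_2}$. Since $I\setminus K_1 \subseteq I\setminus K_2$, these letters are marked in $\mathsf{L}_2$. The only point needing care is that the letter $k$ genuinely occurs in $\underline{w}_{K_1}$. This holds because $\operatorname{supp}(w_{K}) \supseteq I\setminus K$: for $k \notin K$ we have $w_K s_k > w_K$ fails appropriately, and $w_K \neq$ any element avoiding $s_k$.
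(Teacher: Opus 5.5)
Your proposal is correct and follows the paper's argument: the paper obtains the corollary by feeding $w_{K_1}\le_L w_{K_2}$ from the second assertion of Proposition~\ref{proposition_weakbruhat} into Proposition~\ref{proposition_weakBorderlabel}. Your unpacking of the latter is a faithful account of what the paper calls an immediate consequence of the construction: $\underline{w}_{K_1}$ is a suffix of $\underline{w}_{K_2}$, and the shift $\phi(k)=k+\nu$ preserves $k\mapsto k^+$, unfrozen indices, and last occurrences of letters. The one blemish is the garbled phrasing of why $s_k$ occurs in $\underline{w}_{K_1}$; the clean reason is that every $k\in I\setminus K$ is a right descent of $w_K=w_0w_0^K$, because $\ell(w_0^K s_k)=\ell(w_0^K)+1$.
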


We now consider two semisimple complex Lie algebras $\frak{g}_1$ and $\frak{g}_2$. For $i = 1,2$, let $I_i, W_i$, and $\mathsf{D}_i$ denote the index set, Weyl group, and Dynkin diagram, respectively. For $K_i \subseteq I_i$, let $\mathsf{D}_{K_i}$ be the corresponding Dynkin diagram determined by ${K_i}$. Set $J_i \coloneqq I_i \setminus K_i$ as in~\eqref{equ_labelL}. 

Suppose that there exists an embedding $\phi \colon \mathsf{D}_1 \to \mathsf{D}_2$ of Dynkin diagrams. Using $\phi$, any reduced expression $w = s_{i_1}\cdots s_{i_m} \in W_1$ can be viewed as a reduced  expression of $W_2$ by setting
$$
\phi(w) = s_{\phi(i_1)} s_{\phi(i_2)} \cdots s_{\phi(i_m)}.
$$
This enables us to extend the \emph{weak Bruhat order} to elements $w_1 \in W_1$, $w_2 \in W_2$ by declaring
\begin{equation}\label{equ_weakBruhatextended}
w_1 \leq_L w_2 \mbox{ (resp. $w_1 \leq_R w_2$)} \mbox{ if and only if $\phi(w_1) \leq _L w_2$} \mbox{ (resp. $\phi(w_1) \leq_R w_2$)}
\end{equation}

\begin{proposition}\label{proposition_weakbruhat2}
Let $W_i$, $I_i$, and $K_i \subseteq I_i$ be as above. Suppose that there exists an embedding of Dynkin diagrams
$$
\phi \colon \mathsf{D}_1 \to \mathsf{D}_2 \quad \mbox{ such that $\phi(J_1) \subseteq J_2$}.
$$
where $J_i \coloneqq I_i \setminus K_i$. Then, regarding $w_{K_1}$ as an element of $W_2$ via $\phi$, we have
$$
w_{K_1} \leq_L w_{K_2}.
$$
\end{proposition}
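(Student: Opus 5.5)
The plan is to reduce to Proposition~\ref{proposition_weakbruhat} inside $W_2$. Identify $W_1$ with the parabolic subgroup $W_{2,\phi(I_1)}$ of $W_2$ generated by $\{s_{\phi(i)} \mid i \in I_1\}$. This identification is length-preserving, since reduced words in a standard parabolic subgroup remain reduced in the ambient Coxeter group. Under it, the longest element $w_0^{(1)}$ of $W_1$ becomes $w_0^{\phi(I_1)}$, and $w_0^{K_1}$ becomes $w_0^{\phi(K_1)}$. Hence
$$
\phi(w_{K_1}) = w_0^{\phi(I_1)} w_0^{\phi(K_1)},
$$
with the same length. So the statement amounts to $w_0^{\phi(I_1)} w_0^{\phi(K_1)} \leq_L w_0^{(2)} w_0^{K_2} = w_{K_2}$ in $W_2$.

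The key combinatorial input is the inclusion $\phi(K_1) \cup (I_2 \setminus \phi(I_1)) \supseteq K_2$. This follows from $\phi(J_1) \subseteq J_2$: if $k \in K_2$ lies in $\phi(I_1)$, then $k \notin \phi(J_1)$, so $k \in \phi(K_1)$. I will argue in two steps.

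\textbf{Step 1.} Set $M \coloneqq \phi(K_1) \cup (I_2 \setminus \phi(I_1))$. There are no edges between $\phi(I_1)$ and $I_2 \setminus \phi(I_1)$ other than those present in $\mathsf{D}_2$, so some care is needed here. A cleaner route is to compare directly via lengths. For $v \leq_L w$ it suffices to show $\ell(w v^{-1}) = \ell(w) - \ell(v)$. Write
$$
w_{K_2} = w_0^{(2)} w_0^{K_2}, \qquad v = w_0^{\phi(I_1)} w_0^{\phi(K_1)}.
$$
I would instead prove the equivalent statement that every left inversion of $v$ is a left inversion of $w_{K_2}$, i.e.\ $\Phi^+(v^{-1}) \subseteq \Phi^+(w_{K_2}^{-1})$ in inversion-set language. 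For minimal length left coset representatives $w_K = w_0 w_0^K$, the inversion set $\{\alpha > 0 \mid w_K^{-1}\alpha < 0\}$ equals $w_0(\Phi^+ \setminus \Phi^+_K)$. Equivalently, it consists of the positive roots $\beta$ such that $-w_0\beta$ has positive coefficient on some simple root outside $K$, i.e.\ lies outside $\Phi_{K^*}$. Thus the inversion set of $w_{K_2}$ is $\Phi^+ \setminus \Phi^+_{K_2^*}$. Similarly, the inversion set of $v$ is $\Phi^+_{\phi(I_1)} \setminus \Phi^+_{\phi(K_1)^{*_{\phi(I_1)}}}$.

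\textbf{Step 2.} Prove the containment of these inversion sets. Take a root $\beta \in \Phi^+_{\phi(I_1)}$ whose support is not contained in $\phi(K_1)^{*_{\phi(I_1)}}$. We need its support not contained in $K_2^*$. Since inversion sets determine the weak order (with $v \leq_L w$ iff $N(v^{-1}) \subseteq N(w^{-1})$, for the appropriate side convention), this will finish the proof.

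The main obstacle is reconciling the two involutions $*$ and $*^{\phi(I_1)}$, which differ in general; for example, $A_n$ embedded in $D_{n+1}$ or $E$. To handle this, I would first switch to right cosets using Lemma~\ref{lemma_w0Kw0} and Proposition~\ref{proposition_weakbruhat}. The forms $w_0^{K} w_0$ have inversion set $\Phi^+ \setminus \Phi^+_K$ with no involution, and there the containment is immediate from $\phi(K_1) \supseteq K_2 \cap \phi(I_1)$. I would then transport back using the inversion $w \mapsto w^{-1}$, which interchanges $\leq_L$ and $\leq_R$. The difficulty is that this yields a right weak order relation. If needed, I would pass through the chain
$$
w_{K_1} \leq_L w_{\phi(K_1) \cup (I_2 \setminus \phi(I_1))} \leq_L w_{K_2},
$$
where the second relation is Proposition~\ref{proposition_weakbruhat} and the first is the genuinely new parabolic-embedding check. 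I expect the verification of that first relation, namely that the left inversion sets nest despite the mismatch of involutions, to be the hardest step.
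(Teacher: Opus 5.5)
There is a genuine gap, and it comes from mixing up left and right inversion sets. With the paper's definition, $v\leq_L w$ means $w=uv$ with $\ell(w)=\ell(u)+\ell(v)$. This is equivalent to $N(v)\subseteq N(w)$ for the \emph{right} inversion sets $N(x)=\{\alpha\in\Phi^+\mid x\alpha\in-\Phi^+\}$. In the paper's notation this set is $\Phi^+(x^{-1})$, so the condition you display is the correct one. However, the set you then compute, $\{\alpha>0\mid w_K^{-1}\alpha<0\}=\Phi^+(w_K)$, is the \emph{left} inversion set, and containment of left inversion sets characterizes $\leq_R$.

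So Step 2, with the involutions $*$ and $*^{\phi(I_1)}$, is a criterion for $\phi(w_{K_1})\leq_R w_{K_2}$, which is false in general. Take $A_2\subset A_3$ (via $i\mapsto i$) with $J_1=J_2=\{1\}$. Then $\phi(w_{K_1})=s_2s_1$ and $w_{K_2}=s_3s_2s_1$, which has a unique reduced word, so $s_2s_1$ is a suffix but not a prefix. On the root side, $\alpha_2$ has support $\not\subseteq\phi(K_1)^{*^{\phi(I_1)}}=\{1\}$ but $\subseteq K_2^{*}=\{1,2\}$. Thus the ``mismatch of involutions'' is not a technicality to be overcome; that containment cannot be proved.

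The first link $\phi(w_{K_1})\leq_L w_M$ of your fallback chain has the same problem. You plan to verify it by nesting left inversion sets, and in this example $M=K_2$, so it is the same false statement. As written, the key step is left unproven.

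The repair removes the involutions altogether.
\begin{itemize}
\item For $\alpha\in\Phi^+$, $w_0w_0^K\alpha\in-\Phi^+$ iff $w_0^K\alpha\in\Phi^+$ iff $\alpha\notin\Phi_K$. Hence $N(w_K)=\Phi^+\setminus\Phi^+_K$.
\item Since $\phi(w_{K_1})=w_0^{\phi(I_1)}w_0^{\phi(K_1)}$ lies in the parabolic subgroup $W_{\phi(I_1)}$, likewise $N(\phi(w_{K_1}))=\Phi^+_{\phi(I_1)}\setminus\Phi^+_{\phi(K_1)}$.
\item A root in the latter set has support in $\phi(I_1)$ meeting $\phi(I_1)\setminus\phi(K_1)=\phi(J_1)\subseteq J_2$, so it lies outside $\Phi_{K_2}$.
\end{itemize}
This proves $\phi(w_{K_1})\leq_L w_{K_2}$ directly.

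Your right-coset idea is the same computation and already suffices. The left inversion sets of $w_0^{\phi(K_1)}w_0^{\phi(I_1)}$ and $w_0^{K_2}w_0$ are $\Phi^+_{\phi(I_1)}\setminus\Phi^+_{\phi(K_1)}$ and $\Phi^+\setminus\Phi^+_{K_2}$. This gives $w_0^{\phi(K_1)}w_0^{\phi(I_1)}\leq_R w_0^{K_2}w_0$, and inverting yields exactly $\phi(w_{K_1})\leq_L w_{K_2}$: a left relation, not a right one. Do not use Lemma~\ref{lemma_w0Kw0} here. Writing $w_K=w_0^{K^*}w_0$ only re-expresses the same element and brings the involution back; the involution-free elements are the inverses $w_K^{-1}=w_0^Kw_0$.

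This is the paper's argument. Lemma~\ref{lemma_conseqMackey}, derived from the Mackey decomposition, gives $w_0^{K_1}w_0^{I_1}\leq_R w_0^{I_2\setminus J_1}w_0^{I_2}$. Inversion converts this into $\leq_L$, and Proposition~\ref{proposition_weakbruhat} applied to $I_2\setminus J_1\supseteq K_2$ finishes. That is exactly your fallback chain with $M=I_2\setminus\phi(J_1)$, with its first link supplied by the right-coset comparison.
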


To prove Proposition~\ref{proposition_weakbruhat2}, we prepare some lemmas. For $J \subseteq K \subseteq I$, denote by $X^K_J$ the set of minimal length right coset representatives of $W_J$ in $W_K$. If $K = I$, then we write $X_J$ instead of $X^I_J$. Note that
$$
X^J_{K} = \{ w \in W_J \mid w \leq_R w^J_0 w^K_0 \}.
$$
We also set $X_J^{-1} \coloneqq \{ w^{-1} \mid w \in X_J \}$.

\begin{lemma}[Mackey decomposition]\label{lemma_Mackeydecomposition}
For $J \subseteq K \subseteq I$, we have
$$
X_K = \bigsqcup_{d \in X_K \cap X^{-1}_J} d \cdot X^J_{K^d \cap J}, \,\, \mbox{where $K^d = \{ d^{-1} s_i d \mid i \in K \}$}.
$$
\end{lemma}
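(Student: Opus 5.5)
We will prove the statement by reducing it to the standard theory of distinguished double coset representatives for parabolic subgroups of a Coxeter group, as in Geck--Pfeiffer. We first fix the side conventions, since the notation $X^J_{K}$ versus $X^K_J$ in the text is not fully consistent. We read $X_K$ as the set of $w\in W$ that are of minimal length in their coset with respect to $W_K$, characterized by $\ell(ws)>\ell(w)$ (or $\ell(sw)>\ell(w)$, according to the side) for all $s=s_k$, $k\in K$. We read $X_J^{-1}$ as the minimal representatives for $W_J$ on the opposite side. Then $X_K\cap X_J^{-1}$ is exactly the set of distinguished (minimal length) $(W_J,W_K)$-double coset representatives. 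The whole argument is symmetric under $w\mapsto w^{-1}$, so once the convention is fixed we will state and prove one version; the displayed formula is then the corresponding form written as $d\cdot X^J_{K^d\cap J}$.

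The key steps are as follows.

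\emph{Step 1.} We partition $W$ into double cosets $W_J\, d\, W_K$ with $d\in X_J^{-1}\cap X_K$. Each double coset contains a unique element of minimal length, and that element lies in $X_J^{-1}\cap X_K$. Since $X_K$ is a union of its intersections with these double cosets, the decomposition is automatically disjoint. It therefore suffices to describe $X_K\cap W_J d W_K$ for each such $d$.

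\emph{Step 2.} We apply Kilmoyer's theorem. For distinguished $d$, the intersection of $W_J$ with the conjugate of $W_K$ by $d$ is again a standard parabolic subgroup, namely the one generated by the simple reflections in $J$ that are $d$-conjugates of simple reflections in $K$. In the paper's notation this is $W_{K^d\cap J}$.

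\emph{Step 3.} We apply the reduced factorization lemma. Every element of $W_Jd W_K$ can be written uniquely as $x\,d\,v$ with $x\in W_J$ minimal modulo $W_{K^d\cap J}$ and $v\in W_K$. Moreover, lengths add: $\ell(xdv)=\ell(x)+\ell(d)+\ell(v)$. The elements of $X_K$ in the double coset are precisely those with $v=1$. Indeed, $xd$ is minimal in $xdW_K$ by length additivity. Conversely, distinct $x$ in $X^J_{K^d\cap J}$ give distinct $W_K$-cosets by uniqueness of the factorization.

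\emph{Step 4.} Transporting the factor through $d$ (equivalently, passing to inverses to match the left/right convention) rewrites $X_K\cap W_JdW_K$ as $d\cdot X^J_{K^d\cap J}$. Taking the union over $d$ then gives the lemma.

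The main obstacle is Step 3, specifically the length additivity $\ell(xdv)=\ell(x)+\ell(d)+\ell(v)$ together with uniqueness. I would prove it via the exchange condition, using root sets:
\begin{enumerate}
\item $d$ sends the positive roots of $\Phi_K$ to positive roots, because $d\in X_K$;
\item $d^{-1}$ sends the positive roots of $\Phi_J$ to positive roots, because $d\in X_J^{-1}$;
\item hence inversion sets add, except along the overlap $\Phi_J\cap d\Phi_K$, which is exactly $\Phi_{K^d\cap J}$ by Kilmoyer.
\end{enumerate}
If needed, I would simply cite Geck--Pfeiffer, Propositions 2.1.1, 2.1.7 and Lemma 2.1.9, for these facts, since the lemma is classical.
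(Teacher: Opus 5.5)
Your proposal is correct and follows the paper's approach: the paper just cites Geck--Pfeiffer, Lemma 2.1.9, and your sketch (distinguished double coset representatives, Kilmoyer's theorem, the length-additive factorization $xdv$, then reading off the minimal coset representatives) is the standard proof of that lemma. One correction to Step 4: the final rewriting has to be done by inversion, with $X_K$ the minimal representatives of right cosets $W_K w$ as in the paper. It cannot be done by ``transporting'' $x$ through $d$, since $d^{-1}xd$ need not lie in $W_J$. Indeed, with the left-coset convention the displayed formula is false: in $\mathfrak{S}_3$ with $J=K=\{1\}$ it gives $\{1,s_2,s_2s_1\}$ rather than $\{1,s_2,s_1s_2\}$.
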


\begin{proof}
See Lemma 2.1.9 in \cite{GP00}.
\end{proof}

\begin{lemma}\label{lemma_conseqMackey}
Let $J, K \subseteq I$. Then
$$
w_0^{K \cap J} w_0^J   \leq_R w_0^{K} w_0^I .
$$
In particular, if $J \subseteq I_1 \subseteq I_2$, then
$$
w_0^{I_1 \setminus J}  w_0^{I_1}  \leq_R w_0^{I_2 \setminus J}w_0^{I_2}.
$$
\end{lemma}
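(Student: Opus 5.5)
The plan is to reduce the first inequality to a membership statement in the set $X_K$ of minimal length right coset representatives of $W_K$ in $W$, and then to use that $X_K$ is an interval in the right weak order. Throughout, $w_0^I=w_0$, and I write
$$
x \coloneqq w_0^{K\cap J} w_0^J \in W_J .
$$
The second assertion will then be obtained as a special case.

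\textbf{Step 1: the target is the maximum of a right weak interval.} I will use the standard fact (cf.\ \cite{GP00}, or Björner--Brenti) that
$$
X_K=\{u\in W \mid \ell(s_k u)>\ell(u) \text{ for all } k\in K\}
$$
is exactly the right weak interval $[e,\, w_0^K w_0]_R$. The reason is as follows.
\begin{itemize}
\item Every $u\in W$ factors uniquely as $u=vu'$ with $v\in W_K$ and $u'\in X_K$, and the lengths add.
\item Applying this to $w_0$ gives $w_0=w_0^K\,(w_0^K w_0)$ with $w_0^Kw_0\in X_K$ and $\ell(w_0^Kw_0)=\ell(w_0)-\ell(w_0^K)$.
\item For $u\in X_K$ one has $\ell(w_0^K u)=\ell(w_0^K)+\ell(u)$. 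Writing $w_0=(w_0^K u)\,z$ with lengths adding, we get $w_0^Kw_0=u z$ with $\ell(w_0^Kw_0)=\ell(u)+\ell(z)$, that is, $u\le_R w_0^Kw_0$.
\item Conversely, if $u\le_R w_0^K w_0$, then $u$ is a prefix of a reduced word of an element of $X_K$. Since $X_K$ is closed under taking such prefixes (the descent condition on the left is inherited by prefixes), $u\in X_K$.
\end{itemize}
Hence it suffices to show $x\in X_K$, i.e.\ that $x^{-1}\alpha_k\in\Phi^+$ for every $k\in K$.

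\textbf{Step 2: verify $x\in X_K$ by splitting $K$.}
\begin{itemize}
\item For $k\in K\cap J$: inside $W_J$, the element $x=w_0^{K\cap J}w_0^J$ is the minimal length representative of the right coset $W_{K\cap J}\,w_0^J$. Hence $\ell(s_k x)>\ell(x)$. Equivalently, $x$ is the $d=e$ part of the Mackey decomposition in Lemma~\ref{lemma_Mackeydecomposition}.
\item For $k\in K\setminus J$: since $x\in W_J$, the root $x^{-1}\alpha_k$ equals $\alpha_k$ plus an integer combination of the $\alpha_j$ with $j\in J$. Its $\alpha_k$-coefficient is $1>0$, so it is a positive root.
\end{itemize}
Thus $x\in X_K$, and Step 1 gives $w_0^{K\cap J}w_0^J\le_R w_0^K w_0^I$.

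\textbf{Step 3: the special case.} Let $J\subseteq I_1\subseteq I_2$. Work in the Weyl group with index set $I_2$, and apply the first part with $(J,K,I)$ replaced by $(I_1,\ I_2\setminus J,\ I_2)$. Since $J\subseteq I_1$, we have $(I_2\setminus J)\cap I_1=I_1\setminus J$. This yields
$$
w_0^{I_1\setminus J}w_0^{I_1}\le_R w_0^{I_2\setminus J}w_0^{I_2}.
$$

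\textbf{Main obstacle.} The only step needing care is Step~1, the identification of $X_K$ with the weak interval below $w_0^Kw_0$. If a citation is preferred over the argument above, I would quote \cite[Section 2.1]{GP00}. The remaining steps are root-coefficient bookkeeping.
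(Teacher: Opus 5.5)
Your proof is correct and follows the paper's route: identify $X_K$ with the right weak interval below $w_0^K w_0^I$, then show $w_0^{K\cap J}w_0^J\in X_K$ (and you also spell out the substitution giving the special case). The only difference is how you get that membership: you check root positivity directly, splitting $K$ into $K\cap J$ and $K\setminus J$, whereas the paper quotes the $d=e$ summand of the Mackey decomposition; your version is self-contained and does not depend on the hypothesis $J\subseteq K$ under which the paper states that decomposition.
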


\begin{proof}
For $J,K \subseteq I$, observe that
$$
X_K = \bigl\{ w \in W_I \mid w \leq_R  w_0^K w_0^I \bigr\} \mbox{ and } X^J_{K \cap J} = \bigl\{ w \in W_J \mid w \leq_R w_0^{K \cap J} w_0^J \bigr\}.
$$
By the Mackey decomposition (Lemma~\ref{lemma_Mackeydecomposition}), it follows that 
$$
w_0^{K \cap J} w_0^J   \leq_R w_0^{K} w_0^I.
$$
\end{proof}

\begin{proof}[Proof of Proposition~\ref{proposition_weakbruhat2}]
Without loss of generality, after reindexing via $\phi$ if necessary, we may assume that $K_1 \subseteq I_1 \subseteq I_2$. Then, by Lemma~\ref{lemma_conseqMackey},
$$
w_0^{K_1} w_0^{I_1} = w_0^{I_1 \backslash J_1} w_0^{I_1} \leq_R w_0^{I_2 \backslash J_1} w_0^{I_2}.
$$
Taking inverses, we have
$$
w_0^{I_1} w_0^{K_1} = w_0^{I_1} w_0^{I_1 \backslash J_1}  \leq_L  w_0^{I_2} w_0^{I_2 \backslash J_1} \leq_L  w_0^{I_2} w_0^{I_2 \backslash J_2} = w_0^{I_2} w_0^{K_2}. 
$$
where the second inequality $\leq_L$ follows from Proposition~\ref{proposition_weakbruhat}.
\end{proof}

Consider the set of marked Dynkin diagrams, that is, 
\begin{equation}\label{equ_DJ}
\left\{ (\mathsf{D}, J) \mid \mbox{$\mathsf{D}$ is a Dynkin diagram of type $A, D$, or $E$, and $J \subseteq I$} \right\}.
\end{equation}
Motivated by Proposition~\ref{proposition_weakbruhat2}, we introduce a partial order on the set~\eqref{equ_DJ} by
\begin{equation}\label{equ_Dynkindia}
(\mathsf{D}_1, J_1) \leq (\mathsf{D}_2, J_2) 
\end{equation}
if there exists an embedding of Dynkin diagrams
$$
\phi \colon \mathsf{D}_1 \to \mathsf{D}_2 \quad \mbox{ such that $\phi(J_1) \subseteq J_2$}.
$$

Combining Propositions~\ref{proposition_weakbruhat} and~\ref{proposition_weakbruhat2}, we obtain the following corollary.

\begin{corollary}\label{corollary_Dynkin2}
Suppose that $(\mathsf{D}_1, J_1)$ and $(\mathsf{D}_2, J_2)$ are marked Dynkin diagrams satisfying
$$
(\mathsf{D}_1, J_1) \leq (\mathsf{D}_2, J_2)
$$ 
with respect to~\eqref{equ_Dynkindia}. Then there exist reduced expressions $\underline{w}_{K_1}$ and $\underline{w}_{K_2}$ of $w_{K_1}$ and $w_{K_2}$, respectively, such that the associated marked extended exchange matrices $(\varepsilon_1, \mathsf{L}_1)$ and $(\varepsilon_2, \mathsf{L}_2)$ satisfy
$$
(\varepsilon_1, \mathsf{L}_1) \leq (\varepsilon_2, \mathsf{L}_2).
$$
\end{corollary}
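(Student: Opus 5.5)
The plan is to combine Proposition~\ref{proposition_weakbruhat2} with Proposition~\ref{proposition_weakBorderlabel}. We may reduce to the case $\mathsf{D}_1 \subseteq \mathsf{D}_2$ and $J_1 \subseteq J_2$. To do so, identify $\mathsf{D}_1$ with its image under the embedding $\phi$, so that $I_1 \subseteq I_2$. Write $K_i = I_i \setminus J_i$.

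First, Proposition~\ref{proposition_weakbruhat2} gives $w_{K_1} \leq_L w_{K_2}$ in the left weak Bruhat order of $W_2$, where $w_{K_1}$ is viewed in $W_2$ via $\phi$. Concretely, this produces a factorization
$$
w_{K_2} = s_{i_1} \cdots s_{i_\nu}\, \phi(w_{K_1}), \qquad \ell(w_{K_2}) = \nu + \ell(w_{K_1}).
$$
So any reduced word $\underline{w}_{K_1}$ extends on the left to a reduced word $\underline{w}_{K_2}$.

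Next, I would run the argument behind Proposition~\ref{proposition_weakBorderlabel} in this cross-group setting. Take $\phi(\underline{w}_{K_1})$ to be the final segment of $\underline{w}_{K_2}$. Let $\psi(j) = j + \nu$ for $j \in J_{\varepsilon_1}$. Since the exchange matrix in~\eqref{equ_GLSseed} depends only on relative positions, the values $k^+$, and the Cartan entries $a_{i_s,i_r}$, and since $\phi$ preserves Cartan entries, the entries of $\varepsilon_1$ equal the corresponding entries of $\varepsilon_2$. The function $k \mapsto k^+$ on the suffix is the same computed in either word, because it only looks to the right. In particular, unfrozen indices go to unfrozen indices, and frozen indices (the last occurrences of each letter) go to frozen indices. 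This verifies conditions (1) and (2) of Definition~\ref{def_parorderonskewsy}.

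For the markings, $\mathsf{L}_1$ consists of the last occurrences of the letters $k \in J_1$ in $\underline{w}_{K_1}$. Their images are the last occurrences of $\phi(k)$ in $\underline{w}_{K_2}$. These belong to $\mathsf{L}_2$ because $\phi(J_1) \subseteq J_2$. Some care is needed when $\operatorname{supp}(w_{K_1})$ is smaller than $I_1$, but by the reduction to $\operatorname{supp}(w)=I$ the frozen indices are indexed exactly by the support.

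The main obstacle is making sure that Proposition~\ref{proposition_weakBorderlabel}, stated within a single Weyl group, really transfers through $\phi$. In particular, the matrix rows indexed by the suffix only involve suffix positions, and the $\nu$ prefix indices play no role in $\varepsilon_1$. This holds because $\varepsilon_{r,s}$ is nonzero only when $r$ and $s$ are interlaced with their successors, and every successor of a suffix position lies in the suffix.
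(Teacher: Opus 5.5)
Your proposal is correct and follows the paper's route: you feed the comparison $w_{K_1} \leq_L w_{K_2}$ from Proposition~\ref{proposition_weakbruhat2} (whose proof already uses Proposition~\ref{proposition_weakbruhat}) into the suffix-embedding argument behind Proposition~\ref{proposition_weakBorderlabel}. Your explicit check that the shift $j \mapsto j+\nu$ preserves $k^+$, Cartan entries, frozenness and last occurrences (hence the markings, via $\phi(J_1)\subseteq J_2$) is exactly the detail the paper leaves implicit when transporting that proposition across $\phi$; your worry about $\operatorname{supp}(w_{K_1})$ is harmless, since $w_{K_1}$ has full support whenever $J_1 \neq \emptyset$.
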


\subsection{Classification of quivers of finite mutation type}

Consider the following family of cluster algebras$\colon$
\begin{equation}\label{equ_paraboliccluade}
\{ \mathcal{A}_P \mid \mbox{$P$ is a parabolic subgroup of a simple algebraic group of type $A, D$, or $E$} \},
\end{equation} 
where $\mathcal{A}_P$ is defined in Definition~\ref{def_clusteralgforpartialflag}. In this subsection, we classify those cluster algebras in this family that are of finite/infinite mutation type.

We fix an ordering of the vertices of the Dynkin diagrams of the simply laced types. 
\begin{equation}\label{equ_DynkinAD}
\begin{tikzpicture}[scale=0.4]
\draw (-1,0) node[anchor=east] {$A_{n}$};
\draw (0 cm,0) -- (8 cm,0);
\draw[fill=white] (0 cm, 0 cm) circle (.25cm) node[below=4pt]{$1$};
\draw[fill=white] (2 cm, 0 cm) circle (.25cm) node[below=4pt]{$2$};
\draw[fill=white] (4 cm, 0 cm) circle (.25cm) node[below=4pt]{$3$};
\draw[fill=white] (6 cm, 0 cm) circle (.25cm) node[below=4pt]{$\cdots$};
\draw[fill=white] (8 cm, 0 cm) circle (.25cm) node[below=4pt]{$n$};
\end{tikzpicture}
\quad \quad \quad
\begin{tikzpicture}[scale=0.4]
\draw (-1,0) node[anchor=east] {$D_{n}$};
\draw (0 cm,0) -- (6 cm,0);
\draw (6 cm,0) -- (8 cm,0.7 cm);
\draw (6 cm,0) -- (8 cm,-0.7 cm);
\draw[fill=white] (0 cm, 0 cm) circle (.25cm) node[below=4pt]{$1$};
\draw[fill=white] (2 cm, 0 cm) circle (.25cm) node[below=4pt]{$2$};
\draw[fill=white] (4 cm, 0 cm) circle (.25cm) node[below=4pt]{$\cdots$};
\draw[fill=white] (6 cm, 0 cm) circle (.25cm) node[below=4pt]{$n-2$};
\draw[fill=white] (8 cm, 0.7 cm) circle (.25cm) node[right=3pt]{$n$};
\draw[fill=white] (8 cm, -0.7 cm) circle (.25cm) node[right=3pt]{$n-1$};
\end{tikzpicture}
\end{equation}
\begin{equation*}
\begin{tikzpicture}[scale=0.35]
\draw (0,2) node[anchor=east] {$E_{6}$};
\draw (0 cm,0) -- (8 cm,0);
\draw (4 cm,0) -- (4 cm,2 cm);
\draw[fill=white] (0 cm, 0 cm) circle (.25cm) node[below=4pt]{$1$};
\draw[fill=white] (2 cm, 0 cm) circle (.25cm) node[below=4pt]{$3$};
\draw[fill=white] (4 cm, 0 cm) circle (.25cm) node[below=4pt]{$4$};
\draw[fill=white] (4 cm, 2 cm) circle (.25cm) node[left=4pt]{$2$};
\draw[fill=white] (6 cm, 0 cm) circle (.25cm) node[below=4pt]{$5$};
\draw[fill=white] (8 cm, 0 cm) circle (.25cm) node[below=4pt]{$6$};
\end{tikzpicture}
\quad  
\begin{tikzpicture}[scale=0.35]
\draw (0,2) node[anchor=east] {$E_{7}$};
\draw (0 cm,0) -- (10 cm,0);
\draw (4 cm,0) -- (4 cm,2 cm);
\draw[fill=white] (0 cm, 0 cm) circle (.25cm) node[below=4pt]{$1$};
\draw[fill=white] (2 cm, 0 cm) circle (.25cm) node[below=4pt]{$3$};
\draw[fill=white] (4 cm, 0 cm) circle (.25cm) node[below=4pt]{$4$};
\draw[fill=white] (4 cm, 2 cm) circle (.25cm) node[left=4pt]{$2$};
\draw[fill=white] (6 cm, 0 cm) circle (.25cm) node[below=4pt]{$5$};
\draw[fill=white] (8 cm, 0 cm) circle (.25cm) node[below=4pt]{$6$};
\draw[fill=white] (10 cm, 0 cm) circle (.25cm) node[below=4pt]{$7$};
\end{tikzpicture}
\quad  
\begin{tikzpicture}[scale=0.35]
\draw (0,2) node[anchor=east] {$E_{8}$};
\draw (0 cm,0) -- (12 cm,0);
\draw (4 cm,0) -- (4 cm,2 cm);
\draw[fill=white] (0 cm, 0 cm) circle (.25cm) node[below=4pt]{$1$};
\draw[fill=white] (2 cm, 0 cm) circle (.25cm) node[below=4pt]{$3$};
\draw[fill=white] (4 cm, 0 cm) circle (.25cm) node[below=4pt]{$4$};
\draw[fill=white] (4 cm, 2 cm) circle (.25cm) node[left=4pt]{$2$};
\draw[fill=white] (6 cm, 0 cm) circle (.25cm) node[below=4pt]{$5$};
\draw[fill=white] (8 cm, 0 cm) circle (.25cm) node[below=4pt]{$6$};
\draw[fill=white] (10 cm, 0 cm) circle (.25cm) node[below=4pt]{$7$};
\draw[fill=white] (12 cm, 0 cm) circle (.25cm) node[below=4pt]{$8$};
\end{tikzpicture}
\end{equation*}

Gei{\ss}--Leclerc--Schr{\"{o}}er classified the cluster algebras of finite and infinite type in~\eqref{equ_paraboliccluade}. To reduce the number of cases to be examined, they compared certain reduced expressions and associated quivers with respect to the partial order~\eqref{equ_orderonex}. Their classification of finite type up to Dynkin diagram symmetry is summarized in \cite[Table 11.1]{GLS08}\footnote{We note that the case of type $A_5$ with $I \setminus K = \{1,3,4\}$ is missing from the table in \cite{GLS08}.}. 

\begin{table}[h]
\begin{center}
\begin{tabular}{| c | c | c | }
 \hline
 Type & $I \setminus K$ & $\mcal{A}_P$ \\ 
 \hline  \hline
$A_n (n \geq 2)$ & $\{1\}$ &  \\   
$A_n (n \geq 2)$ & $\{2\}$ & $A_{n-2}$  \\  
$A_n (n \geq 2)$ & $\{1, 2\}$ & $A_{n-1}$  \\  
$A_n (n \geq 2)$ & $\{1, n\}$ & $(A_1)^{n-1}$  \\  
$A_n (n \geq 3)$ & $\{1, n-1\}$ & $A_{2n-4}$  \\  
$A_n (n \geq 3)$ & $\{1, 2, n\}$ & $A_{2n-3}$  \\ 
\hline 
$A_4$ & $\{2,3\}$ & $D_4$  \\   
$A_4$ & $\{1,2,3\}$ & $D_5$  \\   
$A_4$ & $\{1,2,3,4\}$ & $D_6$  \\   
\hline 
$A_5$ & $\{3\}$ & $D_4$  \\   
\hline 
\end{tabular}
 \quad
\begin{tabular}{| c | c | c | }
 \hline
 Type & $I \setminus K$ & $\mcal{A}_P$ \\ 
 \hline  \hline
$A_5$ & $\{1, 3\}$ & $E_6$  \\   
$A_5$ & $\{2, 3\}$ & $E_6$  \\   
$A_5$ & $\{1, 2, 3\}$ & $E_7$  \\   
$A_5$ & $\{1, 3, 4\}$ & $E_8$  \\   
\hline 
$A_6$ & $\{3\}$ & $E_6$  \\   
$A_6$ & $\{2, 3\}$ & $E_8$  \\   
\hline 
$A_7$ & $\{3\}$ & $E_8$  \\   
\hline
$D_n (n \geq 4)$ & $\{1\}$ & $(A_1)^{n-2}$  \\   
\hline
$D_4$ & $\{n-1, n\}$ & $A_5$  \\   
\hline
$D_5$ & $\{n\}$ & $A_5$  \\   
\hline
\end{tabular}
\end{center}
\caption{The list of the cluster algebras $\mcal{A}_P$ of finite type and their Lie type}
\label{table_listfiniteparbolic}
\end{table}

The main theorem to be verified in Sections~\ref{sec_distinguishing} is stated below.

\begin{theorem}\label{theorem_divergeentries}
Every cluster algebra $\mcal{A}_P$ not listed in Table~\ref{table_listfiniteparbolic} satisfies the multiplicity property in Theorem~\ref{theorem_maincriterion}. Namely, there exist 
\begin{enumerate}
\item a sequence $\left( \seed_\ell \right)_{\ell \in \mathbb{N}}$ of seeds, 
\item an unfrozen index $r \in J_\mathrm{uf}$, and 
\item a frozen index $s \in J_\mathrm{fz}$ corresponding to the simple root $\alpha_i$ for some $i \in I \setminus K$, 
\end{enumerate}
such that the sequence of the $(r, s)$-entries of the extended exchange matrices $\varepsilon^\ell$ satisfies
$$
\lim_{\ell \to \infty} \varepsilon^\ell_{r, s} = - \infty.
$$
\end{theorem}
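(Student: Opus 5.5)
The plan is a reduction to finitely many minimal cases via the partial orders just introduced. First, the multiplicity property is monotone with respect to the marked order: if $(\varepsilon_1,\mathsf{L}_1) \leq (\varepsilon_2,\mathsf{L}_2)$ via an embedding $\phi$, and $\varepsilon_1$ admits mutation sequences at unfrozen indices of $J_{1,\mathrm{uf}}$ making the $(r,s)$-entry with $s \in \mathsf{L}_1$ diverge to $-\infty$, then the same sequence at $\phi(J_{1,\mathrm{uf}})$ does the same for $(\phi(r),\phi(s))$, with $\phi(s)\in\mathsf{L}_2$. The reason is that the mutation rule~\eqref{equ_mutationexchangematri} for entries $\varepsilon_{r,s}$ with $r$ mutable only involves $\varepsilon_{r,k}$ and $\varepsilon_{k,s}$ for the mutated index $k$, all of which lie in the embedded block. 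Passing to a subsequence, or replacing $\varepsilon$ by $-\varepsilon$ if the entries grow to $+\infty$, costs nothing, since skew-symmetry lets us read the entry from the $s$-side if necessary. By Corollaries~\ref{corollary_Dynkin1} and~\ref{corollary_Dynkin2}, it therefore suffices to treat the marked Dynkin diagrams $(\mathsf{D}, I\setminus K)$ that are minimal with respect to~\eqref{equ_Dynkindia} among those not in Table~\ref{table_listfiniteparbolic}. Here the change of reduced expression is harmless, since all reduced expressions give mutation-equivalent seeds with the frozen index of $\alpha_i$ preserved.

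Next I would list these minimal non-finite cases. Using the classification of Table~\ref{table_listfiniteparbolic} together with upward closure of infinite type, the minimal offenders are small: for example $A_4$ with $\{2\}$ replaced appropriately, $A_5$ with $\{2,4\}$, $A_6$ with $\{1,3\}$, $A_8$ with $\{3\}$, $A_n$ with $\{1,2,n-1\}$-type configurations, $D_4$ with $\{2\}$, $D_5$ with $\{1,5\}$, $D_6$ with $\{6\}$, and $E_6$ with a single end node. For each, I would write down an explicit reduced expression of $w_K$ and the quiver from~\eqref{equ_GLSseed}, with the marked frozen vertices as in~\eqref{equ_labelL}. By \cite{FST12}, such quivers on enough vertices are of infinite mutation type unless they belong to the surface or exceptional list; this identifies $\mathcal{A}(w_K)$ of infinite type with infinite mutation type.

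The core step, and the main obstacle, is producing the diverging entry attached to a \emph{marked} frozen vertex rather than merely some unbounded entry. My approach is to locate, inside the mutable part of a suitable mutated quiver, a full subquiver of finite mutation type but infinite type, for instance a Kronecker pair $k_1 \rightrightarrows k_2$ or an affine $\tilde{A}$ cycle. I would then choose the mutation sequence $(\mu_{k_1}\mu_{k_2})^\ell$ and arrange, via a preliminary finite mutation, that the marked frozen vertex $s$ has an arrow to exactly one of $k_1,k_2$. For a Kronecker pair, the arrows from a vertex attached to it grow along the Chebyshev-type recursion $c_{\ell+1} = 2c_\ell - c_{\ell-1} + \cdots$, which gives linear growth in $\ell$ as long as the attachment is not balanced. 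Checking that the attachment is indeed unbalanced, which is the "threshold" phenomenon where adding the marked vertex breaks finite mutation type, must be done case by case on the minimal list. For $E$-type and large $A$-type cases, I expect to need computer-assisted or careful hand mutations to find the initial finite sequence.

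Finally, by the first paragraph the sequence $(\seed_\ell)$ transfers to every larger marked exchange matrix, which covers all $\mathcal{A}_P$ not in the table.
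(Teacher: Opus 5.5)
\paragraph{Main gap: the minimal cases are not finitely many.}
Your overall architecture matches the paper's, and your monotonicity argument is correct: you reduce along the marked order via Corollaries~\ref{corollary_Dynkin1} and~\ref{corollary_Dynkin2} to minimal marked Dynkin diagrams, then exhibit explicit mutations in each minimal case. The problem is your claim that the minimal offenders are ``small''. The minimal infinite-type marked diagrams form infinite families.
\begin{itemize}
\item For every $n\ge5$, $A_n$ with $I\setminus K=\{2,n-1\}$ is minimal for~\eqref{equ_Dynkindia}. A proper marked subdiagram is a subchain on which the markings become, up to symmetry, $\{1,N\}$, $\{1,N-1\}$, $\{1\}$ or $\{2\}$. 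All of these appear in Table~\ref{table_listfiniteparbolic}.
\item For every $n\ge4$, $D_n$ with $\{2\}$ is likewise minimal.
\end{itemize}
So ``case by case on the minimal list'' never terminates. Your sample list does not see this. It contains an $A_4$ case, although every $\mathcal{A}_P$ in type $A_4$ is of finite type. It contains $\{1,2,n-1\}$, which is not minimal since it lies above $\{2,n-1\}$. It omits, e.g., $A_7$, $D_5$ with $\{4,5\}$, $E_7$ with $\{7\}$ and $E_8$ with $\{8\}$.

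The missing idea is a comparison at the level of quivers rather than Dynkin diagrams. The paper handles the two families as follows.
\begin{itemize}
\item Type $A$: the identity $w_{K_{n+1}}=s_ns_{n+1}w_{K_n}s_{n+1}s_n$ (Lemma~\ref{lemma_an1}) embeds $\mathsf{Q}_n$ into $\mathsf{Q}_{n+1}$ while keeping only the $s_2$-marking $\mathsf{L}'_n\subsetneq\mathsf{L}_n$. This reduces the family to $A_5$.
\item Type $D$: the natural embedding coming from Lemma~\ref{lemma_dn1} sends the marked $s_2$-frozen vertex to the unmarked $s_3$-frozen vertex. So your monotonicity cannot be applied to it at all. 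The paper instead shows that the full subquiver on the vertices labelled $s_1,s_2,s_3$ is independent of $n$ (Lemma~\ref{lemma_dn2}), reducing to $D_4$ and $D_5$.
\end{itemize}

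\paragraph{Smaller points.}
Replacing $\varepsilon$ by $-\varepsilon$ is not a mutation and in general leaves the mutation class. The correct sign fix is one extra mutation at $r$, which negates row $r$. Combined with a pigeonhole argument to fix $r$, this costs nothing.

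Your core mechanism is a legitimate and more elementary substitute for the paper's route. You take a Kronecker pair $k_1\rightrightarrows k_2$ to which the marked frozen vertex is attached unbalancedly, then iterate $(\mu_{k_1}\mu_{k_2})^\ell$. Mutations at vertices of a full subquiver only see that subquiver, so the rank-two computation on $\{k_1,k_2,f\}$ suffices. The paper instead finds an affine or extended affine full subquiver, shows via the Felikson--Tumarkin criteria that adding the marked vertex makes it mutation-infinite, and then applies Proposition~\ref{proposition_multiplicityquiver}.

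In most of the paper's diamond quivers $\mathsf{Q}^\lozenge$, $f$ is indeed attached unbalancedly to the double arrow, so your computation applies directly. The exception is the $E^{(1,1)}_7$ case (Lemma~\ref{lemma_infinitefinite7}, needed for $D_6$ with $\{6\}$), where $f$ is not adjacent to the double arrow. There you would need further mutations, whereas Lemma~\ref{lem:FTextcrit} makes no adjacency demand. Either way, the explicit mutation sequences, which are the real content of the verification, remain to be produced.
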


For our comparison of Newton--Okounkov bodies, we must also take into account the marking and the  partial order respecting this marking in~\eqref{equ_partialorderlabel}. Let $w_P$ (resp. $w_{P^\prime})$ denote the minimal length representative of the left coset $w_0 W_P$ (resp. $w_0 W_{P^\prime}$) in $W$. If $w_P \leq_L w_{P^\prime}$ and $\mcal{A}_P$ satisfies the multiplicity property, then $\mcal{A}_{P^\prime}$ also satisfies it. Following the strategy of \cite{GLS08} with respect to the new partial order~\eqref{equ_partialorderlabel}, we compare the corresponding cluster algebras. See Remark~\ref{rmk_arbitraryaddfrozen} for a discussion of issues arising from the presence of markings.

Let $\mathcal{W}$ be the set of the Weyl group elements $w_P$ associated with $\mcal{A}_P$ and consider the left weak Bruhat order on $\mathcal{W}$ as defined in~\eqref{equ_weakBruhatextended}. By Corollaries~\ref{corollary_Dynkin1} and~\ref{corollary_Dynkin2}, it suffices to compare the associated marked Dynkin diagrams with respect to the partial order~\eqref{equ_Dynkindia}. This leads to the following proposition.

\begin{proposition}\label{proposition_minimalelements}
Let $\mathscr{I}$ be the set of cluster algebras of infinite type in~\eqref{equ_paraboliccluade}. Then, up to Dynkin diagram symmetry, the entries in Table~\ref{table_minimalpara} are precisely the minimal elements of $\mathscr{I}$ with respect to the partial order~\eqref{equ_Dynkindia}.
\end{proposition}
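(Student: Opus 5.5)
The claim is a purely combinatorial statement about the poset of marked Dynkin diagrams $(\mathsf{D},J)$ of types $A$, $D$, $E$ under the order \eqref{equ_Dynkindia}. I would prove it by combining the finite-type classification in Table~\ref{table_listfiniteparbolic} with monotonicity. By Corollary~\ref{corollary_Dynkin2}, $(\mathsf{D}_1,J_1)\le(\mathsf{D}_2,J_2)$ gives a seed embedding $\varepsilon_1\le\varepsilon_2$. By the remark after Definition~\ref{def_parorderonskewsy}, infinite type therefore propagates upward, so $\mathscr{I}$ is an upper set. Consequently its minimal elements are exactly those $(\mathsf{D},J)\in\mathscr{I}$ all of whose immediate predecessors lie in the complement of $\mathscr{I}$, that is, appear in Table~\ref{table_listfiniteparbolic}.

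The steps run in this order.

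First, I would describe the covering relations of \eqref{equ_Dynkindia}. A marked diagram $(\mathsf{D}',J')$ lies immediately below $(\mathsf{D},J)$ in one of two ways:
\begin{itemize}
\item $J'=J\setminus\{j\}$ on the same diagram (shrinking $J$ corresponds to enlarging $K$); or
\item $\mathsf{D}'$ is obtained by deleting a leaf of $\mathsf{D}$ not in $J$, with $J'=J$, so that the result is still connected of type $A$, $D$ or $E$.
\end{itemize}
It suffices to consider connected diagrams. Indeed, for disconnected ones the algebra splits as a product, and the minimal infinite cases will be connected.

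Second, for each entry $(\mathsf{D},J)$ of Table~\ref{table_minimalpara}, I would check the following.
\begin{itemize}
\item It is not in Table~\ref{table_listfiniteparbolic}, so by the Gei{\ss}--Leclerc--Schr\"oer classification \cite{GLS08} it is of infinite type.
\item Every covering predecessor, up to Dynkin symmetry, is in Table~\ref{table_listfiniteparbolic}, hence of finite type.
\end{itemize}
Together these give minimality.

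Third, and conversely, I would show that every infinite-type $(\mathsf{D},J)$ dominates some entry of Table~\ref{table_minimalpara}. I would argue by induction, descending along covers until a further descent lands in Table~\ref{table_listfiniteparbolic}. To make this finite, I would organize by type.
\begin{itemize}
\item In type $A_n$: if $|J|\ge 4$, or if $J$ contains two interior nodes far from the ends, descend to small patterns such as $A_5$ with $J$ containing $\{1,3,4\}$-like configurations. I would also use the bounded-rank exceptions $A_7,\{3\}\to A_8,\{3\}$ and $A_6,\{2,3\}$.
\item In type $D_n$ and type $E$: if $J$ meets anything beyond node $1$ (respectively the small exceptions $D_4,\{3,4\}$ and $D_5,\{5\}$), restrict to a subdiagram of type $D_4$, $D_5$, $D_6$ or $A_k$ containing the marked node.
\end{itemize}

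Every diagram of rank at least $9$ contains a subdiagram of bounded rank with the same local marked pattern near $J$. Together with the finiteness of the list in Table~\ref{table_listfiniteparbolic}, this reduces the check to finitely many shapes.

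The main obstacle is the bookkeeping in the third step: exhaustively confirming that no infinite-type marked diagram escapes all of Table~\ref{table_minimalpara}. This is especially delicate for type $A$ with several marked nodes, given the missing $A_5,\{1,3,4\}$ entry in \cite{GLS08}, which I would double-check. Here I would first try to enumerate by the pair (rank, $|J|$). For $|J|\le 3$ and rank at most $8$ the check is a finite table comparison. For larger rank, deleting an unmarked end leaf always stays within $\mathscr{I}$ once the diagram already contains a listed infinite pattern, so only bounded rank needs inspection.
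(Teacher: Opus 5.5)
\textbf{Verdict.} Your framework is the one the paper uses, but your Step 3 (completeness) does not go through as written. The paper gives no separate proof: it reads Table~\ref{table_minimalpara} off the finite-type list in Table~\ref{table_listfiniteparbolic} (GLS plus the $A_5,\{1,3,4\}$ correction), using Corollaries~\ref{corollary_Dynkin1} and~\ref{corollary_Dynkin2} to know that infinite type is upward closed for \eqref{equ_Dynkindia}.

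\textbf{What is correct.} Your Steps 1 and 2 are sound. Covers are either ``drop one marked node'' or ``delete one unmarked leaf'', and it suffices to check that each entry is infinite while all its covers are finite. This works uniformly for the two families. For $A_n,\{2,n-1\}$ the covers are $A_n,\{2\}$ and $A_{n-1},\{1,n-2\}$. For $D_n,\{2\}$ the covers are $D_{n-1},\{1\}$ and $A_{n-1},\{2\}$. All of these are in the finite list for every $n$.

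\textbf{Where Step 3 fails.} The claim that ``only bounded rank needs inspection'' is false. Table~\ref{table_minimalpara} contains the two infinite families $A_n,\{2,n-1\}$ and $D_n,\{2\}$, which are minimal for every $n$. For these, deleting an unmarked end leaf \emph{leaves} $\mathscr{I}$, as the cover computation above shows. So your sentence ``deleting an unmarked end leaf always stays within $\mathscr{I}$ once the diagram already contains a listed infinite pattern'' is both circular and wrong. There are further problems:
\begin{itemize}
\item The type $E$ advice (restrict to a $D$ or $A$ subdiagram containing the marked node) can never produce $E_6,\{1\}$, $E_6,\{2\}$, $E_7,\{7\}$, $E_8,\{8\}$. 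All their one-leaf deletions are of finite type.
\item ``$A_5$ with $J$ containing $\{1,3,4\}$-like configurations'' is not an infinite pattern. $A_5,\{1,3,4\}$ is of finite type $E_8$, which is exactly the case missing from \cite{GLS08}.
\end{itemize}

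\textbf{How to fix it.} The descent is unnecessary. The poset is well founded, so every infinite element dominates a minimal one. Completeness therefore just means: an infinite $(\mathsf{D},J)$ all of whose covers are finite lies in the table. Since finite type is downward closed, this forces every proper subset of $J$ to be finite on $\mathsf{D}$ and every unmarked-leaf deletion to be finite. Now enumerate from Table~\ref{table_listfiniteparbolic}, uniformly in $n$:
\begin{itemize}
\item \emph{$A_n$, $n\ge 8$.} The only finite singletons are $\{1\},\{2\},\{n-1\},\{n\}$. So $|J|\ge 2$ forces $J\subseteq\{1,2,n-1,n\}$, and the only infinite such $J$ with all proper subsets finite is $\{2,n-1\}$. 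For $|J|=1$, leaf deletion leaves only $A_8,\{3\}$.
\item \emph{$D_n$, $n\ge 6$.} The only finite singleton is $\{1\}$, so $J$ is a singleton. Leaf deletion leaves only $\{2\}$ and $D_6,\{6\}$.
\item \emph{Type $E$.} No nonempty $J$ is finite, so $J$ is a singleton. Leaf deletion leaves exactly the four $E$ entries.
\item \emph{Remaining cases.} $A_n$ with $n\le 7$, $D_4$ and $D_5$ are a finite check.
\end{itemize}
This recovers Table~\ref{table_minimalpara}, with the infinite families appearing uniformly rather than through a rank bound.
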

\begin{table}[h]
\begin{center}
\begin{tabular}{| c | c |  }
 \hline
 Type & $I \setminus K$ \\ 
\hline  \hline
$A_n (n \geq 5)$ & $\{2, n-1\}$   \\   
\hline  
$A_5$ & $\{1, 3, 5\}$  \\
\hline     
$A_6$ & $\{1,3\}$   \\   
$A_6$ & $\{1,4\}$   \\   
$A_6$ & $\{3,4\}$   \\   
\hline 
$A_7$ & $\{4\}$   \\   
$A_7$ & $\{1,5\}$   \\   
$A_7$ & $\{2,3\}$   \\   
\hline
$A_8$ & $\{3\}$   \\   
\hline
\end{tabular}
\quad \quad
\begin{tabular}{| c | c |  }
 \hline
 Type & $I \setminus K$ \\ 
\hline  \hline
$D_n (n \geq 4)$ & $\{2\}$  \\   
\hline
$D_4$ & $\{1,3,4\}$  \\
\hline
$D_5$ & $\{1,5\}$  \\   
$D_5$ & $\{4,5\}$  \\   
\hline
$D_6$ & $\{6\}$  \\   
\hline
$E_6$ & $\{1\}$  \\   
$E_6$ & $\{2\}$  \\
\hline
$E_7$ & $\{7\}$  \\
\hline
$E_8$ & $\{8\}$  \\
\hline
\end{tabular}
\end{center}
\caption{Minimal cases of infinite mutation type}
\label{table_minimalpara}
\end{table}

As illustrated in Table~\ref{table_minimalpara}, the left weak Bruhat order significantly reduces the number of cases that require verification. Nevertheless, \emph{infinitely many} possibilities still remain. In the next part, we further reduce the problem to a finite number of cases by means of a series of lemmas.

We begin with type $A_n$.

\begin{lemma}\label{lemma_an1}
For each integer $n \geq 5$, let $I_n$ be the index set of simple roots for $\frak{sl}_{n+1}(\C)$, and let $K_{n} = I_n \setminus \{2, n-1\}$. Let $w_{K_n}$ be the minimal length representative of the coset $w_0 W_{K_n}$ in the Weyl group $W_n$ of $\frak{sl}_{n+1}(\C)$. Then, regarding $w_{K_{n}}$ as an element of $W_{n+1}$, 
\begin{equation}\label{exp_an1}
w_{K_{n+1}} = s_{n} s_{n+1} w_{K_n} s_{n+1} s_n.
\end{equation}
\end{lemma}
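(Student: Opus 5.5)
We verify \eqref{exp_an1} in the symmetric group $S_{n+2}$, using $w_K = w_0 w_0^K$. Here $w_0$ is the longest element of the ambient Weyl group and $w_0^K$ is the longest element of $W_K$. For $K_n = I_n\setminus\{2,n-1\}$, the parabolic $W_{K_n}$ is the Young subgroup $S_2\times S_{n-2}\times S_2$ of $S_{n+1}$. So $w_0^{K_n}$ reverses the blocks $\{1,2\}$, $\{3,\dots,n-1\}$, $\{n,n+1\}$ separately. Hence $w_{K_n}=w_0w_0^{K_n}$ is the permutation sending the blocks $\{1,2\}$, $\{3,\dots,n-1\}$, $\{n,n+1\}$ \emph{order-preservingly} onto the blocks $\{n,n+1\}$, $\{3,\dots,n-1\}$, $\{1,2\}$ respectively. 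Concretely, in one-line notation
$$w_{K_n}=[\,n,\ n+1,\ 3,\ 4,\ \dots,\ n-1,\ 1,\ 2\,].$$
Likewise, $w_{K_{n+1}}\in S_{n+2}$ is $[\,n+1,\ n+2,\ 3,\dots,n,\ 1,\ 2\,]$.

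Next we regard $w_{K_n}$ as an element of $S_{n+2}$ fixing $n+2$, and compute the conjugate $c\,w_{K_n}\,c^{-1}$ with $c=s_ns_{n+1}$. The element $c$ is the cycle $(n,\ n+1,\ n+2)$ or its inverse, depending on the composition convention. Conjugation by a permutation $c$ relabels via $c\,\sigma\,c^{-1} = $ ($\sigma$ with every label $x$ replaced by $c(x)$). The original $w_{K_n}$ has the transpositions-like pattern $1\mapsto n$, $2\mapsto n+1$, $n\mapsto 1$, $n+1\mapsto 2$, fixes $3,\dots,n-1$ and $n+2$. Applying the relabeling $n\to n+1\to n+2$ (choosing the convention so that $c(n)=n+1$, $c(n+1)=n+2$, $c(n+2)=n$) gives $1\mapsto n+1$, $2\mapsto n+2$, $n+1\mapsto1$, $n+2\mapsto2$, and $n\mapsto n$. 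It also fixes $3,\dots,n-1$. This is exactly $w_{K_{n+1}}$. Since $s_{n+1}s_n=(s_ns_{n+1})^{-1}$, this gives \eqref{exp_an1}.

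The main point requiring care is convention: whether $s_ns_{n+1}$ acts as $c$ or $c^{-1}$ when composing permutations, and whether $w$ acts on positions or values. We will fix the convention (product as composition of functions, $s_i=(i,i+1)$). Then $s_ns_{n+1}$ sends $n+2\mapsto n+1\mapsto$ ... we check directly: $s_ns_{n+1}(n)=s_n(n)=n+1$, $s_ns_{n+1}(n+1)=s_n(n+2)=n+2$, $s_ns_{n+1}(n+2)=s_n(n+1)=n$. So $c(n)=n+1$, $c(n+1)=n+2$, $c(n+2)=n$, matching the relabeling above.

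Because the statement is presumably used to embed reduced words, we should also note a length check. We have $\ell(w_{K_{n+1}})-\ell(w_{K_n})=\dim \mathrm{Fl}(2,n;n+2)-\dim\mathrm{Fl}(2,n-1;n+1)=4$. So the product $s_ns_{n+1}w_{K_n}s_{n+1}s_n$ is reduced, which is useful for the later multiplicity argument though not needed for the equality itself.
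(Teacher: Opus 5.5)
Your argument is correct and is essentially the paper's proof: the paper records $w_{K_n}$ in one-line notation as $[n, n+1, 3, \dots, n-1, 1, 2]$ and leaves the conjugation by $s_n s_{n+1}$ implicit, which you carry out explicitly. Your length count also justifies the paper's later remark that reduced words of $w_{K_n}$ extend to reduced words of $w_{K_{n+1}}$, and your only slip is harmless: $W_{K_n}$ is $S_2 \times S_{n-3} \times S_2$, since the middle block $\{3, \dots, n-1\}$ has $n-3$ elements.
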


\begin{proof}
This follows from the expression
$$
w_{K_{n}} = 
\left(\begin{array}{cc|ccc|cc}
1 & 2 & 3 & \cdots & n-1 & n& n+1 \\
n  & n+1 & 3 & \cdots & n-1 & 1 & 2
\end{array}\right), \,\,
$$
where we identify the Weyl group $W_n$ of $\frak{sl}_{n+1}(\C)$ with the symmetric group $\frak{S}_{n+1}$. 
\end{proof}

By~\eqref{exp_an1}, any reduced expression of $w_{K_{n}}$ induces a reduced expression of $w_{K_{n+1}}$. Using these reduced expressions, we construct the marked quivers  $(\mathsf{Q}_n, \mathsf{L}_n)$ and $(\mathsf{Q}_{n+1}, \mathsf{L}_{n+1})$ associated with $K_{n}$ and $K_{n+1}$, respectively. Note that the marking $\mathsf{L}_n$ consists of two vertices$\colon$ one corresponding to the frozen variable associated with $s_2$ and the other corresponding to $s_n$. We introduce an alternative marking for $\mathsf{Q}_n \colon$
$$
\mathsf{L}^\prime_n \coloneqq \{ \mbox{the vertex corresponding to the frozen variable associated with $s_2$}\} \subsetneq \mathsf{L}_n.
$$
Lemma~\ref{lemma_an1} yields the following corollary.

\begin{corollary}\label{cor_an1}
Let $(\mathsf{Q}_n, \mathsf{L}^\prime_n)$ and $(\mathsf{Q}_{n+1}, \mathsf{L}^\prime_{n+1})$ be as above. Then, for each $n \geq 5$, 
$$
(\mathsf{Q}_n, \mathsf{L}^\prime_n) \leq (\mathsf{Q}_{n+1}, \mathsf{L}^\prime_{n+1})
$$ 
in the sense of~\eqref{equ_partialorderlabel}.
\end{corollary}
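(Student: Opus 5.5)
We need to show $(\mathsf{Q}_n, \mathsf{L}'_n) \leq (\mathsf{Q}_{n+1}, \mathsf{L}'_{n+1})$ using \eqref{exp_an1}. The plan is to build a reduced expression of $w_{K_{n+1}}$ containing a reduced expression of $w_{K_n}$ as a consecutive block, and then compare the exchange matrices given by \eqref{equ_GLSseed}.

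\emph{Step 1: the reduced expression.} Fix a reduced expression $\underline{w}_{K_n} = s_{i_1}\cdots s_{i_m}$. By Lemma~\ref{lemma_an1},
\[
\underline{w}_{K_{n+1}} \coloneqq s_n s_{n+1}\, s_{i_1}\cdots s_{i_m}\, s_{n+1} s_n
\]
is an expression of $w_{K_{n+1}}$. It has length $m+4$. A direct length count, using $\ell(w_{K_n}) = \dim \mathrm{Fl}$ for the given partial flag variety, gives $\ell(w_{K_{n+1}}) - \ell(w_{K_n}) = 4$, so this expression is reduced. Equivalently, one can read the permutation displays in the proof of Lemma~\ref{lemma_an1} and count inversions.

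\emph{Step 2: the embedding.} Define $\phi(j) = j+2$ on $J_n = \{1,\dots,m\}$. The occurrences of $s_n$ and $s_{n+1}$ outside the middle block lie only at positions $1, 2, m+3, m+4$.

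For positions $j$ in the middle block, the successor $j^+$ computed in $\underline{w}_{K_{n+1}}$ agrees (after the shift) with the one computed in $\underline{w}_{K_n}$ in all cases but one. The exception is the last occurrence of $s_n$ in the block: in $\underline{w}_{K_{n+1}}$ its successor is position $m+4$ rather than $m+1$. Hence unfrozen indices of $\mathsf{Q}_n$ stay unfrozen, and the frozen vertex of $\mathsf{Q}_n$ attached to $s_n$ becomes unfrozen in $\mathsf{Q}_{n+1}$. This is harmless, because condition (1) of Definition~\ref{def_parorderonskewsy} only constrains unfrozen vertices.

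The frozen vertex for $s_2$ maps to the frozen vertex for $s_2$, since no $s_2$ is added outside the block. This gives condition (3) for $\mathsf{L}'$. It is exactly why the marking at $s_n$ has to be dropped.

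\emph{Step 3: matching the entries.} Condition (2) requires $\varepsilon_{r,s} = \varepsilon'_{r+2,s+2}$ for all $r \in J_{n,\mathrm{uf}}$ and $s \in J_n$. Each case of \eqref{equ_GLSseed} depends only on the relative order of $s, r, s^+, r^+$ and on the colours $i_r, i_s$. For $r$ unfrozen, $r^+ \le m$ is unchanged by the shift. Only $s^+$ can change, and only when $s$ is the last $s_n$, where $s^+$ moves from $m+1$ to $m+4$.

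Both values exceed every index in the block, so the comparisons $r < s^+$ and $r^+ < s^+$ are preserved. The cases $r = s^+$ and $r^+ = s$ never involve $m+1$ for unfrozen $r$. Therefore all entries agree.

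The main obstacle is the bookkeeping in Step 3, specifically checking that the changed successor never flips one of the inequalities in \eqref{equ_GLSseed}. Together with the reducedness check in Step 1, this is the only real content of the argument.
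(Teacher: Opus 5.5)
Your proposal is correct and is essentially the paper's argument: the paper derives the corollary directly from Lemma~\ref{lemma_an1} using the induced reduced expression $s_n s_{n+1}\underline{w}_{K_n} s_{n+1} s_n$ and the shift $j \mapsto j+2$. Your Steps 1--3 supply exactly the bookkeeping the paper leaves implicit: reducedness via $\ell(w_{K_{n+1}})-\ell(w_{K_n})=(4n-4)-(4n-8)=4$, the single changed successor at the last $s_n$ of the block, and preservation of the $s_2$ marking.

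A side remark that does not affect your proof: since $K_n = I_n\setminus\{2,n-1\}$, the second vertex of $\mathsf{L}_n$ is really the $s_{n-1}$-frozen vertex (the paper's text has a slip here), and it must be dropped because $\phi$ sends it to the unmarked $s_{n-1}$-frozen vertex of $\mathsf{Q}_{n+1}$, whose markings sit at $s_2$ and $s_n$.
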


We now turn to type $D_n$. Let $\epsilon_i$ be the projection to the $i$-th diagonal entry. The simple roots are given by (cf.~\eqref{equ_DynkinAD})$\colon$
$$
\alpha_1 = \epsilon_1 - \epsilon_2, \cdots, \alpha_{n-1} = \epsilon_{n-1} - \epsilon_n, \alpha_n = \epsilon_{n-1} + \epsilon_n.
$$
Let $I_n$ be the index set of simple roots for $\frak{so}_{2n}(\C)$. The Weyl group of $\frak{so}_{2n}(\C)$ is denoted by $W_{n}$ isomorphic to $(\Z / 2\Z)^{n-1} \rtimes \mathfrak{S}_n$, the group of even signed permutation group. We identify $W_{n}$ with a subgroup of $\mathfrak{S}_{2n}$ via the embedding $W_{n} \hookrightarrow \mathfrak{S}_{2n}$ determined by
$$
\begin{cases}
s_j \mapsto s_j s_{2n-j} \, &\mbox{ for $j = 1, \dots, n-1$} \\
s_n  \mapsto s_n s_{n-1} s_{n+1} s_n. &
\end{cases}
$$
Under this embedding, the negative signed index $\overline{j}$ in $W_{n}$ corresponds to $2n+1-j$ in $\mathfrak{S}_{2n}$.

For each integer $n \geq 4$, let $K_{n} = I_n \setminus \{2\}$. Then the element $w_{K_{n}}$ can be written in permutation notation as
$$
w_{K_{n}} = 
\left(\begin{array}{cc|ccc|cc}
1 & 2 & 3 & \cdots & 2n-2 & 2n-1& 2n \\
2n-1  & 2n & 3 & \cdots & 2n-2 & 1 & 2
\end{array}\right). \,\,
$$
We introduce the \emph{shift map} $\sigma$ as follows. The shift map is defined by the homomorphism $\sigma \colon \frak{S}_{2n} \to \frak{S}_{2n+2}$$\colon$ for $w \in \frak{S}_{2n}$ with
$$
w = 
\left(\begin{array}{ccccc}
1 & 2 & \cdots &  2n-1& 2n \\
w(1)  & w(2)  & \cdots & w(2n-1) & w(2n)
\end{array}\right), \,\,
$$ 
we set
$$
\sigma(w) = 
\left(\begin{array}{cccccc}
1& 2 & 3 & \cdots &  2n+1& 2n+2 \\
1 & w(1) +1 & w(2) + 1 & \cdots & w(2n) +1 & 2n+2
\end{array}\right). \,\,
$$
Equivalently, $\sigma$ is determined by $\sigma(s_i) = s_{i+1}$ for all simple transpositions $s_i$. Therefore, it induces a well-defined homomorphism
$$
\sigma \colon W_{n} \to W_{{n+1}}.
$$

We then obtain the following lemma.

\begin{lemma}\label{lemma_dn1}
For each integer $n \geq 4$, with $I_n, K_n$, and $w_{K_n}$ defined as above, we have 
\begin{equation}\label{exp_dn1}
w_{K_{n+1}} = s_{2} s_{1} \sigma(w_{K_{n}}) s_{1} s_2.
\end{equation}
\end{lemma}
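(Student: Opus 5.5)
The identity \eqref{exp_dn1} is an equality of elements of $W_{n+1}$. Since the embedding $W_{n+1}\hookrightarrow\mathfrak{S}_{2n+2}$ is an injective homomorphism, I will prove it by computing both sides as permutations in $\mathfrak{S}_{2n+2}$. Under the embedding the type $D$ reflections are
\[
s_1\mapsto s_1 s_{2n+1}, \qquad s_2\mapsto s_2 s_{2n},
\]
where on the right $s_i$ denotes the transposition $(i\ i{+}1)$. The left-hand side is the permutation already recorded in the paper with $n$ replaced by $n+1$. It is the involution
\[
w_{K_{n+1}}=(1\ \ 2n{+}1)(2\ \ 2n{+}2),
\]
which fixes $3,\dots,2n$.

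\textbf{Step 1: compute $\sigma(w_{K_n})$.} The definition of $\sigma$ conjugates by the index shift $j\mapsto j+1$ and adds the fixed points $1$ and $2n+2$. Since $w_{K_n}=(1\ \ 2n{-}1)(2\ \ 2n)$, this gives
\[
\sigma(w_{K_n})=(2\ \ 2n)(3\ \ 2n{+}1).
\]
I should also confirm that $\sigma$ restricts to a map $W_n\to W_{n+1}$. This holds because $\sigma$ sends $s_js_{2n-j}$ to $s_{j+1}s_{2n+1-j}$, which is the image of the type $D_{n+1}$ reflection $s_{j+1}$. It also holds because the permutation computed above commutes with the involution $j\mapsto 2n+3-j$ and has an even number of sign changes.

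\textbf{Step 2: conjugate.} Set $u:=s_1s_2\in W_{n+1}$, viewed in $\mathfrak{S}_{2n+2}$ as $s_1s_{2n+1}s_2s_{2n}$. The right-hand side is $u^{-1}\sigma(w_{K_n})u$. For any permutation $\pi$, conjugation acts on cycles by
\[
\pi\,(a\ b)\,\pi^{-1}=(\pi(a)\ \pi(b)).
\]
Hence the right-hand side is
\[
\bigl(\pi(2)\ \ \pi(2n)\bigr)\bigl(\pi(3)\ \ \pi(2n{+}1)\bigr), \qquad \pi=u^{-1}.
\]
A direct computation should give the following action of $u^{-1}=s_2s_1$ in the embedded form $s_2s_{2n}s_1s_{2n+1}$: it acts as the $3$-cycle $3\mapsto2\mapsto1$ on the first three letters and as the mirror $3$-cycle $2n\mapsto 2n{+}1\mapsto 2n{+}2$ on the last three. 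Then
\[
\pi(2)=1,\qquad \pi(2n)=2n+1,\qquad \pi(3)=2,\qquad \pi(2n+1)=2n+2,
\]
and the conjugate is exactly $(1\ \ 2n{+}1)(2\ \ 2n{+}2)=w_{K_{n+1}}$.

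\textbf{Main obstacle.} The only delicate point is the composition convention in Step 2. Depending on whether permutations act on positions or on values, $\pi$ is $u$ or $u^{-1}$, and this decides whether the $3$-cycle runs $3\to2\to1$ or $1\to2\to3$. I will fix the convention for which the displayed permutation matrices of $w_{K_n}$ and of $s_i$ are consistent, then check on the single letter $3$ (tracking $3\mapsto2\mapsto1$) and its mirror $2n+1$. If the convention produces the reversed cycle, the conjugate becomes $(3\ \ 2n{+}1)(1\ \ 2n{-}1)$ instead. That would signal a convention mismatch rather than a false statement, and it can be settled by checking the smallest case $n=4$ by hand.

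As a sanity check, and because the next step of the paper uses \eqref{exp_dn1} to produce reduced expressions, I will also verify that the right-hand side is length-additive:
\[
\ell(w_{K_{n+1}})=\ell(w_{K_n})+4.
\]
Since $w_{K_n}=w_0w_0^{K_n}$, its length is $|\Phi^+(D_n)|-|\Phi^+_{K_n}|$, the number of positive roots of $D_n$ outside $\Phi^+_{K_n}$. For $K_n=I_n\setminus\{2\}$ this number is $4n-7$, and passing from $n$ to $n+1$ increases it by $4$.
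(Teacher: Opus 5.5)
Your computation is correct, and it is the verification the paper intends. The paper states this lemma without proof, having just recorded $w_{K_n}$ and $\sigma$ in permutation form, and it proves the analogous Lemma~\ref{lemma_an1} by reading off the permutation in the same way. Your convention worry is moot: the word $s_2s_1\,(\cdot)\,s_1s_2$ is a palindrome and $\sigma(w_{K_n})$ is an involution, so either composition convention gives $(1\ \ 2n{+}1)(2\ \ 2n{+}2)$, and the fallback case you describe never arises.
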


By~\eqref{exp_dn1}, any reduced expression of $w_{K_{n}}$ induces a reduced expression of $w_{K_{n+1}}$. Using these reduced expressions, we obtain the marked quivers  $(\mathsf{Q}_n, \mathsf{L}_n)$ and $(\mathsf{Q}_{n+1}, \mathsf{L}_{n+1})$ associated with $K_{n}$ and $K_{n+1}$, respectively. Note that the marking $\mathsf{L}_n$ consists of the single vertex, corresponding to the frozen variable associated with $s_{2}$. 

Although the quiver $\mathsf{Q}_n$ is embedded into $\mathsf{Q}_{n+1}$ via~\eqref{exp_dn1}, this embedding does \emph{not} preserve the marking$\colon$ the frozen variable associated with $s_{2}$ in $\mathsf{Q}_n$ is mapped to one associated with $s_{3}$ in $\mathsf{Q}_{n+1}$, whereas $\mathsf{L}_{n+1}$ consists of the vertex  corresponding to the frozen variable associated with $s_{2}$, (cf. Example~\ref{example_OG28lab}).

To resolve this issue, we find a suitable subquiver of $\mathsf{Q}_n$ that embeds into $\mathsf{Q}_{n+1}$ preserving the lebeling. Let $V_{\leq 3}$ denote the subset of vertices corresponding to the simple reflections $s_{1}, s_{{2}}$, or $s_{{3}}$, and set
$$
\mathsf{Q}_n^\prime \coloneqq \mathsf{Q}_n \big{|}_{V_{\leq 3}}.
$$
The following lemma provides the desired embedding.

\begin{lemma}\label{lemma_dn2}
For each integer $n \geq 5$, let $I_n, K_n$, and $w_{K_n}$ be as in Lemma~\ref{lemma_dn1}. Then the subquiver $\mathsf{Q}_n^\prime$ is isomorphic to $\mathsf{Q}^\prime_{n+1}$ and this isomorphism preserves the marking.
\end{lemma}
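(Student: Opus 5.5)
The plan is to compare the two quivers through the explicit recursion of Lemma~\ref{lemma_dn1},
$$
w_{K_{n+1}} = s_2 s_1 \sigma(w_{K_n}) s_1 s_2.
$$
Fix a reduced expression $\underline{w}_{K_n} = s_{i_1}\cdots s_{i_m}$. Then
$$
\underline{w}_{K_{n+1}} = s_2 s_1 s_{i_1+1}\cdots s_{i_m+1} s_1 s_2
$$
is a reduced expression, by length count: $\ell(w_{K_{n+1}}) = \ell(w_{K_n})+4$, since $\dim \mathrm{OG}(2,2n+2)-\dim\mathrm{OG}(2,2n)=4$. Here $\sigma$ should be read at the level of Dynkin labels, i.e.\ $s_j \mapsto s_{j+1}$ for $j\le n-1$, and $s_n\mapsto s_{n+1}$ in the $D$-type sense. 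This is exactly what the permutation embedding encodes.

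The key observation is that the exchange matrix \eqref{equ_GLSseed} is local in the word. The entry $\varepsilon_{r,s}$ depends only on the letters $i_r, i_s$, on the relative positions of $r, s, r^+, s^+$, and on $a_{i_s,i_r}$. Consequently, the full subquiver on the positions carrying letters in a set $S\subseteq I$ is determined by the subword of letters in $S$ together with the Cartan entries among $S$. This holds because $r^+$ and $s^+$ are computed letterwise.

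First I would restrict $\underline{w}_{K_n}$ to the letters $\{1,2,3\}$. These correspond to $V_{\le 3}$ in $\mathsf{Q}_n$, and this gives the subword $u_n$. For $\underline{w}_{K_{n+1}}$, the letters $\{1,2,3\}$ come from three sources: the outer $s_2s_1\cdots s_1s_2$, and the shifted letters of $\underline{w}_{K_n}$ that were originally $1$ and $2$. So the subword $u_{n+1}$ equals
$$
2\,1\,\sigma(u_n|_{\{1,2\}})\,1\,2 .
$$
The claim then reduces to showing that $u_n$ and $u_{n+1}$ agree as words, up to commutations that preserve the quiver. For this I would choose the reduced expression concretely. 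A natural choice is a "column" expression of $w_{K_n}$ read off from the permutation form, namely
$$
w_{K_n} = (s_2 s_1)(s_3 s_2)\cdots \text{(middle part in the $D$ tail)} \cdots (s_1 s_2)\text{-type},
$$
so that its $\{1,2,3\}$-restriction stabilizes for $n\ge 5$. The threshold $n\ge5$ is needed so that the letter $3$ is not adjacent to the fork. One would verify, by direct computation for $n=5,6$ and then by the recursion, that $u_{n+1}=u_n$ literally.

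Given equality of subwords, positions correspond bijectively and the induced map respects several structures. It respects $r^+$, hence frozen versus unfrozen, because the last occurrence of each letter is preserved. It respects all the entries of \eqref{equ_GLSseed}, since the Cartan matrices restricted to $\{1,2,3\}$ coincide, both being of type $A_3$. Finally, it sends the frozen vertex of letter $2$ to the frozen vertex of letter $2$, which is the marking $\mathsf{L}_n=\{j_2\}$. This yields the marking-preserving isomorphism $\mathsf{Q}'_n\cong\mathsf{Q}'_{n+1}$.

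The main obstacle is the bookkeeping that matches the subword of letters $\{1,2,3\}$ in the recursive expression against the original one. The shift sends the old letter $3$ to $4$, so the $3$'s of $\mathsf{Q}'_{n+1}$ come from the old $2$'s. Showing stabilization therefore requires an honest explicit reduced word. I would first try the word obtained by writing $w_{K_n}$ as the product moving $\epsilon_1,\epsilon_2$ to $-\epsilon_2,-\epsilon_1$ through the chain, and check the restriction by hand.
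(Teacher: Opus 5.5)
Your proposal is correct and is essentially the paper's argument. It has two ingredients: the locality of \eqref{equ_GLSseed}, so that $\mathsf{Q}'_n$ depends only on the $\{1,2,3\}$-subword of the recursively built reduced word, and stabilization of that subword under the recursion of Lemma~\ref{lemma_dn1}. The paper settles the stabilization you flag as the main obstacle by unrolling the recursion to $w_{K_{n+1}} = s_2 s_1 s_3 s_2 s_4 s_3\,\sigma^{(3)}(w_{K_{n-2}})\,s_3 s_4 s_2 s_3 s_1 s_2$, whose middle factor contains no letters $\le 3$. Hence the subword is always $2\,1\,3\,2\,3\,3\,2\,3\,1\,2$, which is a fixed point of your map $u \mapsto 2\,1\,\sigma(u|_{\{1,2\}})\,1\,2$; this is exactly what makes your induction from $u_5=u_6$ close.

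One small correction: the threshold $n\ge 5$ has nothing to do with where the letter $3$ sits relative to the fork, since the Cartan data on $\{1,2,3\}$ is of type $A_3$ for every $n\ge 4$. It instead reflects how many recursion steps from the base word are needed before the $\{1,2\}$-restriction becomes $2\,1\,2\,2\,1\,2$.
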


\begin{proof}
The Dynkin diagram of type $D_{n+1}$, when restricted to $V_{\leq 3}$, coincides with the Dynkin diagram of type $D_{n}$ restricted to the same subset. This follows from the expression
\begin{equation}\label{equ_wpn+1}
w_{K_{n+1}} =  s_{2} s_{1} s_{3} s_{2}  s_{4} s_{3} \, \sigma^{(3)} (w_{K_{n-2}}) \, s_{3}  s_{4} s_{2}  s_{3} s_{1} s_{2}
\end{equation}
which is obtained from~\eqref{exp_dn1}. In this expression, any reduced expression of $w_{K_{n-2}}$ contains none of the simple reflections $s_{1}, s_2, s_{3}$. By the construction of quivers in~\eqref{equ_GLSseed}, the subquiver $\mathsf{Q}^\prime_{n+1}$ is entirely determined by the sequence of $s_{1}, s_2$, and $s_{3}$ occurring before and after $w_{K_{n-2}}$ in~\eqref{equ_wpn+1}. Since the same pattern appears in $w_{K_n}$ for all $n \geq 5$, the conclusion follows. 
\end{proof}

By Lemma~\ref{lemma_dn2}, it suffices to show that $\mathsf{Q}_5^\prime$ has the multiplicity property in order to verify that each $\mathcal{A}_P$ of type $D_n$ with $n \geq 5$ has the multiplicity property.

In summary, thanks to Lemmas~\ref{lemma_an1} and~\ref{lemma_dn2}, it suffices to check only the finitely many cases listed in Table~\ref{table_minimalparbolic}. These cases will be examined in the next section.

\begin{table}[h]
\begin{center}
\begin{tabular}{| c | c |  }
 \hline
 Type & $I \setminus K$ \\ 
\hline  \hline
$A_5$ & $\{2, 4\}$   \\   
$A_5$ & $\{1, 3, 5\}$  \\
\hline     
$A_6$ & $\{1,3\}$   \\   
$A_6$ & $\{1,4\}$   \\   
$A_6$ & $\{3,4\}$   \\   
\hline
\end{tabular}
\quad \quad
\begin{tabular}{| c | c |  }
 \hline
 Type & $I \setminus K$ \\ 
\hline  \hline
$A_7$ & $\{4\}$   \\   
$A_7$ & $\{1,5\}$   \\   
$A_7$ & $\{2,3\}$   \\   
\hline
$A_8$ & $\{3\}$   \\   
\hline 
\end{tabular}
\quad \quad
\begin{tabular}{| c | c |  }
 \hline
 Type & $I \setminus K$ \\ 
\hline  \hline
$D_4$ & $\{2\}$  \\   
$D_4$ & $\{1,3,4\}$  \\
\hline
$D_5$ & $\{2\}$  \\   
$D_5$ & $\{1,5\}$  \\   
$D_5$ & $\{4,5\}$  \\   
\hline
\end{tabular}
\quad \quad
\begin{tabular}{| c | c |  }
 \hline
 Type & $I \setminus K$ \\ 
\hline  \hline
$D_6$ & $\{6\}$  \\   
\hline
$E_6$ & $\{1\}$  \\   
$E_6$ & $\{2\}$  \\
\hline
$E_7$ & $\{7\}$  \\
\hline
$E_8$ & $\{8\}$  \\
\hline
\end{tabular}
\end{center}
\caption{Minimal cases of infinite mutation type}
\label{table_minimalparbolic}
\end{table}

%----------------------------------------------------------------------------------------
\section{Detecting thresholds for appearance of infinite mutation type subquivers}\label{sec_distinguishing}

The goal of this section is to complete the proof of Theorem~\ref{theorem_distinguishGP}. We show that each case in Table~\ref{table_minimalparbolic} admits an arrow of arbitrarily large multiplicity. This is achieved by identifying an induced quiver of infinite mutation type that arises from a quiver of finite mutation type after adding a marked frozen vertex. 

Since we work with complex simple algebraic groups of simply laced type, the exchange matrix of the initial seed $\seed_0$ is skew-symmetric, so every extended exchange matrix $\varepsilon_{\seed}$ can be represented by a quiver. As a quiver determines a cluster algebra, we say that the quiver is of finite, infinite, finite mutation, or infinite mutation type according to the type of its associated cluster algebra.

Let $\mathsf{Q}_0$ be a quiver containing a subquiver $\mathsf{Q}^\prime$ of \emph{finite mutation type}. Let $\mathsf{V}_0$ (resp. $\mathsf{V}^\prime$) be the vertex set of $\mathsf{Q}_0$ (resp. $\mathsf{Q}^\prime$). Suppose that there is a vertex $f \in \mathsf{V}_0 \setminus \mathsf{V}^\prime$ such that the induced subquiver $\mathsf{Q} \coloneqq \mathsf{Q}_0 \big{|}_{\mathsf{V}^\prime \cup \{f\}}$ is of \emph{infinite mutation type}. Since $\mathsf{Q}^\prime$ has a finite mutation class, the mutation class of $\mathsf{Q}$ contains quivers  in which the multiplicity of arrows from $f$ to some unfrozen vertex becomes arbitrarily large. 

We summarize this observation in the following proposition.

\begin{proposition}[Lemma 6.2 in \cite{CKKP25}]\label{proposition_multiplicityquiver}
Let $\mathsf{Q}$ be a quiver with vertex set $\mathsf{V} = \mathsf{V}_\mathrm{fz} \sqcup \mathsf{V}_\mathrm{uf}$, and let $f \in \mathsf{V}$. Let $\varepsilon_\mathsf{Q}$ be its extended exchange matrix.
Set $\mathsf{V}^\prime \coloneqq \mathsf{V} - \{f\}$ and let $\mathsf{Q}^\prime = \mathsf{Q} |_{\mathsf{V}^\prime}$.
If $\mathsf{Q}$ is of infinite mutation type while $\mathsf{Q}^\prime$ is of finite mutation type, then 
\begin{equation} \label{eq:manyarrows2}
    \text{\parbox{35em}{
for each integer $\ell \geq 0$, there exists a matrix $\varepsilon_{\ell} =(\varepsilon^{\ell}_{i,j})_{ i \in \mathsf{V}^\prime,\, j \in \mathsf{V}}$ mutation equivalent to $\varepsilon_\mathsf{Q}$ such that
$- \varepsilon^{\ell}_{i_\ell,f} \geq \ell.$  
}}
\end{equation}
\end{proposition}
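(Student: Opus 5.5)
We argue by contradiction, using only that the mutation class of $\mathsf{Q}'$ is finite. Suppose \eqref{eq:manyarrows2} fails. Then there is a bound $N \ge 0$ such that every matrix $\varepsilon$ mutation equivalent to $\varepsilon_\mathsf{Q}$ satisfies $-\varepsilon_{i,f} < N$ for all $i \in \mathsf{V}'$. Here mutations are only at unfrozen vertices of $\mathsf{V}'$; if $f$ is unfrozen, we simply regard it as frozen, which only restricts the mutations. We will show that the entries $|\varepsilon_{i,f}|$ are then bounded in both directions. Together with the finiteness of the mutation class of $\mathsf{Q}'$, this forces $\mathsf{Q}$ to be of finite mutation type, a contradiction.

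\textbf{Step 1: the restriction to $\mathsf{V}'$ takes finitely many values.} Mutation at $k \in \mathsf{V}'_{\mathrm{uf}}$ commutes with restriction to $\mathsf{V}'$. This is because the rule \eqref{equ_mutationexchangematri} for $\varepsilon'_{r,s}$ with $r,s \in \mathsf{V}'$ involves only the entries $\varepsilon_{r,k}$ and $\varepsilon_{k,s}$, which are themselves indexed by $\mathsf{V}'$. Hence the restriction of any matrix in the mutation class of $\mathsf{Q}$ lies in the mutation class of $\mathsf{Q}'$, which is finite by hypothesis.

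\textbf{Step 2: the column at $f$ is bounded in both directions.} For the lower bound, skew-symmetry gives $\varepsilon_{f,i} = -\varepsilon_{i,f}$, so an arrow of multiplicity $a$ from $f$ to $i$ corresponds to $\varepsilon_{i,f} = -a$. The assumption $-\varepsilon_{i,f} < N$ therefore bounds the arrows from $f$ into $\mathsf{V}'$. The upper bound $\varepsilon_{i,f} < M$ is the delicate point, and we handle it with a sign-flip trick. The quiver $\mathsf{Q}^{\mathrm{op}}$, obtained by reversing all arrows, has extended exchange matrix $-\varepsilon_\mathsf{Q}$. Mutation commutes with negation, so the mutation class of $\mathsf{Q}^{\mathrm{op}}$ is the negative of that of $\mathsf{Q}$. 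Consequently, a uniform bound on one sign of the $f$-column is equivalent to the same statement for $\mathsf{Q}^{\mathrm{op}}$ with the opposite sign.

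The difficulty is that the hypothesis concerns a fixed sign. Our plan is to apply the argument of \cite[Lemma 6.2]{CKKP25}, which we quote. Infinite mutation type of $\mathsf{Q}$, combined with Step 1, forces the multiset $\{|\varepsilon_{i,f}|\}$ to be unbounded along some mutation sequence. It then remains to show that large entries of both signs occur.

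To get entries of the correct sign, we take a matrix $\varepsilon$ in the class with $\varepsilon_{i,f} = a \gg 0$, an unfrozen vertex $i$, and mutate at $i$. The rule \eqref{equ_mutationexchangematri} gives $\varepsilon'_{i,f} = -a$, so after one mutation the sign is reversed. This is exactly why the lemma can assert $-\varepsilon^\ell_{i_\ell,f} \ge \ell$.

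\textbf{Step 3: finish and locate the obstacle.} Once $|\varepsilon_{i,f}| \le C$ uniformly, the mutation class of $\mathsf{Q}$ is contained in the set of pairs consisting of a matrix in the finite mutation class of $\mathsf{Q}'$ and a column vector with entries in $[-C,C]$. This set is finite, contradicting the assumption that $\mathsf{Q}$ is of infinite mutation type. The sign-reversal in Step 2 shows that any unbounded $|\varepsilon_{i,f}|$ can be converted into a large negative entry, which yields \eqref{eq:manyarrows2}. The main care needed is that $i$ must be unfrozen. This holds automatically, because the rows of the extended exchange matrix are indexed only by unfrozen vertices, so every entry $\varepsilon_{i,f}$ appearing in the argument has $i \in \mathsf{V}'_{\mathrm{uf}}$.
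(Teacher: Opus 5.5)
Your argument is correct and is essentially the paper's. The paper simply cites \cite[Lemma 6.2]{CKKP25}, remarking that finiteness of the mutation class of $\mathsf{Q}'$ forces unbounded multiplicities at $f$; you make this explicit: mutation at $k\in\mathsf{V}'_{\mathrm{uf}}$ commutes with restriction to $\mathsf{V}'$, so a bounded $f$-column would make the class of $\mathsf{Q}$ finite, and mutating at $i$ turns a large positive $\varepsilon_{i,f}$ into a large negative one.

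Several pieces should be deleted. The $\mathsf{Q}^{\mathrm{op}}$ detour does no work. The appeal to \cite[Lemma 6.2]{CKKP25} is circular, since that is the statement being proved. The reduction ``regard $f$ as frozen'' is invalid in general, because restricting mutations need not preserve infinite mutation type. It is harmless here only because the rows of $\varepsilon_\ell$ are indexed by $\mathsf{V}'$, which already forces $f$ to be frozen.
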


The main result of this section is stated below.

\begin{proposition}\label{prop_mainsection7}
Suppose that the cluster algebra $\mcal{A}_P$ associated with a parabolic subgroup $P$ of a complex simple algebraic group of simply laced type is of infinite type. Let $(\mathsf{Q}_P,\mathsf{L}_P)$ be the associated marked quiver. Then $(\mathsf{Q}_P,\mathsf{L}_P)$ has a subquiver $\mathsf{Q}$ and a marked frozen vertex $f \in \mathsf{L}_P$ satisfying ~\eqref{eq:manyarrows2}.
\end{proposition}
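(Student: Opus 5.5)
The proof reduces Proposition~\ref{prop_mainsection7} to the finitely many cases of Table~\ref{table_minimalparbolic} and treats each of them with Proposition~\ref{proposition_multiplicityquiver}.

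\emph{Reduction.} Let $\mcal{A}_P$ be of infinite type. By Proposition~\ref{proposition_minimalelements}, its marked Dynkin diagram $(\mathsf{D},I\setminus K)$ dominates, in the order~\eqref{equ_Dynkindia}, some entry of Table~\ref{table_minimalpara}. Corollaries~\ref{corollary_Dynkin1} and~\ref{corollary_Dynkin2} then give reduced expressions for which the minimal marked quiver $(\mathsf{Q}_{\min},\mathsf{L}_{\min})$ satisfies $(\mathsf{Q}_{\min},\mathsf{L}_{\min})\le(\mathsf{Q}_P,\mathsf{L}_P)$.

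The property we want passes upward along such embeddings. Suppose $\mathsf{Q}\subseteq\mathsf{Q}_{\min}$ is an induced subquiver and $f\in\mathsf{L}_{\min}$ satisfies~\eqref{eq:manyarrows2}. The embedding $\phi$ sends unfrozen vertices to unfrozen vertices and marked vertices to marked vertices, so $\phi(\mathsf{Q})$ is an induced subquiver of $\mathsf{Q}_P$ and $\phi(f)\in\mathsf{L}_P$. Mutation sequences at the unfrozen vertices of $\phi(\mathsf{Q})$, performed inside $\mathsf{Q}_P$, restrict to the corresponding mutations of $\mathsf{Q}$, because mutation is local. Hence the multiplicities of arrows between $\phi(f)$ and unfrozen vertices grow without bound in $\mathsf{Q}_P$ as well.

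The infinite families in Table~\ref{table_minimalpara} reduce to finitely many cases. For $A_n$ with $I\setminus K=\{2,n-1\}$, Corollary~\ref{cor_an1} embeds $(\mathsf{Q}_5,\mathsf{L}'_5)$ into every $(\mathsf{Q}_n,\mathsf{L}'_n)$ with $\mathsf{L}'_n\subseteq\mathsf{L}_n$. For $D_n$ with $I\setminus K=\{2\}$, Lemma~\ref{lemma_dn2} identifies the marked subquiver $\mathsf{Q}'_n$ with $\mathsf{Q}'_5$ for all $n\ge5$, and $D_4$ is listed separately. This leaves exactly the entries of Table~\ref{table_minimalparbolic}. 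For $D_5$ we must produce the witness inside $\mathsf{Q}'_5$ rather than merely inside $\mathsf{Q}_5$.

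\emph{Case analysis.} For each entry of Table~\ref{table_minimalparbolic}, we fix a reduced expression of $w_K$ and build $(\mathsf{Q}_P,\mathsf{L}_P)$ from~\eqref{equ_GLSseed}. We then look for a marked frozen vertex $f\in\mathsf{L}_P$ and a set $\mathsf{V}'$ of vertices with two properties. First, $\mathsf{Q}'=\mathsf{Q}_P|_{\mathsf{V}'}$ should be of finite mutation type, which we aim to certify by showing its mutable part is mutation equivalent to an affine quiver $\tilde A_{p,q}$, $\tilde D_n$, $\tilde E_{6,7,8}$, or to a surface-type quiver from the classification in \cite{FST12}. Second, $\mathsf{Q}=\mathsf{Q}_P|_{\mathsf{V}'\cup\{f\}}$ should be of infinite mutation type. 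Proposition~\ref{proposition_multiplicityquiver} then yields~\eqref{eq:manyarrows2}.

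A natural first attempt is to take $\mathsf{V}'$ to be all unfrozen vertices, if the mutable part is of finite mutation type, and to add the single marked frozen vertex $f$. If the whole mutable part is already of infinite mutation type, we shrink $\mathsf{V}'$ to a maximal subset of finite mutation type adjacent to $f$, found by deleting vertices one at a time; this is the threshold idea in the section title. To certify infinite mutation type of $\mathsf{Q}$, it suffices to find, after finitely many explicit mutations, a pair of vertices joined by at least three arrows. Alternatively, one can observe that $\mathsf{Q}$, regarded as mutable, is connected with at least $11$ vertices and is neither of surface type nor one of the exceptional quivers in the list of \cite{FST12}.

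The main obstacle is this case-by-case verification: for each of the eighteen cases, one must choose $f$ and $\mathsf{V}'$ correctly and exhibit the explicit mutation sequences that certify both properties. The marked type-$D$ and type-$E$ cases, where only one marked frozen vertex is available, are where a suitable $\mathsf{V}'$ is hardest to locate. I would handle these by computer-assisted mutation, searching for a double-arrow-free finite-type core adjacent to $f$, and record each certifying sequence.
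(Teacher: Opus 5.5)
Your reduction is the paper's: Proposition~\ref{proposition_minimalelements}, Corollaries~\ref{corollary_Dynkin1}, \ref{corollary_Dynkin2} and \ref{cor_an1}, Lemma~\ref{lemma_dn2}, and locality of mutation on induced subquivers. The substance of the proof, however, is the verification for the entries of Table~\ref{table_minimalparbolic}. You leave that open, and the tests you propose for ``$\mathsf{Q}$ is of infinite mutation type'' do not test the property that is needed.

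In Proposition~\ref{proposition_multiplicityquiver} the vertex $f$ is frozen. Only vertices of $\mathsf{V}'$ are mutated, and one needs the set of extended exchange matrices, with $f$ as a column, to be infinite. A triple arrow inside $\mathsf{V}'$ is excluded by mutation-finiteness of $\mathsf{Q}'$ (connected, with at least three vertices, as here). So any triple arrow you find sits at $f$, and such an arrow certifies nothing. Likewise, mutation-infiniteness of $\mathsf{Q}$ with $f$ regarded as mutable (your FST test) is only a necessary condition.

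Concretely, take the Kronecker quiver $v_1 \rightrightarrows v_2$ and a frozen vertex $f$ with three arrows $f\to v_1$ and three arrows $v_2\to f$. Mutating at $v_1$ or at $v_2$ only interchanges the roles of $v_1$ and $v_2$. Hence with $f$ frozen the mutation class has two elements and $|\varepsilon_{v_i,f}|=3$ never grows. Yet a triple arrow is present, and the fully mutable $3$-cycle with multiplicities $(2,3,3)$ is mutation-infinite. Remark~\ref{rmk_arbitraryaddfrozen} shows that even inside $\mathsf{Q}_P$, adding frozen vertices to a mutation-finite core need not produce infinite mutation type. So this step cannot be certified by your heuristics.

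The paper handles this step with the Felikson--Tumarkin classification of frozen extensions.
\begin{itemize}
\item For affine $\mathsf{Q}'$, one mutates $\mathsf{Q}$ into a quiver $\mathsf{Q}^\lozenge$ containing the diamond of Figure~\ref{fig_quiverdiamond}. One then checks $b_1\neq -b_2$ or $b_2>0$ (Lemma~\ref{lem:FTcrit}).
\item For extended affine $\mathsf{Q}'$, connectedness of $\mathsf{Q}$ suffices (Lemma~\ref{lem:FTextcrit}).
\end{itemize}
Combined with explicit mutation sequences, this gives Lemmas~\ref{lemma_infinitefinite2}--\ref{lemma_infinitefinite5}. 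Tables~\ref{table_minimalparboliclemmaAtype}--\ref{table_minimalparboliclemmaEtype} then record, for each minimal case, the reduced word, the core $\mathsf{Q}'$ and the marked vertex $f$.

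Your certificates for finite mutation type of $\mathsf{Q}'$ (affine or surface type) also omit the exceptional types $E^{(1,1)}_7$ and $E^{(1,1)}_8$. The paper uses these for $D_6$ with $\{6\}$, $A_7$ with $\{2,3\}$ and $A_8$ with $\{3\}$. Without a frozen-vertex criterion of this kind and the actual case data, your plan does not yet establish~\eqref{eq:manyarrows2}.
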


The proof of Proposition~\ref{prop_mainsection7} will be provided throughout this section. It is worth mentioning a corollary stating the special property of the cluster algebras $\mathcal{A}_P$.

\begin{corollary}
The cluster algebra $\mcal{A}_P$ is of finite type if and only if it is of finite mutation type. 
\end{corollary}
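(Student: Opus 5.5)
The statement to prove is the final corollary: $\mathcal{A}_P$ is of finite type if and only if it is of finite mutation type. The implication ``finite type $\Rightarrow$ finite mutation type'' is immediate. Finitely many seeds give finitely many extended exchange matrices. So the content is the converse, and I would prove its contrapositive: if $\mathcal{A}_P$ is of infinite type, then it is of infinite mutation type. The tool is Proposition~\ref{prop_mainsection7}, which I take as established by the case analysis of this section.

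Assume $\mathcal{A}_P$ is of infinite type. Proposition~\ref{prop_mainsection7} supplies a subquiver $\mathsf{Q}$ of $\mathsf{Q}_P$ and a marked frozen vertex $f$ satisfying \eqref{eq:manyarrows2}. For every $\ell$ there is an exchange matrix $\varepsilon_\ell$, mutation equivalent to $\varepsilon_{\mathsf{Q}}$, with an entry $|\varepsilon^\ell_{i_\ell,f}|\ge \ell$.

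The key step is to transport this from $\mathsf{Q}$ to $\mathsf{Q}_P$. The subquiver $\mathsf{Q}$ consists of unfrozen vertices of $\mathsf{Q}_P$ together with the vertex $f$. Mutate $\mathsf{Q}_P$ along the same sequence of unfrozen vertices that produces $\varepsilon_\ell$ from $\varepsilon_{\mathsf{Q}}$. I need the following fact: mutation at a vertex $k$ of $\mathsf{Q}$ changes the entries among the vertices of $\mathsf{Q}$ by formula \eqref{equ_mutationexchangematri}, and that formula only involves entries $\varepsilon_{r,k},\varepsilon_{k,s}$ with $r,s,k$ in $\mathsf{Q}$. Hence the restriction of the mutated matrix of $\mathsf{Q}_P$ to the vertices of $\mathsf{Q}$ equals the mutation of the restriction. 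In other words, mutation commutes with taking induced full subquivers when we mutate only at vertices of the subquiver.

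Consequently, $\mathsf{Q}_P$ has mutation-equivalent matrices with entries of absolute value at least $\ell$, for every $\ell$. This yields infinitely many distinct extended exchange matrices, so $\mathcal{A}_P$ is of infinite mutation type.

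The only real obstacle is Proposition~\ref{prop_mainsection7} itself. Its proof reduces, via Corollaries~\ref{corollary_Dynkin1} and~\ref{corollary_Dynkin2} and Lemmas~\ref{lemma_an1} and~\ref{lemma_dn2}, to the finitely many cases of Table~\ref{table_minimalparbolic}. In each case one must exhibit a finite-mutation-type subquiver that becomes of infinite mutation type once a marked frozen vertex is added, and then apply Proposition~\ref{proposition_multiplicityquiver}. For the corollary, however, that case work is assumed.
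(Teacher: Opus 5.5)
Your proof is correct and follows the paper's intended route: finite type $\Rightarrow$ finite mutation type is immediate, and the converse is the contrapositive obtained from Proposition~\ref{prop_mainsection7}, whose unbounded frozen-column entries force infinitely many distinct extended exchange matrices. Your remark that mutation at vertices of an induced subquiver commutes with restriction makes explicit the step, left implicit in the paper, that transfers the unbounded entries from $\mathsf{Q}$ to $\mathsf{Q}_P$.
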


Assuming Proposition~\ref{prop_mainsection7}, we now complete the proof of Theorem~\ref{theorem_distinguishGP}.

\begin{proof}[Proof of Theorem~\ref{theorem_distinguishGP}]
Combining Proposition~\ref{prop_mainsection7} with Proposition~\ref{proposition_minimalelements}, we obtain Theorem~\ref{theorem_divergeentries}, which verifies the multiplicity condition of the criterion in Theorem~\ref{theorem_maincriterion}. The non-negativity condition is confirmed by Theorem~\ref{theorem_main6}. Therefore, we establish Theorem~\ref{theorem_distinguishGP} from  Theorem~\ref{theorem_maincriterion}.
\end{proof}

\begin{remark}
We conclude with a remark on the role of marking. The classification into finite or infinite type does not depend on the presence of frozen vertices. In particular, removing all frozen vertices from an infinite-type quiver $\mathsf{Q}_P$ still yields a quiver of infinite type. 

To prove Proposition~\ref{prop_mainsection7} using Proposition~\ref{proposition_multiplicityquiver}, we proceed in practice by first removing all frozen vertices and then reintroducing selected frozen vertices so as to obtain a quiver of infinite mutation type. It is important to emphasize that adding arbitrary frozen vertices does \emph{not} guarantee that the resulting quiver is of infinite mutation type. In contrast, a suitably chosen marked frozen vertex can  make the resulting quiver of infinite mutation type. See Remark~\ref{rmk_arbitraryaddfrozen} for an explicit example. 
\end{remark}

\subsection{Transition from finite to infinite mutation type by adding frozen variables}

The first ingredient in the proof of Proposition~\ref{prop_mainsection7} is the classification result of cluster algebras of finite mutation type due to Felikson--Shapiro--Tumarkin.

\begin{theorem}[Theorem 6.1 in \cite{FST12}]\label{theorem_FST61}
Any skew-symmetric matrix $(n \times n)$-matrix with $n \geq 3$ is either the adjacency matrix of triangulation of a bordered two-dimensional surface, or a matrix mutation equivalent to one of the following types$\colon$
$$
E_6, E_7, E_8, \widetilde{E}_6, \widetilde{E}_7, \widetilde{E}_8, E^{(1,1)}_6, E^{(1,1)}_7, E^{(1,1)}_8, X_6, X_7. 
$$
\end{theorem}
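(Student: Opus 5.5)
We read the statement with the implicit hypothesis that the skew-symmetric matrix is of \emph{finite mutation type}; without it the dichotomy is false. Since this is the classification theorem of Felikson--Shapiro--Tumarkin, the plan below follows the structure of their argument rather than anything internal to this paper.

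\textbf{Step 1: bounded multiplicities.} Work with quivers $\mathsf{Q}$ on $n \geq 3$ vertices. The starting observation is that a connected quiver is of finite mutation type if and only if every quiver in its mutation class has all arrow multiplicities at most $2$. The "only if'' direction is a direct check on rank $3$. Any three-vertex subquiver carrying an arrow of multiplicity $\geq 3$, together with a third connected vertex, can be mutated to produce unbounded multiplicities; this is the same mechanism as in Proposition~\ref{proposition_multiplicityquiver}. The property passes to full subquivers, since mutation commutes with restriction.

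\textbf{Step 2: the surface side.} Next one introduces the elementary blocks of Fomin--Shapiro--Thurston. These are quivers obtained from triangles and self-folded triangles, and gluing them along outlets produces exactly the adjacency quivers of (tagged) triangulations of bordered surfaces. One checks two facts. First, block-decomposable quivers are closed under mutation, because a flip of a triangulation corresponds to mutation. Second, they are of finite mutation type, since a surface has finitely many triangulations up to the mapping class group. It therefore remains to show that a finite-mutation-type quiver which is \emph{not} block-decomposable is mutation equivalent to one of the eleven exceptional types $E_6,\dots,X_7$.

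\textbf{Step 3: the exceptional side.} I would first classify the \emph{minimal non-decomposable} quivers of finite mutation type. These are the non-decomposable quivers all of whose proper full connected subquivers are block-decomposable. The key combinatorial lemma to aim for is that such a minimal quiver has at most $7$ vertices (FST obtain a bound of this sort). The proof is a local analysis: every vertex of a block-decomposable quiver has bounded degree with constrained neighborhoods, and removing a suitably chosen vertex from a larger non-decomposable quiver must leave a non-decomposable subquiver, contradicting minimality. With this bound in hand, a finite (computer-assisted) enumeration of quivers with multiplicities $\leq 2$ and $\leq 7$ vertices identifies the minimal ones and their mutation classes.

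\textbf{Step 4: growing up, and the main obstacle.} The final step is to show that any non-decomposable finite-mutation-type quiver has at most $10$ vertices. One grows it from a minimal non-decomposable subquiver by adding one vertex at a time. The mutation classes of $X_7$, $E_8^{(1,1)}$, and so on admit no connected extension of finite mutation type: every extension yields, after mutation, an arrow of multiplicity $\geq 3$, by Step 1. The remaining small cases are then finitely enumerated and matched to the listed types. I expect the bound on minimal non-decomposable quivers in Step 3 to be the main obstacle. It requires a delicate case analysis of how a vertex attaches to a block decomposition, including non-uniqueness of decompositions and the special blocks coming from self-folded triangles. Everything else reduces to finite checks or standard properties of triangulations.
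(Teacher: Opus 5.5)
The paper does not prove this statement. It quotes it from Felikson--Shapiro--Tumarkin, and your outline is a faithful summary of how they prove it: multiplicities at most $2$ for connected quivers on $n\ge 3$ vertices, block-decomposability of surface quivers via Fomin--Shapiro--Thurston, a bound of $7$ vertices for minimal non-decomposable mutation-finite quivers, and computer-assisted growth up to the $10$-vertex class $E^{(1,1)}_8$.

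You are right that the printed statement drops ``of finite mutation type''. It also drops connectedness, which you use in Step~1 but should add to the hypotheses. For example, $E_6\sqcup A_1$ is mutation-finite on $7$ vertices, yet it is neither a surface quiver nor mutation equivalent to any of the eleven connected exceptional quivers.

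As a proof, Steps~3 and~4 are announcements rather than arguments. The $\le 7$ bound is a long case analysis of how a vertex attaches to block decompositions of the complementary subquivers. Both the list of minimal quivers and the extension checks are computer-assisted. So citing the source, as the paper does, is the appropriate level of justification here.

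One ingredient that Step~4 uses silently should be made explicit: a full subquiver of a block-decomposable quiver is block-decomposable. Deleting a vertex turns each block through it into a smaller block or a union of blocks. This is what guarantees that each intermediate quiver in your chain is again non-decomposable, since it contains the minimal non-decomposable one. Only then is it covered by the inductive classification.
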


Our goal is to locate, within our marked quivers, a subquiver that is of finite mutation type but infinite type, cf. Proposition~\ref{proposition_multiplicityquiver}. For this purpose, the relevant cases are the following six families:
\begin{enumerate}
\item (Affine type) $\widetilde{A}_{p,q},  \widetilde{E}_6, \widetilde{E}_7, \widetilde{E}_8$ 
\item (Extended affine type) $E^{(1,1)}_7, E^{(1,1)}_8$
\end{enumerate}
Note that quivers of type $\widetilde{A}_{p,q}$ arise from triangulations of an annulus. 

A quiver of finite mutation type may become of infinite mutation type after the addition of a suitably chosen frozen vertex. Felikson--Tumarkin \cite{FT24} classified when this happens. The following lemma restates their criterion in a form adapted to our setting.

\begin{lemma}[Section 4 in \cite{FT24}] \label{lem:FTcrit}
Suppose that $\mathsf{Q}^\prime$ is a quiver of affine type. By Theorem~\ref{theorem_FST61}, the quiver $\mathsf{Q}^\prime$ is mutation equivalent to a quiver $\mathsf{Q}^\lozenge$ containing the diamond-shaped subquiver in Figure~\eqref{fig_quiverdiamond}, see $\mathsf{Q}^\lozenge$ in Figures~\ref{fig_quiverApq},~\ref{fig_quiverE6},~\ref{fig_quiverE71},~\ref{fig_quiverE72},~\ref{fig_quiverE8}. Let $\mathsf{Q}$ be the quiver obtained from $\mathsf{Q}^\lozenge$ by adding a single frozen vertex $f$ and set
\begin{align*}
&b_1 \coloneqq (\text{the number of arrows from $w_0$ to $f$}) -  \, (\text{the number of arrows from $f$ to $w_0$}), \\ 
&b_2 \coloneqq (\text{the number of arrows from $v_0$ to $f$}) -  \, (\text{the number of arrows from $f$ to $v_0$}).
\end{align*}
If either $b_1\neq -b_2$ or $b_2 > 0$, then $\mathsf{Q}$ is of mutation infinite type.
\vspace{-0.2cm}
\begin{figure}[h]
$$
\vspace{-0.1cm}
\resizebox{0.23\hsize}{!}{
\xymatrix{
&w_0 \ar[rd] 	&		\\
w_1 \ar[ru] \ar[rd]&	&v_{p-1} \ar[ld] 		\\
&v_0 \ar@<-.5ex>[uu] \ar@<.5ex>[uu] 		\\
}
}
$$
\caption{Diamond shape subquiver}
\label{fig_quiverdiamond}
\end{figure}
\end{lemma}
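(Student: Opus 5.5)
The plan is to reduce Lemma~\ref{lem:FTcrit} to the classification of finite mutation type quivers (Theorem~\ref{theorem_FST61}) together with the standard fact that a connected quiver with at least three vertices is of finite mutation type if and only if every quiver in its mutation class has all arrow multiplicities at most $2$. Since $\mathsf{Q}^\prime$ is affine, it is mutation equivalent to the quiver $\mathsf{Q}^\lozenge$ containing the diamond of Figure~\ref{fig_quiverdiamond}. Mutating $\mathsf{Q}^\prime$ and leaving the added frozen vertex $f$ in place changes only the arrows incident to $f$. Hence it suffices to work with $\mathsf{Q}=\mathsf{Q}^\lozenge\cup\{f\}$ and to exhibit a mutation sequence, supported on unfrozen vertices, along which the number of arrows between $f$ and some vertex grows without bound.

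The mutation sequence should be the \emph{Kronecker sequence} on the double arrow $v_0\Rightarrow w_0$, namely alternating mutations $\mu_{w_0}\mu_{v_0}\mu_{w_0}\cdots$. On the pair $(v_0,w_0)$ this acts as the tropicalized $\widetilde{A}_1$ (Kronecker) dynamics. The vector $(b_1,b_2)$ of arrows from $\{w_0,v_0\}$ to $f$ transforms by the piecewise-linear maps of~\eqref{equ_mutationexchangematri} restricted to the rank-two Kronecker block.
\begin{itemize}
\item Mutating at $w_0$ replaces $b_1$ by $-b_1$ and adds $2[b_1]_+$-type corrections to $b_2$.
\item Mutating at $v_0$ acts symmetrically.
\end{itemize}
I will compute that the linear part of the composite $\mu_{w_0}\mu_{v_0}$ is unipotent, with a Jordan block whose invariant line is spanned by $(1,-1)$. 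Consequently, as soon as $b_1+b_2\neq 0$, the entries grow linearly in the number of iterations, which gives unbounded multiplicity. The case $b_1=-b_2$ with $b_2>0$ must be treated separately. Here the piecewise-linear sign conditions select the other linear branch, which is no longer the unipotent fixed direction, and one iteration moves $(b_1,b_2)$ off the invariant line. After that the previous case applies.

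The remaining vertices $w_1$ and $v_{p-1}$ of the diamond (and the rest of $\mathsf{Q}^\lozenge$) cause no trouble, because the arrows between $f$ and $\{v_0,w_0\}$ are updated only through arrows among $v_0$, $w_0$, and $f$. So the multiplicity at $f$ is computed exactly by the rank-two recursion. Once multiplicities exceed $2$, the characterization of finite mutation type shows that $\mathsf{Q}$ is of infinite mutation type.

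I expect the main obstacle to be the bookkeeping of signs in the piecewise-linear recursion, in particular verifying that for $b_1=-b_2>0$ the orbit leaves the invariant line. This is exactly where the asymmetric condition $b_2>0$ must enter, since for $b_2<0$ the vector stays periodic and the resulting quiver may genuinely remain of finite mutation type. If the direct computation becomes messy, my fallback is to cite the explicit analysis in \cite[Section 4]{FT24} for this case.
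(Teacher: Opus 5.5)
Your argument is correct, but it does not follow the paper. The paper gives no proof here and imports the statement from \cite[Section~4]{FT24}.

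You instead restrict to the induced subquiver on $\{v_0,w_0,f\}$. This is legitimate: under mutations at $v_0$ and $w_0$ alone this $3\times 3$ block evolves autonomously, so unbounded entries there force infinitely many extended exchange matrices for $\mathsf{Q}$. You then iterate the Kronecker mutations.

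Concretely, mutating always at the current sink (first $\mu_{w_0}$, then $\mu_{v_0}$) gives $(b_1,b_2)\mapsto\bigl(-b_1+2[\,b_2+2[b_1]_+]_+,\ -b_2-2[b_1]_+\bigr)$.
\begin{itemize}
\item On the cone $\{b_1\ge 0,\ b_2+2b_1\ge 0\}$ this is the unipotent map $\mathbf{b}\mapsto\mathbf{b}+2(b_1+b_2)(1,-1)$.
\item When $b_1+b_2>0$, this cone is reached in at most one step and is forward invariant.
\item When $b_1+b_2<0$, the inverse (source-first) direction works in the same way, with the cone $\{b_2\le 0,\ b_1+2b_2\le 0\}$.
\item The point $(-c,c)$ with $c>0$ is sent to $(3c,-c)$.
\end{itemize}
What your route buys is a short self-contained proof. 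It also shows that for this sufficient condition only the Kronecker pair $v_0\Rightarrow w_0$ matters. The vertices $w_1$, $v_{p-1}$ and the rest of $\mathsf{Q}^\lozenge$ would only be needed to decide the remaining attachments with $b_1=-b_2\le 0$, which the lemma does not address.

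Two points need repair.

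First, drop the appeal to the characterization ``finite mutation type iff all multiplicities are at most $2$.'' It holds only when every vertex is mutable and fails with frozen vertices. For $(b_1,b_2)=(c,-c)$ with $c\ge 3$, the block $\{v_0,w_0,f\}$ has only two extended exchange matrices in its mutation class, yet it contains an arrow of multiplicity $c$. You do not need this step anyway: unbounded growth of the entries already gives infinitely many distinct extended exchange matrices.

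Second, $b_1+b_2$ is invariant only on the unipotent branch; your own case $(-c,c)\mapsto(3c,-c)$ shows it changes elsewhere. So the sentence ``consequently the entries grow linearly'' must be backed by the cone-invariance check above. You also need to choose the direction of the Kronecker sequence according to the sign of $b_1+b_2$. With these two fixes the fallback to \cite{FT24} is unnecessary.
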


Using Lemma~\ref{lem:FTcrit}, we first deal with the affine types.

\begin{lemma}[see Lemma 6.5 in \cite{CKKP25}]\label{lemma_infinitefinite2}
For $p, q > 0$, let ${\mathsf{Q}^\prime}$ be the quiver of type $\widetilde{A}_{p,q}$ with all vertices unfrozen. Let $\mathsf{Q}$ be the quiver obtained from ${\mathsf{Q}^\prime}$ by adding a single frozen vertex $f$ and one arrow from $f$ to $w_0$ as in Figure~\ref{fig_quiverApq}. Then the following hold$\colon$
\begin{enumerate}
\item the quiver ${\mathsf{Q}^\prime}$ is of finite mutation type, and
\item the quiver $\mathsf{Q}$ is of infinite mutation type.
\end{enumerate}
\end{lemma}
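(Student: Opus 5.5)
Part (1) follows from Theorem~\ref{theorem_FST61}. A quiver of type $\widetilde{A}_{p,q}$ is the adjacency quiver of a triangulation of an annulus with $p$ marked points on one boundary component and $q$ on the other. Every quiver mutation-equivalent to it comes from a triangulation of the same annulus, because flips of triangulations correspond to mutations. Since the surface is fixed, arrow multiplicities in such quivers are bounded by $2$, and there are only finitely many quivers on $p+q$ vertices with bounded multiplicities. Hence $\mathsf{Q}^\prime$ is of finite mutation type.

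For part (2), we choose the representative $\mathsf{Q}^\lozenge$ of the mutation class of $\widetilde{A}_{p,q}$ shown in Figure~\ref{fig_quiverApq}. It consists of the diamond of Figure~\ref{fig_quiverdiamond}, with the double arrow $v_0 \Rightarrow w_0$, together with the two chains of single arrows forming the rest of the annulus. The quiver $\mathsf{Q}$ is obtained from it by adding a frozen vertex $f$ with exactly one arrow $f \to w_0$. We then read off the invariants of Lemma~\ref{lem:FTcrit}. With one arrow from $f$ to $w_0$ and none back, $b_1 = 0 - 1 = -1$. There are no arrows between $f$ and $v_0$, so $b_2 = 0$. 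Then $-b_2 = 0 \neq -1 = b_1$, so the condition $b_1 \neq -b_2$ holds, and Lemma~\ref{lem:FTcrit} gives that $\mathsf{Q}$ is of infinite mutation type.

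The one delicate point is that Lemma~\ref{lem:FTcrit} is stated for the frozen vertex attached to the specific diamond representative $\mathsf{Q}^\lozenge$. We must check that the quiver in Figure~\ref{fig_quiverApq} is literally of that form, i.e.\ that its labelled vertices $w_0, v_0, w_1, v_{p-1}$ sit in the diamond with the orientations required, and that the signs of $b_1, b_2$ follow the lemma's convention.

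As a sanity check independent of \cite{FT24}, one could exhibit an explicit infinite mutation sequence. Alternately mutating at $w_0$ and $v_0$ is the Kronecker-type mutation on the double arrow, and it changes the arrows between $f$ and $\{w_0, v_0\}$ by the tropical rule. The multiplicities form a sequence growing linearly, e.g.\ $(1,0) \mapsto (1,2) \mapsto (3,2) \mapsto \cdots$ up to sign. This shows directly that infinitely many distinct extended exchange matrices occur, in line with Proposition~\ref{proposition_multiplicityquiver}. The only thing to verify here is that the neighbouring vertices $w_1, v_{p-1}$ do not interfere with this two-vertex computation, since they are not adjacent to $f$.
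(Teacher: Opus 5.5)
Your route is the paper's. Part (1) comes from the annulus model in Theorem~\ref{theorem_FST61}, and part (2) passes to the diamond representative $\mathsf{Q}^\lozenge$ of Figure~\ref{fig_quiverApq} and applies Lemma~\ref{lem:FTcrit} with $b_1=-1$, $b_2=0$. The gap is the sentence ``the quiver $\mathsf{Q}$ is obtained from it by adding a frozen vertex $f$ with exactly one arrow $f\to w_0$.'' In the lemma, $f$ is attached to the acyclic quiver $\mathsf{Q}'$ (source $v_0$, sink $w_0$), not to $\mathsf{Q}^\lozenge$. You cannot pick another representative of the unfrozen mutation class and re-attach $f$ by the same rule. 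Mutations generally change the arrows at $f$, and mutation-finiteness of the extended quiver depends exactly on those arrows; that is the content of Lemma~\ref{lem:FTcrit}.

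What must be shown, and what the paper asserts in the sentence after the statement, is that $\mathsf{Q}$ itself mutates to $\mathsf{Q}^\lozenge$ with $f\to w_0$ as the only arrow at $f$. The argument is short.
\begin{itemize}
\item Mutate successively at $v_1,\dots,v_{p-1}$ and then at $w_{q-1},\dots,w_1$.
\item A mutation at $k$ only reverses arrows at $k$ and changes arrows between neighbours of $k$.
\item Since $w_0$, the unique neighbour of $f$, is never mutated, $f$ is never adjacent to the vertex being mutated, so $f\to w_0$ survives unchanged.
\item Meanwhile each chain contributes one arrow $v_0\to w_0$.
\end{itemize}
For $p,q\ge 2$ you end with $v_0\Rightarrow w_0$ and two oriented triangles, $v_0\Rightarrow w_0\to v_{p-1}\to v_0$ and $v_0\Rightarrow w_0\to w_1\to v_0$.

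So the check you single out, matching labels and orientations between the printed figures, is not the relevant one. In fact the arrow $w_1\to w_0$ printed in Figures~\ref{fig_quiverdiamond} and~\ref{fig_quiverApq} cannot be right. Together with $w_1\to v_0$ and $v_0\Rightarrow w_0$ it is already mutation-infinite, since mutating at $v_0$ gives a triple arrow. The mutation above yields $w_0\to w_1$ instead.

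Your ``sanity check'' is actually a complete proof of (2) that bypasses Lemma~\ref{lem:FTcrit}. It also covers $p=1$ or $q=1$, where the four-vertex diamond degenerates. However, the justification you give is wrong. If you start with $\mu_{w_0}$, the paths $f\to w_0\to w_1$ and $f\to w_0\to v_{p-1}$ do create arrows from $f$ to $w_1$ and $v_{p-1}$.

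The correct reason is that mutation at $k$ commutes with restriction to any full subquiver containing $k$: the new entry between $i$ and $j$ depends only on the old entries among $i,j,k$. Hence the full subquiver on $\{v_0,w_0,f\}$ evolves under alternating $\mu_{v_0},\mu_{w_0}$ exactly like the Kronecker quiver with one extra arrow $f\to w_0$. Its multiplicities at $f$ grow linearly, as you computed, and distinct restrictions force distinct extended exchange matrices. Combined with the chain mutations above, this proves (2) directly. It also exhibits explicitly the unbounded multiplicities that Proposition~\ref{proposition_multiplicityquiver} is later used to supply.
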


In Figure~\ref{fig_quiverApq}, the quiver $\mathsf{Q}$ on the left is mutation equivalent to the quiver $\mathsf{Q}^\lozenge$ on the right.

\begin{figure}[h]
$$
\resizebox{1.03\hsize}{!}{
\xymatrix{
  \mathsf{Q} & v_0 \ar[r] \ar[ld] & v_1 \ar[dr] & \\
  w_{q-1} \ar[d]& & & v_2 \ar[d] \\
  \vdots \ar[d] & \cdots &  & \vdots \ar[d]  \\
  w_2 \ar[rd] & & & v_{p-1} \ar[ld]   \\
  & w_1 \ar[r] & w_0 & \fbox{f} \ar[l]   \\
} \quad \,
\xymatrix{
\mathsf{Q}^\lozenge &  	&		&	& 	&	&	&	&\\
&	&	&	&w_0 \ar[rd] 	& \fbox{f}\ar[l]	&	&	&\\
w_{q-1} \ar@{-}[r] &	\cdots &	w_2  \ar[r] \ar[l] 	&w_1 \ar[ru] \ar[rd]&	&v_{p-1} \ar[ld]	&v_{p-2} \ar[l] \ar[r]	&\cdots \ar@{-}[r]	& v_1	& \\
&	&	  	& 	&v_0 \ar@<-.5ex>[uu] \ar@<.5ex>[uu]    &	&	& 	&\\
}
}
$$
\caption{Quivers $\mathsf{Q}$ and $\mathsf{Q}^\lozenge$ in Lemma~\ref{lemma_infinitefinite2}} \label{fig_quiverApq}
\end{figure}

\begin{lemma}\label{lemma_infinitefinite6}
Let ${\mathsf{Q}^\prime}$ be the quiver of type $\widetilde{E}_6$ with all vertices unfrozen. Let $\mathsf{Q}$ be the quiver obtained from ${\mathsf{Q}^\prime}$ by adding one frozen vertex $f$ and one single arrow from $6$ to $f$ as in Figure~\ref{fig_quiverE6}. Then the following hold$\colon$
\begin{enumerate}
\item the quiver ${\mathsf{Q}^\prime}$ is of finite mutation type and
\item the quiver $\mathsf{Q}$ is of infinite mutation type.
\end{enumerate}
\end{lemma}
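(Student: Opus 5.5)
Part (1) is immediate from Theorem~\ref{theorem_FST61}: $\widetilde{E}_6$ is one of the exceptional finite mutation types listed there, so ${\mathsf{Q}^\prime}$ has only finitely many quivers in its mutation class. All the work is in part (2). The plan is to reduce it to the Felikson--Tumarkin criterion, Lemma~\ref{lem:FTcrit}. We track the frozen vertex $f$ along a mutation sequence inside the unfrozen part. This sequence should carry ${\mathsf{Q}^\prime}$ to the diamond representative $\mathsf{Q}^\lozenge$ of Figure~\ref{fig_quiverE6}.

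Concretely, I would fix an orientation of $\widetilde{E}_6$ compatible with the labelling in Figure~\ref{fig_quiverE6}. This is the tree with a central vertex and three legs of length two, with $6$ a leaf. I would then write down an explicit short sequence of mutations $\mu_{k_r}\cdots\mu_{k_1}$ at unfrozen vertices producing a quiver with the diamond subquiver on vertices $w_0,w_1,v_0,v_{p-1}$, with the double arrow $v_0\Rightarrow w_0$. A convenient choice is to first mutate along one leg to create a $3$-cycle, and then mutate at the centre so that a double arrow appears. The same mutations are applied to $\mathsf{Q}$, recording at each step the arrows incident to $f$ via the rule~\eqref{equ_mutationexchangematri}. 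Initially $f$ is adjacent only to $6$, with one arrow $6\to f$. At each step the column of $f$ changes only through the vertex being mutated, so this bookkeeping is a handful of integer updates.

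At the end we read off $b_1$ and $b_2$, the signed numbers of arrows between $f$ and $w_0$, respectively $v_0$. The aim is to arrange the sequence so that $f$ ends up adjacent to exactly one of the two diamond vertices, or has arrows of the same sign to both. Then $b_1\neq -b_2$, or $b_2>0$, and Lemma~\ref{lem:FTcrit} shows that $\mathsf{Q}$ is of infinite mutation type. Since mutation type is invariant under mutation, this transfers back to $\mathsf{Q}$.

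The main obstacle is choosing the mutation sequence so that the leaf $6$, and hence $f$, is not cancelled out. If $6$ is mutated, the arrow to $f$ reverses and propagates to the neighbours of $6$, and the end result could accidentally give $b_1=-b_2$ with $b_2\le 0$. I would first try sequences that avoid mutating at $6$ altogether, placing $6$ on the arm $v_{p-1},v_{p-2},\dots$ or making it equal to $w_0$ or $v_0$ itself. If $6$ is untouched and becomes $w_0$, then $b_1=1$ and $b_2=0$, so the criterion applies directly. If no such sequence produces the diamond, I would fall back to a direct argument: find two vertices $a,b$ and an explicit mutation sequence under which the multiplicity between them grows without bound. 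This could be done via a rank-$3$ subquiver containing $f$ of Markov-type growth, or by exhibiting a subquiver with a triple arrow. Either of these already forces infinite mutation type.
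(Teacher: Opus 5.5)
Your reduction matches the paper's: part (1) follows from Theorem~\ref{theorem_FST61}, and part (2) comes from mutating $\mathsf{Q}$ at unfrozen vertices to a representative containing the diamond and then applying Lemma~\ref{lem:FTcrit}. The gap is that part (2) is only a search plan. Beyond the citation, the whole content of the statement is the existence of a concrete mutation sequence after which $f$ sits correctly relative to the double arrow. You neither exhibit such a sequence nor compute the resulting $b_1,b_2$. Your fallback (``Markov-type growth'' or ``a triple arrow'') is equally unspecified. The paper supplies exactly this missing input. The sequence $(6,1,7,3,2,1,7,3,2,4,7,5,2,4,1,6,5,7)$ yields a quiver with $3\rightrightarrows 4$, an arrow $f\to 4$, and no arrow between $f$ and $3$. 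With $v_0=3$ and $w_0=4$ this gives $b_1=-1\neq 0=-b_2$.

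Parts of your guidance for finding a sequence also point the wrong way. If $6$ is never mutated, then $f$ stays adjacent only to $6$ via the single arrow $6\to f$, so the double arrow would have to be incident to $6$ itself. Placing $6$ on the arm $v_{p-1},v_{p-2},\dots$ gives $b_1=b_2=0$. Then neither $b_1\neq -b_2$ nor $b_2>0$ holds, and Lemma~\ref{lem:FTcrit} gives no conclusion. Likewise, ``create a $3$-cycle along a leg, then mutate at the centre'' does not by itself produce a double arrow. When all multiplicities are $1$, a mutation at $k$ creates a double arrow only if there is already an arrow $i\to j$ together with a path $i\to k\to j$. Mutating at a vertex of an oriented $3$-cycle instead cancels the opposite arrow. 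In the paper's sequence, all multiplicities stay equal to $1$ through the first fifteen mutations. The double arrow first appears at the second mutation at $6$, which combines $3\to 4$ with the path $3\to 6\to 4$ into $3\rightrightarrows 4$. The same mutation uses the path $f\to 6\to 4$ to attach $f$ to $w_0=4$; that path exists because the first mutation reversed $6\to f$. So mutating at $6$ is not an obstacle to be avoided; in this route it is the step that does the work.
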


\begin{proof} 
The quiver $\mathsf{Q}^\prime$ is of type $\widetilde{E}_6$ and hence is of finite mutation type in Theorem~\ref{theorem_FST61}.
To show that $\mathsf{Q}$ is of infinite mutation type, consider the following sequence $\mu$ of mutations (applied from left to right)
\begin{equation*}\label{equ_seqofmut}
(6,1,7,3,2,1,7,3,2,4,7,5,2,4,1,6,5,7).
\end{equation*}
Let $\mathsf{Q}^\lozenge \coloneqq \mu(\mathsf{Q})$ denote the resulting quiver. Then $\mathsf{Q}^\lozenge$ contains a double arrow $3 \rightrightarrows 4$ together with an arrow $f \to 4$. By Lemma~\ref{lem:FTcrit}, $\mathsf{Q}$ is of infinite mutation type. 
\begin{figure}[h]
$$
\resizebox{0.9\hsize}{!}{
\xymatrix{
	 	\mathsf{Q}	&	&7 \ar[d]	&	\\
	 		&	&4	&	&	&		 \\
	  	1 \ar[r]	&2	&3 \ar[l]\ar[r] \ar[u]	&5	&6 \ar[l] \ar[r]	& \fbox{f} 	\\
}
\quad \quad
\xymatrix{
	 \mathsf{Q}^\lozenge	&	&4 \ar[rd] \ar[rrd] \ar[ld]	& \fbox{f}\ar[l]	&	\\
	5    	&2 \ar[l] \ar[rd]&	&1 \ar[ld]	&6 \ar[lu] \ar[lld]	&7 \ar[l]	\\
	  	& 	&3 \ar@<-.5ex>[uu] \ar@<.5ex>[uu]   &	&	\\
}
}
$$
\caption{Quivers $\mathsf{Q}$ and $\mathsf{Q}^\lozenge$ in Lemma~\ref{lemma_infinitefinite6}} \label{fig_quiverE6}
\end{figure}
\end{proof}

\begin{lemma}\label{lemma_infinitefinite1}
Let ${\mathsf{Q}^\prime}$ be the quiver of type $\widetilde{E}_7$ with all vertices unfrozen. Let $\mathsf{Q}$ be the quiver obtained from ${\mathsf{Q}^\prime}$ by adding one frozen vertex $f$ and one single arrow from $f$ to $4$ as in Figure~\ref{fig_quiverE71}. Then the following hold$\colon$
\begin{enumerate}
\item the quiver ${\mathsf{Q}^\prime}$ is of finite mutation type and
\item the quiver $\mathsf{Q}$ is of infinite mutation type.
\end{enumerate}
\end{lemma}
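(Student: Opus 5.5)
The argument follows the template of Lemma~\ref{lemma_infinitefinite6}. Part (1) is immediate. The underlying quiver of $\mathsf{Q}^\prime$ is an orientation of the extended Dynkin diagram $\widetilde{E}_7$. Since $\widetilde{E}_7$ is acyclic, every orientation of it gives a mutation equivalent quiver. This type appears in the list of Theorem~\ref{theorem_FST61}, so $\mathsf{Q}^\prime$ is of finite mutation type.

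For part (2), the plan is to reduce to the criterion of Lemma~\ref{lem:FTcrit}. Because $\mathsf{Q}^\prime$ is of affine type $\widetilde{E}_7$, it is mutation equivalent (through unfrozen vertices only) to a quiver $\mathsf{Q}^\lozenge$ containing the diamond of Figure~\ref{fig_quiverdiamond}. That diamond consists of:
\begin{itemize}
\item a double arrow $v_0 \rightrightarrows w_0$,
\item an oriented triangle through $w_1$,
\item an oriented triangle through $v_{p-1}$.
\end{itemize}
The first step is to write down an explicit mutation sequence $\mu$ at unfrozen vertices of $\mathsf{Q}$, read from left to right, exactly as for $\widetilde{E}_6$. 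The sequence should be chosen so that $\mu(\mathsf{Q})$ restricted to the unfrozen part has the shape $\mathsf{Q}^\lozenge$ of Figures~\ref{fig_quiverE71} and~\ref{fig_quiverE72}.

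To find $\mu$, I would start from a standard one-sink/one-source orientation and mutate along sinks and sources. This is the Coxeter-type process, and it reduces $\mathsf{Q}^\prime$ to the Felikson--Tumarkin diamond normal form. Along the way I would keep track of the arrows between $f$ and the unfrozen vertices, using the mutation rule~\eqref{equ_mutationexchangematri}. Frozen vertices are never mutated, so at each step the column indexed by $f$ changes only through the term $\mathrm{sgn}(\varepsilon_{k,f})[\varepsilon_{r,k}\varepsilon_{k,f}]_+$.

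Once $\mu(\mathsf{Q}) = \mathsf{Q}^\lozenge \cup \{f\}$ is reached, I read off the integers $b_1$ (the net number of arrows $w_0 \to f$) and $b_2$ (the net number of arrows $v_0 \to f$). The aim is a configuration like the $\widetilde{E}_6$ case: $f$ attached to exactly one of $w_0$ or $v_0$ with a single arrow, for instance $f \to w_0$, giving $(b_1,b_2) = (-1,0)$. Then $b_1 \neq -b_2$, and Lemma~\ref{lem:FTcrit} shows that $\mathsf{Q}$ is of infinite mutation type. More generally, any final configuration with $b_1 \neq -b_2$ or $b_2 > 0$ suffices.

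The main obstacle is finding a mutation sequence whose final position of $f$ is favorable. The vertex $f$ is attached at $4$ in Figure~\ref{fig_quiverE71}, and some choices of $\mu$ end with $f$ balanced, meaning $b_1 = -b_2 \le 0$, where the criterion gives no conclusion. If the first sequence produces such a balanced configuration, I would try the following, in order:
\begin{itemize}
\item apply additional mutations at $w_1$ or $v_{p-1}$, which preserve the diamond shape up to relabeling but redistribute the arrows at $f$;
\item use the alternative diamond normal form of Figure~\ref{fig_quiverE72}, where the long branch of $\widetilde{E}_7$ is placed differently relative to vertex $4$.
\end{itemize}
Each candidate can be checked mechanically with a quiver mutation computation, and I would record the successful sequence explicitly, as was done for $\widetilde{E}_6$.
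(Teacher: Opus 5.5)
Part (1) is fine, and your strategy for (2) is the paper's: mutate at unfrozen vertices to the diamond form, track the arrows at $f$, and apply Lemma~\ref{lem:FTcrit}. The gap is that the proposal stops exactly where the proof begins. Everything in (2) rests on exhibiting \emph{some} mutation sequence that places $f$ favorably relative to a diamond. You neither exhibit one nor argue that one exists. Lemma~\ref{lem:FTcrit} is only a sufficient condition, so nothing guarantees your search succeeds; if $\mathsf{Q}$ were mutation-finite, it never would. The paper settles this by explicit computation. The sequence $(6,1,8,4,3,5,7,6,7,3,2,4,1,5,8,3,7,5,2)$ produces $\mathsf{Q}^\lozenge$ with $v_0=8$, $w_0=4$ and $8\rightrightarrows 4$. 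In it, $f\to 4$ and $8\to f$, plus further arrows between $f$ and $2,3,5,6$. Hence $b_1=-1$ and $b_2=1$.

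Your guidance for the search would also mislead you on precisely this example.
\begin{itemize}
\item You have the sign of the inconclusive case wrong. It is $b_1=-b_2\ge 0$ (that is, $b_1=-b_2$ with $b_2\le 0$), not $b_1=-b_2\le 0$. With your sign you would discard the paper's configuration. That configuration has $b_1=-b_2=-1$ and is decided by the clause $b_2>0$.
\item Your $\widetilde E_6$-style target, with $f$ joined to a single diamond vertex and $(b_1,b_2)=(-1,0)$, is not what arises here.
\item Mutating at $v_{p-1}$, or at any $x$ with $w_0\to x\to v_0$, does not preserve the diamond. It adds an arrow $w_0\to v_0$, which cancels one of the two arrows $v_0\rightrightarrows w_0$, so the double arrow is lost.
\item Figure~\ref{fig_quiverE72} is not an alternative normal form for this $\mathsf{Q}$. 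Its unfrozen part is identical to that of Figure~\ref{fig_quiverE71}; it describes a different quiver, with $f\to 7$.
\end{itemize}
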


\begin{proof} 
Apply the following sequence $\mu$ of mutations to $\mathsf{Q}$ 
$$
(6,1,8,4,3,5,7,6,7,3,2,4,1,5,8,3,7,5,2).
$$ 
Let $\mathsf{Q}^\lozenge \coloneqq \mu(\mathsf{Q})$. By Lemma~\ref{lem:FTcrit} ($b_2 > 0$), $\mathsf{Q}$ is of infinite mutation type.
\begin{figure}[h]
$$
\resizebox{0.95\hsize}{!}{
\xymatrix{
	\mathsf{Q} 	&	&	&\fbox{f} \ar[d]&	&&\\
	 	&	&	&4	&	&&&\\
	8  	&1 \ar[l]\ar[r]	&2	&3 \ar[l]\ar[r] \ar[u]	&5	&6 \ar[l]\ar[r]	&7\\
}
\xymatrix{
	 \mathsf{Q}^\lozenge	&	&	&\fbox{f}\ar[ld] \ar[rrdd] \ar@/_1.0pc/[llldd]	&	&	&\\
	 	&	&4 \ar[rd] \ar[rrd] \ar[ld]	&	&	&	&\\
	2   \ar[r] 	&3 \ar[rd] \ar@/^1.0pc/[rruu]&	&1 \ar[ld]	&7 \ar[lld]	&5 \ar[l] \ar[r]	&6 \ar[llluu]	&\\
	  	& 	&8 \ar@<-.5ex>[uu] \ar@<.5ex>[uu] \ar[ruuu] &	&	& 	&\\
}
}
$$
\caption{Quivers $\mathsf{Q}$ and $\mathsf{Q}^\lozenge$ in Lemma~\ref{lemma_infinitefinite1}} \label{fig_quiverE71}
\end{figure}
\end{proof}

\begin{lemma}\label{lemma_infinitefinite3}
Let ${\mathsf{Q}^\prime}$ be the quiver of type $\widetilde{E}_7$ with all vertices unfrozen. Let $\mathsf{Q}$ be the quiver obtained from ${\mathsf{Q}^\prime}$ by adding one frozen vertex $f$ and one single arrow from $f$ to $7$ as in Figure~\ref{fig_quiverE72}. Then the following hold$\colon$
\begin{enumerate}
\item the quiver ${\mathsf{Q}^\prime}$ is of finite mutation type and
\item the quiver $\mathsf{Q}$ is of infinite mutation type.
\end{enumerate}
\end{lemma}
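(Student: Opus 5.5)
Part (1) needs no new work. $\mathsf{Q}^\prime$ is of type $\widetilde{E}_7$, so it is of finite mutation type by Theorem~\ref{theorem_FST61}, exactly as in Lemmas~\ref{lemma_infinitefinite6} and~\ref{lemma_infinitefinite1}.

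For part (2), I will follow the template of those two lemmas. I use the same labelling of $\widetilde{E}_7$ as in Figure~\ref{fig_quiverE71}: the long arm $8-1-2-3-5-6-7$ with branch vertex $3$ and short leg $4$, keeping the same orientations as there. The only change is that the frozen vertex $f$ is now attached by a single arrow $f\to 7$, at the end of the arm, instead of to the short leg $4$.

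The plan is to find an explicit mutation sequence $\mu$ at unfrozen vertices such that $\mathsf{Q}^\lozenge \coloneqq \mu(\mathsf{Q})$ has two properties.
\begin{itemize}
\item Its unfrozen part contains the diamond of Figure~\ref{fig_quiverdiamond}: a double arrow $v_0 \rightrightarrows w_0$ together with the oriented paths $w_1\to w_0\to v_{p-1}\to v_0$ and $w_1\to v_0$.
\item The arrows at $f$ satisfy the hypothesis of Lemma~\ref{lem:FTcrit}: either $b_1\neq -b_2$ or $b_2>0$.
\end{itemize}
Lemma~\ref{lem:FTcrit} then gives infinite mutation type immediately.

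To produce $\mu$, I would mutate along the arm starting from the $f$ end, beginning with $\mu_7,\mu_6,\mu_5,\ldots$. This propagates arrows from $f$ toward the branch vertex $3$, so that $f$ becomes adjacent to the vertices that will form the diamond. After that, I would reuse a shortened version of the sequence from Lemma~\ref{lemma_infinitefinite1}, which already produced a diamond (there with double arrow $8\rightrightarrows 1$), and adjust its beginning to account for the new position of $f$.

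Since $f$ is frozen, it never changes the principal part. Hence any sequence that carries the unfrozen $\widetilde{E}_7$ quiver to a diamond-containing quiver works for the principal part regardless of where $f$ sits. Only the arrows incident to $f$ need tracking, and they are updated by the rule~\eqref{equ_mutationexchangematri} applied to the single column of $f$.

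The main obstacle is the second property: the given sequence might end with $b_1=-b_2\le 0$, i.e., with the arrows at $f$ "balanced" across the diamond. The proof of the lemma would then fail for that choice of $\mu$. If this happens, I would exploit the freedom in the choice of diamond: different mutation sequences realize different copies of the diamond, and the $\mathbb{Z}$-linear functional recording $f$'s column changes between them. I would search, by hand or by a short computation, over sequences of length about $20$, as in the preceding lemmas, until one produces $b_2>0$, for instance an arrow $v_0\to f$. The write-up would then record this sequence and the resulting $\mathsf{Q}^\lozenge$ in Figure~\ref{fig_quiverE72}, in the same format as before.
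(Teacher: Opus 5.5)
Your strategy matches the paper's: exhibit an explicit mutation sequence, then apply Lemma~\ref{lem:FTcrit}. But the proposal stops exactly where the proof begins. No sequence is given and the arrows at $f$ are never checked, so part (2) is not yet proved.

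Your plan for finding $\mu$ also works against your own correct observation that $f$ never affects the principal part. That observation means you can reuse the sequence of Lemma~\ref{lemma_infinitefinite1} verbatim, with no adjustment. Prefixing $\mu_7,\mu_6,\mu_5,\dots$ would instead change the unfrozen quiver, and then there is no reason the remaining sequence still produces a diamond. The paper does exactly the verbatim reuse: it applies the same sequence $(6,1,8,4,3,5,7,6,7,3,2,4,1,5,8,3,7,5,2)$ and tracks only the column of $f$.

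Carrying this out gives the following.
\begin{itemize}
\item The principal part becomes $8\rightrightarrows 4$ together with the oriented triangles $4\to x\to 8$ for $x\in\{1,3,7\}$, plus the arrows $2\to 3$, $5\to 6$, $5\to 7$.
\item The arrows at $f$ are $f\to 4$, $f\to 5$, $6\to f$, $7\to f$, with no arrow between $f$ and $8$.
\item Taking $w_0=4$ and $v_0=8$, this gives $b_1=-1$ and $b_2=0$, so $b_1\neq -b_2$ and Lemma~\ref{lem:FTcrit} applies.
\end{itemize}
The ``balanced'' case you worried about does not occur.

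One caution for your search. The diamond as you transcribed it, which is also how Figure~\ref{fig_quiverdiamond} reads literally, has $w_1\to w_0$ and $w_1\to v_0$. That configuration can never appear in the mutation class of $\widetilde{E}_7$. The triangle $\{w_1,w_0,v_0\}$ would be acyclic with a double arrow, and mutating it at $v_0$ creates an arrow of multiplicity $3$ from $w_1$ to $w_0$. Such a subquiver is mutation-infinite, while every full subquiver of a mutation-finite quiver is mutation-finite. What you should look for is a double arrow $v_0\rightrightarrows w_0$ lying in oriented triangles $w_0\to x\to v_0$, as in $\mathsf{Q}^\lozenge$ of Figure~\ref{fig_quiverE72}.
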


\begin{proof}
Apply the following sequence $\mu$ of mutations to $\mathsf{Q}$ 
\begin{equation*}
(6, 1, 8, 4, 3, 5, 7, 6, 7, 3, 2, 4, 1, 5, 8, 3, 7, 5, 2).
\end{equation*}
Let $\mathsf{Q}^\lozenge \coloneqq \mu(\mathsf{Q})$. By Lemma~\ref{lem:FTcrit}, $\mathsf{Q}$ is of infinite mutation type. 
\begin{figure}[h]
$$
\resizebox{1.0\hsize}{!}{
\xymatrix{
	\mathsf{Q} 	&	&	&4	&	&	&\fbox{f} \ar[d]	 \\
	8  	&1 \ar[l]\ar[r]	&2	&3 \ar[l]\ar[r] \ar[u]	&5	&6 \ar[l] \ar[r]	&7 \\
}
\quad
\xymatrix{
	 \mathsf{Q}^\lozenge	&	&4 \ar[rd] \ar[rrd] \ar[ld]	& \fbox{f}\ar[l] \ar[rrd]	&	&	&\\
	2   \ar[r]  	&3 \ar[rd]&	&1 \ar[ld]	&7 \ar[lu] \ar[lld]	&5 \ar[l] \ar[r]	&6 \ar[lllu]	&\\
	  	& 	&8 \ar@<-.5ex>[uu] \ar@<.5ex>[uu]   &	&	& 	&\\
}
}
$$
\caption{Quivers $\mathsf{Q}$ and $\mathsf{Q}^\lozenge$ in Lemma~\ref{lemma_infinitefinite3}} \label{fig_quiverE72}
\end{figure}
\end{proof}

\begin{lemma}\label{lemma_infinitefinite4}
Let ${\mathsf{Q}^\prime}$ be the quiver of type $\widetilde{E}_8$ with all vertices unfrozen. Let $\mathsf{Q}$ be the quiver obtained from ${\mathsf{Q}^\prime}$ by adding one frozen vertex $f$ and one single arrow from $f$ to $9$ as in Figure~\ref{fig_quiverE8}. Then the following hold$\colon$
\begin{enumerate}
\item the quiver ${\mathsf{Q}^\prime}$ is of finite mutation type and
\item the quiver $\mathsf{Q}$ is of infinite mutation type.
\end{enumerate}
\end{lemma}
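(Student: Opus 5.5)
Part (1) is immediate from Theorem~\ref{theorem_FST61}. $\widetilde{E}_8$ appears in the list of exceptional finite mutation classes, so the unfrozen quiver $\mathsf{Q}^\prime$ of type $\widetilde{E}_8$ has finitely many quivers in its mutation class.

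For part (2) I follow the template of Lemmas~\ref{lemma_infinitefinite6}, \ref{lemma_infinitefinite1} and~\ref{lemma_infinitefinite3}. The goal is to find an explicit sequence $\mu$ of mutations at unfrozen vertices of $\mathsf{Q}$ (the $\widetilde{E}_8$ quiver with vertices $1,\dots,9$ plus the frozen $f$ and an arrow $f\to 9$). The resulting quiver $\mathsf{Q}^\lozenge\coloneqq\mu(\mathsf{Q})$ should contain the diamond-shaped subquiver of Figure~\ref{fig_quiverdiamond}, and then I would apply Lemma~\ref{lem:FTcrit}. The strategy is to transport $\mathsf{Q}^\prime$ to the standard representative of the $\widetilde{E}_8$ mutation class containing a double arrow $v_0\rightrightarrows w_0$. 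That representative is the analogue of the $\widetilde{E}_6$ and $\widetilde{E}_7$ pictures: the double arrow sits in a diamond with $w_1,v_{p-1}$, and the remaining arms hang off as in Figures~\ref{fig_quiverE6}--\ref{fig_quiverE72}. At the same time I would track how the arrows incident to $f$ evolve.

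Concretely, I would mimic the earlier sequences. For $\widetilde{E}_7$ the same word worked for both placements of $f$, which suggests first finding a mutation word $\mu$ taking the unfrozen $\widetilde{E}_8$ quiver to a diamond form. That word would be of length about $20$--$25$, obtained for instance by sink/source reflections along the long arm followed by mutations creating the double arrow. I would then recompute only the column of the exchange matrix indexed by $f$ along $\mu$ using the mutation rule~\eqref{equ_mutationexchangematri}. Since $f$ is frozen, this is a linear computation in the single vector $(\varepsilon_{r,f})_r$: at each step $k$, we have $\varepsilon_{r,f}\mapsto \varepsilon_{r,f}+\mathrm{sgn}(\varepsilon_{k,f})[\varepsilon_{r,k}\varepsilon_{k,f}]_+$ and $\varepsilon_{k,f}\mapsto-\varepsilon_{k,f}$. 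At the end I read off $b_1$ (arrows $w_0\to f$ minus $f\to w_0$) and $b_2$ (the same for $v_0$).

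The main obstacle is ensuring the hypothesis $b_1\neq -b_2$ or $b_2>0$ holds rather than the degenerate case $b_1=-b_2\le 0$. Remark~\ref{rmk_arbitraryaddfrozen} warns that a badly placed frozen vertex can stay mutation finite, so the endpoint vertex $9$ matters. If the first diamond form gives $b_1=-b_2$, I would compose with further mutations inside the finite mutation class of $\mathsf{Q}^\prime$ that preserve a diamond: for example, mutating along the arms, or moving the double arrow using the $\widetilde{A}$-type annulus symmetry. These change the $f$-column until one of the two inequalities holds. Alternatively, I could first mutate to bring $f$ adjacent to a vertex of the diamond and compare with Lemma~\ref{lemma_infinitefinite2}. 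Once such a $\mathsf{Q}^\lozenge$ is exhibited, Lemma~\ref{lem:FTcrit} gives infinite mutation type of $\mathsf{Q}^\lozenge$, hence of $\mathsf{Q}$.
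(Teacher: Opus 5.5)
Your route is the paper's route. Part (1) comes from Theorem~\ref{theorem_FST61}, and part (2) comes from mutating $\mathsf{Q}$ to a representative with a double arrow and invoking Lemma~\ref{lem:FTcrit}. What is missing is the step that actually constitutes the proof of (2). You never exhibit a mutation word or the resulting arrows at $f$, so the proposal leaves open whether the hypothesis of Lemma~\ref{lem:FTcrit} is met. The paper's proof is exactly this certificate. The word is $\mu=(8,9,5,1,3,4,5,2,6,5,1,3,2,3,6,7,4,8,5,9,6,3,5)$, of length $23$ and so within your predicted range. It sends $\mathsf{Q}$ to a quiver with:
\begin{itemize}
\item the double arrow $9\rightrightarrows 4$;
\item oriented triangles $9\rightrightarrows 4\to x\to 9$ for $x\in\{3,6,8\}$;
\item arms $6\to 7$ and $3\leftarrow 5\to 2\leftarrow 1$;
\item exactly two arrows at $f$, namely $f\to 4$ and $8\to f$.
\end{itemize}
I redid the $23$ mutations by hand, and the result matches Figure~\ref{fig_quiverE8}. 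Taking $v_0=9$ and $w_0=4$ gives $b_1=-1$ and $b_2=0$. Hence $b_1\neq -b_2$, and Lemma~\ref{lem:FTcrit} yields infinite mutation type.

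Your fallback for a degenerate outcome $b_1=-b_2\le 0$ is not a reliable remedy. Take the Kronecker quiver $v_0\rightrightarrows w_0$ with $c\geq 1$ arrows $w_0\to f$ and $c$ arrows $f\to v_0$. Mutation at either unfrozen vertex reproduces the same cyclic triangle with the roles of $v_0,w_0$ swapped, and again $(b_1,b_2)=(c,-c)$. That extension is therefore genuinely mutation finite. So a degenerate outcome would be evidence against the claim, not an artifact of the chosen representative. Here it simply does not arise.

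When searching for the target shape, aim for the configuration in the $\mathsf{Q}^\lozenge$ pictures, where every triangle through the double arrow is oriented ($w_0\to x\to v_0$). Do not use the literal arrow $w_1\to w_0$ drawn in Figure~\ref{fig_quiverdiamond}. The triangle $w_1\to w_0$, $w_1\to v_0$, $v_0\rightrightarrows w_0$ is acyclic with edge multiplicities $(1,1,2)$, which is neither of finite nor of affine type, so it is mutation infinite. Consequently it never occurs in the finite mutation class of $\widetilde{E}_8$.
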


\begin{proof}
Apply the following sequence $\mu$ of mutations to $\mathsf{Q}$ 
\begin{equation*}
(8,9,5,1,3,4,5,2,6,5,1,3,2,3,6,7,4,8,5,9,6,3,5)
\end{equation*}
Let $\mathsf{Q}^\lozenge \coloneqq \mu(\mathsf{Q})$. By Lemma~\ref{lem:FTcrit}, $\mathsf{Q}$ is of infinite mutation type. 
\begin{figure}[h]
$$
\resizebox{1.0\hsize}{!}{
\xymatrix{
	 	\mathsf{Q} 	&	&4	&	&	&	&	& \fbox{f} \ar[d] \\
	  	1 \ar[r]	&2	&3 \ar[l]\ar[r] \ar[u]	&5	&6 \ar[l] \ar[r]	&7	&8 \ar[r] \ar[l]	&9\\
}
\quad \,\,
\xymatrix{
	\mathsf{Q}^\lozenge	&	&4 \ar[rd] \ar[rrd] \ar[ld]	& \fbox{f}\ar[l]	&	&	&\\
	7    	&6 \ar[l] \ar[rd]&	&8 \ar[u] \ar[ld]	&3 \ar[lld]	&5 \ar[l] \ar[r]	&2 	&1 \ar[l] \\
	  	& 	&9 \ar@<-.5ex>[uu] \ar@<.5ex>[uu]   &	&	& 	&\\
}
}
$$
\caption{Quivers $\mathsf{Q}$ and $\mathsf{Q}^\lozenge$ in Lemma~\ref{lemma_infinitefinite4}} \label{fig_quiverE8}
\end{figure}
\end{proof}

Next, we deal with the extended affine types.

\begin{lemma}[Section 5 in \cite{FT24}] \label{lem:FTextcrit}
Suppose that $\mathsf{Q}^\prime$ be a quiver of extended affine type (e.g., Figure~\ref{fig_quiverE711},~\ref{fig_quiverE811}).
Let $\mathsf{Q}$ be the quiver obtained from $\mathsf{Q}^\prime$ by adding a single frozen vertex $f$.
If $\mathsf{Q}$ is connected, then $\mathsf{Q}$ is of infinite mutation type.
\end{lemma}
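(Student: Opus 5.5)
The plan is to argue that $\mathsf{Q}$ cannot itself be of finite mutation type, and then to reduce to the classification of finite mutation type quivers (Theorem~\ref{theorem_FST61}) together with the Felikson--Tumarkin analysis of how a finite mutation class behaves when a frozen vertex is added.

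\textbf{Step 1: reduce to a subquiver containing $f$.} Since $\mathsf{Q}$ is connected, there is at least one arrow between $f$ and $\mathsf{Q}^\prime$. Mutation commutes with restriction to full subquivers in the following sense: if $\mathsf{Q}$ were of finite mutation type, then every full subquiver of every quiver in its mutation class would have bounded arrow multiplicities. So it suffices to produce quivers mutation equivalent to $\mathsf{Q}$ in which some arrow incident to $f$ has arbitrarily large multiplicity. Equivalently, we can show that some subquiver, obtained after mutations, is of infinite mutation type.

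\textbf{Step 2: move $\mathsf{Q}^\prime$ to a convenient representative.} By the structure of extended affine quivers $E^{(1,1)}_7$ and $E^{(1,1)}_8$, the class of $\mathsf{Q}^\prime$ contains a representative in which a full subquiver of affine type $\widetilde{E}_7$ or $\widetilde{E}_8$ (or $\widetilde{D}$, $\widetilde{A}$) appears. Moreover, the complementary vertex forms a double arrow with a vertex of that affine part. We apply the same sequence of mutations to $\mathsf{Q}$. Since the mutations are at unfrozen vertices of $\mathsf{Q}^\prime$, $f$ stays attached, and connectivity is preserved: mutation never disconnects a frozen vertex from a connected unfrozen part unless all its arrows cancel.

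The case where all of $f$'s arrows cancel is handled by choosing a different representative. This is possible because the mutation class of $\mathsf{Q}^\prime$ is finite but contains many representatives, and the vector of arrows from $f$ transforms by the tropical rule~\eqref{equ_tropical11}, which is never identically zero.

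\textbf{Step 3: split into two cases and apply Lemma~\ref{lem:FTcrit}.} We next find an affine full subquiver $\mathsf{A}$ of the representative such that $f$ is connected to $\mathsf{A}$.

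\emph{Case (a): $f$ attaches to an affine piece.} Then apply Lemma~\ref{lem:FTcrit} to $\mathsf{A}\cup\{f\}$, after bringing $\mathsf{A}$ into diamond form $\mathsf{Q}^\lozenge$. Here we need to check that the condition $b_1\neq -b_2$ or $b_2>0$ holds for at least one choice of diamond form. If $b_1=-b_2\le 0$, mutating at $v_0$ or $w_0$ changes $(b_1,b_2)$ by the tropical rule. A second diamond in the same class then gives a violating pair, since the arrows from $f$ are not invariant under the affine Coxeter transformation unless they are zero.

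\emph{Case (b): $f$ attaches only to the extra vertex $x$ of the extended affine quiver.} Then use the double arrow at $x$: the subquiver consisting of $x$, a neighbor $y$ with $x\rightrightarrows y$, and $f$ is a three-vertex quiver. We would first try to mutate at $y$ so that $f$ becomes attached to the affine part, reducing to Case (a).

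\textbf{Main obstacle.} The hardest part is the sign and cancellation analysis in Step 3: showing that, for every nonzero attachment vector of $f$, some representative violates $b_1=-b_2\le 0$. This essentially amounts to showing that the imaginary-root-type invariant vectors of the affine parts do not extend to an invariant vector on the full extended affine quiver. I would attack it by noting that the $g$-vector-type vectors fixed under all tropical mutations must vanish on unfrozen coordinates (Lemma~\ref{lemma_characfixed}), and a connected $f$ has a nonzero unfrozen coordinate.
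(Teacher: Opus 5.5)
\textbf{Gap in Case (a).} The paper gives no proof of this lemma; it is quoted from Section~5 of \cite{FT24}. Your argument therefore has to stand on its own, and its decisive step does not. In Case~(a) you claim that when $b_1=-b_2\le 0$, some other diamond representative of $\mathsf{A}$ violates the condition of Lemma~\ref{lem:FTcrit}, ``since the arrows from $f$ are not invariant under the affine Coxeter transformation unless they are zero.'' This is false. An affine Coxeter transformation has eigenvalue $1$ (it fixes the null root), and affine quivers do admit nonzero frozen attachments with finite mutation class. That is exactly why Lemma~\ref{lem:FTcrit} leaves out the configuration $b_1=-b_2$, $b_2\le 0$. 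The simplest instance is the Kronecker quiver $v_0\rightrightarrows w_0$ with $f\to v_0$ and $w_0\to f$. Mutation at either $v_0$ or $w_0$ returns the same quiver with $v_0$ and $w_0$ interchanged, so the class is finite even though $f$ is attached. Consequently no argument confined to a single affine piece $\mathsf{A}\cup\{f\}$ can succeed. The contradiction must come from the interaction with the vertex outside $\mathsf{A}$, which is exactly what you postpone to the ``main obstacle''. So what is missing is the core of the proof, not a routine verification.

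\textbf{Why the proposed fix fails.} Lemma~\ref{lemma_characfixed} characterizes vectors \emph{fixed} by all tropical mutations. Finite mutation type only requires the column of $f$ to take finitely many values over the exchange graph. In the Kronecker example above (or for $A_1$ with one frozen vertex) the column is nonzero and not fixed (it changes sign), yet the class is finite. So ``nonzero, hence not fixed, hence infinite'' proves too much.

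\textbf{What is needed.} You need an incompatibility argument that uses the extended affine structure. For instance, in the standard representative with double arrow $x\rightrightarrows y$, both $\mathsf{Q}'\setminus\{x\}$ and $\mathsf{Q}'\setminus\{y\}$ are of affine type. This also makes your Case~(b) superfluous; as written it does not work anyway, because $f$ is not adjacent to $y$, so mutating at $y$ leaves the arrows at $f$ unchanged. One must then show that, unless it is zero, the column of $f$ cannot have the exceptional mutation-finite shape with respect to every affine subquiver occurring in the mutation class simultaneously. Lemma~\ref{lem:FTcrit} is only a sufficient criterion, so this requires one of two things. Either you need its converse, namely the classification of mutation-finite one-vertex extensions of affine quivers from Section~4 of \cite{FT24}. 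Or you need a uniform construction of a violating diamond for each of the infinitely many possible attachment vectors.
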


\begin{lemma}\label{lemma_infinitefinite7}
Let ${\mathsf{Q}^\prime}$ be the quiver of type ${E}^{(1,1)}_7$ with all vertices unfrozen. Let $\mathsf{Q}$ be the quiver obtained from ${\mathsf{Q}^\prime}$ by adding one frozen vertex $f$ and one single arrow from $8$ to $f$ as in Figure~\ref{fig_quiverE711}. Then the following hold$\colon$
\begin{enumerate}
\item the quiver ${\mathsf{Q}^\prime}$ is of finite mutation type and
\item the quiver $\mathsf{Q}$ is of infinite mutation type.
\end{enumerate}
\end{lemma}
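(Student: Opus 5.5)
Part (1) is immediate. A quiver of type $E^{(1,1)}_7$ is one of the exceptional types listed in Theorem~\ref{theorem_FST61}. Its mutation class is therefore finite, and so $\mathsf{Q}^\prime$ is of finite mutation type.

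For part (2), I will apply Lemma~\ref{lem:FTextcrit} directly. By construction, $\mathsf{Q}$ is obtained from $\mathsf{Q}^\prime$ by adjoining the single frozen vertex $f$ and the single arrow $8 \to f$. So the only hypothesis left to check is that $\mathsf{Q}$ is connected. The quiver $\mathsf{Q}^\prime$ of type $E^{(1,1)}_7$ in Figure~\ref{fig_quiverE711} is connected; this is visible from the figure, and it holds for any representative of the class, since connectivity is preserved under mutation. The new arrow joins $f$ to the vertex $8$ of $\mathsf{Q}^\prime$, so $\mathsf{Q}$ is connected as well. Lemma~\ref{lem:FTextcrit} then gives that $\mathsf{Q}$ is of infinite mutation type.

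One subtlety is that Lemma~\ref{lem:FTextcrit} is phrased for "a quiver of extended affine type". I should make sure the specific drawing in Figure~\ref{fig_quiverE711} really lies in the mutation class of $E^{(1,1)}_7$, for instance by comparing it with the standard representative used in \cite{FST12} or \cite{FT24}.

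If one prefers a self-contained argument, in the style of Lemmas~\ref{lemma_infinitefinite6}--\ref{lemma_infinitefinite4}, the alternative is to exhibit an explicit mutation sequence. This sequence would carry $\mathsf{Q}$ to a quiver containing a subquiver whose arrow multiplicities grow, or which visibly leaves the finite list of Theorem~\ref{theorem_FST61}. Finding such a sequence by hand is the only potentially laborious step, which is why I would rely on Lemma~\ref{lem:FTextcrit}: the connectivity check is trivial. I expect no real obstacle beyond confirming that the figure matches the $E^{(1,1)}_7$ mutation class.
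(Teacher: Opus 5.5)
Your argument is correct. Take at face value the hypothesis that $\mathsf{Q}^\prime$ is of type $E^{(1,1)}_7$. Then part (1) follows from Theorem~\ref{theorem_FST61}, exactly as the paper handles the affine cases. Part (2) is an immediate instance of Lemma~\ref{lem:FTextcrit}, since $\mathsf{Q}^\prime$ is connected and $f$ is joined to it by an arrow. This does not depend on which vertex $f$ is attached to; the figure in fact draws the arrow between $f$ and $6$ rather than $8$.

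The paper's proof is more computational. It applies the mutation sequence $(7,2,9,4,7,3,8,1,3,6,1)$ to $\mathsf{Q}$ and obtains $\mathsf{Q}^\lozenge$. The unfrozen part of $\mathsf{Q}^\lozenge$ is the standard representative of $E^{(1,1)}_7$: a double arrow $9 \rightrightarrows 2$, oriented triangles through $8$, $4$, $5$, and arms $8$--$3$--$7$, $4$, $5$--$1$--$6$, with $f$ hanging off the end of one arm. The paper then appeals to the Felikson--Tumarkin criterion. Its text cites Lemma~\ref{lem:FTcrit}, but $\mathsf{Q}^\prime$ is extended affine and $f$ is not adjacent to the double arrow in $\mathsf{Q}^\lozenge$. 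So the statement whose hypotheses are actually met is Lemma~\ref{lem:FTextcrit}, i.e.\ your connectivity argument.

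What the paper's mutation sequence buys is exactly the check you flagged as a subtlety. It certifies that the quiver drawn in Figure~\ref{fig_quiverE711} really lies in the $E^{(1,1)}_7$ mutation class, which underpins both part (1) and the use of the extended-affine criterion. Your route is shorter, but it leaves that identification to be verified separately.
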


\begin{proof}
Apply the following sequence $\mu$ of mutations to $\mathsf{Q}$ 
\begin{equation*}
(7, 2, 9, 4, 7, 3, 8, 1, 3, 6, 1).
\end{equation*}
Let $\mathsf{Q}^\lozenge \coloneqq \mu(\mathsf{Q})$. By Lemma~\ref{lem:FTcrit}, $\mathsf{Q}$ is of infinite mutation type. 
\begin{figure}[h]
$$
\resizebox{1.0\hsize}{!}{
\xymatrix{
\mathsf{Q}&&7 \ar[rd]&&&& \\
&3 \ar[ru] \ar[rd]&&8 \ar[ll] \ar[rd] &&& \\
2 \ar[ru] \ar[rd] &&5 \ar[rrrd] \ar[ru] \ar[ll] &&9 \ar[ll] &\fbox{f} \ar[d] & \\
&4 \ar[rrru] &&1 \ar[lu] &&6 \ar[ll] &
}
\quad
\xymatrix{
	 \mathsf{Q}^\lozenge	& &	&	& 	&		\\
	 	& &	&2 \ar[rd] \ar[rrd] \ar[ld]	& 	&  	&	& \fbox{f} \ar[d]		\\
	7 &   3 \ar[r] \ar[l]	&8 \ar[rd]&	&4 \ar[ld]	&5 \ar[lld]	&1 \ar[l] \ar[r]	&6 	\\
	  	& & 	&9 \ar@<-.5ex>[uu] \ar@<.5ex>[uu]    &	&	 	\\
}
}
$$
\caption{Quivers $\mathsf{Q}$ and $\mathsf{Q}^\lozenge$ in Lemma~\ref{lemma_infinitefinite7}} \label{fig_quiverE711}
\end{figure}
\end{proof}

\begin{lemma}\label{lemma_infinitefinite5}
Let ${\mathsf{Q}^\prime}$ be the quiver of type ${E}^{(1,1)}_8$ with all vertices unfrozen. Let $\mathsf{Q}$ be the quiver obtained from ${\mathsf{Q}^\prime}$ by adding one frozen vertex $f$ and one single arrow from $8$ to $f$ as in Figure~\ref{fig_quiverE811}. Then the following hold$\colon$
\begin{enumerate}
\item the quiver ${\mathsf{Q}^\prime}$ is of finite mutation type and
\item the quiver $\mathsf{Q}$ is of infinite mutation type.
\end{enumerate}
\end{lemma}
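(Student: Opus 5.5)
Part (1) follows directly from Theorem~\ref{theorem_FST61}: the quiver $\mathsf{Q}^\prime$ in Figure~\ref{fig_quiverE811} is by construction a representative of the extended affine type $E^{(1,1)}_8$. Every skew-symmetric matrix in that list has a finite mutation class, so $\mathsf{Q}^\prime$ is of finite mutation type. The only thing to confirm is that the drawn quiver is indeed the standard $E^{(1,1)}_8$ quiver, or mutation equivalent to it. I would do this by comparing it with the Felikson--Shapiro--Tumarkin representative, possibly after a short explicit mutation sequence.

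For part (2) I would give two arguments, the second serving as a check on the first. The quick route is Lemma~\ref{lem:FTextcrit}. $\mathsf{Q}$ is obtained from a quiver of extended affine type by adding a single frozen vertex $f$. Since $\mathsf{Q}^\prime$ is connected and $f$ is joined to vertex $8$ by one arrow, $\mathsf{Q}$ is connected. Lemma~\ref{lem:FTextcrit} then gives that $\mathsf{Q}$ is of infinite mutation type.

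To match the style of the preceding lemmas, and to avoid relying on the exact form of the extended affine criterion, I would also give an explicit mutation sequence. Following the proof of Lemma~\ref{lemma_infinitefinite7}, I would mutate $\mathsf{Q}$ at unfrozen vertices only, so that $f$ stays frozen, until the result $\mathsf{Q}^\lozenge = \mu(\mathsf{Q})$ contains a diamond subquiver as in Figure~\ref{fig_quiverdiamond}, with a double arrow $v_0 \rightrightarrows w_0$. The remaining unfrozen part should then form an affine subquiver, presumably of type $\widetilde{E}_8$ or $\widetilde{E}_7$. I would then read off the numbers $b_1, b_2$ from the arrows between $f$ and $w_0, v_0$, and check that $b_1 \neq -b_2$ or $b_2>0$. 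Lemma~\ref{lem:FTcrit} applied to that affine subquiver with $f$ adjoined would show that this subquiver is of infinite mutation type. Hence $\mathsf{Q}$, which contains it up to mutation, is of infinite mutation type as well.

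The main obstacle is finding the mutation sequence in the explicit argument. Concretely, one must first mutate the $E^{(1,1)}_8$ part so that a double arrow appears, and only then arrange the arrows at $f$ to satisfy the Felikson--Tumarkin inequality. I expect a sequence of about ten to twenty mutations, found by computer search, analogous to the one used for $E^{(1,1)}_7$. The step to check most carefully is that the arrows incident to $f$ in $\mathsf{Q}^\lozenge$ really meet the inequality, rather than the degenerate case $b_1=-b_2\le 0$.
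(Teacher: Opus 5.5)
Your proof is correct, and your part (1) matches the paper's implicit justification via Theorem~\ref{theorem_FST61}. For part (2), however, your main argument takes a different route.

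You invoke Lemma~\ref{lem:FTextcrit} directly. Since $\mathsf{Q}'$ is of extended affine type and the arrow $8\to f$ makes $\mathsf{Q}$ connected, $\mathsf{Q}$ is of infinite mutation type. The paper states Lemma~\ref{lem:FTextcrit} but does not use it in this proof. Instead it gives the explicit $28$-step mutation sequence $(1,7,2,9,4,9,5,4,10,9,5,10,7,6,4,8,10,7,4,8,5,7,4,5,4,5,7,3)$. After this sequence one has $10\rightrightarrows 8$, each $x\in\{4,5,6\}$ lies in an oriented triangle $8\to x\to 10$, and $f$ is attached only by $8\to f$. Hence $b_1=1\neq 0=-b_2$. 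The paper then applies the affine criterion, Lemma~\ref{lem:FTcrit}, implicitly to the $\widetilde{E}_8$-shaped subquiver left after deleting vertex $3$, exactly as you anticipated.

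Each route has its own advantage. Yours is a one-liner and does not depend on how $f$ is attached, but it rests entirely on the stronger Felikson--Tumarkin result for extended affine types. The paper's route gives a concrete, machine-checkable certificate and needs only the affine criterion.

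Your second, ``explicit'' argument is the paper's strategy without the actual mutation sequence, so on its own it is only a plan. It is unnecessary once Lemma~\ref{lem:FTextcrit} is granted. Also, the degenerate case $b_1=-b_2\le 0$ you worried about does not arise in the paper's $\mathsf{Q}^\lozenge$.
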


\begin{proof}
Apply the following sequence $\mu$ of mutations to $\mathsf{Q}$ \begin{equation*}
(1, 7, 2, 9, 4, 9, 5, 4, 10, 9, 5, 10, 7, 6, 4, 8, 10, 7, 4, 8, 5, 7, 4, 5, 4, 5, 7, 3).
\end{equation*}
Let $\mathsf{Q}^\lozenge \coloneqq \mu(\mathsf{Q})$. By Lemma~\ref{lem:FTcrit}, $\mathsf{Q}$ is of infinite mutation type. 
\begin{figure}[h]
$$
\resizebox{1.0\hsize}{!}{
\xymatrix{
	 	\mathsf{Q} &		1 \ar[d] \ar[r]	&2 \ar[d] \ar[r]	&4 \ar[d] \ar[r]	&6 \ar[d] \ar[r]	&8 \ar[d]			\\
&	  	3 \ar[r]	&5 \ar[lu] \ar[r]	&7 \ar[lu] \ar[r]	&9 \ar[lu] \ar[r]	&10 \ar[lu]	 	\\
&	  		&	&	&	&  \fbox{f} \ar[u]	 	\\
}
\quad \quad
\xymatrix{
	 \mathsf{Q}^\lozenge	&	&8 \ar[rd] \ar[rrd] \ar[r] \ar[ld]	& \fbox{f}	&	&	&\\
	7  \ar[r]    	&4 \ar[rd]&	&5 \ar[ld]	&6 \ar[lld]	&9 \ar[l] \ar[r]	&2 	&1 \ar[l] \ar[r] & 3 \\
	  	& 	&10 \ar@<-.5ex>[uu] \ar@<.5ex>[uu]    &	&	& 	&\\
}
}
$$
\caption{Quivers $\mathsf{Q}$ and $\mathsf{Q}^\lozenge$ in Lemma~\ref{lemma_infinitefinite5}} \label{fig_quiverE811}
\end{figure}
\end{proof}

\subsection{Existence of an arrow of arbitrary large multiplicity}

In this subsection, we complete the proof of Proposition~\ref{prop_mainsection7} using the lemmas established in the previous subsection.

We begin with the type $A$ case.

\begin{example}\label{example_A5K135}
Consider a classical group $G$ of type $A_5$ and let $K = \{1, 3, 5\}$. Let $P = P^K$ be the corresponding parabolic subgroup. Then $G/P \simeq \mathcal{F}\ell(2,4;6)$, which has dimension $m = 12$. The Weyl group $W$ of $G$ is isomorphic to the symmetric group $\mathfrak{S}_6$. The minimal length representative of the coset $w_0W_K$ is given by 
$$
w_{K} = 
\left(\begin{array}{cccccc}
1 & 2 & 3 & 4 & 5 & 6 \\
5  & 6 & 3 & 4 & 1 & 2 
\end{array}\right).
$$
A reduced expression of $w_{K}$ is 
$$
s_4 s_3 s_5 s_2 s_4 s_1 s_3 s_2 s_4 s_3 s_5 s_4.
$$ 
The corresponding marked quiver $(\mathsf{Q}_P, \mathsf{L}_P = \{8, 12\})$ is depicted in Figure~\ref{fig_quiverA5_K135} where the set of frozen vertices is $\mathsf{V}_\mathrm{fz} = \{6, 8, 10, 11, 12\}$.
\begin{figure}[h]
$$
\resizebox{.7\hsize}{!}{
\xymatrix{
s_1 &	& 	&	&\fbox{6}	&	&	&&\\
s_2 &	& 	&4 \ar[ru] \ar[rd]	&	&{\xy*{8}*\cir<7pt>{}\endxy} \ar[ll]	&	&&&\\
s_3 &	&2 \ar[ru] \ar[rd]	&	&7 \ar[ll] \ar[ru] \ar[rd]	&	&\fbox{10} \ar[ll] &&\\
s_4 &1 \ar[ru] \ar[rd]	&	&5 \ar[ru] \ar[ll]	&	&9 \ar[ll] \ar[ru] \ar[rd]	&	&{\xy*{12}*\cir<7pt>{}\endxy} \ar[ll]\\
s_5 &	&3 \ar[rrru]	&	&	&	&\fbox{11} \ar[llll]	& 
}
}
$$
\caption{The marked quiver for $A_5$ type with $K=\{1,3,5\}$} \label{fig_quiverA5_K135}
\end{figure}

Let $\mathsf{V}^\prime = \{1,2,3,7,9\}$ be a subset of the vertex set $\mathsf{V}$ of $\mathsf{Q}_P$ and consider the induced subquiver $\mathsf{Q}^\prime = \mathsf{Q}_P |_{\mathsf{V}^\prime}$. By applying the mutation sequence $(2,3,1)$, we see that $\mathsf{Q}^\prime$ is of finite mutation type $\widetilde{A}_{3,2}$. Now take $\mathsf{V} = \mathsf{V}^\prime \cup \{12\}$ and consider the induced subquiver $\mathsf{Q} = \mathsf{Q}_P |_{\mathsf{V}}$. By Lemma~\ref{lemma_infinitefinite2}, the quiver $\mathsf{Q}$ is of infinite mutation type.
\end{example}

\begin{remark}\label{rmk_arbitraryaddfrozen}
In Example~\ref{example_A5K135}, suppose that, instead of adding $\{ 12 \}$, we add all frozen vertices corresponding to simple roots in $K$, that is,
$$
\mathsf{V} = \mathsf{V}^\prime \cup \{6, 10, 11\},
$$
then the induced subquiver $\mathsf{Q} = \mathsf{Q}_P |_{\mathsf{V}}$ remains of finite mutation type. To obtain infinite mutation type, one must add a frozen vertex corresponding to a simple root in $I \setminus K$.
\end{remark}

Using the same method, we handle the remaining cases. Table~\ref{table_minimalparboliclemmaAtype} summarizes the results. The first column lists the type of the group $G$, the second specifies the chosen subset of simple roots, the third provides the minimal length representative $w_{K}$ of the coset $w_0 W_K$, and the fourth gives a reduced expression for $w_{K}$. The fifth column describes a finite mutation type subquiver $\mathsf{Q}^\prime$ of the marked quiver $(\mathsf{Q}_P, \mathsf{L}_P)$ together with the lemma applied, and the final column indicates the frozen vertex added to $\mathsf{Q}^\prime$.

\begin{table}[h]
\begin{center}
\resizebox{.95\hsize}{!}{
\begin{tabular}{| c | c | c | c | c | c | }
 \hline 
 Type & $I \setminus K$ & $w_{K}$ & $\underline{w}_K$ & Subquiver of finite mutation type & Frozen vertex  \\ 
\hline  \hline
$A_5$ & $\{2, 4\}$ & (5,6,3,4,1,2) & $435241324354$ & $\widetilde{A}_{3,2}$-type (Lemma~\ref{lemma_infinitefinite2})& $s_4$  \\   
$A_5$ & $\{1, 3, 5\}$ & (6,4,5,2,3,1) & $3241524324135$ & $\widetilde{E}_7$-type (Lemma~\ref{lemma_infinitefinite1}) & $s_3$ \\
\hline     
$A_6$ & $\{1,3\}$ & (7,5,6,1,2,3,4) &  43524613524321 & $\widetilde{E}_7$-type (Lemma~\ref{lemma_infinitefinite3}) & $s_3$ \\   
$A_6$ & $\{1,4\}$ & (7,4,5,6,1,2,3) & 654324135246354 & $\widetilde{E}_8$-type (Lemma~\ref{lemma_infinitefinite4}) & $s_4$ \\   
$A_6$ & $\{3,4\}$ & (5,6,7,4,1,2,3) & 435246135246354 &  $\widetilde{A}_{3,3}$-type (Lemma~\ref{lemma_infinitefinite2})& $s_4$  \\   
\hline
$A_7$ & $\{4\}$ & (5,6,7,8,1,2,3,4) & 4352461357246354 & $\widetilde{A}_{3,3}$-type (Lemma~\ref{lemma_infinitefinite2})& $s_4$  \\   
$A_7$ & $\{1,5\}$ & (8,4,5,6,7,1,2,3) & 7654324135246357465 &  $\widetilde{E}_6$-type (Lemma~\ref{lemma_infinitefinite6}) & $s_5$  \\   
$A_7$ & $\{2,3\}$  & (7,8,6,1,2,3,4,5) & 54635724613524132 & ${E}^{(1,1)}_8$-type (Lemma~\ref{lemma_infinitefinite5}) & $s_2$  \\   
$A_8$ & $\{3\}$  & (7,8,9,1,2,3,4,5,6) & 657468357246135243 & ${E}^{(1,1)}_8$-type (Lemma~\ref{lemma_infinitefinite5}) & $s_3$ \\   
\hline
\end{tabular} 
\quad \quad
}
\end{center}
\caption{Minimal cases of infinite type of type $A$}
\label{table_minimalparboliclemmaAtype}
\end{table}

Next, we deal with the case of type $D_n$.

\begin{example}
Consider the classical group $G$ of type $D_4$ with $K = \{2\}$ and let $P = P^K$ be the corresponding parabolic subgroup. Then $G/P$ is isomorphic to the isotropic flag variety $\mathcal{IF}(1,3,4; 8)$ of dimension $m = 12$. The Weyl group $W$ of $G$ embeds into the symmetric group $\frak{S}_8$. The minimal length representative of the coset $w_0 W_K$ is given by 
$$
w_K = 
\left(\begin{array}{cccccccc}
1 & 2 & 3 & 4 & 5 & 6 & 7 & 8 \\
8 & 6 & 7 & 5 & 4 & 2 & 3 & 1
\end{array}\right).
$$
A reduced expression of $w_K$ is 
$$
s_1 s_3 s_2 s_3 s_4 s_2 s_1 s_3 s_2 s_3 s_4
$$ 
and the corresponding marked quiver $(\mathsf{Q}_P, \mathsf{L}_P = \{7, 10, 11\})$ is depicted in Figure~\ref{fig_quiverD4_K134}. The set of frozen variables is $\mathsf{V}_\mathrm{fz} = \{7, 9, 10, 11\}$.
\begin{figure}[h]
$$
\resizebox{.7\hsize}{!}{
\xymatrix{
s_1 & 1 \ar[rrrd]	& 	&	&	&{\xy*{7}*\cir<7pt>{}\endxy} \ar[llll]	&	&&&\\
s_2 &	&3 \ar[rd]  \ar@/_1pc/[rrdd]	&	&6 \ar[ll] \ar[ru] \ar[rd]	&	&\fbox{9} \ar[ll] &&\\
s_3 &2 \ar[ru]	&	&4 \ar[ru] \ar[ll]	&	&8 \ar[ll] \ar[ru]	&	&{\xy*{10}*\cir<7pt>{}\endxy} \ar[ll]\\
s_4 &	&	&	& 5 \ar@/_1pc/[rruu] 	&	&	& 	& {\xy*{11}*\cir<7pt>{}\endxy} \ar[llll]
}
}
$$
\caption{The marked quiver for $D_4$ type with $K=\{1,3,4\}$} \label{fig_quiverD4_K134}
\end{figure}

Let $\mathsf{V}^\prime = V_\mathrm{uf}$. Applying the mutation at the nodes $(4, 1, 6, 2, 7, 5)$, the quiver $\mathsf{Q}_P$ contains a subquiver $\mathsf{Q}^\prime = \mathsf{Q}_P |_{\mathsf{V}^\prime}$ of finite mutation type $\widetilde{E}_{6}$. Now set $\mathsf{V} = \mathsf{V}^\prime \cup {10}$ and consider the induced subquiver $\mathsf{Q} = \mathsf{Q}_P |_{\mathsf{V}}$. By Lemma~\ref{lemma_infinitefinite6}, the quiver $\mathsf{Q}$ is of infinite mutation type.
\end{example}

Using the same method, the remaining cases can be treated similarly. The result is summarized in Table~\ref{table_minimalparboliclemmaDtype}.

\begin{table}[h]
\begin{center}
\resizebox{.95\hsize}{!}{
\begin{tabular}{| c | c | c | c | c | c | }
 \hline 
 Type & $I \setminus K$ & $w_{K}$ & $\underline{w}_{K}$ & Subquiver of finite mutation type & Frozen vertex  \\ 
\hline  \hline
$D_4$ & $\{2\}$ & (7,8,3,4,5,6,1,2) & $213242132$ & $\widetilde{A}_{2,2}$-type (Lemma~\ref{lemma_infinitefinite2})& $s_2$  \\   
$D_4$ & $\{1, 3, 4\}$ & (8,6,7,5,4,2,3,1) & $13234213234$ & $\widetilde{E}_6$-type (Lemma~\ref{lemma_infinitefinite6}) & $s_3$ \\
\hline     
$D_5$ & $\{3\}$  & (8,9,10,4,6,5,7,1,2,3) & $324351324132543$ & $\widetilde{A}_{3,3}$-type (Lemma~\ref{lemma_infinitefinite2})& $s_3$   \\   
$D_5$ & $\{1,5\}$ & (10,5,7,8,9,2,3,4,6,1) & $12435432132435$ & $\widetilde{E}_7$-type (Lemma~\ref{lemma_infinitefinite3})& $s_5$ \\   
$D_5$ & $\{4,5\}$ & (7,8,9,10,5,6,1,2,3,4) & $43532413243543$  & $\widetilde{E}_{7}$-type (Lemma~\ref{lemma_infinitefinite3})& $s_4$  \\   
\hline
$D_6$ & $\{6\}$ & (7,8,9,10,11,12,1,2,3,4,5,6) & $643546213243546$ & ${E}^{(1,1)}_7$-type (Lemma~\ref{lemma_infinitefinite7})  & $s_6$\\   
\hline
\end{tabular} 
\quad \quad
}
\end{center}
\caption{Minimal cases of infinite type of type $D$}
\label{table_minimalparboliclemmaDtype}
\end{table}

Finally, the results of type $E$ are summarized in Table~\ref{table_minimalparboliclemmaEtype}.

\begin{table}[h]
\begin{center}
\resizebox{.95\hsize}{!}{
\begin{tabular}{| c | c | c | c | c | c | }
 \hline 
 Type & $I \setminus K$ & $\underline{w}_{K}$ & Subquiver of finite mutation type & Frozen vertex  \\ 
\hline  \hline
$E_6$ & $\{1\}$ & $6543245613452431$ & $\widetilde{E}_7$-type (Lemma~\ref{lemma_infinitefinite3}) & $s_1$  \\   
$E_6$ & $\{2\}$ & $245341324565432451342$ & $\widetilde{A}_{2,3}$-type (Lemma~\ref{lemma_infinitefinite2})& $s_2$ \\
\hline     
$E_7$ & $\{7\}$  & $765432456713456245341324567$ & $\widetilde{A}_{1,2}$-type (Lemma~\ref{lemma_infinitefinite2})& $s_7$   \\   
\hline
$E_8$ & $\{8\}$ & $876543245671345624534132456787654324567134562453413245678$ &$\widetilde{E}_8$-type (Lemma~\ref{lemma_infinitefinite4})  & $s_8$\\   
\hline

\end{tabular} 
\quad \quad
}
\end{center}
\caption{Minimal cases of infinite type of type $E$}
\label{table_minimalparboliclemmaEtype}
\end{table}

\begin{proof}[Proof of Proposition~\ref{prop_mainsection7}]
We have verified that all minimal cases satisfy~\eqref{eq:manyarrows2}.  
The remaining cases follow from Corollaries~\ref{corollary_Dynkin1} and~\ref{corollary_Dynkin2}, together with Lemmas~\ref{lemma_an1} and~\ref{lemma_dn2}.  
\end{proof}

%\bibliographystyle{amsalpha}
%\bibliography{ref}

\providecommand{\bysame}{\leavevmode\hbox to3em{\hrulefill}\thinspace}
\providecommand{\MR}{\relax\ifhmode\unskip\space\fi MR }
% \MRhref is called by the amsart/book/proc definition of \MR.
\providecommand{\MRhref}[2]{%
  \href{http://www.ams.org/mathscinet-getitem?mr=#1}{#2}
}
\providecommand{\href}[2]{#2}

\end{document}